\newtheorem{thm}{Theorem}[section]
\newtheorem{cor}[thm]{Corollary}
\newtheorem{lem}[thm]{Lemma}
\newtheorem{prop}[thm]{Proposition}
\newtheorem{rem}[thm]{Remark}
\newtheorem{defn}[thm]{Definition}
\theoremstyle{remark}
\numberwithin{equation}{section}
\newcommand{\thmref}[1]{Theorem~\ref{#1}}
\newcommand{\lemref}[1]{Lemma \ref{#1}}
\newcommand{\propref}[1]{Proposition \ref{#1}}
\newcommand{\remref}[1]{Remark \ref{#1}}
\def\BB{{\mathbb B}}
\def\CC{{\mathbb C}}
\def\NN{{\mathbb N}}
\def\RR{{\mathbb R}}
\def\SS{{{\mathbb S}^{d-1}}}
\def\SSS{{\mathbb S}}
\def\bA{{\bf A}}
\def\bB{{\bf B}}
\def\bD{{\bf D}}
\def\bF{{\bf F}}
\def\bG{{\bf G}}
\def\bH{{\bf H}}
\def\cB{\mathcal{B}}
\def\cF{\mathcal{F}}
\def\cH{\mathcal{H}}
\def\cJ{\mathcal{J}}
\def\cK{\mathcal{K}}
\def\cP{\mathcal{P}}
\def\cQ{\mathcal{Q}}
\def\cR{\mathcal{R}}
\def\cS{\mathcal{S}}
\def\cX{\mathcal{X}}
\def\cY{\mathcal{Y}}
\def\cZ{\mathcal{Z}}
\def\fb{{\mathfrak b}}
\def\fB{{\mathfrak B}}
\def\fD{{\mathfrak D}}
\def\fL{{\mathfrak L}}
\def\nA{\mathscr{D}}
\def\sF{\mathscr{F}}
\def\HHH{\mathscr{H}}
\newcommand{\PP}{Z}
\def\eps{{\varepsilon}}
\def\u*{*}
\def\scal{N}
\def\bbb{b}
\def\kk{k}
\def\rrr{r}
\def\supp{\operatorname{supp}}
\def\II{{I}}
\def\QQ{{\cQ}}
\def\NNN{\mathscr{N}}
\def\ONE{{\mathbbm 1}}
\def\aa{\varphi}
\def\LL{\Psi}
\def\YY{\mathscr Y}
\def\MM{\mathscr{M}}
\def\ff{{\mathfrak f}}
\def\bb{{\mathfrak b}}
\def\tQ{{\tilde{\mathcal{Q}}}}
\def\gam{\beta}
\def\tc{{\tilde c}}
\def\newPsi{\cK}
\def\newPhi{\Lambda}
\def\nA{A}
\def\nB{B}
\def\ww{\widetilde{w}}
\newcommand{\til}[1]{\breve#1}
\begin{document}

\title[Nonlinear Approximation of Harmonic Functions]
{Nonlinear \boldmath $n$-term Approximation of Harmonic Functions from Shifts of
the Newtonian Kernel}

\author[K. G. Ivanov]{Kamen G. Ivanov}
\address{Institute of Mathematics and Informatics\\
Bulgarian Academy of Sciences\\
Sofia, Bulgaria}
\email{kamen@math.bas.bg}

\author[P. Petrushev]{Pencho Petrushev}
\address{Department of Mathematics\\
University of South Carolina\\
Columbia, SC}
\email{pencho@math.sc.edu}

\subjclass[2010]{41A17, 41A25, 42C15, 42C40, 42B35, 42B30}
\keywords{Nonlinear approximation, harmonic functions, Newtonian kernel, Hardy spaces, Besov spaces, frame decomposition}
\thanks{The first author has been supported by Grant DN 02/14
of the Fund for Scientific Research of the Bulgarian Ministry of Education and Science.
The second author has been supported by NSF Grant DMS-1714369.}

\begin{abstract}
A basic building block in Classical Potential Theory is the fundamental solution
of the Laplace equation in $\RR^d$ (Newtonian kernel).
The main goal of this article is to study the rates of nonlinear $n$-term approximation
of harmonic functions on the unit ball $B^d$ from shifts of the Newtonian kernel
with poles outside $\overline{B^d}$ in the harmonic Hardy spaces.
Optimal rates of approximation are obtained in terms of harmonic Besov spaces.
The main vehicle in establishing these results is the construction of highly localized frames for
Besov and Triebel-Lizorkin spaces on the sphere whose elements are linear combinations of a fixed number of shifts
of the Newtonian kernel.
\end{abstract}


\maketitle

\tableofcontents

\section{Introduction}\label{s1}

The fundamental solution of the Laplace equation
$\frac{1}{|x|^{d-2}}$ in dimension $d>2$ or
$\ln \frac{1}{|x|}$ if $d=2$
with $|x|$ being the Euclidean norm of $x\in \RR^d$
is a basic building block in Potential theory.
As is customary, we shall term the harmonic function
$\frac{1}{|x|^{d-2}}$ or $\ln \frac{1}{|x|}$
``Newtonian kernel''.

The main purpose of this article is to study the nonlinear $n$-term approximation
of harmonic functions on the unit ball $B^d$ in $\RR^d$ from linear combinations
of shifts of the Newtonian kernel.
More explicitly, the problem is
for a given harmonic function $U$ on $B^d$ and $n\ge 1$ to find
$n$ locations $\{y_j\}$ in $\RR^d\setminus \overline{B^d}$
and coefficients $\{a_j\}$ in $\CC$ so that
\begin{equation}\label{lin-comb}
a_0+\sum_{j=1}^n \frac{a_j}{|x-y_j|^{d-2}}\quad\hbox{if} \;\; d>2
\quad\hbox{or}\quad
a_0+\sum_{j=1}^n a_j\ln \frac{1}{|x-y_j|}\quad\hbox{if} \;\; d=2
\end{equation}
approximates $U$ with an optimal rate (near best) in the harmonic Hardy space $\HHH^p(B^d)$, $0<p<\infty$.
Denote by $\NNN_n$ the set of all harmonic functions on $B^d$ that can be represented in the form \eqref{lin-comb}.
Here the points $\{y_j\}$ are allowed to vary with the function and hence $\NNN_n$ is nonlinear.
Given $U\in \HHH^p(B^d)$ we denote
\begin{equation}\label{def-En}
E_n(U)_{\HHH^p}:=\inf_{G\in\NNN_n}\|U-G\|_{\HHH^p(B^d)}.
\end{equation}
We shall term $E_n(U)_{\HHH^p}$ the best $n$-term approximation of $U$ in the harmonic Hardy space $\HHH^p(B^d)$
from shifts of the Newtonian kernel as in \eqref{lin-comb}.
Our goal is to study the rate of convergence of $\{E_n(U)_{\HHH^p}\}$
and the smoothness spaces that govern this approximation process.
The same approximation problem is also important in the case when the function $U$ to be approximated is harmonic on
$\RR^d\setminus \overline{B^d}$ and the poles $\{y_j\}$ are in $B^d$ or
$U$ is harmonic on
$\RR^d_+$ and the poles $\{y_j\}$ are in $\RR^d_-$.

The results of A. Pekarski \cite{Pek1, Pek2} on rational approximation of holomorphic functions on the unit disc in $\CC$
and also the results in \cite{KP2} served as an inspiration and motivation for the development in this article.
An important motivation to us also comes from some applications of Potential theory.
In Geodesy people consider approximation of the gravitational (disturbing) potential using the potential of $n$ point masses.
A given potential $U$ is approximated
by the potential of $n$ point charges in Electrostatics
or by the potential of $n$ magnetic poles in Magnetism.
There is also a great deal of work done on the Method of Fundamental Solutions
for the Dirichlet problem of the Laplace equation in Numerical Analysis.
This research is directly related to the problems we consider here.
The multipole method of V.~Rokhlin and his collaborators (e.g. \cite{CGR, GR}) is also relevant to our undertaking.

We refer the reader to \cite{AG, Helms, K} for the basics of Potential theory.

The focus of this article is on the establishment of a direct (Jackson type) estimate
for nonlinear $n$-term approximation of functions in the harmonic Hardy space $\HHH^p(B^d)$, $0<p<\infty$,
from shifts of the Newtonian kernel.
As one can expect the harmonic Besov spaces on the ball
\begin{equation*}
B^{s\tau}_\tau(\HHH) \quad\hbox{with}\quad 1/\tau=s/(d-1)+1/p,~s>0,
\end{equation*}
will be naturally involved in the approximation process.

The poor localization of the Newtonian kernel is the first obstacle to overcome
in approximating from linear combinations of its shifts.
An important step forward in solving this approximation problem is the construction in \cite{IP2}
of highly localized summability kernels on the unit sphere $\SS$ in $\RR^d$ that are restrictions
to the sphere of linear combinations of a fixed number of
shifts of the Newtonian kernel just as in \eqref{lin-comb}.
Note that the harmonic functions by their nature cannot be well localized in an open subset of $\RR^d$,
but they can be well localized on the boundary of such a~set; typical examples are  $\SS$ and $\RR^{d-1}$.

To obtain our approximation result we proceed as follows:
We first use the result from \cite{IP2} to construct a pair of dual frames $\{\theta_\xi\}$, $\{\tilde\theta_\xi\}$
for all spaces of interest on $\SS$ whose elements $\{\theta_\xi\}$  are linear combinations of
a fixed number of shifts of the Newtonian kernel and are well localized.
Armed with these frames we apply an intermediate nonlinear $n$-term approximation from $\{\theta_\xi\}$
to the boundary value function/distribution $f_U$ of the harmonic function $U$ to be approximated.
This leads us to the desired estimate by harmonic extension to $B^d$ of the approxiamant and using the fact that each $\theta_\xi$ is
a finite linear combination of shifts of the Newtonian kernel.

Thus a major step in our development is to construct such a pair of dual frames.
More precisely, one of our main goals is to construct (see \thmref{thm:prop-frame}) a pair of frames
$\{\theta_\xi\}$, $\{\tilde\theta_\xi\}$
for the Besov and Triebel-Lizorkin spaces $\cB^{sq}_p(\SS)$ and $\cF^{sq}_p(\SS)$
with parameters $(s, p, q)$ in the range
\begin{equation}\label{indices-1}
\QQ=\QQ(A):=\big\{(s, p, q): |s| \le A,\; A^{-1}\le p \le A, \;\hbox{and}\; A^{-1}\le q<\infty\big\},
\end{equation}
where $A >1$ is a fixed constant.
This construction employs the small perturbation method for construction of frames developed in \cite{DKKP}
and relies on the kernels from \cite{IP2}.
While the basic ideas behind the construction of the frames $\{\theta_\xi\}$, $\{\tilde\theta_\xi\}$ is relatively simple,
some of the details become technical when applied to the specific case of this article.
For example, the requirement that $\{\theta_\xi\}$, $\{\tilde\theta_\xi\}$ are frames for the class
of Besov and Triebel-Lizorkin spaces $\cB^{sq}_p(\SS)$ and $\cF^{sq}_p(\SS)$
with parameters $(s, p, q)\in \QQ(A)$ compels us to carefully trace the constants
appearing in all relevant estimates.

The next several remarks will perhaps clarify some of the issues arising in our construction of
the pair of frames $\{\theta_\xi\}$, $\{\tilde\theta_\xi\}$ described above:

(1)
In applying the small perturbation method from \cite{DKKP} we use as a~backbone a~frame $\{\psi_\xi\}$ on $\SS$ from \cite{NPW2},
which can characterize the Besov and Triebel-Lizorkin spaces $\cB^{sq}_p(\SS)$ and $\cF^{sq}_p(\SS)$
with complete range of parameters $(s, p, q)$, i.e. $s\in\RR$, $0<p,q<\infty$.
With the restriction that each frame element $\{\theta_\xi\}$ is a linear combination of a fixed number of shifts
of the Newtonian kernel comes the natural limitation that the new frames $\{\theta_\xi\}$, $\{\tilde\theta_\xi\}$
can characterize the Besov and Triebel-Lizorkin spaces $\cB^{sq}_p(\SS)$ and $\cF^{sq}_p(\SS)$
with parameters from $\QQ(A)$ (see\eqref{indices-1}).
Here $A>1$ can be arbitrarily large but is fixed and the number of shifts depends on $A$.

(2)
If the old frame $\{\psi_\xi\}$ is a basis, then the new frame $\{\theta_\xi\}$ is also a basis.
This is the case in dimension $d=2$, where we use Meyer's periodic wavelet basis on $\SSS^1$.
As for now there are no convenient bases on $\SS$ when $d>2$.
For this reason we work with frames, which are completely satisfactory for our purposes.

(3)
The rotation group on $\SS$ is not commutative in dimensions $d\ge 3$,
which is a major difference from the translation group in $\RR^{d-1}$.
This is an essential obstacle in constructing highly localized linear combinations of a fixed number
of shifts of the Newtonian kernel with \emph{vanishing} moments on $\SS$.
In order to overcome this difficulty we replace the \emph{vanishing moment} conditions
on the $\varphi$-transform of Frazier and Jawerth
with \emph{small moment} conditions, see e.g. Propositions~\ref{prop:5_1}--\ref{prop:5_3}
and \eqref{eq:small-1} in \thmref{thm:new_frame_diamond}.
In general, the vanishing moment conditions are not valid for $\theta_\xi$.

(4)
We restrict the parameters to $p,q<\infty$ for several reasons.
First, whenever $p,q<\infty$ the Besov and Triebel-Lizorkin spaces $\cB^{sq}_p(\SS)$ and $\cF^{sq}_p(\SS)$
are separable and the finite sequences are dense in the respective Besov and Triebel-Lizorkin sequence spaces.
Also, the respective frame representations converge unconditionally.
These facts are important in the construction and utilization of the frames $\{\theta_\xi\}$, $\{\tilde\theta_\xi\}$.
Furthermore, as is well known, in general, nonlinear $n$-term approximation from frames or bases
in $L^\infty$ or as in our case $\HHH^\infty$ is not quite natural.
Just as in Harmonic analysis one should work in BMO instead.

The intimate relation between the harmonic Hardy and Besov spaces on $B^d$ on the one hand
and the Triebel-Lizorkin and Besov spaces of functions/distributions on $\SS$ on the other
will play a critical role in our development.
Harmonic Besov and Triebel-Lizorkin spaces on $B^d$
and Besov and Triebel-Lizorkin spaces of distributions on $\SS$ with full range of parameters
are treated in \cite{IP}.
In particular, the equivalence of these spaces on $B^d$ and on its boundary $\SS$ is established in \cite{IP}.
These equivalences enable us to mediate between spaces and frames on $B^d$ and on $\SS$.
For example, it allows to transfer the constructed frame $\{\theta_\xi\}$ on $\SS$ to a~frame on $B^d$
and approximation results from $\SS$ to $B^d$.

Our main result in \thmref{thm:Jackson} asserts that if $U\in B^{s\tau}_\tau(\HHH)$ with $s>0$ and $1/\tau=s/(d-1)+1/p$,
then $U\in\HHH^p(B^d)$ and
\begin{equation}\label{Jackson-0}
E_n(U)_{\HHH^p} \le c n^{-s/(d-1)}\|U\|_{B^{s\tau}_\tau(\HHH)}, \quad n\ge 1.
\end{equation}
We derive this estimate from a respective estimate for nonlinear $n$-term approximation
of functions/distributions from $\{\theta_\xi\}$ on the unit sphere $\SS$.

Denote by $\sigma_n(f)_{\cF_p^{02}}$ the best $n$-term approximation of $f$ in the Triebel-Lizorkin space $\cF_p^{02}(\SS)$
from the frame $\{\theta_\xi\}$ mentioned above.
We show that whenever $f~\in~B^{s\tau}_\tau(\SS)$ with $1/\tau=s/(d-1)+1/p$, $0<p<\infty$,
then $f\in\cF_p^{02}(\SS)$ and
\begin{equation}\label{Jackson-1}
\sigma_n(f)_{\cF_p^{02}(\SS)} \le c n^{-s/(d-1)}\|f\|_{B^{s\tau}_\tau(\SS)}, \quad n\ge 1.
\end{equation}
As is well known the harmonic Hardy space $\HHH^p(B^d)$, $0<p<\infty$, can be identified
with the Triebel-Lizorkin space $\cF_p^{02}=\cF_p^{02}(\SS)$ of functions/distributions on $\SS$,
and hence \eqref{Jackson-1} implies that for any harmonic function $U\in B^{s\tau}_\tau(\HHH)$
\begin{equation}\label{Jackson-2}
\sigma_n(U)_{\HHH^p} \le c n^{-s/(d-1)}\|U\|_{B^{s\tau}_\tau(\HHH)}, \quad n\ge 1,
\end{equation}
where $\sigma_n(U)_{\HHH^p}$ stands for the best $n$-term approximation of $U$ in $\HHH^p(B^d)$
from the harmonic extension to $B^d$ of $\{\theta_\xi\}$.
Finally, estimate \eqref{Jackson-2} yields \eqref{Jackson-0} taking into account that
each frame element $\theta_\xi$ is a linear combination of a fixed number of shifts
of the Newtonian kernel.

It is insightful to study the nonlinear approximation from functions as in \eqref{lin-comb}
in the norms of the closely related to $\HHH^p(B^d)$
harmonic Triebel-Lizorkin and Besov spaces $F_p^{0q}(B^d)$ and $B_p^{0q}(B^d)$.
As shown in Theorem~\ref{thm:Jackson-B-F} the nonlinear $n$-term approximations in $F_p^{0q}(B^d)$
have the optimal rate $O(n^{-s/(d-1)})$ for any $0<q<\infty$,
while the nonlinear $n$-term approximation in $B_p^{sq}(B^d)$
achieves this optimal order only for $p\le q<\infty$, see Theorems~\ref{thm:Jackson-B-B}.

\medskip

\noindent
{\em Bernstein inequality: Conjecture.}
We conjecture that the following Bernstein type inequality is valid:
Let $1<p<\infty$, $s>0$, and $1/\tau=s/(d-1)+1/p$. Then
\begin{equation}\label{bernstein}
\|G\|_{B_\tau^{s\tau}(\HHH)} \le cn^{s/(d-1)}\|G\|_{\HHH^p(B^d)}, \quad \forall G\in \NNN_n.
\end{equation}
If valid this estimate along with the Jackson estimate (\ref{Jackson-0}) would lead to a complete characterization
of the rates of approximation (approximation spaces) of nonlinear $n$-term approximation in $\HHH^p(B^d)$, $1<p<\infty$,
from shifts of the Newtonian kernel.

\smallskip

It is natural to pose the question whether the approximation results of this paper hold when $p=\infty$.
We think that just as in the case of rational approximation
analogues of these results are valid if $\HHH^\infty(B^d)$ is replaced by the harmonic BMO space on $B^d$.
We shall not pursue this line of research in the present article.

\smallskip

\noindent
{\em Organization.}
The outline of the paper is as follows.
In Section~\ref{sec:background} we introduce some basic notation and assemble background material
about the maximal operator, spherical harmonics, and maximal $\delta$-nets;
we also give some technical estimates on inner products of localized functions on the sphere.
Section~\ref{s2} presents some basic facts about harmonic Besov and Triebel-Lizorkin spaces
on $B^d$ and $\SS$ developed in \cite{IP};
it also recalls the construction of frames for Besov and Triebel-Lizorkin spaces on $\SS$
and their frame decomposition developed in \cite{NPW2}.
Section~\ref{s3} presents and somewhat refines the small perturbation method for construction of frames developed in \cite{DKKP}.
Section~\ref{s4} deals with localization properties of the frame elements from \S \ref{s2}
and highly localized kernels induced by shifts of the Newtonian kernel, developed in \cite{IP2}.
Section~\ref{s6} contains the construction of a pair of frames for Besov and Triebel-Lizorkin spaces
whose elements are finite linear combinations of shifts of the Newtonian kernel.
Section~\ref{s7} is devoted to nonlinear $n$-term approximation of functions in the harmonic Hardy spaces
from shifts of the Newtonian kernel.
Section~\ref{s8} deals with nonlinear $n$-term approximation in the exterior of the unit ball in $\RR^d$
and in the upper half space in $\RR^d$ from shifts of the Newtonian kernel.
Proofs of key estimates supporting our main results are given in Section~\ref{appendix}.

\smallskip

\noindent
{\em Notation.}
Throughout this article the constants $d$, $M$, and $K$ will appear frequently.
Here $d\in\NN$ is the dimension of the space $\RR^d$,
$M>0$ determines decay rates, and
$K\in \NN$ is a parameter determining the upper bound of the order of derivatives required from some functions.
Positive constants will be denoted by $c$ and they may vary at every occurrence.
Most of these constants will depend only on $d, K, M$.
By $C$'s we denote numbers (constants) that also depend on parameters different from $d, K, M$.
When we would like to trace the dependence of a constant $c$ on these parameters we use indexing, e.g. $c_1,c_2$, etc.
or indicate the dependence on parameters in parenthesis.
These indexed constants preserve their values throughout the article.
The relation $a\sim b$ means that there exists a constant $c\ge 1$ such that $c^{-1}a\le b \le ca$.

\section{Background and technical ground work}\label{sec:background}

\subsection{Basic notation and simple inequalities}\label{s1_2}

In this article we use standard notation.
Thus $\RR^d$ stands for the $d$-dimensional Euclidean space.
The inner product of $x,y\in\RR^d$ is denoted by $x\cdot y =\sum_{k=1}^d x_ky_k$ and
the Euclidean norm of $x$ by $|x|=\sqrt{x\cdot x}$.
We write $\BB(x_0,r):=\{x : |x-x_0|< r\}$
and set $B^d:=\BB(0, 1)$, the open unit ball in $\RR^d$.

As usual $\NN_0$ stands for the set of non-negative integers.
For $\beta=(\beta_1,\dots,\beta_d)\in\NN_0^d$ the monomial $x^\beta$
is defined by $x^\beta:=x_1^{\beta_1}\dots x_d^{\beta_d}$ and its degree is $|\beta|:=\beta_1+\dots+\beta_d$.
The set of all polynomials in $x\in\RR^d$ of total degree $n$ is denoted by $\cP_n^d$.
We denote $\partial_k:=\partial/\partial x_k$
and then $\partial^\beta:=\partial_1^{\beta_1}\dots \partial_d^{\beta_d}$ is a differential operator of order $|\beta|$,
the gradient operator is $\nabla:=(\partial_1,\dots, \partial_d)$,
and $\Delta:=\partial_1^2+\dots +\partial_d^2$ stands for the Laplacian.
When necessary we indicate the variable of differentiation by a subscript, e.g. $\partial^\beta_x$.

The unit sphere in $\RR^d$ is denoted by $\SS:=\{x : |x|=1\}$.
We denote by $\rho(x, y)$ the geodesic distance between $x, y\in\SS$, that is,
$\rho(x,y):=\arccos (x\cdot y)$.
The open spherical cap (ball on the sphere) centred at $\eta\in\SS$ of radius $r$ is denoted by $B(\eta,r)=\{x\in\SS : \rho(\eta,x)<r\}$.
We denote by $\Delta_0$ the Laplace-Beltrami operator on $\SS$.
As is well known (e.g. \cite[Theorem 1.8.2]{DX}) $\Delta_0$ has the decomposition
\begin{equation}\label{eq:laplace_beltrami_decomposition}
\Delta_0 =\sum_{1\le i<\ell\le d}D_{i,\ell}^2,\quad D_{i,\ell}=x_i\partial_\ell-x_\ell\partial_i,\quad x\in\SS.
\end{equation}
For any function $g$ on $\SS$ we denote by $\til{g}$ its standard extension, defined by
\begin{equation}\label{extension}
\til{g}(x):=g(x/|x|) \quad\hbox{for}\quad x\in\RR^d\backslash\{0\}.
\end{equation}
As is well known (e.g. \cite[Corollary 1.4.3]{DX} or \cite{Seeley}) for any $g\in C^2(\SS)$
\begin{equation}\label{eq:laplace_extension}
\Delta \til{g}(x)=\Delta_0 g(x),\quad x\in\SS.
\end{equation}
By definition $g\in W_\infty^K(\SS)$, $K\in \NN_0$, if
$\|\partial^\beta\til{g}\|_{L^\infty(\BB(0,2)\setminus\BB(0,1/2))} \le c$, $\forall |\beta|\le K$.

The Lebesgue measure on $\SS$ is denoted by $\sigma$ and
we set $|E|:=\sigma(E)$ for a~measurable set $E\subset \SS$.
Thus, $\omega_d:=|\SS|=2\pi^{d/2}/\Gamma(d/2)$.

The inner product of $f,g\in L^2(\SS)$ is given by
\begin{equation*}
	\left\langle f,g\right\rangle := \int_{\SS} f(y)\overline{g(y)}\,d\sigma(y).
\end{equation*}
The nonstandard convolution of functions
$F\in L^\infty[-1,1]$ and $g\in L(\SS)$ is defined by
\begin{equation}\label{def-conv}
	F*g(x):=\left\langle F(x\cdot\bullet),\overline{g}\right\rangle=\int_{\SS}F(x\cdot y)g(y) d\sigma(y).
\end{equation}

We say that a function $f$ defined on $\SS$
is \emph{localized around} $\eta\in\SS$ with dilation factor $N$ and decay rate $M>0$ if the estimate
\begin{equation}\label{eq:local_1}
	|f(x)|\le \kappa N^{d-1}(1+N\rho(\eta,x))^{-M},\quad x\in\SS,
\end{equation}
holds for some constant $\kappa >0$ independent of $N, x, \eta$.
The multiplier $N^{d-1}$ is used as part of the decay function in \eqref{eq:local_1} in order to have $\|f\|_{L^1(\SS)}\le c$.
Namely, for $M> d-1$ we have
\begin{equation}\label{eq:conv_1}
\int_{\SS} \frac{N^{d-1}}{(1+N\rho(\eta,y))^M}d\sigma(y)\le c_0,\quad \forall \eta\in\SS,~\forall N\ge 1,
\end{equation}
where $c_0=c(d)/(M-d+1)$ depends only on $d$ and $M$.
The weight function in the right-hand side of \eqref{eq:local_1} also has the property:
For any $\eta_1,\eta_2\in\SS$  with
$\rho(\eta_1,\eta_2)\le N^{-1}$
\begin{equation}\label{comp_local}
(1+N\rho(\eta_2, x))^{-1}\le 2(1+N\rho(\eta_1, x))^{-1},\quad \forall x\in\SS.
\end{equation}
Indeed, $1+N\rho(\eta_1, x)\le 1+N(\rho(\eta_1,\eta_2)+\rho(\eta_2, x))\le 2+N\rho(\eta_2, x)$, which implies \eqref{comp_local}.

Another simple inequality that will be useful is:
\begin{equation}\label{eq:small-1_powers}
|\Delta_0^{K/2}x^\beta|\le c_6,\quad \forall x\in\SS,~0\le|\beta|\le K,
\end{equation}
where $c_6$ depends only on $d$ and $K$.

\subsection{Spherical harmonics}\label{s1_3}

The spherical harmonics will be our main vehicle in dealing with harmonic functions
on the unit ball $B^d$ in $\RR^d$.

Denote by $\cH_\kk$ the space of all spherical harmonics of degree $\kk$ on $\SS$.
As is well known the dimension of $\cH_\kk$ is
$N(\kk, d)= \frac{2\kk+d-2}{\kk}\binom{\kk+d-3}{\kk-1} \sim \kk^{d-1}$.
Furthermore, the spaces $\cH_\kk$, $\kk=0, 1, \dots$, are orthogonal
and $L^2(\SS) =\bigoplus_{k\ge 0} \cH_k$.

Let $\{Y_{\kk \nu}: \nu=1, \dots, N(\kk, d)\}$ be a real-valued orthonormal basis for $\cH_\kk$.
Then the kernel of the orthogonal projector onto $\cH_\kk$ is given by
\begin{equation}\label{Pk}
Z_\kk(x\cdot y) = \sum_{\nu=1}^{N(\kk, d)} Y_{\kk \nu}(x)Y_{\kk \nu}(y),
\quad x, y\in \SS.
\end{equation}
As is well known (see e.g. \cite[Theorem 1.2.6]{DX})
\begin{equation}\label{def-Pk}
Z_\kk(x\cdot y)= \frac{\kk+\mu}{\mu\omega_d}\,
C^{\mu}_\kk(x\cdot y),
\quad x, y\in \SS,
\quad \mu:=\frac{d-2}{2},~d>2.
\end{equation}
Here $C_\kk^{\mu}$ is the Gegenbauer (ultraspherical) polynomial of degree
$\kk$ normalized by
$C_\kk^{\mu}(1)= \binom{\kk + 2\mu-1}{\kk}$.
The Gegenbauer polynomials are usually defined by the following generating function
\begin{equation*}
(1-2uz+z^2)^{-\mu}=\sum_{\kk=0}^\infty C^{\mu}_\kk(u)z^\kk,\quad |z|<1,~|u|<1.
\end{equation*}
The polynomials $C_\kk^{\mu}$, $k=0, 1, \dots$, are orthogonal in the space $L^2([-1, 1], w)$
with weight $w(u):= (1-u^2)^{\mu-1/2}$,
see \cite[p.~80, (4.7.1)]{Sz} or \cite[Table 18.3.1]{OLBC}.
In the case $d=2$ the kernel of the orthogonal projector onto $\cH_\kk$ takes the form
\begin{equation*}
Z_0(x\cdot y)=\frac{1}{2\pi},\quad 	Z_k(x\cdot y)=\frac{1}{\pi}T_k(x\cdot y),\quad k\ge 1,
\end{equation*}
where $T_k(u):=\cos(n\arccos u)$ is the $k$-th degree Chebyshev polynomial of the first kind.
We refer the reader to \cite{Muller, SW} for the basics of spherical harmonics.

As is well known (see e.g. \cite[Theorem 1.4.5]{DX}) the spherical harmonics are
eigenfunctions of the Laplace-Beltrami operator $\Delta_0$ on $\SS$, namely,
\begin{equation}\label{eq:LB1}
-\Delta_0Y(x)=\kk(\kk+d-2)Y(x),\quad~x\in\SS,~\forall Y\in \cH_\kk.
\end{equation}

The set of all band-limited functions (i.e. spherical polynomials) on $\SS$ of degree $\le N$ will be denoted by $\Pi_N$,
i.e. $\Pi_N :=\bigoplus_{\kk=0}^N \cH_\kk$.

The Poisson kernel on the unit ball $B^d$ is given by
\begin{equation}\label{Poisson}
P(y, x):=\sum_{k= 0}^\infty |x|^k\PP_\kk\Big(\frac{x}{|x|}\cdot y\Big)
=\frac{1}{\omega_d}\frac{1-|x|^2}{|x-y|^d},
\quad |x| <1,\; y\in \SS.
\end{equation}

Kernels on the sphere $\SS$ of the form
\begin{equation}\label{def-Lam}
\Lambda_N(x\cdot y) := \sum_{\kk=0}^\infty
\lambda(\kk/N)\PP_\kk(x\cdot y),
\quad x, y\in \SS,
\quad N\ge 1,
\end{equation}
where $\lambda\in C^\infty[0,\infty)$ is compactly supported,
will play a key role in this article.
Observe that in this case
\begin{equation}\label{def-Lam-2}
\Lambda_N(u) := \sum_{\kk=0}^\infty
\lambda(\kk/N)\PP_\kk(u), \quad u\in[-1, 1],
\end{equation}
is simply a polynomial kernel.
The localization of this kernel is given in the following

\begin{thm}\label{thm:localization}
Let $\nu\ge 0$ and $M\in\NN$.
Assume $\lambda\in C^\infty[0, \infty)$,
$\|\lambda^{(m)}\|_\infty \le \kappa$ for $0\le m\le M$ and
either $\supp \lambda \subset [1/4, 4]$ or $\supp \lambda \subset [0, 2]$ and $\lambda(t)=1$ for $t\in [0, 1]$.
Then there exists a constant $c>0$ depending only on $M$, $\nu$, and $d$ such that
for any $N\ge 1$ the kernel $\Lambda_N$ from $(\ref{def-Lam})-(\ref{def-Lam-2})$
obeys
\begin{equation}\label{local-Lam}
|\Lambda_N^{(\nu)}(\cos \theta)| \le c \kappa \frac{N^{d-1+2\nu}}{(1+N|\theta|)^{M}},
\quad |\theta|\le \pi,
\end{equation}
and hence
\begin{equation}\label{local-Lam-2}
|\Lambda_N^{(\nu)}(x\cdot y)| \le c \kappa \frac{N^{d-1+2\nu}}{(1+N\rho(x, y))^{M}},
\quad x, y\in\SS.
\end{equation}
Furthermore, for $x, y, z\in \SS$
\begin{equation}\label{local-Lip-1}
|\Lambda_N(x\cdot z)-\Lambda_N(y\cdot z)| \le c\kappa \frac{\rho(x, y) N^d}{(1+N\rho(x, z))^{M}},
\quad\hbox{if}\quad  \rho(x, y) \le N^{-1}.
\end{equation}
\end{thm}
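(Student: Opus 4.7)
The strategy for the pointwise bound \eqref{local-Lam} is to reduce the $\nu$-th derivative case to $\nu=0$ via a standard Gegenbauer identity, then split the angular range into a near-diagonal and a far-field regime. Since $Z_k=\frac{k+\mu}{\mu\omega_d}C_k^\mu$ and $(C_k^\mu)'=2\mu\,C_{k-1}^{\mu+1}$, iterating the derivative $\nu$ times rewrites $\Lambda_N^{(\nu)}$ as a kernel of the same form but with the ultraspherical index shifted $\mu\mapsto\mu+\nu$ (equivalently, the effective dimension raised from $d$ to $d+2\nu$) and with an absolutely bounded multiplicative factor. This extracts the $N^{d-1+2\nu}$ scaling of \eqref{local-Lam} as the natural amplitude of the transformed kernel in dimension $d+2\nu$, so it is enough to prove the bound for $\nu=0$, in an arbitrary dimension, with constants depending on $d$.

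For $\nu=0$, the near-diagonal regime $N|\theta|\le 1$ is handled by the trivial estimate $|Z_k(\cos\theta)|\le Z_k(1)\sim k^{d-2}$, summed against the compactly supported multiplier $\lambda(k/N)$; this gives $|\Lambda_N(\cos\theta)|\le c\kappa N^{d-1}$, which matches \eqref{local-Lam} in this range. For $N|\theta|\ge 1$ I would use a Szeg\H{o}-type summation-by-parts argument. The three-term Gegenbauer recurrence (together with the asymptotic $Z_k(\cos\theta)\approx\cos((k+\mu)\theta)/(\sin\theta)^\mu$) yields an identity
\[(1-u)\,Z_k(u)=-\tfrac{1}{2}\alpha_k\bigl[Z_{k+1}(u)-2Z_k(u)+Z_{k-1}(u)\bigr]+\text{lower order},\]
with coefficients $\alpha_k$ bounded in $k$. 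Multiplying $\Lambda_N(u)$ by $(1-u)$ and performing double Abel summation transfers the factor $(1-u)$ from the kernel side into a second-order divided difference of $\lambda(\cdot/N)$ on the multiplier side, which is $O(\kappa/N^2)$ since $\|\lambda''\|_\infty\le\kappa$. Iterating $L=\lceil M/2\rceil$ times produces
\[(1-\cos\theta)^L\,|\Lambda_N(\cos\theta)|\le c\kappa\,N^{d-1-2L},\]
using at most $M$ derivatives of $\lambda$. Since $1-\cos\theta\sim\theta^2$, this yields $|\Lambda_N(\cos\theta)|\le c\kappa N^{d-1}/(N|\theta|)^M$ in the far-field and, combined with the near-diagonal bound, gives \eqref{local-Lam}; the transition to \eqref{local-Lam-2} is immediate via $\rho(x,y)=\arccos(x\cdot y)$.

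For the Lipschitz-type estimate \eqref{local-Lip-1}, assuming $\rho(x,y)\le N^{-1}$, I would parameterize the great-circle arc from $x$ to $y$ as $\gamma(t)=\cos t\cdot x+\sin t\cdot e$, $t\in[0,\rho(x,y)]$, for a suitable tangent unit vector $e\in\SS$, and write
\[\Lambda_N(x\cdot z)-\Lambda_N(y\cdot z)=-\int_0^{\rho(x,y)}\Lambda_N'(\gamma(t)\cdot z)\,\frac{d}{dt}(\gamma(t)\cdot z)\,dt.\]
Using $\bigl|\tfrac{d}{dt}(\gamma(t)\cdot z)\bigr|\le 1$, applying the already-established $\nu=1$ case of \eqref{local-Lam-2} to bound $|\Lambda_N'(\gamma(t)\cdot z)|$, and invoking \eqref{comp_local} (valid because $\rho(\gamma(t),x)\le\rho(x,y)\le N^{-1}$) to replace $(1+N\rho(\gamma(t),z))^{-M}$ by $2(1+N\rho(x,z))^{-M}$ uniformly in $t$, integration in $t$ delivers \eqref{local-Lip-1}. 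The delicate step in this plan, and the one requiring careful book-keeping, is the summation-by-parts argument: one must verify that the recurrence coefficients and the lower-order correction terms each admit bounds compatible with iteration, so that after $L$ iterations the resulting multiplier really is a finite combination of $2L$-th order divided differences of $\lambda$, with all constants depending only on $d$, $M$, $\nu$. The secondary subtlety is harmonizing the two permitted supports of $\lambda$ — the case $\supp\lambda\subset[0,2]$ with $\lambda\equiv 1$ on $[0,1]$ is reduced to the dyadic case by splitting off the portion $k\le N$ on which $\lambda$ is constant.
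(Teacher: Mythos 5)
The paper itself gives no proof of this theorem — it defers to \cite{NPW1} (Theorem 3.5), \cite{NPW2} (Lemmas 2.4, 2.6) and \cite{IPX} — so your sketch must stand on its own. For \eqref{local-Lam}--\eqref{local-Lam-2} your route is sound and is essentially the one used in those references: the reduction of $\nu\ge 1$ to $\nu=0$ via $(C_k^\mu)'=2\mu C_{k-1}^{\mu+1}$ (shifting $\mu\mapsto\mu+\nu$, i.e.\ $d\mapsto d+2\nu$, which is exactly where $N^{d-1+2\nu}$ comes from), the trivial bound $|Z_k(\cos\theta)|\le Z_k(1)\sim k^{d-2}$ for $N|\theta|\le 1$, and the iterated $(1-u)$-multiplication with double Abel summation for $N|\theta|\ge 1$. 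The bookkeeping you flag as delicate does close, and it is worth recording why: writing $uZ_k=a_kZ_{k+1}+\beta_kZ_{k-1}$ with $a_k=\frac{k+1}{2(k+1+\mu)}$, $\beta_k=\frac{k+2\mu-1}{2(k+\mu-1)}$, the coefficient of $Z_k$ in $(1-u)\sum_kb_kZ_k$ is $b_k-a_{k-1}b_{k-1}-\beta_{k+1}b_{k+1}$, and $a_{k-1}+\beta_{k+1}=1$ \emph{exactly}; hence every surviving term carries either a pure second difference of $b_k=\lambda(k/N)$ or an $O(1/k)$ coefficient multiplying a first difference, and both are $O(\kappa N^{-2})$ on the support $k\sim N$. (A cruder accounting that treats the $O(1/k)$ deviations of $a_k$ and $\beta_k$ separately loses a factor of $N$ per iteration and does not recover \eqref{local-Lam}.) This identity also disposes of your ``secondary subtlety'': in the flat case $\supp\lambda\subset[0,2]$, $\lambda\equiv 1$ on $[0,1]$, the new coefficients vanish wherever $\lambda(\cdot/N)$ is locally constant, so no case splitting is needed — and the splitting you propose would actually fail if taken literally, since the partial-sum kernel $\sum_{k\le N}Z_k$ is \emph{not} localized and dyadic pieces of a flat multiplier do not sum to the stated bound when $M>d-1$.

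There is, however, a genuine gap in your argument for \eqref{local-Lip-1}. Bounding $\bigl|\frac{d}{dt}(\gamma(t)\cdot z)\bigr|\le 1$ and invoking the $\nu=1$ case of \eqref{local-Lam-2} gives, after integration, $c\kappa\,\rho(x,y)\,N^{d+1}(1+N\rho(x,z))^{-M}$ — one full factor of $N$ worse than \eqref{local-Lip-1}. The missing idea is that $\gamma'(t)$ is a unit tangent vector at $\gamma(t)\in\SS$, hence orthogonal to $\gamma(t)$, so that $|\gamma'(t)\cdot z|\le\sin\rho(\gamma(t),z)\le\rho(\gamma(t),z)$. Since $\rho(\gamma(t),z)\le N^{-1}(1+N\rho(\gamma(t),z))$ and \eqref{comp_local} applies, this extra factor converts $N^{d+1}(1+N\rho(x,z))^{-M}$ into $N^{d}(1+N\rho(x,z))^{-(M-1)}$; applying the derivative estimate with $M+1$ in place of $M$ (harmless, as $\lambda\in C^\infty$ and the constant may depend on $M$) then yields \eqref{local-Lip-1} as stated. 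Without this observation the step fails, not merely loses a constant.
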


For a proof, see  \cite[Theorem 3.5]{NPW1} and \cite[Lemmas 2.4, 2.6]{NPW2}, also \cite{IPX}.

\subsection{Maximal $\delta$-nets and cubature formulas on the sphere}\label{s5_3}

For discretization of integrals and construction of frames on $\SS$ we shall need cubature formulas,
which are naturally constructed using maximal $\delta$-nets on $\SS$.

\smallskip

\noindent
{\bf Definition.}
Given $\delta>0$ we say that a finite set $\cZ\subset\SS$ is a maximal $\delta$-net on $\SS$
if
(i) $\rho(\zeta_1,\zeta_2)\ge\delta$ for all $\zeta_1,\zeta_2\in\cZ$, $\zeta_1\ne\zeta_2$, and
(ii) $\cup_{\zeta\in\cZ}B(\zeta,\delta)=\SS$.

Clearly, a maximal $\delta$-net on $\SS$ exists for any $\delta>0$.
For every maximal $\delta$-net $\cZ$
it is easy to construct (see \cite[Proposition 2.5]{CKP}) a disjoint partition $\{\nA_\zeta\}_{\zeta\in\cZ}$ of $\SS$
consisting of measurable sets such that
\begin{equation}\label{disjoint}
B(\zeta,\delta/2)\subset \nA_\zeta\subset B(\zeta,\delta),\quad \zeta\in\cZ.
\end{equation}

Two kinds of cubature formulas on $\SS$ will be utilized.

\smallskip

\noindent
{\bf Simple cubature formulas on $\SS$.}
Let $0<\gamma\le 1$ be a parameter to be selected.
Let $\cZ_j\subset\SS$ ($j\in\NN$) be a maximal $\delta_j$-net on $\SS$ with $\delta_j:=\gamma 2^{-j+1}$.
We shall use the cubature formula
\begin{equation}\label{simpl-cubature}
\int_{\SS} f(x) d\sigma(x)\approx \sum_{\zeta\in \cZ_j} w_\zeta f(\zeta), \quad w_\zeta:=|\nA_\zeta|,
\end{equation}
where $\nA_\zeta$ is from \eqref{disjoint} with $\cZ=\cZ_j$, $\delta=\delta_j$.
The cubature \eqref{simpl-cubature} is apparently exact for all constants.
Evidently,
\begin{equation}\label{eq:maximal_net}
	w_\zeta=|\nA_\zeta|\sim (\gamma 2^{-j+1})^{d-1}
\end{equation}
with constants of equivalence depending only on $d$.
Note that \eqref{disjoint} implies that the number of elements in $\cZ_j$ is
$\le c(d)(\gamma^{-1}2^j)^{d-1}$.

Further,
given $j\in\NN$ we define a map $\zeta$ from $\SS$ to $\cZ_j$ as follows:
For every $y\in\SS$ we set $\zeta(y):=\eta\in\cZ_j$ if $y\in\nA_\eta$.
We shall use this map in Lemmas~\ref{lem:1_3} and \ref{lem:1_4} below.

\smallskip

\noindent
{\bf Nontrivial cubature formulas on $\SS$.}
Let $\cX_j\subset\SS$ be a maximal $\delta_j$-net on $\SS$ with $\delta_j:=\gamma 2^{-j+1}$, $0<\gamma<1$, $j\ge 1$.
In \cite[Theorem 4.3]{NPW1} it is shown that there exist $\gamma$ ($0<\gamma <1$), depending only on $d$,
and weights $\{\ww_\xi\}_{\xi\in\cX_j}$, satisfying
\begin{equation}\label{cubature_w}
c_7^{-1} 2^{-j(d-1)}\le \ww_\xi\le c_7 2^{-j(d-1)}, \quad \xi\in\cX_j,
\end{equation}
with constant $c_7$ depending only on $d$, such that the cubature formula
\begin{equation}\label{cubature}
\int_\SS f(x)d\sigma(x)\approx \sum_{\xi\in\cX_j} \ww_\xi f(\xi)
\end{equation}
is exact for all spherical harmonics $f$ of degree $\le 2^{j+1}$, $j\ge 1$.

As before,
the number of nodes in $\cX_j$ is $\le c(d)(\gamma^{-1}2^j)^{d-1}$,
since $\cX_j$ is a maximal $\delta_j$-net on $\SS$.
Also the disjoint partition $\{\nA_\xi\}_{\xi\in\cX_j}$ of $\SS$ exists with
$B(\xi,\delta_j/2)\subset \nA_\xi\subset B(\xi,\delta_j)$, but the equality $\ww_\xi=|A_\xi|$ \emph{does not hold} in general.

\subsection{Maximal operator}

The maximal operator is an important technical tool when dealing with Besov and Triebel-Lizorkin spaces.
We shall use the following version of the Hardy-Littlewood maximal operator:
\begin{equation}\label{def:max-op}
\MM_tf(x):=\sup_{B\ni x}\Big(\frac1{|B|}\int_B |f|^t\, d\sigma \Big)^{1/t},
\quad x\in \SS, \; t>0,
\end{equation}
where the sup is over all spherical caps $B\subset \SS$ such that $x\in B$.

The Fefferman-Stein vector-valued maximal inequality (see \cite[Ch. II (13), p. 56]{Stein})
can be written in the form:
{\em If $0<p<\infty, 0<q\le\infty$, and
$0<t<\min\{p,q\}$, then for any sequence of measurable functions
$\{f_\nu\}$ on $\SS$}
\begin{equation}\label{max-ineq}
\Big\|\Big(\sum_{\nu}|\MM_tf_\nu(\cdot)|^q\Big)^{1/q} \Big\|_{L^p}
\le \tc_1\Big\|\Big(\sum_{\nu}| f_\nu(\cdot)|^q\Big)^{1/q}\Big\|_{L^p}.
\end{equation}
From Theorem 2.1 in \cite{GLY} it follows that the constant $\tc_1$ above can be written in the form
\begin{equation}\label{max-const}
\tc_1=\Big(c^* \max\big\{p/t, (p/t-1)^{-1}\big\}\max\big\{1, (q/t-1)^{-1}\big\}\Big)^{1/t},
\end{equation}
where $c^*>0$ is a constant depending only on $d$.

Note that the area/volume of a spherical cap $B(x,r)$ on $\SS$, $d\ge 2$, is given by
\begin{equation*}
|B(x,r)|=\omega_{d-1}\int_0^r \sin^{d-2}v\,dv.
\end{equation*}
Hence
\begin{equation}\label{sph_cap2}
|B(x_1,r_1)|/|B(x_2,r_2)|\le (r_1/r_2)^{d-1},\quad 0<r_2\le r_1\le\pi,\quad x_1,x_2\in\SS,
\end{equation}
\begin{equation}\label{sph_cap}
1/\tc_2\le |B(x,r)|/r^{d-1}\le \tc_2,\quad 0<r\le\pi,\quad x\in\SS,
\end{equation}
where $\tc_2$ is a constant depending only on $d$.

\subsection{Inner products of zonal functions}\label{s5_1}

A function $f$ on $\SS$ is zonal if it is invariant under rotation about a fixed axis.
If this axis is in the direction of $\eta\in\SS$, then $f$ can be represented as $f(x)=F(\eta\cdot x)$, $x\in\SS$,
for an appropriate function $F:[-1,1]\to\RR$.

\begin{lem}\label{lem:0}
Let $F, G\in L^\infty[-1,1]$. Then there exists $H\in C[-1,1]$ such that
\begin{equation}\label{convol}
H(x\cdot z)=\int_{\SS}F(x\cdot y)G(y\cdot z)\,d\sigma(y),\quad \forall x,z\in\SS.
\end{equation}
\end{lem}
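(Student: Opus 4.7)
My plan is to establish rotational invariance of the right-hand side of \eqref{convol} first, and then use approximation by continuous functions to deduce continuity of $H$.

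For the invariance, the change of variables $y\mapsto Ry$ with $R\in SO(d)$ in the integral gives
$$\int_\SS F(x\cdot y) G(y\cdot z)\,d\sigma(y)=\int_\SS F(R^Tx\cdot u) G(u\cdot R^Tz)\,d\sigma(u),$$
so $I(x,z):=\int_\SS F(x\cdot y) G(y\cdot z)\,d\sigma(y)$ is invariant under simultaneous rotation of $(x,z)$. Since any two pairs in $\SS\times\SS$ with the same inner product are related by such a rotation, $I(x,z)$ depends only on $t=x\cdot z$, and this defines $H:[-1,1]\to\CC$ via \eqref{convol}.

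For continuity of $H$, I would argue by approximation. Using Lusin's theorem together with Tietze extension, any $F\in L^\infty[-1,1]$ is an $L^1$-limit, with respect to the weight $w(t):=(1-t^2)^{(d-3)/2}$, of continuous functions $F_n\in C[-1,1]$ with $\|F_n\|_\infty\le\|F\|_\infty$; approximate both $F$ and $G$ in this fashion. The rotation-invariant zonal identity
$$\int_\SS h(x\cdot y)\,d\sigma(y)=\omega_{d-1}\int_{-1}^1 h(t)w(t)\,dt,$$
which holds for every $x\in\SS$ by parametrization of $\SS$ around a pole, together with the uniform sup-norm bounds on $F_n,G_n$ shows that
$$I_n(x,z):=\int_\SS F_n(x\cdot y) G_n(y\cdot z)\,d\sigma(y)\to I(x,z)$$
uniformly on $\SS\times\SS$. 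For each $n$, dominated convergence gives joint continuity of $I_n$ on $\SS\times\SS$, and by the same invariance argument as above there is $H_n\in C[-1,1]$ with $I_n(x,z)=H_n(x\cdot z)$; explicitly one can take $z=e_d$ and $x(t)=(\sqrt{1-t^2},0,\dots,0,t)$, and observe that $H_n(t)=I_n(x(t),e_d)$ is continuous in $t$. Uniform convergence $I_n\to I$ then forces $H_n\to H$ uniformly on $[-1,1]$, whence $H\in C[-1,1]$.

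The only genuine obstacle is this approximation step: since $F,G$ are only in $L^\infty$ (not continuous), one cannot pass continuity through the integral by direct dominated convergence in $(x,z)$, and bounded approximation in weighted $L^1$ is the clean replacement that also preserves the uniform estimates needed for the limit argument.
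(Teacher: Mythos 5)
Your proof is correct, but it takes a genuinely different route from the paper's. The paper first proves \eqref{convol} for polynomials $F,G$ by expanding in the zonal kernels $Z_k$ and using the reproducing property $\int_{\SS}Z_k(x\cdot y)Z_k(y\cdot z)\,d\sigma(y)=Z_k(x\cdot z)$, which yields $H=\sum_k \hat F_k\hat G_k Z_k$ explicitly, and then invokes ``a limiting argument'' for general $F,G\in L^\infty$. You instead prove directly that $I(x,z)=\int_{\SS}F(x\cdot y)G(y\cdot z)\,d\sigma(y)$ is invariant under the diagonal action of the rotation group, hence a function of $x\cdot z$ alone, and obtain continuity by approximating $F,G$ by continuous functions boundedly in $L^\infty$ and in weighted $L^1$, which gives $I_n\to I$ uniformly on $\SS\times\SS$. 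Your approach buys a fully detailed limiting step (precisely the part the paper leaves implicit: sup-norm approximation by polynomials is impossible for general $L^\infty$ data, so some bounded a.e./$L^1(w)$ approximation is needed in either proof), at the cost of not producing the multiplier formula for $H$ that the expansion argument gives for free. Two small points to tidy: for $d=2$ two pairs with equal inner product may only be related by an element of $O(2)$ rather than $SO(2)$, but $\sigma$ is invariant under the full orthogonal group, so the invariance argument is unaffected; and one should note that $y\mapsto F(x\cdot y)$ is well defined $\sigma$-a.e.\ for every fixed $x$ because the pushforward of $\sigma$ under $y\mapsto x\cdot y$ is $\omega_{d-1}w(t)\,dt$, absolutely continuous with respect to Lebesgue measure on $[-1,1]$.
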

\begin{proof}
Assume first that $F$ and $G$ are algebraic polynomials of degree $m$.
Then we can expend them in Gegenbauer polynomials to obtain
$F=\sum_{k=0}^m \hat F_k\PP_k$ and
$G=\sum_{k=0}^m \hat G_k\PP_k$.
Using that $\PP_k(x\cdot y)$ is the kernel of the orthogonal projector onto $\cH_k$
we have
$\int_{\SS}\PP_k(x\cdot y)\PP_k(y\cdot z)\,d\sigma(y)=\PP_k(x\cdot z)$ (see \eqref{Pk}).
Therefore,
\begin{align*}
\int_{\SS}F(x\cdot y)G(y\cdot z)\,d\sigma(y)
=\sum_{k=0}^m \hat F\hat G\PP_k(x\cdot z)
=H(x\cdot z),
\end{align*}
where $H$ is an algebraic polynomial of degree $m$.
Thus (\ref{convol}) holds for polynomials.
Finally, a limiting argument implies that (\ref{convol}) is valid in general.
\end{proof}

From \lemref{lem:0} it follows that for any $F, G\in L^\infty[-1,1]$
\begin{equation}\label{eq:comut}
\int_{\SS}F(x\cdot y)G(y\cdot z)\,d\sigma(y)
=\int_{\SS}F(z\cdot y)G(y\cdot x)\,d\sigma(y),\quad \forall x,z\in\SS.
\end{equation}

\subsection{Inner products of localized functions}\label{s5_2}

The estimation of the inner products of well localized functions and functions with small moments on the sphere
will play a key role in our further development.
The following proposition is an analogue of \cite[Lemma B1]{FJ}.
We replace the vanishing moment condition used in \cite{FJ}
by the weaker ``small moments'' condition \eqref{eq:3a}.

\begin{prop}\label{prop:5_1}
Let $K\in\NN$, $M>K+d-1$, $\scal_2\ge\scal_1\ge 1$ $(\scal_1,\scal_2\in\RR)$, and $\kappa_1, \kappa_2>0$.
Assume $f\in L^\infty(\SS)$ and $g\in W_\infty^K(\SS)$, see \S\ref{s1_2}.
Furthermore, assume that for some $x_1,x_2\in\SS$
\begin{align}
\left|\partial^\beta \til{g}(y)\right|
&\le \frac{\kappa_1 \scal_1^{|\beta|+d-1}}{(1+\scal_1\rho(x_1,y))^M},
\quad \forall y\in\SS,~0\le |\beta|\le K, \label{eq:1a}\\
|f(y)|
&\le \frac{\kappa_2 \scal_2^{d-1}}{(1+\scal_2\rho(x_2,y))^M},
\quad \forall y\in\SS, \quad\hbox{and}\label{eq:2a}
\end{align}
\begin{equation}\label{eq:3a}
\Big|\int_{\SS}y^\beta f(y)\, d\sigma(y)\Big|
\le \kappa_2 \scal_2^{-K},\quad 0\le |\beta|\le K-1.
\end{equation}
Then
\begin{equation}\label{eq:4a}
|\left\langle g,f\right\rangle|=\Big|\int_{\SS}g(y)\overline{f(y)}\, d\sigma(y)\Big|
\le c_1\frac{\kappa_1 \kappa_2 (\scal_1/\scal_2)^{K}\scal_1^{d-1}}{(1+\scal_1\rho(x_1,x_2))^M},
\end{equation}
where $c_1$ depends only on $d$, $K$, and $M$.
Above $\til{g}(y):=g(y/|y|)$ for $y\in\RR^d\backslash\{0\}$ just as in \eqref{extension}.
\end{prop}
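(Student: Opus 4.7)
The plan is to apply the Frazier--Jawerth Taylor-expansion method, modified to accommodate the ``small moments'' hypothesis \eqref{eq:3a} in place of vanishing moments. I would expand $\til{g}$ about $x_2$ in $\RR^d$ to order $K-1$,
\[
\til{g}(y)=P(y)+R(y),\qquad P(y):=\sum_{|\beta|\le K-1}\frac{\partial^\beta \til{g}(x_2)}{\beta!}(y-x_2)^\beta,
\]
with $R$ the integral remainder involving $\partial^\beta \til{g}$ for $|\beta|=K$ evaluated along the segment from $x_2$ to $y$, and split $\langle g,f\rangle=\int P\overline{f}\,d\sigma+\int R\overline{f}\,d\sigma$ for separate estimation.

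For the polynomial piece, expanding $(y-x_2)^\beta=\sum_{\alpha\le\beta}\binom{\beta}{\alpha}(-x_2)^{\beta-\alpha}y^\alpha$ rewrites $\int P\overline{f}\,d\sigma$ as a finite linear combination of monomial moments $\int y^\alpha\overline{f}\,d\sigma$ with $|\alpha|\le K-1$, each bounded by $\kappa_2 N_2^{-K}$ thanks to \eqref{eq:3a}. The coefficient $\partial^\beta \til{g}(x_2)$ is controlled by \eqref{eq:1a} at $y=x_2$, giving $|\partial^\beta \til{g}(x_2)|\le\kappa_1 N_1^{K-1+d-1}(1+N_1\rho(x_1,x_2))^{-M}$ since $|\beta|\le K-1$. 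Combining yields $|\int P\overline{f}\,d\sigma|\le c\,\kappa_1\kappa_2(N_1/N_2)^K N_1^{d-2}(1+N_1\rho(x_1,x_2))^{-M}$, which is dominated by the target bound because $N_1\ge 1$.

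For the remainder, I exploit that $\til{g}$ is homogeneous of degree $0$ by its definition \eqref{extension}, so $\partial^\beta \til{g}$ is homogeneous of degree $-K$ for $|\beta|=K$. For $\rho(x_2,y)\le\pi/2$, points $z=(1-t)x_2+ty$ on the segment $[x_2,y]$ satisfy $|z|\ge\cos(\rho(x_2,y)/2)\ge 1/\sqrt 2$, so $|\partial^\beta \til{g}(z)|\le 2^{K/2}|\partial^\beta \til{g}(z/|z|)|$ and the sphere-only hypothesis \eqref{eq:1a} applies at the spherical projection $z/|z|$; since $z/|z|$ lies on the great-circle arc from $x_2$ to $y$, the decay factor $(1+N_1\rho(x_1,z/|z|))^{-M}$ is comparable to $(1+N_1\rho(x_1,y))^{-M}$ via the triangle inequality and \eqref{comp_local}. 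In the complementary region $\rho(x_2,y)>\pi/2$, one bounds $|\til{g}|$ and $|P|$ crudely from \eqref{eq:1a} and exploits the fast spatial decay of $f$ from \eqref{eq:2a}. This produces $|R(y)|\le c\,\kappa_1\rho(x_2,y)^K N_1^{K+d-1}(1+N_1\rho(x_1,y))^{-M}$ in the near region, and absorbing $\rho(x_2,y)^K$ against a factor $(1+N_2\rho(x_2,y))^{-K}$ extracted from \eqref{eq:2a} (which uses $M-K>d-1$) reduces matters to the standard bilinear convolution estimate
\[
\int_{\SS}\frac{N_2^{d-1}}{(1+N_1\rho(x_1,y))^M(1+N_2\rho(x_2,y))^{M-K}}\,d\sigma(y)\le \frac{c}{(1+N_1\rho(x_1,x_2))^M},
\]
valid because $N_1\le N_2$ and $M-K>d-1$. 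Combining with the prefactor $\kappa_1\kappa_2 N_1^{K+d-1}N_2^{d-1}\cdot N_2^{-K}$ gives the target bound $c\,\kappa_1\kappa_2(N_1/N_2)^K N_1^{d-1}(1+N_1\rho(x_1,x_2))^{-M}$.

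The principal technical obstacle is the remainder step: transferring the on-sphere derivative hypothesis \eqref{eq:1a} to points on the segment $[x_2,y]\subset\RR^d$ and tracking the $N_1$-decay factor along its spherical projection. Degree-zero homogeneity of $\til{g}$ together with the lower bound on $|z|$ on the segment when $\rho(x_2,y)\le\pi/2$ handles the first; the triangle inequality and \eqref{comp_local} handle the second, at a cost absorbed into constants depending only on $d,K,M$.
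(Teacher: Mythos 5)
Your overall strategy — Taylor expansion of $\til{g}$ about $x_2$ to order $K-1$, treating the polynomial piece via the small-moment hypothesis \eqref{eq:3a} together with \eqref{eq:1a} at $y=x_2$, and then controlling the remainder integrated against $f$ — is exactly the decomposition the paper uses in its proof (this is the $S_1+S_2$ split in \eqref{eq:5a}). The polynomial piece is handled correctly and matches the paper.

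There is, however, a genuine gap in your remainder estimate. You assert that for all $y$ with $\rho(x_2,y)\le\pi/2$,
\[
|R(y)|\le c\,\kappa_1\,\rho(x_2,y)^K N_1^{K+d-1}(1+N_1\rho(x_1,y))^{-M},
\]
justifying the decay factor by the claim that $(1+N_1\rho(x_1,z/|z|))^{-M}$ for $z$ on the arc from $x_2$ to $y$ is comparable to $(1+N_1\rho(x_1,y))^{-M}$. This is false when the arc from $x_2$ to $y$ passes near $x_1$ — precisely when $\rho(x_1,y)\le \rho(x_1,x_2)/2$ and $\rho(x_2,y)\sim\rho(x_1,x_2)$ is not small compared with $N_1^{-1}$. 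There the sup over the arc of $(1+N_1\rho(x_1,z))^{-M}$ is of order $1$, while $(1+N_1\rho(x_1,y))^{-M}$ and $(1+N_1\rho(x_1,x_2))^{-M}$ can both be as small as $(N_1\rho(x_1,x_2))^{-M}$. Neither the triangle inequality nor \eqref{comp_local} rescues the comparison, because \eqref{comp_local} requires the two centers to lie within $N^{-1}$ of each other, which fails once $\rho(x_2,y)\gg N_1^{-1}$. Your far-region cutoff at $\rho(x_2,y)=\pi/2$ does not catch this, since the problematic set has $\rho(x_2,y)\sim\rho(x_1,x_2)$, which can be well below $\pi/2$. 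The paper avoids this by applying the Taylor-remainder bound only on $A_1=\{\rho(x_2,y)\le N_1^{-1}\}$ (so \eqref{comp_local} applies and the sup is comparable to $(1+N_1\rho(x_1,x_2))^{-M}$), and on $A_2$ (the region where $y$ is near $x_1$ but $\rho(x_2,y)>N_1^{-1}$) and $A_3$ it uses the triangle inequality $|\til{g}(y)-P(y)|\le|\til{g}(y)|+|P(y)|$ with separate localization estimates, together with the stronger $N_2$-decay of $f$. Your proof needs a comparable intermediate-region argument before it can be accepted.
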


For cases where condition \eqref{eq:3a} may not be satisfied we modify Proposition~\ref{prop:5_1}
as follows

\begin{prop}\label{prop:5_2}
Let $M>d$, $\scal_2\ge\scal_1\ge 1$,
and $\kappa_1, \kappa_2>0$.
Let $f\in L^\infty(\SS)$ and $g\in W_\infty^1(\SS)$.
Also, assume that for some $x_1,x_2\in\SS$
\begin{align}
\left|\partial^\alpha \til{g}(y)\right|
&\le \frac{\kappa_1 \scal_1^{d}}{(1+\scal_1\rho(x_1,y))^M},
\quad \forall y\in\SS,~|\alpha|=1, \label{eq:1b}\\
|f(y)|&\le \frac{\kappa_2 \scal_2^{d-1}}{(1+\scal_2\rho(x_2,y))^M},\quad \forall y\in\SS.\label{eq:2b}
\end{align}
Then
\begin{equation}\label{eq:4b}
\Big|\int_{\SS}g(y)\overline{f(y)}\, d\sigma(y)-g(x_2)\int_{\SS}\overline{f(y)}\, d\sigma(y)\Big|
\le c_2\frac{\kappa_1 \kappa_2 (\scal_1/\scal_2)\scal_1^{d-1}}{(1+\scal_1\rho(x_1,x_2))^M},
\end{equation}
where $c_2$ depends only on $d$ and $M$.
\end{prop}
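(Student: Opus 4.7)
The key observation is that the quantity on the left-hand side depends on $g$ only through the difference $g(y)-g(x_2)$, so the absence of a pointwise size bound on $g$ itself is inessential. I would rewrite the expression as
\begin{equation*}
\Big|\int_\SS \bigl(g(y)-g(x_2)\bigr)\overline{f(y)}\, d\sigma(y)\Big|
\end{equation*}
and bound the difference pointwise by a geodesic-arc integral of the gradient of $\tilde g$. If $\gamma_y:[0,\rho(x_2,y)]\to \SS$ denotes the unit-speed geodesic from $x_2$ to $y$, then
\begin{equation*}
|g(y)-g(x_2)|\le \int_0^{\rho(x_2,y)} |\nabla\tilde g(\gamma_y(s))|\, ds,
\end{equation*}
and the integrand is controlled by $\sum_{|\alpha|=1}|\partial^\alpha\tilde g|$ and hence, by \eqref{eq:1b}, by $c\,\kappa_1\scal_1^d(1+\scal_1\rho(x_1,\gamma_y(s)))^{-M}$.

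Next I would split $\SS$ into a near region $N=\{y:\rho(x_2,y)\le \rho(x_1,x_2)/2\}$ and its complement $F$. On $N$ every point $\xi$ on the arc from $x_2$ to $y$ satisfies $\rho(x_1,\xi)\ge \rho(x_1,x_2)/2$ by the triangle inequality, so the weight $(1+\scal_1\rho(x_1,\xi))^{-M}$ is uniformly bounded along the arc by $2^M(1+\scal_1\rho(x_1,x_2))^{-M}$. This yields
\begin{equation*}
|g(y)-g(x_2)|\le c\,\kappa_1\scal_1^d\,\rho(x_2,y)\,(1+\scal_1\rho(x_1,x_2))^{-M}\quad (y\in N),
\end{equation*}
and inserting this into the integral against $|f(y)|$, together with the standard estimate $\int_\SS \rho(x_2,y)(1+\scal_2\rho(x_2,y))^{-M}\, d\sigma(y)\lesssim \scal_2^{-d}$ (which uses $M>d$), gives exactly the target bound on $N$, the gain factor $\scal_1/\scal_2$ being produced at this step.

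In the far region $F$ I would split the path integral at $s=\rho(x_1,x_2)/2$: the initial segment is bounded as on $N$, while on the remaining segment I would fall back on the uniform pointwise bound $|\nabla\tilde g|\le \kappa_1\scal_1^d$. The required decay in $\rho(x_1,x_2)$ is then manufactured from the localization of $f$ via the weight-transfer inequality
\begin{equation*}
1+\scal_2\rho(x_2,y)\ge \tfrac12(1+\scal_1\rho(x_1,x_2))\quad (y\in F),
\end{equation*}
which holds because $\scal_2\ge \scal_1$ and $\rho(x_2,y)\ge \rho(x_1,x_2)/2$. A sub-decomposition of $F$ according to whether $\rho(x_2,y)$ lies inside or outside the cap $B(x_2,1/\scal_1)$, combined with the $L^1$-bound $\int_\SS |f(y)|\, d\sigma(y)\lesssim \kappa_2$ from \eqref{eq:conv_1} and the standard decay integrals, then closes the estimate.

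The main obstacle is precisely the treatment of $F$, where one must simultaneously retain the full exponent $M$ in the target weight $(1+\scal_1\rho(x_1,x_2))^{-M}$, produce the gain factor $\scal_1/\scal_2$, and keep enough $y$-decay for integrability of the remaining factors. Balancing these three requirements is the technical heart of the argument; it plays here the role filled in Proposition \ref{prop:5_1} (with $K=1$) by the small-moment hypothesis \eqref{eq:3a}, which is replaced in the present setting by the explicit subtraction of $g(x_2)\int\overline{f}\, d\sigma$ on the left-hand side and by careful exploitation of the weight-transfer inequality inside $F$.
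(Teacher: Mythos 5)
Your near-region estimate is correct and is exactly the sharp part of the argument: on $N=\{y:\rho(x_2,y)\le\rho(x_1,x_2)/2\}$ the geodesic from $x_2$ to $y$ stays at distance $\ge\rho(x_1,x_2)/2$ from $x_1$, so \eqref{eq:1b} gives $|g(y)-g(x_2)|\le c\kappa_1\scal_1^{d}\rho(x_2,y)(1+\scal_1\rho(x_1,x_2))^{-M}$, and the factor $\rho(x_2,y)$ integrated against \eqref{eq:2b} produces $\scal_2^{-1}$ (using $M>d$), yielding the target. The gap is in the far region $F$. There you bound the tail of the path integral by the \emph{uniform} estimate $|\nabla\til{g}|\le c\kappa_1\scal_1^{d}$, which gives $|g(y)-g(x_2)|\lesssim\kappa_1\scal_1^{d}\rho(x_2,y)$. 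This discards the localization of $\nabla\til{g}$ around $x_1$ entirely, and no amount of weight transfer from $f$ can recover it. Concretely, take $\scal_1=\scal_2=N$, $\rho(x_1,x_2)\sim 1$, and integrate over the annulus $\rho(x_2,y)\sim 1$ (measure $\sim 1$): your bound on the integrand is $\sim\kappa_1 N^{d}\cdot\kappa_2 N^{d-1}N^{-M}$, giving $\kappa_1\kappa_2 N^{2d-1-M}$, whereas the right-hand side of \eqref{eq:4b} is $\sim\kappa_1\kappa_2 N^{d-1-M}$ — a loss of $N^{d}$. The sub-decomposition at radius $1/\scal_1$ does not help (this $y$ lies outside that cap), the weight-transfer inequality has already been fully spent in reaching $N^{-M}$, and the $L^1$ bound on $f$ only makes things worse. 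The true size of $|g(y)-g(x_2)|$ for such $y$ is $O(\kappa_1\scal_1^{d-1}(1+\scal_1\min\{\rho(x_1,y),\rho(x_1,x_2)\})^{-(M-1)})$, obtained by integrating \eqref{eq:1b} along a path from $x_2$ to $y$ that detours around $x_1$ while keeping track of $\rho(x_1,\gamma(s))$; your tail bound overestimates this by a polynomial factor in $\scal_1$.

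The repair requires splitting the far region according to the position of $y$ relative to $x_1$ as well, not only relative to $x_2$. This is what the paper does: Proposition~\ref{prop:5_2} is proved by running the proof of Proposition~\ref{prop:5_1} with $K=1$ (moving the term $g(x_2)\int\overline{f}$, i.e.\ $S_2$ in \eqref{eq:5a}, to the left-hand side), with the three regions $\nA_1=\{\rho(x_2,y)\le \scal_1^{-1}\}$, $\nA_2=\{\rho(x_2,y)>\scal_1^{-1},\ \rho(x_1,y)\le\rho(x_1,x_2)/2\}$, $\nA_3=\{\rho(x_2,y)>\scal_1^{-1},\ \rho(x_1,y)>\rho(x_1,x_2)/2\}$. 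On $\nA_2$ ($y$ near $x_1$) one exploits that $\rho(x_2,y)\sim\rho(x_1,x_2)$ forces $(1+\scal_2\rho(x_2,y))^{-M}\lesssim(\scal_1/\scal_2)^{M}(1+\scal_1\rho(x_1,x_2))^{-M}$, while the decay of $|g(y)-g(x_2)|$ in $\rho(x_1,y)$ keeps the $y$-integral bounded; on $\nA_3$ the decay of $|g(y)-g(x_2)|$ in $\rho(x_1,x_2)$ supplies most of the target weight and the factor $\scal_1/\scal_2$ is extracted from $(1+\scal_2\rho(x_2,y))^{-(M-d+1)}$ using $\rho(x_2,y)>\scal_1^{-1}$ and $M>d$. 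You should rework the far region along these lines; as written, that part of your argument does not close.
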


In cases where only the function localizations are known/matter we use

\begin{prop}\label{prop:5_3}
Let $M> d-1$, $\scal_2\ge\scal_1\ge 1$,
$\kappa_1, \kappa_2>0$.
Let $f,g\in L^\infty(\SS)$,
and for some $x_1,x_2\in\SS$
\begin{equation}\label{eq:1c}
	\left|g(y)\right|
\le \frac{\kappa_1 \scal_1^{d-1}}{(1+\scal_1\rho(x_1,y))^M},\quad \forall y\in\SS,
\end{equation}
\begin{equation}\label{eq:2c}
	|f(y)|\le \frac{\kappa_2 \scal_2^{d-1}}{(1+\scal_2\rho(x_2,y))^M},\quad \forall y\in\SS.
\end{equation}
Then
\begin{equation}\label{eq:4c}
|\left\langle g,f\right\rangle|=\Big|\int_{\SS}g(y)\overline{f(y)}\, d\sigma(y)\Big|
\le c_3\frac{\kappa_1 \kappa_2 \scal_1^{d-1}}{(1+\scal_1\rho(x_1,x_2))^M},
\end{equation}
where $c_3$ depends only on $d$ and $M$.
\end{prop}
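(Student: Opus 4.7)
The plan is to bound $|\langle g,f\rangle|\le \int_{\SS}|g||f|\,d\sigma$ by a region decomposition dictated by the target weight. Set $r:=\rho(x_1,x_2)$ and split $\SS=E_1\cup E_2$ with $E_1:=\{y\in\SS:\rho(x_1,y)\ge r/2\}$ and $E_2:=\SS\setminus E_1\subset B(x_1,r/2)$; on $E_2$ the triangle inequality forces $\rho(x_2,y)>r/2$. On $E_1$, the elementary inequality $1+\scal_1\rho(x_1,y)\ge (1+\scal_1 r)/2$ lets me pull out the pointwise bound $|g(y)|\le 2^M\kappa_1\scal_1^{d-1}/(1+\scal_1 r)^M$, and what remains is $\int_{\SS}|f|\,d\sigma\le c_0\kappa_2$ by \eqref{eq:conv_1}, which already delivers the target bound on this piece.

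On $E_2$ the symmetric trick produces $|f(y)|\le 2^M\kappa_2\scal_2^{d-1}/(1+\scal_2 r)^M$, leaving the factor $\int_{E_2}|g|\,d\sigma$ to estimate. I would control this integral by two independent bounds and take the minimum: globally, $\int_{E_2}|g|\,d\sigma\le c_0\kappa_1$ by \eqref{eq:conv_1}; locally, $\sup_{E_2}|g|\cdot|E_2|\le c\kappa_1(\scal_1 r)^{d-1}$ via the trivial pointwise bound $|g|\le\kappa_1\scal_1^{d-1}$ together with $|E_2|\le c r^{d-1}$ from \eqref{sph_cap}. Combining, the $E_2$-contribution is at most a constant multiple of $\kappa_1\kappa_2\,\scal_2^{d-1}\min\{1,(\scal_1 r)^{d-1}\}/(1+\scal_2 r)^M$, and it remains to verify that this is bounded by $c\kappa_1\kappa_2\scal_1^{d-1}/(1+\scal_1 r)^M$. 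When $\scal_1 r\ge 1$ the inequality reduces to a comparison of pure powers that $M>d-1$ and $\scal_2\ge\scal_1$ settle at once; when $\scal_1 r<1$, the left side rewrites as $\kappa_1\kappa_2\scal_1^{d-1}(\scal_2 r)^{d-1}/(1+\scal_2 r)^M$, and the elementary inequality $(\scal_2 r)^{d-1}\le (1+\scal_2 r)^M$ (valid for $M\ge d-1$) combined with $(1+\scal_1 r)^M\le 2^M$ closes the argument.

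I expect the small-$\scal_1 r$ regime to be the only genuine obstacle: there the crude global bound $\int_{E_2}|g|\,d\sigma\le c_0\kappa_1$ alone cannot absorb the potentially large factor $\scal_2^{d-1}$, and the small diameter of $E_2$ must be exploited through the measure-based bound $|E_2|\le cr^{d-1}$. The remainder of the argument is a routine application of \eqref{eq:conv_1} together with elementary manipulations of the weights.
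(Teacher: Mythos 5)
Your argument is correct, and all constants you invoke depend only on $d$ and $M$, but it follows a genuinely different route from the paper's. The paper proves Proposition~\ref{prop:5_3} by rerunning the proof of Proposition~\ref{prop:5_1} with $K=0$ (no Taylor term, $S_2\equiv 0$), which splits $\SS$ into three pieces: a cap of radius $\scal_1^{-1}$ about $x_2$, the set near $x_1$ (within $\rho(x_1,x_2)/2$) but outside that cap, and the remainder. On the middle piece the paper exploits the lower bound $\rho(x_2,y)>\scal_1^{-1}$ to extract the factor $(\scal_1/\scal_2)^M$, which together with $M>d-1$ converts $\scal_2^{d-1}$ into $\scal_1^{d-1}$. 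You instead use only a two-set split governed by the distance to $x_1$, and on the cap $E_2=B(x_1,r/2)$ you replace that mechanism by the bound $\int_{E_2}|g|\le c\,\kappa_1\min\{1,(\scal_1 r)^{d-1}\}$, obtained from \eqref{eq:conv_1} on one hand and from $\sup|g|\cdot|E_2|$ via \eqref{sph_cap} on the other; the case analysis in $\scal_1 r\gtrless 1$ then closes exactly as you indicate. Your version is more self-contained (it does not inherit the structure of the $K\ge 1$ argument) at the cost of the explicit $\min$ and the two-regime verification; the paper's version buys uniformity of exposition across Propositions~\ref{prop:5_1}--\ref{prop:5_3}. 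Both are elementary and yield the stated dependence of $c_3$ on $d$ and $M$ only.
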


To streamline our presentation we defer the proofs of
Propositions~\ref{prop:5_1}, \ref{prop:5_2}, and \ref{prop:5_3} to Section~\ref{appendix}.

\section{Spaces of functions and distributions on the ball and sphere}\label{s2}

The theory of harmonic Besov and Triebel-Lizorkin spaces on $B^d$,
and their relation with the respective Besov and Triebel-Lizorkin spaces of distributions on $\SS$ is developed in \cite{IP}.
In this section we review all definitions and results that will be needed from \cite{IP}.

Denote by $\HHH(B^d)$ the set of all harmonic functions on the unit ball $B^d$ in $\RR^d$.

\subsection{Harmonic Besov and Triebel-Lizorkin spaces on $B^d$}\label{s2_1}

It is convenient to define the harmonic Besov and Triebel-Lizorkin spaces on $B^d$
by using their expansion in solid spherical harmonics.
As in \S~\ref{s1_3} let $\{Y_{\kk j}: j=1, \dots, N(\kk, d)\}$ be a real-valued orthonormal basis for $\cH_\kk$.
The harmonic coefficients of $U\in\HHH(B^d)$ are defined by
\begin{equation}\label{coeff-cU}
\bbb_{\kk \nu}(U):= \frac{1}{a^\kk}\int_\SS U(a\eta)Y_{\kk \nu}(\eta) d\sigma(\eta)
\end{equation}
for some $0<a<1$. It is an important observation that the coefficients
are independent of $a$ for all $0<a<1$.
This implies the representation
\begin{equation}\label{rep-U-3a}
U(r\xi) = \sum_{\kk=0}^\infty \sum_{\nu=1}^{N(\kk, d)} \bbb_{\kk \nu}(U)r^\kk Y_{\kk \nu}(\xi),
\quad 0\le r<1, ~\xi\in \SS,
\end{equation}
where the convergence is absolute and uniform on every compact subset of $B^d$.

For $U\in \HHH(B^d)$ and $\beta\in \RR$ we define
\begin{equation}\label{def-JU}
J^\beta U(r\xi)
:= \sum_{\kk=0}^\infty r^\kk (\kk+1)^{-\beta}\sum_{\nu=1}^{N(\kk, d)} \bbb_{\kk\nu}(U)Y_{\kk\nu}(\xi),
\quad 0\le r<1, ~\xi\in \SS.
\end{equation}
The above series converges absolutely and uniformly
on every compact subset of $B^d$
and hence $J^\beta U$ is a well defined harmonic function on $B^d$.

\begin{defn}\label{def:H-B-F}
Let $s\in\RR$, $0<q\le \infty$, and $\beta:=s+1$.

$(a)$
The harmonic Besov space $B^{sq}_p(\HHH)$, $0<p\le \infty$, is defined as the set of all $U\in \HHH(B^d)$ such that
$$
\|U\|_{B^{sq}_p(\HHH)}
:= \Big(\int_0^1 (1-r)^{(\beta-s)q}\|J^{-\beta} U(r\cdot)\|_{L^p(\SS)}^q \frac{dr}{1-r}\Big)^{1/q} <\infty
\quad\hbox{if}\; q\ne \infty
$$
and
$$
\|U\|_{B^{s\infty}_p(\HHH)} := \sup_{0<r<1} (1-r)^{\beta-s}\|J^{-\beta} U(r\cdot)\|_{L^p(\SS)} <\infty.
$$

$(b)$ The harmonic Triebel-Lizorkin space $F^{sq}_p(\HHH)$, $0<p<\infty$, is defined as the set of all $U\in \HHH(B^d)$ such that
$$
\|U\|_{F^{sq}_p(\HHH)}
:= \Big\|\Big(\int_0^1 (1-r)^{(\beta-s)q}|J^{-\beta} U(r\cdot)|^q \frac{dr}{1-r}\Big)^{1/q}\Big\|_{L^p(\SS)} <\infty
\quad\hbox{if}\; q\ne \infty
$$
and
$$
\|U\|_{F^{s\infty}_p(\HHH)} := \Big\|\sup_{0<r<1} (1-r)^{\beta-s}|J^{-\beta} U(r\cdot)|\Big\|_{L^p(\SS)} <\infty.
$$
\end{defn}
Choosing an arbitrary $\beta>s$ above results in equivalent quasi-norms
for the spaces $B^{sq}_p(\HHH)$ and $F^{sq}_p(\HHH)$.

\subsection{Besov and Triebel-Lizorkin spaces on $\SS$}\label{s2_2}

The Besov and Triebel-Lizorkin spaces on $\SS$ in general are spaces of distributions.
As test functions we use the class
$\cS:= C^\infty(\SS)$ of all functions $\phi$ on $\SS$ such that
$$
\|\PP_\kk*\phi\|_2 \le c(\phi, m)(1+\kk)^{-m}, \quad \forall \kk, m\ge 0.
$$
Recall that the convolution $\PP_\kk*\phi$ is defined in \eqref{def-conv}.
The topology on $\cS$ is defined by the sequence of norms
\begin{equation}\label{test_norms}
P_m(\phi):= \sum_{\kk=0}^\infty (\kk+1)^m \|\PP_\kk*\phi\|_2
= \sum_{\kk=0}^\infty (\kk+1)^m \Big(\sum_{\nu=1}^{N(\kk, d)} |\langle \phi, Y_{\kk \nu}\rangle|^2\Big)^{1/2}.
\end{equation}
$\cS$ is complete in this topology.

Observe that all $Y_{\kk \nu}\in\cS$ and hence by (\ref{Pk}) $\PP_\kk(x\cdot y)\in \cS$
as a function of $x$ for every fixed $y$ and as function of $y$ for every fixed $x$.

The space $\cS':=\cS'(\SS)$ of distributions on $\SS$ is defined as the space of
all continuous linear functionals on $\cS$.
The pairing of $f\in \cS'$ and $\phi\in\cS$ will be denoted by
$\langle f, \phi\rangle := f(\overline{\phi})$, which is consistent with the inner product
on $L^2(\SS)$.
More precisely, $\cS'$ consists of all linear functionals $f$ on $\cS$ for which
there exist constants $c>0$ and $m\in\NN_0$ such that
\begin{equation}\label{def-distr}
|\langle f, \phi\rangle| \le c P_m(\phi),\quad \forall \phi\in\cS.
\end{equation}
For any $f\in\cS'$ we define $\PP_\kk*f$ by
\begin{equation}\label{def-P-f}
\PP_\kk*f(x) :=\langle f, \overline{\PP_\kk(x\cdot\bullet)}\rangle
=\langle f, \PP_\kk(x\cdot\bullet)\rangle,
\end{equation}
where on the right $f$ is acting on $\overline{\PP_\kk(x\cdot y)}=\PP_\kk(x\cdot y)$ as a function of $y$
($\PP_k$ is real-valued).

Observe that the representation
\begin{equation}\label{represent-f}
f=\sum_{\kk=0}^\infty \PP_\kk*f, \quad \forall f\in\cS'
\end{equation}
holds with convergence in distributional sense.

\begin{defn}\label{def:B-F-spaces}
Let $s\in\RR$, $0<q\le \infty$, and
$\varphi$ satisfy the conditions:
$\varphi \in C^\infty(\RR_+)$,
$\supp \varphi \subset [1/2, 2]$, and
$|\varphi(u)|\ge c>0$ for $u\in [3/5, 5/3]$.
For a distribution $f\in \cS'$ set
\begin{equation}\label{rep-Phi-j-f}
\Phi_0*f = \PP_0*f,\quad
\Phi_j*f = \sum_{\kk=0}^\infty \varphi\Big(\frac{\kk}{2^{j-1}}\Big)\PP_\kk*f, ~ j\ge 1,
\end{equation}
where $\PP_k*f$ is defined in \eqref{def-P-f}.

$(a)$
The Besov space $\cB^{sq}_p:=\cB^{sq}_p(\SS)$, $0<p\le \infty$, is defined as the set of all distributions
$f\in \cS'$ such that
\begin{equation}\label{B-norm}
\|f\|_{\cB^{s q}_p} :=
\Big(\sum_{j=0}^\infty \Big(2^{sj}\|\Phi_j*f\|_{L^p(\SS)}\Big)^q\Big)^{1/q}
< \infty,
\end{equation}
where the $\ell^q$-norm is replaced by the sup-norm if $q=\infty$.

$(b)$
The Triebel-Lizorkin space $\cF^{sq}_p:=\cF^{sq}_p(\SS)$, $0<p<\infty$, is defined as the set of all distributions
$f\in \cS'$ such that
\begin{equation}\label{F-norm}
\|f\|_{\cF^{s q}_p} :=
\Big\|\Big(\sum_{j=0}^\infty \big(2^{sj}|\Phi_j*f(\cdot)|\big)^q\Big)^{1/q}\Big\|_{L^p(\SS)}
< \infty,
\end{equation}
where the $\ell^q$-norm is replaced by the sup-norm if $q=\infty$.
\end{defn}

Note that the definitions of the Besov and Triebel-Lizorkin spaces above are independent of
the particular selection of the function $\varphi$ with the required properties, that is,
different $\varphi$'s produce equivalent quasi-norms.

\subsection{Identification of harmonic Besov and Triebel-Lizorkin spaces}\label{s2_3}

We are interested in harmonic functions $U\in \HHH(B^d)$
with coefficients of \emph{at most polynomial growth}:
\begin{equation}\label{coef-growth}
|\bbb_{\kk \nu}(U)| \le c(\kk+1)^\gamma,
\quad \nu=1, \dots, N(\kk, d), \;\; \kk=0, 1, \dots,
\end{equation}
for some constants $\gamma, c>0$.
The functions in the harmonic Besov and Triebel-Lizorkin spaces
have this property.

The relationship between harmonic functions on $B^d$ and distributions on $\SS$
is clarified by the following

\begin{prop}\label{prop:identify}
$(a)$ To any $U\in \HHH(B^d)$ represented by $(\ref{rep-U-3a})$
with coefficients satisfying $(\ref{coef-growth})$
there corresponds a distribution $f\in \cS'$, $f=f_U$,
$($the boundary value function/distribution of $U$$)$
defined by
\begin{equation}\label{rep-f-33}
f:= \sum_{\kk=0}^\infty \sum_{\nu=1}^{N(\kk, d)} \bbb_{\kk \nu}(U) Y_{\kk \nu}
\quad \hbox{$($convergence in $\cS'$$)$}
\end{equation}
with coefficients
$\bbb_{\kk \nu}(U)=\langle f, Y_{\kk \nu}\rangle$.

$(b)$ To any distribution $f\in\cS'$ with coefficients $\bbb_{\kk \nu}(f):=\langle f, Y_{\kk \nu}\rangle$
there corresponds a harmonic function $U\in \HHH(B^d)$, $U=U_f$,
$($the harmonic extension of $f$ to $B^d$$)$, defined by
\begin{equation}\label{rep-U-33}
U(x) = \sum_{\kk=0}^\infty \sum_{\nu=1}^{N(\kk, d)} \bbb_{\kk \nu}(f)|x|^\kk Y_{\kk \nu}\Big(\frac{x}{|x|}\Big),
\quad |x|<1,
\end{equation}
with coefficients $\bbb_{\kk \nu}(U)=\bbb_{\kk \nu}(f)$ obeying $(\ref{coef-growth})$,
where the series converges uniformly on every compact subset of $B^d$.

$(c)$ For every $U\in \HHH(B^d)$ we have $U_{f_U} = U$ and for every $f\in \cS'$ we have $f_{U_f} = f$.
\end{prop}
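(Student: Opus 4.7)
The plan is to verify each of the three parts in turn, with most content in (a) and (b); part (c) then follows from uniqueness of the harmonic expansion on each side.

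For (a), given $U\in\HHH(B^d)$ with polynomially bounded coefficients as in (\ref{coef-growth}), I would define $f$ by
\[\langle f,\phi\rangle := \sum_{\kk=0}^\infty \sum_{\nu=1}^{N(\kk,d)} \bbb_{\kk\nu}(U)\langle Y_{\kk\nu},\phi\rangle,\quad \phi\in\cS,\]
and check that the series converges absolutely and defines a tempered distribution. Since $\PP_\kk*\phi=\sum_\nu\langle\phi,Y_{\kk\nu}\rangle Y_{\kk\nu}$, Cauchy--Schwarz gives $\sum_\nu|\langle Y_{\kk\nu},\phi\rangle|\le N(\kk,d)^{1/2}\|\PP_\kk*\phi\|_2\le c(\kk+1)^{(d-1)/2}\|\PP_\kk*\phi\|_2$, and combined with $|\bbb_{\kk\nu}(U)|\le c(\kk+1)^\gamma$ this produces the continuity estimate $|\langle f,\phi\rangle|\le cP_m(\phi)$ for any integer $m\ge\gamma+(d-1)/2$, so $f\in\cS'$ by (\ref{def-distr}). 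Setting $\phi=Y_{\kk\nu}$ and invoking orthonormality yields $\langle f,Y_{\kk\nu}\rangle=\bbb_{\kk\nu}(U)$. A side remark is needed on the independence of (\ref{coeff-cU}) from the radius $a\in(0,1)$: inserting (\ref{rep-U-3a}) with $r=a$ into (\ref{coeff-cU}) and integrating termwise, which is justified by uniform convergence on $\{|x|=a\}$, returns the same $\bbb_{\kk\nu}(U)$ regardless of $a$.

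For (b), the coefficients $\bbb_{\kk\nu}(f):=\langle f,Y_{\kk\nu}\rangle$ must be shown to satisfy (\ref{coef-growth}). From (\ref{def-distr}) there exist $c,m$ with $|\bbb_{\kk\nu}(f)|\le cP_m(Y_{\kk\nu})$, and a direct computation from (\ref{test_norms}) using $\PP_j*Y_{\kk\nu}=\delta_{j\kk}Y_{\kk\nu}$ together with $\|Y_{\kk\nu}\|_2=1$ gives $P_m(Y_{\kk\nu})=(\kk+1)^m$. Then in (\ref{rep-U-33}) the pointwise bound $\||x|^\kk Y_{\kk\nu}(x/|x|)\|_{L^\infty(|x|\le r_0)}\le r_0^\kk\|Y_{\kk\nu}\|_\infty\le cr_0^\kk(\kk+1)^{(d-1)/2}$, which comes from the addition theorem $\|Y_{\kk\nu}\|_\infty^2\le \PP_\kk(1)\sim \kk^{d-1}$, together with the polynomial growth of $\bbb_{\kk\nu}(f)$, yields absolute and uniform convergence on every ball $\{|x|\le r_0\}$, $r_0<1$. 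Each partial sum is a harmonic polynomial, so $U\in\HHH(B^d)$ by locally uniform convergence of harmonic functions. Substituting (\ref{rep-U-33}) into (\ref{coeff-cU}) and using orthonormality of $\{Y_{\kk\nu}\}$ then confirms $\bbb_{\kk\nu}(U)=\bbb_{\kk\nu}(f)$.

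For (c), both identities follow from uniqueness: a harmonic function on $B^d$ obeying (\ref{coef-growth}) is uniquely determined by its coefficients via (\ref{rep-U-3a}), and a distribution $f\in\cS'$ is uniquely determined by its coefficients via (\ref{represent-f}) combined with $\PP_\kk*f=\sum_\nu\langle f,Y_{\kk\nu}\rangle Y_{\kk\nu}$. Since the constructions in (a) and (b) preserve coefficients, both compositions return the original object. The main obstacle I anticipate is the convergence bookkeeping in (a): one must correctly track the powers of $(\kk+1)$ coming from the polynomial growth of $\bbb_{\kk\nu}(U)$, the factor $N(\kk,d)^{1/2}$ from Cauchy--Schwarz over $\nu$, and the decay of $\|\PP_\kk*\phi\|_2$, and then choose $m$ in the seminorm $P_m$ large enough to absorb the first two into the third. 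All other steps are routine manipulations with the harmonic expansion.
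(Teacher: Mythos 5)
Your proof is correct; the paper itself states Proposition~\ref{prop:identify} without proof (deferring to \cite{IP}), and your argument — bounding $\sum_\nu|\langle Y_{\kk\nu},\phi\rangle|$ via Cauchy--Schwarz and $N(\kk,d)^{1/2}$, computing $P_m(Y_{\kk\nu})=(\kk+1)^m$ from $Z_j*Y_{\kk\nu}=\delta_{j\kk}Y_{\kk\nu}$, using the addition theorem bound $\|Y_{\kk\nu}\|_\infty\le c(\kk+1)^{(d-1)/2}$ for locally uniform convergence, and closing part (c) by matching coefficients — is exactly the standard argument used there. The convergence bookkeeping you flag as the main obstacle is handled correctly.
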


The principle results of this subsection are:

\begin{thm}\label{thm:equiv-norms-F}
Let $s\in \RR$, $0<p < \infty$, $0<q\le \infty$.
A harmonic function $U\in F^{s q}_p(\HHH)$
if and only if its boundary value distribution $f=f_U$
defined by $(\ref{rep-f-33})$ belongs to $\cF^{sq}_p(\SS)$,
moreover
$\|U\|_{F^{s q}_p} \sim \|f\|_{\cF^{s q}_p}$.
\end{thm}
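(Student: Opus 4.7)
\emph{Proof plan.} The strategy is to recognize both quasinorms as Littlewood-Paley type characterizations of the same boundary distribution $f = f_U \in \cS'$ (one continuous in $r$, one dyadic in $j$), and then compare them via a Peetre-type maximal-function argument. The starting point is Proposition~\ref{prop:identify}, which identifies $U$ with its boundary distribution $f$ through the common coefficients $b_{k\nu}$. Combining $\sum_\nu Y_{k\nu}(\xi)Y_{k\nu}(\eta) = \PP_k(\xi\cdot\eta)$ with the definition \eqref{def-JU} yields the key identity
\begin{equation*}
J^{-\beta}U(r\xi) = G_{r,\beta} * f(\xi), \qquad G_{r,\beta}(u) := \sum_{k=0}^\infty r^k(k+1)^\beta \PP_k(u),
\end{equation*}
where the convolution is the zonal one of \eqref{def-conv}. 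Thus both $\|U\|_{F^{sq}_p}$ and $\|f\|_{\cF^{sq}_p}$ are quasinorms built from zonal convolutions of $f$, and the task reduces to comparing the continuous family $\{G_{r,\beta}\}_{0\le r<1}$ with the dyadic family $\{\Phi_j\}_{j\ge 0}$ from Definition~\ref{def:B-F-spaces}.

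I would first discretize the radial integral by breaking $[0,1)$ into dyadic annuli $[r_j, r_{j+1}]$ with $r_j := 1-2^{-j}$; on each annulus $1-r \sim 2^{-j}$ and the multiplier $r^k(k+1)^\beta$ varies in $r$ by only bounded factors, so that
\begin{equation*}
\|U\|_{F^{sq}_p} \sim \Big\|\Big(\sum_{j\ge 0} 2^{-j(\beta-s)q}|G_{r_j,\beta}*f|^q\Big)^{1/q}\Big\|_{L^p(\SS)}.
\end{equation*}
The heart of the argument is then an almost-orthogonality estimate between $G_{r_j,\beta}$ and the dyadic partition: inserting a smooth partition $1 = \sum_\ell\varphi(\cdot/2^{\ell-1})$ on the frequency side writes $G_{r_j,\beta} = \sum_\ell G_{r_j,\beta}^{(\ell)}$, where $G_{r_j,\beta}^{(\ell)}$ is the zonal kernel arising from the rescaled multiplier $t \mapsto \lambda_{j,\ell}(t) := \varphi(t)(1-2^{-j})^{2^{\ell-1}t}(2^{\ell-1}t+1)^\beta$. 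One checks that $\|\lambda_{j,\ell}\|_{C^M} \le c_M\, 2^{j\beta}2^{-|j-\ell|N}$ for any preassigned $N$, and feeding this into Theorem~\ref{thm:localization} yields the spatial estimate
\begin{equation*}
|G_{r_j,\beta}^{(\ell)}(\xi\cdot\eta)| \le C\, 2^{j\beta} 2^{-|j-\ell|N} \frac{2^{\ell(d-1)}}{(1+2^\ell\rho(\xi,\eta))^M}.
\end{equation*}

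A standard Peetre-type argument (using \eqref{comp_local}, \eqref{sph_cap2}, and the band-limitation of $\Phi_\ell*f$ at scale $2^\ell$) then delivers the pointwise bound
\begin{equation*}
2^{-j(\beta-s)}|G_{r_j,\beta}*f(\xi)| \le C \sum_{\ell\ge 0} 2^{-|j-\ell|N'} 2^{\ell s}\MM_t(\Phi_\ell*f)(\xi)
\end{equation*}
for any $0<t<\min(p,q)$. Young's inequality on the sequence side, combined with the Fefferman-Stein inequality \eqref{max-ineq}, then gives $\|U\|_{F^{sq}_p} \le C\|f\|_{\cF^{sq}_p}$. The reverse inequality is obtained symmetrically: on the support of $\varphi(\cdot/2^{j-1})$ one writes $\varphi(k/2^{j-1}) = [\varphi(k/2^{j-1})r_j^{-k}(k+1)^{-\beta}] \cdot r_j^k(k+1)^\beta$, so that $\Phi_j*f$ is obtained from $G_{r_j,\beta}*f$ by convolving with the kernel generated by the first (smooth, compactly supported) bracket, and the same localization-Peetre machinery applies. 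The main obstacle lies in the almost-orthogonality decay $2^{-|j-\ell|N}$ for $\ell \ll j$: while the exponential factor $r_j^{2^{\ell-1}t}$ in $\lambda_{j,\ell}$ provides painless decay when $\ell \gg j$, for $\ell \ll j$ the decay must be extracted by repeated summation by parts on the discrete multiplier $r_j^k(k+1)^\beta$, whose $m$-th finite difference is of order $\sim 2^{-jm}(k+1)^\beta$, and one must carefully balance this against the $2^{\ell(d-1)}$ prefactor in Theorem~\ref{thm:localization} to verify that $N$ can be taken arbitrarily large.
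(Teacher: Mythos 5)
The paper itself does not prove this theorem — it is imported from \cite{IP} with no argument given here — so your proposal cannot be compared against the paper's own proof, only assessed on its merits. Your overall plan (identify $J^{-\beta}U(r\cdot)=G_{r,\beta}*f$, discretize the radial integral, insert a Littlewood--Paley partition, derive a localization estimate for the cross-terms, and close with a Peetre maximal-function argument plus Fefferman--Stein) is exactly the standard route and is, in broad outline, the right one.

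There is, however, a genuine error in the claimed almost-orthogonality bound, and the closing sentence of your plan (``verify that $N$ can be taken arbitrarily large'') points you in the wrong direction. For $\ell\ll j$ and $t\in[1/2,2]$ one has $r_j^{2^{\ell-1}t}=(1-2^{-j})^{2^{\ell-1}t}\sim e^{-2^{\ell-j-1}t}\sim 1$ and $(2^{\ell-1}t+1)^\beta\sim 2^{\ell\beta}$, so $\|\lambda_{j,\ell}\|_\infty\sim 2^{\ell\beta}$, not $2^{j\beta}2^{-(j-\ell)N}$. The derivatives of $\lambda_{j,\ell}$ are no larger (each $t$-derivative brings down a factor of order $2^{\ell-j}\le 1$ from the exponential and does not change the order of magnitude of the polynomial factor), so Theorem~\ref{thm:localization} can only give
\begin{equation*}
|G_{r_j,\beta}^{(\ell)}(\xi\cdot\eta)|\le C\,2^{\ell\beta}\,\frac{2^{\ell(d-1)}}{(1+2^{\ell}\rho(\xi,\eta))^{M}},\qquad \ell<j,
\end{equation*}
which is $2^{j\beta}2^{-(j-\ell)\beta}$ times the weight, i.e.\ the decay exponent you can extract from the multiplier alone saturates at $N=\beta$ — it cannot be pushed higher by summation by parts, because the sup norm itself is $\sim 2^{\ell\beta}$. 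The situation for $\ell\gg j$ is fine, as you say: there the factor $e^{-2^{\ell-j}}$ beats every power. The fix is that the missing decay for $\ell<j$ is supplied by the weight built into the $F$-quasinorm: multiplying by $2^{-j(\beta-s)}$ turns the bound into
\begin{equation*}
2^{-j(\beta-s)}\,|G_{r_j,\beta}^{(\ell)}*\Phi_\ell*f(\xi)|\le C\,2^{-(j-\ell)(\beta-s)}\,2^{\ell s}\,\MM_t(\Phi_\ell*f)(\xi),\qquad \ell<j,
\end{equation*}
and $\beta-s=1>0$ is all the Young-type step needs, regardless of whether $q\ge1$ or $q<1$ (for $q<1$ one uses the $q$-inequality, which requires only $\sum_m 2^{-|m|\eta q}<\infty$ for some $\eta>0$). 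So you should state the Peetre bound with the rate $N'=\min\{\beta-s,\,\text{(arbitrary)}\}$, asymmetric in $j-\ell$, rather than claim a symmetric, arbitrarily large decay; the rest of the argument (and the reverse inequality via the factorization $\varphi(k/2^{j-1})=[\varphi(k/2^{j-1})r_j^{-k}(k+1)^{-\beta}]\cdot r_j^k(k+1)^\beta$) then goes through as you describe. One further point deserving care: the discretization $\|U\|_{F^{sq}_p}\sim\|(\sum_j 2^{-j(\beta-s)q}|G_{r_j,\beta}*f|^q)^{1/q}\|_{L^p}$ is not automatic as an intermediate equivalence — you should either avoid it by working directly with the continuous integral and summing/integrating at the end, or justify it (e.g.\ via a mean-value/subharmonicity argument to relate the value of the harmonic function $J^{-\beta}U$ at $r_j$ to its average over the annulus).
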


\begin{thm}\label{thm:equiv-norms-B}
Let $s\in \RR$, $0<p, q\le \infty$.
A harmonic function $U\in B^{s q}_p(\HHH)$
if and only if its boundary value distribution $f=f_U$
defined by $(\ref{rep-f-33})$ belongs to $\cB^{sq}_p(\SS)$,
moreover
$\|U\|_{B^{s q}_p} \sim \|f\|_{\cB^{s q}_p}$.
\end{thm}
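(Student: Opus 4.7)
The plan is to use Proposition~\ref{prop:identify} to identify $U \in \HHH(B^d)$ with its boundary distribution $f = f_U \in \cS'$ via the matching coefficients $\bbb_{\kk\nu}(U) = \langle f, Y_{\kk\nu}\rangle$, so that both quasi-norms become explicit expressions in the same spectral data. Take $\beta = s+1$, so $\beta - s = 1$. The first step is to discretize the integral: partition $(0,1)$ into dyadic shells $I_j := \{r : 2^{-j-1}\le 1-r \le 2^{-j}\}$, $j \ge 0$, pick representatives $r_j \in I_j$, and show that $r \mapsto \|J^{-\beta}U(r\cdot)\|_{L^p(\SS)}$ varies by at most a bounded factor across each $I_j$ (a consequence of $r^k$-spectral damping together with uniform control on the band-limited projections $\sum_{k\le 2^{j+2}} r^k(k+1)^\beta \PP_k*f$ given by Theorem~\ref{thm:localization}). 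This yields
\begin{equation*}
\|U\|_{B^{sq}_p(\HHH)} \sim \Bigl(\sum_{j\ge 0} 2^{-jq}\|J^{-\beta}U(r_j\cdot)\|_{L^p(\SS)}^q\Bigr)^{1/q},
\end{equation*}
with the natural modification when $q = \infty$.

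The theorem then reduces to the equivalence of this discrete sum with the Besov sum $\sum_j 2^{sjq}\|\Phi_j*f\|_{L^p}^q$. Note $J^{-\beta}U(r_j\cdot) = K_{r_j}^\beta * f$ with $K_{r_j}^\beta(x\cdot y) := \sum_k r_j^k(k+1)^\beta \PP_k(x\cdot y)$, while $\Phi_j * f$ corresponds to the multiplier $\varphi(k/2^{j-1})$. Taking $r_j := e^{-2^{-j}}$, one writes $r_j^k(k+1)^\beta = 2^{\beta j}\lambda_j(k/2^j)$ with $\lambda_j(u) := (u + 2^{-j})^\beta e^{-u}$ uniformly smooth and rapidly decaying on $[c,\infty)$ for any $c > 0$, though not compactly supported. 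The strategy is to insert a Calder\'on-type reproducing identity via a $C^\infty$ bump $\tilde\varphi$ with $\tilde\varphi \equiv 1$ on $\supp\varphi$ and slightly larger support: this expresses each of the two multipliers as a sum over neighbouring scales of the other, times a transfer multiplier of the $\Lambda_N$-type in (\ref{def-Lam}), whose associated kernels satisfy the sharp localization (\ref{local-Lam-2}) of Theorem~\ref{thm:localization} with $M$ chosen arbitrarily large.

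The main obstacle is the low-frequency tail of $K_{r_j}^\beta$ (terms with $k \ll 2^j$), which contributes a weighted sum $\sum_{j'<j} 2^{\beta j'}(\text{piece at level }j')$ to $J^{-\beta}U(r_j\cdot)$ with no counterpart in $\Phi_j*f$. This is absorbed via a discrete Hardy-type inequality exploiting $\beta - s = 1 > 0$ to telescope the geometric weights $2^{(\beta-s)(j'-j)}$ back into the Besov sum; the fast-decaying high-frequency tail at $k \gg 2^j$ is handled analogously and more easily. A second technical point is that convolution bounds $\|K * h\|_{L^p(\SS)} \le c\|h\|_{L^p}$ for the auxiliary transfer kernels $K$ when $0 < p \le 1$ cannot rely on Young's inequality; they are obtained via the pointwise majorization $|K * h(x)| \le c\MM_t h(x)$ for some $t < p$, which follows from the decay estimate (\ref{local-Lam-2}) with $M > (d-1)/t$ combined with the boundedness of $\MM_t$ on $L^p$. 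The extreme cases $p = \infty$ and $q = \infty$ present no new difficulty: Young's inequality applies directly for $p = \infty$ via the uniform $L^1$-bound (\ref{eq:conv_1}) on the transfer kernels, and sums are replaced by suprema throughout for $q = \infty$.
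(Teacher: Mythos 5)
The paper itself does not prove this theorem: it is quoted from reference \cite{IP} (see the opening of Section~\ref{s2}), so there is no in-paper argument to compare against. Your plan follows the classical multiplier route — dyadic discretization of the radial integral, identification $J^{-\beta}U(r_j\cdot)=K^\beta_{r_j}*f$, Calder\'on-type transfer to the Littlewood--Paley pieces $\Phi_m*f$, discrete Hardy for cross-scale leakage, and a Fefferman--Stein maximal argument for $p<1$ — and the high-level structure is the right one.

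However, the $p<1$ step as written has a genuine gap. You assert that the pointwise majorization $|K*h(x)|\le c\MM_t h(x)$ ``follows from the decay estimate~\eqref{local-Lam-2} with $M>(d-1)/t$.'' That is not correct: decay of $K$ alone does not yield this majorization, nor does it yield $\|K*h\|_p\lesssim\|h\|_p$ for $p<1$. Testing a kernel $K$ of dilation $N$ against the indicator of a cap of radius $\delta\ll N^{-1}$ gives $\|K*h\|_p/\|h\|_p\sim(N\delta)^{(d-1)(1-1/p)}$, which blows up as $\delta\to 0$ when $p<1$. The majorization that is actually available (and used in \cite{NPW1,NPW2}) requires the \emph{second} factor $h$ to be a spherical polynomial of degree comparable to the dilation of $K$. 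That is unproblematic when $h=\Phi_m*f$, which is bandlimited; but in the direction $\|f\|_{\cB^{sq}_p}\lesssim\|U\|_{B^{sq}_p}$ you must convolve a bandlimited transfer kernel against $h=J^{-\beta}U(r_m\cdot)$, which is \emph{not} bandlimited (it is a superposition over all frequencies damped only by $r_m^k$). To close the argument you need to exploit the specific exponential spectral damping in $J^{-\beta}U(r_m\cdot)$ beyond what your sketch supplies, or reroute the proof — e.g., by showing directly that both quasi-norms are equivalent to the needlet sequence norm $\|\{\langle f,\psi_\xi\rangle\}\|_{\bb^{sq}_p}$, one half of which is Theorem~\ref{thm:F-Bnorm-equivalence}(a); this factors through a common discrete object and sidesteps convolutions against non-bandlimited functions entirely. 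A lesser but related issue: the claim that $\|J^{-\beta}U(r\cdot)\|_{L^p}$ ``varies by at most a bounded factor across each $I_j$'' is justified for $p\ge1$ by Poisson-kernel monotonicity via Young, but for $0<p<1$ Young is unavailable and this step too needs an argument you have not indicated.
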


\subsection{Harmonic Hardy spaces}\label{s2_5}

Here we consider the harmonic Hardy spaces $\HHH^p(B^d)$ on the ball
(usually denoted by $h^p(B^d)$).

\begin{defn}\label{def:Hp}
The space $\HHH^p :=\HHH^p(B^d)$, $0< p\le \infty$,
is defined as the set of all harmonic functions $U\in \HHH(B^d)$
such that
\begin{equation}\label{def:Hp-norm}
\|U\|_{\HHH^p} := \|\sup_{0\le r<1} |U(r\cdot)|\|_{L^p(\SS)} <\infty.
\end{equation}
\end{defn}

The following identification
of harmonic Hardy spaces holds.

\begin{thm}\label{thm:identify-Hp}
A harmonic function $U\in\HHH^p(B^d)$, $0<p<\infty$, if and only if its boundary distribution
$f_U \in \cF^{02}_p(\SS)$ and
\begin{equation}\label{norm-Hp-Fp}
\|U\|_{\HHH^p} \sim \|U\|_{F^{02}_p(\HHH)} \sim \|f_U\|_{\cF^{02}_p(\SS)}.
\end{equation}
Furthermore, $U\in\HHH^p(B^d)$, $1<p<\infty$, if and only if $f_U \in L^p(\SS)$ and
\begin{equation}\label{norm-Hp-Lp}
\|U\|_{\HHH^p} \sim \|f_U\|_{L^p(\SS)}.
\end{equation}
In addition, for any $U\in \HHH^p(B^d)$, $1<p<\infty$,
\begin{equation}\label{equiv-Hp-norm}
\|U\|_{\HHH^p} \sim \sup_{0\le r<1} \|U(r\cdot)\|_{L^p(\SS)}
\end{equation}
and the right-hand side quantity is sometimes used to define $\|U\|_{\HHH^p}$ for $p>1$.
\end{thm}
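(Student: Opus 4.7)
The middle equivalence $\|U\|_{F^{02}_p(\HHH)}\sim\|f_U\|_{\cF^{02}_p(\SS)}$ is simply \thmref{thm:equiv-norms-F} specialized to $s=0$, $q=2$, so the substantive content is the identification $\|U\|_{\HHH^p}\sim\|U\|_{F^{02}_p(\HHH)}$, together with \eqref{norm-Hp-Lp} and \eqref{equiv-Hp-norm} in the range $p>1$.

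For the main equivalence I would recognize $\|U\|_{F^{02}_p(\HHH)}$ as a harmonic Littlewood-Paley $g$-function. With the choice $\beta=1$ one has from \eqref{rep-U-3a}
\[
J^{-1}U(r\xi)=\sum_{k=0}^\infty(k+1)r^k\sum_\nu \bbb_{k\nu}(U)Y_{k\nu}(\xi)=\partial_r\bigl(rU(r\xi)\bigr)=U(r\xi)+r\partial_rU(r\xi),
\]
so the $F^{02}_p$ quasi-norm is the $L^p(\SS)$ norm of $\bigl(\int_0^1(1-r)|U+r\partial_rU|^2\,dr\bigr)^{1/2}$, a standard weighted $g$-function. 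Its equivalence with $\|U\|_{\HHH^p}$ is the classical Littlewood-Paley/maximal characterization of the harmonic Hardy space on the ball. I would prove it in two directions. In one direction, write the Littlewood-Paley pieces $\Phi_j*f_U$ as the harmonic extension of $f_U$ evaluated at $r=1-2^{-j}$ (up to a bounded multiplier) and use Poisson kernel localization to dominate $(1-r)^{1/2}|J^{-1}U(r\xi)|$ at $r\asymp 1-2^{-j}$ by a Peetre maximal function of $\Phi_j*f_U$; the Fefferman-Stein inequality \eqref{max-ineq} with $q=2$ then yields $\|U\|_{F^{02}_p}\lesssim\|U\|_{\HHH^p}$. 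In the reverse direction, I would discretize the radial variable dyadically and bound $\sup_{r\in[1-2^{-j+1},1-2^{-j}]}|U(r\xi)|$ pointwise by $\MM_t(\Phi_j*f_U)(\xi)$ for some $t<p$ (via a Plancherel-Polya/Peetre argument applied to the band-limited $\Phi_j*f_U$), then invoke \eqref{max-ineq} again.

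For $1<p<\infty$, the remaining equivalences drop out quickly. The identification $\cF^{02}_p(\SS)=L^p(\SS)$ with equivalent norms is the Littlewood-Paley theorem on the sphere, obtained from Parseval together with \eqref{max-ineq} at $q=2$ and $t<\min\{p,2\}$; combined with the main equivalence this gives \eqref{norm-Hp-Lp}. For \eqref{equiv-Hp-norm}, subharmonicity of $|U|^p$ (valid for $p\ge 1$) makes $r\mapsto\|U(r\cdot)\|_{L^p(\SS)}$ monotone nondecreasing, so trivially $\sup_r\|U(r\cdot)\|_{L^p}\le\|U\|_{\HHH^p}$. Conversely, if the supremum is finite then by Fatou $f_U\in L^p$ with $\|f_U\|_{L^p}\le\sup_r\|U(r\cdot)\|_{L^p}$, $U$ equals the Poisson integral of $f_U$, and the standard majorization $\sup_r|U(r\xi)|\le c\,\MM_1 f_U(\xi)$ combined with $L^p$ boundedness of $\MM_1$ for $p>1$ closes the loop.

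The main obstacle is the direction $\|U\|_{\HHH^p}\lesssim\|U\|_{F^{02}_p}$ for small $p$, in particular $p\le 1$. This is the heart of real-variable Hardy space theory on the sphere and requires either the Peetre maximal function combined with vector-valued Fefferman-Stein below the classical $L^p$ threshold, or an atomic decomposition argument; by contrast the reverse direction and the $p>1$ statements reduce to soft consequences of maximal operator bounds and subharmonicity.
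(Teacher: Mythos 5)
Your route is genuinely different from the paper's, and for $0<p\le 1$ it is incomplete at exactly the point you flag. The paper does not attempt a direct Littlewood--Paley argument at all: it introduces the spherical Hardy space $\HHH^p(\SS)$ defined by the Poisson maximal function \eqref{Hardy-sphere}, quotes the needlet-frame characterization of $\HHH^p(\SS)$ due to Dai and the identical needlet characterization of $\cF_p^{02}(\SS)$ from \thmref{thm:F-Bnorm-equivalence}(b), and concludes $\HHH^p(\SS)=\cF_p^{02}(\SS)$ by matching the two sequence-space quasi-norms; \lemref{lem:coeff} (which shows $U=P*f_U$ for every $U\in\HHH^p(B^d)$, via the coefficient growth \eqref{coeff-growth2}) then gives $\|U\|_{\HHH^p(B^d)}=\|f_U\|_{\HHH^p(\SS)}$, and \thmref{thm:equiv-norms-F} supplies the middle equivalence. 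Your plan instead amounts to reproving the Fefferman--Stein/Triebel identification of the Poisson maximal quasi-norm with the $g$-function quasi-norm on the sphere. For $p>1$ your sketch is fine and coincides with the paper's citations to Stein and Axler--Bourdon--Ramey. But for $p\le 1$ the two inequalities between $\|\sup_r|U(r\cdot)|\|_{L^p}$ and $\|U\|_{F_p^{02}}$ are not consequences of the vector-valued maximal inequality \eqref{max-ineq} applied to the pieces $\Phi_j*f_U$: that inequality only interchanges one square-function-type expression with another and never produces or absorbs the maximal function of $U$ itself. The direction $\|U\|_{F_p^{02}}\lesssim\|U\|_{\HHH^p}$ for $p\le1$ already requires a tent-space/good-$\lambda$ or atomic argument, and the converse is the classical hard half of real-variable Hardy space theory. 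You correctly identify this, but naming the candidate tools is not the same as supplying the step; as written the proof of \eqref{norm-Hp-Fp} for $0<p\le1$ is missing.

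Two smaller points. First, before any of this one must know that $U\in\HHH^p(B^d)$ has a boundary distribution $f_U\in\cS'$ at all and that $U$ is recovered from it by the Poisson kernel; for $p\le1$ this is not automatic and is precisely the content of \lemref{lem:coeff}. Second, in your first direction you propose to dominate $(1-r)^{1/2}|J^{-1}U(r\xi)|$ by a Peetre maximal function of $\Phi_j*f_U$ and conclude $\|U\|_{F_p^{02}}\lesssim\|U\|_{\HHH^p}$; a bound by maximal functions of $\Phi_j*f_U$ only yields $\|U\|_{F_p^{02}}\lesssim\|f_U\|_{\cF_p^{02}}$, which is the (already known) middle equivalence, not a bound by the Hardy quasi-norm. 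The cleanest repair is the paper's: import a frame or maximal-function characterization of $\HHH^p(\SS)$ and match it against \thmref{thm:F-Bnorm-equivalence}(b). Your identity $J^{-1}U(r\xi)=\partial_r(rU(r\xi))$ and the entire $p>1$ discussion, including the subharmonicity argument for \eqref{equiv-Hp-norm}, are correct.
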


To prove this theorem we shall need the following

\begin{lem}\label{lem:coeff}
If $U\in\HHH^p(B^d)$, $0<p\le\infty$, then
\begin{equation}\label{coeff-growth2}
|\bbb_{\kk \nu}(U)| \le c(\kk+1)^\gamma\|U\|_{\HHH^p},
\quad \nu=1, \dots, N(\kk, d), \;\; \kk=0, 1, \dots,
\end{equation}
for some constants $\gamma, c>0$, depending only on $d$ and $p$,
i.e. inequalities \eqref{coef-growth} are valid.
Consequently, there exists a distribution $f_U\in\cS'$ with spherical harmonic coefficients
the same as the coefficients of $U$,
which in turn leads to
$$
U=P*f_U
$$
with $P(y, x)$ being the Poisson kernel, see \eqref{Poisson}.
Here $P*f_U$ is defined by
$$
P*f_U(x):= \langle f_U, \overline{P(\cdot, x)}\rangle = \langle f_U, P(\cdot, x)\rangle,
$$
where $f_U$ acts on $\overline{P(y, x)} = P(y, x)$ as a function of $y$ $($$P(y, x)$ is real-valued$)$.
\end{lem}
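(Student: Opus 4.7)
The plan is first to establish the polynomial coefficient bound \eqref{coeff-growth2}, then to deduce existence of $f_U\in\cS'$ from it, and finally to verify $U=P*f_U$ by expanding $P(\cdot,x)$ in spherical harmonics. Starting from the $a$-independent identity $\bbb_{\kk\nu}(U)=a^{-\kk}\int_\SS U(a\eta)Y_{\kk\nu}(\eta)\,d\sigma(\eta)$ and the crude estimate $\int_\SS |U(a\eta)Y_{\kk\nu}(\eta)|\,d\sigma\le\|U(a\cdot)\|_{L^\infty(\SS)}\|Y_{\kk\nu}\|_{L^1(\SS)}\le c\|U(a\cdot)\|_{L^\infty(\SS)}$ (using $\|Y_{\kk\nu}\|_{L^2}=1$ and the Cauchy-Schwarz inequality on $\SS$), the whole argument reduces to proving the pointwise bound
\[
\|U(a\cdot)\|_{L^\infty(\SS)}\le C(1-a)^{-(d-1)/p}\|U\|_{\HHH^p},\qquad 0<p\le\infty,\ 0\le a<1,
\]
with $C$ depending only on $d$ and $p$. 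Choosing $a=1-1/(\kk+1)$ then gives $a^{-\kk}\le e$ and $(1-a)^{-(d-1)/p}=(\kk+1)^{(d-1)/p}$, which yields \eqref{coeff-growth2} with $\gamma=(d-1)/p$.

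To prove the displayed pointwise estimate I would use the mean-value property of $U$ on the ball $B(a\eta,(1-a)/2)\subset B^d$. For $p\ge 1$, Hölder's inequality combined with $\|U(r\cdot)\|_{L^p(\SS)}\le\|U\|_{\HHH^p}$ (integrated in the radial variable) suffices, since $|U|^p$ is then subharmonic. For $0<p<1$, $|U|^p$ is no longer subharmonic; instead I would invoke the standard local boundedness estimate $\sup_{B(y,\rho/2)}|U|\le C_p\rho^{-d/p}\bigl(\int_{B(y,\rho)}|U|^p\,dm\bigr)^{1/p}$ valid for harmonic $U$ and all $p>0$, which follows from the $p=1$ case together with interior (Cauchy) gradient estimates for harmonic functions.

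Once \eqref{coeff-growth2} holds, the dimension estimate $N(\kk,d)\sim\kk^{d-1}$ and Cauchy-Schwarz in $\nu$ give, for every $\phi\in\cS$,
\[
\Big|\sum_{\kk,\nu}\bbb_{\kk\nu}(U)\langle Y_{\kk\nu},\phi\rangle\Big|\le c\sum_{\kk\ge 0}(\kk+1)^{\gamma+(d-1)/2}\|\PP_\kk*\phi\|_{L^2(\SS)}\le cP_m(\phi)
\]
for $m$ sufficiently large, so the series $f_U:=\sum_{\kk,\nu}\bbb_{\kk\nu}(U)Y_{\kk\nu}$ converges in $\cS'$ with $\langle f_U,Y_{\kk\nu}\rangle=\bbb_{\kk\nu}(U)$. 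The geometric decay of $|x|^\kk$ for fixed $x\in B^d$ makes $P(\cdot,x)=\sum_{\kk,\nu}|x|^\kk Y_{\kk\nu}(x/|x|)Y_{\kk\nu}(\cdot)$ a test function in $\cS$; pairing $f_U$ against it and invoking orthogonality of spherical harmonics collapses the result to $\sum_{\kk,\nu}\bbb_{\kk\nu}(U)|x|^\kk Y_{\kk\nu}(x/|x|)=U(x)$, by \eqref{rep-U-3a}. The main obstacle is the pointwise bound in the range $0<p<1$, where subharmonicity of $|U|^p$ fails and one must invoke the local $L^p$-to-$L^\infty$ estimate for harmonic functions with $p<1$; the remaining steps are routine bookkeeping with the spherical-harmonic expansion.
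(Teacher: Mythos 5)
Your proof is correct, but you establish the key coefficient bound \eqref{coeff-growth2} by a genuinely different route. The paper gets it in two lines by observing that $\|U\|_{B_p^{-1,1}(\HHH)}=\int_0^1\|U(r\cdot)\|_{L^p}\,dr\le\|U\|_{\HHH^p}$, so $\HHH^p(B^d)$ embeds continuously into the harmonic Besov space $B_p^{-1,1}(\HHH)$, and then cites the coefficient-growth estimate \eqref{coeff-grow3} for Besov spaces from Proposition~4.2 of \cite{IP}. You instead prove the growth directly from classical harmonic function theory: the local $L^p$-to-$L^\infty$ estimate (subharmonicity of $|U|^p$ for $p\ge 1$, the standard Hardy--Littlewood/Fefferman--Stein local boundedness lemma for $0<p<1$) gives $\|U(a\cdot)\|_{L^\infty(\SS)}\le C(1-a)^{-(d-1)/p}\|U\|_{\HHH^p}$, and the choice $a=1-1/(\kk+1)$ in the $a$-independent coefficient formula \eqref{coeff-cU} yields \eqref{coeff-growth2} with the explicit exponent $\gamma=(d-1)/p$. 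Your computation checks out (the radial slab contributes the factor $(1-a)$ that turns $(1-a)^{-d}$ into $(1-a)^{1-d}$, and $a^{-\kk}=(1+1/\kk)^{\kk}\le e$). What each approach buys: the paper's argument is shorter and reuses machinery already developed in \cite{IP}, at the cost of an external reference; yours is self-contained, elementary, and produces a concrete value of $\gamma$, at the cost of invoking the $p<1$ local boundedness estimate, which is standard but whose proof is an iteration argument rather than quite the ``$p=1$ case plus gradient estimates'' you sketch. The second half of your argument --- convergence of $\sum\bbb_{\kk\nu}(U)Y_{\kk\nu}$ in $\cS'$ via Cauchy--Schwarz in $\nu$ and $N(\kk,d)\sim\kk^{d-1}$, followed by pairing $f_U$ with $P(\cdot,x)\in\cS$ --- is essentially identical to the paper's.
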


\begin{proof}
To prove \eqref{coeff-growth2} we invoke Proposition 4.2 from \cite{IP}
which, in particular, asserts that for any $U\in B_p^{sq}(\HHH)$, $s\in\RR$, $0<p,q\le \infty$,
\begin{equation}\label{coeff-grow3}
|\bbb_{\kk \nu}(U)| \le c(\kk+1)^\gamma\|U\|_{B_p^{sq}(\HHH)},
\quad \nu=1, \dots, N(\kk, d), \;\; \kk=0, 1, \dots,
\end{equation}
where the constants $\gamma, c>0$ depend only on $d,s,p,q$.

If $U\in \HHH^p(B^d)$ then by Definition~\ref{def:H-B-F} with $\beta=0$
$$
\|U\|_{B_p^{-1,1}(\HHH)}=\int_0^1\|U(r\cdot)\|_p dr
\le \|\sup_{0\le r<1}|U(r\cdot)|\|_p
= \|U\|_{\HHH^p},
$$
which implies that $\HHH^p(B^d)$ is continuously embedded in the harmonic Besov space $B_p^{-1,1}(\HHH)$.
Now, the above and \eqref{coeff-grow3} imply \eqref{coeff-growth2}.

We set
\begin{equation}\label{rep-f-main}
f_U:= \sum_{\kk\ge 0}\sum_{\nu=1}^{N(\kk, d)} \bbb_{\kk \nu}(U) Y_{\kk \nu}.
\end{equation}
Inequalities \eqref{coeff-growth2} and Proposition~\ref{prop:identify} lead to the conclusion that
the series in \eqref{rep-f-main} converge in $\cS'$ and defines a distribution
$f_U\in \cS'$ with coefficients $\bbb_{\kk \nu}(f_U):=\bbb_{\kk \nu}(U)$.
In turn, this implies that
$$
\sum_{\nu=1}^{N(\kk, d)} \bbb_{\kk \nu}(U) Y_{\kk \nu}\Big(\frac{x}{|x|}\Big) = Z_k*f_U\Big(\frac{x}{|x|}\Big),
\quad |x|<1,
$$
and hence
\begin{align*}
U(x)&= \sum_{k=0}^\infty |x|^k Z_k*f_U\Big(\frac{x}{|x|}\Big)
= \sum_{k=0}^\infty |x|^k \Big\langle f_U, Z_k\Big(\cdot, \frac{x}{|x|}\Big)\Big\rangle
\\
&=\lim_{m\to\infty} \Big\langle f_U, \sum_{k=0}^m |x|^k Z_k\Big(\cdot, \frac{x}{|x|}\Big)\Big\rangle
= \langle f_U, P(\cdot, x)\rangle
= P*f_U(x).
\end{align*}
Here we used the obvious fact that for any $|x|<1$
$$
P(y, x)=\lim_{m\to\infty}\sum_{k=0}^m |x|^k Z_k\Big(y, \frac{x}{|x|}\Big)
\quad\hbox{(convergence in $\cS$)}.
$$
The proof is complete.
\end{proof}

\begin{proof}[Proof of Theorem~\ref{thm:identify-Hp}]
The Hardy space $\HHH^p(\SS)$, $0<p<\infty$, on the sphere is defined as the set of all distributions $f\in\cS'$
such that
\begin{equation}\label{Hardy-sphere}
\|f\|_{\HHH^p(\SS)} :=\|\sup_{0\le r<1}|P*f(r \cdot)|\|_{L^p(\SS)} <\infty.
\end{equation}
A frame characterization of the Triebel-Lizorkin spaces on $\SS$ has been established in \cite[Theorem~4.5]{NPW2}
(see Theorem~\ref{thm:F-Bnorm-equivalence} (b) below),
which along with the same frame characterization of the Hardy spaces $\HHH^p(\SS)$ from \cite[Theorem~1.1]{Dai}
implies that $\HHH^p(\SS)=\cF_p^{0 2}(\SS)$, $0<p<\infty$, with equivalent quasi-norms.

By Lemma~\ref{lem:coeff} and \eqref{Hardy-sphere} it follows that
$U\in \HHH^p(B^d)$, $0<p<\infty$, if and only if $f_U \in \HHH^p(\SS)$ and
$\|U\|_{\HHH^p(B^d)} = \|f_U\|_{\HHH^p(\SS)}$.
This along with the above observation and Theorem~\ref{thm:equiv-norms-F} implies
$$
\|U\|_{\HHH^p(B^d)} = \|f_U\|_{\HHH^p(\SS)} \sim \|f_U\|_{\cF_p^{0 2}(\SS)} \sim \|U\|_{\cF_p^{0 2}(\HHH)},
$$
which confirms \eqref{norm-Hp-Fp}.

The equivalence $\|U\|_{\HHH^p} \sim \|f_U\|_{L^p(\SS)}$, when $1<p<\infty$,
follows by Lemma~\ref{lem:coeff} and the maximal inequality
just as in the case of Hardy spaces on $\RR^d$,
see \cite[\S~1.2.1, p.~91]{Stein}.
For the equivalence
$\sup_{0\le r<1} \|U(r\cdot)\|_{L^p(\SS)} \sim \|f_U\|_{L^p(\SS)}$, $1<p<\infty$,
see \cite[Chapter 6]{ABR}.
The proof is complete.
\end{proof}

\subsection{Frame decomposition of distribution spaces on $\SS$}\label{subsec:frame-SS}

We next recall the construction of the frame (needlets) on $\SS$
from \cite{NPW2}.
Note that in dimension $d=2$ the Meyer's periodic wavelets (see \cite{Meyer})
form a basis with the desired properties.

The first step in the construction of needlets on $\SS$, $d>2$, is the selection of
a real-valued function $\aa\in C^\infty(\RR_+)$ with the properties:
$\supp \aa\subset [1/2, 2]$, $0\le \aa \le 1$, $\aa(u)\ge c>0$ for $u\in[3/5, 5/3]$,
$\aa^2(u)+\aa^2(u/2)=1$ for $u\in [1, 2]$,
and hence
$\sum_{\nu=0}^\infty \aa^2(2^{-\nu}u) =1$ for $u\in [1, \infty)$.
Set
\begin{equation}\label{def-Psi-j}
\LL_0:=Z_0,
\quad\hbox{and}\quad
\LL_j:=\sum_{\kk=0}^\infty \aa\Big(\frac{\kk}{2^{j-1}}\Big)Z_\kk, \quad j\ge 1.
\end{equation}
It is easy to see that
$f=\sum_{j=0}^\infty \LL_j*\LL_j*f$ for every $f\in\cS'$ (convergence in $\cS'$).

The next step is to discretize $\LL_j*\LL_j$ for $j\ge 1$ by using the cubature formula on $\SS$
from \eqref{cubature}, where $\cX_j$ is a maximal $\delta_j$-net with $\delta_j=\gamma 2^{-j+1}$, $0<\gamma <1$.
In addition, for $j=0$ we set $\cX_0:=\{e_1\}$ with $e_1:= (1, 0, \dots, 0)$, and $\ww_{e_1}:=\omega_d$.

Since the cubature formula \eqref{cubature} is exact for spherical harmonics of degree $\le 2^{j+1}$
we have
$$
\LL_j*\LL_j(x\cdot y) =\int_\SS \LL_j(x\cdot\eta)\LL_j(\eta\cdot y)d\sigma(\eta)
= \sum_{\xi\in\cX_j} \ww_\xi \LL_j(x\cdot \xi)\LL_j(\xi\cdot y),
$$
which allows to discretize $f=\sum_{j=0}^\infty \LL_j*\LL_j*f$ and obtain
\begin{equation}\label{discretize}
f=\sum_{j=0}^\infty \sum_{\xi\in\cX_j}\langle f,\psi_\xi \rangle \psi_\xi,
\quad \forall f\in\cS' \quad\hbox{(convergence in $\cS'$)},
\end{equation}
\begin{equation}\label{def-psi-xi}
\psi_\xi(x):=\ww_\xi^{1/2} \LL_j(\xi\cdot x), \quad \xi\in\cX_j, \;j\ge 0.
\end{equation}
We set $\cX:=\cup_{j\ge 0}\cX_j$ assuming that equal points from different sets $\cX_j$
are distinct points in $\cX$ so that $\cX$ can be used as an index set.
This completes the construction of the system
$\Psi=\{\psi_\xi\}_{\xi\in\cX}$.

Observe that the frame elements $\{\psi_\xi\}$ are not only band limited,
but also have excellent localization on $\SS$.
From the properties of $\aa$ and Theorem~\ref{thm:localization} and \eqref{cubature_w}
it follows that (see also \cite{NPW1, NPW2}) for any $M>0$
\begin{equation}\label{local-needlet-0}
|\psi_\xi(x)| \le \tc_4 2^{(j-1)(d-1)/2}(1+2^{j-1}\rho(x, \xi))^{-M},
\quad x\in\SS,\; \xi\in\cX_j, \; \;j\ge 0,
\end{equation}
where $\tc_4>0$ is a constant depending only on $d$, $M$, $c_7$ and $\aa$.
Moreover, the localization of $\psi_\xi$ can be improved to sub-exponential
as shown in \cite[Theorem~5.1]{IPX}.

The normalization factor $\ww_\xi^{1/2}$ in \eqref{def-psi-xi} makes all $\psi_\xi$ essentially normalized in $L^2(\SS)$,
i.e. $\|\psi_\xi\|_{L^2(\SS)}\sim 1$.
In what follows we only need the lower bound estimate
\begin{equation}\label{psi-xi-norm2}
\|\psi_\xi\|_{L^2(\SS)} \ge \tc_5, \quad \forall\xi\in\cX_j, \;j\ge 0,
\end{equation}
with a constant $\tc_5$ depending only on $d$, $M$, $c_7$ and $\aa$.
Inequality \eqref{psi-xi-norm2} follows from \eqref{def-psi-xi}, \eqref{cubature_w}, \eqref{def-Psi-j},
the properties of $\aa$, and $\int_\SS Z_k^2(\xi\cdot x)d\sigma(x)=Z_k(1)\sim k^{d-2}$.

\smallskip

We next define the Besov and Triebel-Lizorkin sequence spaces $\bb_p^{sq}$ and $\ff_p^{sq}$ associated to $\cX$.

\begin{defn}\label{def:B}
Let $s\in \RR$, $0<p,q\le\infty$. Then $\bb_p^{sq}:=\bb_p^{sq}(\cX)$
is defined as the space of all complex-valued sequences
$h:=\{h_{\xi}\}_{\xi\in \cX}$ such that
\begin{equation}\label{def-b-space}
\|h\|_{\bb_p^{sq}} :=
\Big(\sum_{j=0}^\infty
\Big[2^{j[s+(d-1)(1/2-1/p)]}
\Big(\sum_{\xi \in \cX_j}
|h_\xi|^p\Big)^{1/p}\Big]^q\Big)^{1/q}<\infty
\end{equation}
with the usual modification when $p=\infty$ or  $q=\infty$.
\end{defn}

\begin{defn}\label{def:TL}
Let $s\in \RR$, $0<p<\infty$, and $0<q\le\infty$. Then $\ff_p^{sq}:=\ff_p^{sq}(\cX)$
is defined as the space of all complex-valued sequences
$h:=\{h_{\xi}\}_{\xi\in \cX}$ such that
\begin{equation}\label{def-f-space}
\|h\|_{\ff_p^{sq}} :=\Big\|\Big(\sum_{\xi\in\cX}
\big[|B_\xi|^{-s/(d-1)-1/2}
|h_{\xi}|\ONE_{B_\xi}(\cdot)\big]^q\Big)^{1/q}\Big\|_{L^p} <\infty
\end{equation}
with the usual modification for $q=\infty$.
Here $B_\xi:=B(\xi,\gamma 2^{-j+1})$, $\xi\in \cX_j$, where $\gamma$ is used in the selection of $\cX_j$,
$|B_\xi|$ is the measure of $B_\xi$
and
$\ONE_{B_\xi}$ is the characteristic function of $B_\xi$.
\end{defn}

\begin{rem}\label{rem:TL}
The replacement of $B_\xi=B(\xi,\gamma 2^{-j+1})$ in Definition~\ref{def:TL}
with $B(\xi,\gamma 2^{-j})$ or with the disjoint partition sets $\nA_\xi$ produces equivalent quasi-norms.
This immediately follows from the vector-valued maximal inequality as observed in \cite[Proposition 2.7]{FJ}.
\end{rem}

The main result here asserts that $\{\psi_\xi\}_{\xi\in\cX}$ is a self-dual real-valued frame for
Besov and Triebel-Lizorkin spaces on the sphere.
To state this result we introduce the following
{\em analysis} and {\em synthesis} operators:
\begin{equation}\label{def-oper-S-T}
S_{\psi}: f\mapsto \{\langle f, \psi_\xi\rangle\}_{\xi\in\cX},
\quad
T_\psi: \{h_\xi\}_{\xi\in\cX} \mapsto \sum_{\xi\in\cX}h_\xi\psi_\xi.
\end{equation}

\begin{thm}\label{thm:F-Bnorm-equivalence}
Let $s\in \RR$ and $0< p, q< \infty$.

\smallskip

\noindent
$(a)$
The operators
$S_{\psi}: \cB_p^{s q} \to \bb_p^{s q}$ and
$T_\psi: \bb_p^{s q} \to \cB_p^{s q}$ are bounded, and
$T_\psi\circ S_{\psi}= I$ on $\cB_p^{s q}$.
Hence,
if $f\in \cS'$, then $f\in \cB_p^{sq}$ if and only if
$\{\langle f,\psi_\xi\rangle\}_{\xi \in \cX}\in \bb_p^{sq}$,
and
\begin{equation}\label{B-disc-calderon}
f =\sum_{\xi\in \cX}\langle f, \psi_\xi\rangle \psi_\xi
\quad\mbox{and}\quad
\|f\|_{\cB_p^{sq}}
\sim  \|\{\langle f,\psi_\xi\rangle\}\|_{\bb_p^{sq}}.
\end{equation}

\smallskip

\noindent
$(b)$
The operators
$S_{\psi}: \cF_p^{s q} \to \ff_p^{s q}$ and
$T_\psi: \ff_p^{s q} \to \cF_p^{s q}$ are bounded, and
$T_\psi\circ S_{\psi}= I$ on $\cF_p^{s q}$.
Hence,
if $f\in \cS'$, then $f\in \cF_p^{sq}$ if and only if
$\{\langle f,\psi_\xi\rangle\}_{\xi \in \cX}\in \ff_p^{sq}$, and
\begin{equation}\label{F-disc-calderon}
f =\sum_{\xi\in \cX}\langle f, \psi_\xi\rangle \psi_\xi
\quad \mbox{and}\quad
\|f\|_{\cF_p^{sq}}
\sim  \|\{\langle f,\psi_\xi\rangle\}\|_{\ff_p^{sq}}.
\end{equation}
The convergence in $(\ref{B-disc-calderon})$ and $(\ref{F-disc-calderon})$ is unconditional
in $\cB_p^{sq}$ and $\cF_p^{sq}$, respectively.
\end{thm}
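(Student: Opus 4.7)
The plan is to prove the frame characterization by establishing four building blocks: a reproducing identity that gives $T_\psi \circ S_\psi = I$, boundedness of the analysis operator $S_\psi$, boundedness of the synthesis operator $T_\psi$, and unconditional convergence of the frame expansion. Once the two operator bounds are in hand, the quasi-norm equivalences follow at once, and the density of finitely supported sequences in $\bb_p^{sq}$ and $\ff_p^{sq}$ (which requires $p, q < \infty$) yields unconditional convergence.

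For the reproducing identity, the construction of $\LL_j$ in \eqref{def-Psi-j} together with $\sum_{\nu \ge 0} \aa^2(2^{-\nu}u) = 1$ on $[1, \infty)$ gives $f = \sum_j \LL_j * \LL_j * f$ in $\cS'$. Since $\LL_j$ is a polynomial of degree at most $2^j$, the function $\eta \mapsto \LL_j(x \cdot \eta)\LL_j(\eta \cdot y)$ is a spherical harmonic of degree $\le 2^{j+1}$, so the cubature \eqref{cubature} reproduces it exactly, which yields the discretization \eqref{discretize}. For the boundedness of $S_\psi$ the essential input is the spectral localization of $\psi_\xi$ for $\xi \in \cX_j$, whose spectrum lies in $[2^{j-2}, 2^j]$. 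Using the lower bound that $\aa$ satisfies on $[3/5, 5/3]$ together with the analogous property of the defining $\varphi$, one can write $\langle f, \psi_\xi\rangle$ as a convolution involving only $\Phi_{j-1} * f, \Phi_j * f, \Phi_{j+1} * f$. Combining the localization \eqref{local-needlet-0} of $\psi_\xi$ with Proposition~\ref{prop:5_3} and a Peetre-type pointwise estimate on band-limited functions yields
\[
|\langle f, \psi_\xi\rangle| \le c\, 2^{-j(d-1)/2} \sum_{|k-j|\le 1} \inf_{x \in \nA_\xi} \MM_t(\Phi_k * f)(x),
\]
valid for any $0 < t \le 1$. Substituting this into $\|S_\psi f\|_{\ff_p^{sq}}$ and applying the Fefferman-Stein vector-valued maximal inequality \eqref{max-ineq} with $0 < t < \min\{p, q\}$ closes the Triebel-Lizorkin case; the Besov case uses only the scalar $L^p$-maximal bound on each dyadic level.

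For the boundedness of $T_\psi$, set $g = \sum_\xi h_\xi \psi_\xi$ and compute $\Phi_j * g$. Spectral disjointness forces $\Phi_j * \psi_\xi = 0$ unless $|j - k| \le 1$ for $\xi \in \cX_k$, so at most three neighbouring scales contribute. Applying Proposition~\ref{prop:5_3} to $\Phi_j * \psi_\xi$, summing over $\xi \in \cX_k$ with the help of \eqref{cubature_w} and the partition $\{\nA_\xi\}$, and dominating by a maximal function yields
\[
|\Phi_j * g(x)| \le c\, 2^{j(d-1)/2} \sum_{|k-j| \le 1} \MM_t\Big(\sum_{\xi \in \cX_k} |h_\xi| \ONE_{\nA_\xi}\Big)(x)
\]
for any $0 < t \le 1$ and sufficiently large decay parameter $M$. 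Taking the $\cF_p^{sq}$ quasi-norm of both sides and invoking \eqref{max-ineq} once more (with Remark~\ref{rem:TL} to replace $B_\xi$ by $\nA_\xi$) delivers $\|T_\psi h\|_{\cF_p^{sq}} \le c \|h\|_{\ff_p^{sq}}$; the Besov version follows analogously using the scalar maximal inequality. Convergence of the series defining $g$ in the corresponding quasi-norm comes for free by applying the same estimate to truncations of $h$, which is where the assumption $p, q < \infty$ enters to ensure that these truncations form a Cauchy sequence.

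The hardest technical step is crafting the two pointwise maximal-function bounds in the regime $p, q < 1$, which forces one to work with $\MM_t$ for $t$ strictly below $\min\{p, q\}$ and hence to extract enough polynomial decay from $\psi_\xi$ and from $\Phi_j$. This requires the full strength of Theorem~\ref{thm:localization}, the freedom to choose the decay parameter $M$ arbitrarily large in Proposition~\ref{prop:5_3}, and careful geometric bookkeeping on the spherical caps via \eqref{sph_cap}--\eqref{sph_cap2} in order to convert sums over the maximal $\delta_j$-net into the maximal function of a piecewise constant function on the partition $\{\nA_\xi\}$.
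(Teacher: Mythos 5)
The paper does not prove this theorem; it simply cites \cite[Theorems 4.5 and 5.5]{NPW2} and leaves the details there. Your reconstruction — reproducing identity via exact cubature, Peetre-type pointwise maximal estimate for the analysis coefficients, localization plus discrete-to-maximal-function comparison for the synthesis operator, Fefferman–Stein to close both bounds, and density of finitely supported sequences for unconditional convergence — is exactly the architecture of the \cite{NPW2} argument, so the proposal is correct and follows essentially the same route.

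One point worth tightening if you were to write this out in full: the step ``one can write $\langle f,\psi_\xi\rangle$ as a convolution involving only $\Phi_{j-1}*f,\Phi_j*f,\Phi_{j+1}*f$'' is not literal, since $\aa$ and $\varphi$ need not be related pointwise. The clean way is either to invoke the equivalence of the $\cB_p^{sq}$ and $\cF_p^{sq}$ quasi-norms under different admissible $\varphi$ (stated after Definition~\ref{def:B-F-spaces}) and take $\varphi=\aa$, so that $\LL_j*f$ and $\Phi_j*f$ coincide and the Peetre estimate applies directly at scale $j$, or to insert a genuine Calder\'on-type reproducing pair $\sum_k \Phi_k*\tilde\Phi_k = I$ and use spectral disjointness to restrict to $|k-j|\le 1$, at the cost of replacing $\Phi_k*f$ by $\tilde\Phi_k*f$ in the maximal bound. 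Either repair is routine, but the sketch as written glides past it.
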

For details and proofs, see \cite[Theorems 4.5 and 5.5]{NPW2}.

\begin{rem}\label{rem:unif-bounds}
A careful examination of the proofs in \cite{NPW2} shows that the operators $S_{\psi}$ and $T_{\psi}$
are uniformly bounded on the respective spaces with parameters
$$
(s, p, q)\in \QQ(A), \quad \mbox{for fixed}~A >1,
$$
where $\QQ(A)$ is the index set defined in $(\ref{indices-1})$,
that is, all constants that appear in the equivalences in Theorem~\ref{thm:F-Bnorm-equivalence}
depend only on $A$, $d$, and $\varphi$, if $(s, p, q)\in \QQ(A)$.
In fact, the only nontrivial source of constants is the maximal inequality $(\ref{max-ineq})$,
however, as seen in $(\ref{max-const})$ these constant are compatible with the definition
of $\QQ(A)$ in $(\ref{indices-1})$.
\end{rem}
The above observation will be needed for the construction of new frames below.

\begin{rem}\label{rem:frame}
In general, one normally constructs and works with a pair of dual frames
$\{\psi_\xi\}_{\xi\in\cX}$, $\{\tilde\psi_\xi\}_{\xi\in\cX}$ on $\SS$,
see \cite{NPW2}.
In the construction presented above  we consider the case when $\tilde\psi_\xi=\psi_\xi$ for simplicity.
\end{rem}

Some embeddings between Besov or Triebel-Lizorkin spaces will be needed.

\begin{prop}\label{embed}
Assume $s,s_0,s_1\in\RR$ and let $0<p,p_0,p_1,q,q_0,q_1\le\infty$ in the case of Besov spaces
and $0<p,p_0,p_1<\infty$, $0<q,q_1,q_2\le\infty$ in the case of Triebel-Lizorkin spaces.
The following continuous embeddings are valid:
\begin{equation}\label{eq:2}
\cB^{s_0q_0}_p\subset \cB^{s_1q_1}_p,~~\cF^{s_0q_0}_p\subset \cF^{s_1q_1}_p,
\quad \hbox{if}\;\;
s_0=s_1,~q_0\le q_1~~\mbox{or}~~s_0>s_1,~\forall q_0, q_1;
\end{equation}
\begin{equation}\label{eq:4}
\cB^{sq}_{p_0}\subset \cB^{sq}_{p_1},
\quad \cF^{sq}_{p_0}\subset \cF^{sq}_{p_1},
\quad  \hbox{if}\;\;
p_0\ge p_1;
\end{equation}
\begin{equation}\label{eq:11}
\cB^{s_0q}_{p_0}\subset \cB^{s_1q}_{p_1},
\quad  \hbox{if}\;\;
s_0\ge s_1,~s_0-\frac{d-1}{p_0}=s_1-\frac{d-1}{p_1};
\end{equation}
\begin{equation}\label{eq:12}
\cF^{s_0q_0}_{p_0}\subset \cF^{s_1q_1}_{p_1},
\quad  \hbox{if}\;\;
s_0> s_1,~s_0-\frac{d-1}{p_0}=s_1-\frac{d-1}{p_1},~\forall q_0, q_1;
\end{equation}
\begin{equation}\label{eq:8b}
\cB^{sq}_{p}\subset \cF^{sq}_{p}\subset \cF^{sp}_{p}=\cB^{sp}_{p},
\quad  \hbox{if}\;\;
q<p;
\end{equation}
\begin{equation}\label{eq:8c}
\cB^{sp}_{p}=\cF^{sp}_{p}\subset \cF^{sq}_{p}\subset \cB^{sq}_{p},
\quad  \hbox{if}\;\;
p<q.
\end{equation}
\end{prop}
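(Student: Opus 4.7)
The plan is to pass every embedding through the frame characterization of \thmref{thm:F-Bnorm-equivalence}: $f\in \cB_p^{sq}$ (resp.\ $\cF_p^{sq}$) iff $\{\langle f,\psi_\xi\rangle\}\in \bb_p^{sq}$ (resp.\ $\ff_p^{sq}$), with equivalent quasi-norms. It therefore suffices to prove the listed embeddings at the sequence-space level, where they become combinatorial estimates on weighted $\ell^p$ and mixed norms over the index set $\cX=\bigcup_{j\ge 0} \cX_j$. (The endpoint cases $p=\infty$ or $q=\infty$ in \eqref{eq:2}, \eqref{eq:4}, \eqref{eq:11} not covered by the frame theorem are handled by the same estimates applied to the continuous Littlewood-Paley pieces $\Phi_j*f$ of Definition~\ref{def:B-F-spaces}.)

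To unify the treatment I would recast both quasi-norms in one mixed-norm form. For $j\ge 0$ and $x\in\SS$ let $\xi_j(x)\in\cX_j$ denote an index with $x\in B_{\xi_j(x)}$ (the bounded-overlap structure of the caps $B_\xi$ makes this essentially unique), and set $F_j(x):=|B_{\xi_j(x)}|^{-s/(d-1)-1/2}|h_{\xi_j(x)}|$. Using $|B_\xi|\sim 2^{-j(d-1)}$ for $\xi\in\cX_j$ one obtains, up to multiplicative constants,
\begin{equation*}
\|h\|_{\bb_p^{sq}}\sim \bigl\|\{\|F_j\|_{L^p(\SS)}\}_{j\ge 0}\bigr\|_{\ell^q}, \qquad \|h\|_{\ff_p^{sq}}\sim \bigl\|\{F_j(\cdot)\}_{j\ge 0}\bigr\|_{L^p(\ell^q)}.
\end{equation*}

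With this reformulation the majority of the embeddings become standard inequalities. For \eqref{eq:2}, monotonicity in $q$ when $s_0=s_1$ is the nesting of $\ell^q$ norms applied outer/pointwise, while the case $s_0>s_1$ is handled by factoring out the geometric weight $2^{(s_1-s_0)j}\le 1$ and using either $\ell^{q_0}\hookrightarrow \ell^{q_1}$ or a one-line Hölder according as $q_0\le q_1$ or $q_0>q_1$. For \eqref{eq:4}, $\SS$ has finite measure so $L^{p_0}\hookrightarrow L^{p_1}$, and since $\#\cX_j\sim 2^{j(d-1)}$ the analogous $\ell^{p_0}\hookrightarrow \ell^{p_1}$ holds on each level, the weight $|B_\xi|^{-1/2}$ in the sequence norm absorbing the correct power of $|B_\xi|$. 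For \eqref{eq:11}, the Sobolev identity $s_0-(d-1)/p_0=s_1-(d-1)/p_1$ is precisely what forces the termwise inequality $2^{j[s_1+(d-1)(1/2-1/p_1)]}\|h_j\|_{\ell^{p_1}}\le 2^{j[s_0+(d-1)(1/2-1/p_0)]}\|h_j\|_{\ell^{p_0}}$, using $\|\cdot\|_{\ell^{p_1}}\le \|\cdot\|_{\ell^{p_0}}$ for $p_0\le p_1$. For \eqref{eq:8b}-\eqref{eq:8c}, the comparison $\cB_p^{sq}\leftrightarrow \cF_p^{sq}$ is exactly Minkowski's integral inequality between $\ell^q_j(L^p_x)$ and $L^p_x(\ell^q_j)$, with direction dictated by $q\le p$ or $p\le q$; the identity $\cF_p^{sp}=\cB_p^{sp}$ is Fubini once outer and inner indices coincide; and the remaining $\cF_p^{sq}\subset \cF_p^{sp}$ (or its reverse) is pointwise $\ell^q$-nesting.

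The genuine obstacle is \eqref{eq:12}, the Triebel-Lizorkin Sobolev embedding, where the $L^{p_0}$ integration and $\ell^{q_0}$ summation do not decouple by Minkowski. My plan is first to invoke \eqref{eq:2} to trade some smoothness (the strict inequality $s_0>s_1$ leaves room) and reduce to the core inequality of the form $\cF_{p_0}^{s_0\infty}\hookrightarrow \cF_{p_1}^{s_1 q_1}$. In the sequence model this is proved by pointwise controlling the integrand defining $\|h\|_{\ff_{p_1}^{s_1 q_1}}$ by $\MM_t$ applied to the corresponding integrand for $(s_0,p_0)$, for a suitable $t<p_0$, and then invoking the vector-valued Fefferman-Stein inequality \eqref{max-ineq}; the exponent matching is delivered by the Sobolev scaling together with $|B_\xi|\sim 2^{-j(d-1)}$. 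A more conceptual alternative is to chain through an intermediate Besov space via the Franke-Jawerth type embeddings, whose atomic input is readily supplied by the frame $\{\psi_\xi\}$ and its localization \eqref{local-needlet-0}. Either way, carrying out this step at the discrete level is where the bulk of the work lies.
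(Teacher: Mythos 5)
Your proposal is essentially correct, but it takes a genuinely different route from the paper for the two nontrivial items. The paper dismisses \eqref{eq:2}, \eqref{eq:4}, \eqref{eq:8b}, \eqref{eq:8c} as easy (your sequence-level/Minkowski arguments are exactly the intended ones), proves \eqref{eq:11} directly on the continuous Littlewood--Paley blocks via the Nikolski inequality $\|\Phi_j*f\|_{L^{p_1}}\le c2^{j(d-1)(1/p_0-1/p_1)}\|\Phi_j*f\|_{L^{p_0}}$ for spherical polynomials of degree $\le 2^j$, and for \eqref{eq:12} simply cites the classical Jawerth/Triebel argument (itself Nikolski-based). You instead push everything through the needlet characterization of Theorem~\ref{thm:F-Bnorm-equivalence}, where \eqref{eq:11} reduces to the observation that the Sobolev scaling makes the level-$j$ weights coincide and $\ell^{p_0}\hookrightarrow\ell^{p_1}$ finishes it; this is clean and self-contained, and your second route for \eqref{eq:12} (chaining $\cF^{s_0q_0}_{p_0}\subset\cF^{s_0-\eps,p_0}_{p_0}=\cB^{s_0-\eps,p_0}_{p_0}\subset\cB^{s_1+\eps',p_0}_{p_1}=\cF^{s_1+\eps',p_0}_{p_1}\subset\cF^{s_1q_1}_{p_1}$ using the slack $s_0>s_1$) is arguably simpler than the maximal-function argument you sketch first and avoids any new estimate. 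What the paper's Nikolski route buys is uniform treatment of the endpoints: the proposition allows $p_0,p_1,q\le\infty$ for Besov spaces, while the frame theorem is stated only for $p,q<\infty$. Your parenthetical fallback to the continuous pieces does not quite close this for \eqref{eq:11}, because the termwise inequality you rely on there is the discrete reverse H\"older $\|\cdot\|_{\ell^{p_1}}\le\|\cdot\|_{\ell^{p_0}}$, whose continuous analogue for the blocks $\Phi_j*f$ \emph{is} the Nikolski inequality --- so at the endpoints you would end up needing the paper's ingredient anyway. With that one inequality added, your argument is complete.
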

\begin{proof}
The proofs of embeddings \eqref{eq:2}, \eqref{eq:4}, \eqref{eq:8b}, and \eqref{eq:8c} are easy and will be omitted.

Embedding \eqref{eq:11} is an immediate consequence of the Nikolski inequality for spherical polynomials.
Indeed, by Definition~\ref{def:B-F-spaces} it follows that $\Phi_j*f$ is a spherical polynomial of degree $\le 2^j$,
i.e. $\Phi_j*f \in \Pi_{2^j}$. Then by the Nikolski inequality, see e.g. \cite[Theorem~5.5.1]{DX},
\begin{equation}\label{Nikolski}
\|\Phi_j*f\|_{L^{p_1}}\le c2^{j(1/p_0-1/p_1)(d-1)}\|\Phi_j*f\|_{L^{p_0}},
\quad p_0\le p_1,
\end{equation}
and \eqref{eq:11} follows readily.

The proof of embedding \eqref{eq:12} relies on the Nikolski inequality \eqref{Nikolski}
and can be carried out along the lines of the proof of the same embedding result in the classical
case on $\RR^n$ from \cite[Theorem~2.1]{Jawerth}, see also \cite[Theorem~2.7.1]{Triebel-1}.
We omit the details.
\end{proof}

\section{Construction of frames by small perturbation}\label{s3}

Here we present the small perturbation method for construction of frames, developed in \cite{DKKP}.
Special attention is paid to the dependence of the numerous constants on the parameters of the distribution spaces involved.

\subsection{Setting and conditions on the old frame}\label{set-up}

As in Section~\ref{s2_2} we denote by $\cS:=C^\infty(\SS)$ the set of all test functions
on $\SS$ and let $\cS'$ be its dual.
We assume that $\YY$ is a collection of quasi-Banach spaces $\fB=\fB(\SS)\subset\cS'$
of distributions on $\SS$ with quasi-norms $\|\cdot\|_\fB$,
which are continuously embedded in $\cS'$,
i.e. there exist $m=m(\fB)\in\NN$ and $C=C(\fB)>0$
such that $|\langle f, \phi\rangle|\le C\|f\|_\fB P_m(\phi)$ for all $f\in\fB$, $\phi\in\cS$.
Also we assume that $\cS$ is a dense subset of each $\fB\in\YY$.

\begin{sloppypar}
Furthermore, we assume that there exists a collection $\YY_d$ of quasi-Banach complex-valued sequence spaces $\fb=\fb(\cX)$
with quasi-norms $\|\cdot\|_\fb$,
such that every $\fB\in\YY$ is associated with a space $\fb\in \YY_d$.
We assume that the constants in the quasi-triangle inequalities
for the quasi-Banach spaces in $\YY$ and $\YY_d$ are uniformly bounded,
i.e. there exists a constant $C_1=C_1(\YY,\YY_d)$ such that
\begin{equation}\label{quasi-triangle}
\begin{split}
\|f_1+f_2\|_\fB&\le C_1(\|f_1\|_\fB+\|f_2\|_\fB), \quad\forall f_1,f_2\in\fB,~\forall \fB\in\YY;\\
\|h_1+h_2\|_\fb&\le C_1(\|h_1\|_\fb+\|h_2\|_\fb), \quad\forall h_1,h_2\in\fb,~\forall \fb\in\YY_d.
\end{split}
\end{equation}
A popular version of the Aoki-Rolewicz theorem states
(see e.g. \cite[Lemma 3.10.1]{BL}) that for any quasi-Banach space $\fB$ with a quasi-norm $\|\cdot\|_\fB$
satisfying the quasi-triangle inequalities with constant $C_1$ there exists a \emph{norm} $\|\cdot\|^*$ on $\fB$, such that
\begin{equation}\label{quasi-triangle2}
\|f\|^*\le \|f\|_\fB^\tau\le 2\|f\|^*, \quad\forall f\in\fB,\quad \mbox{where}~\tau=\ln 2/(\ln 2+\ln C_1)\le 1.
\end{equation}
\end{sloppypar}

Targeted application of this construction is to
the Besov and Triebel-Lizorkin function spaces
introduced in Section~\ref{s2_2}
and the corresponding sequence spaces
introduced in Section~\ref{subsec:frame-SS}.
For the Besov spaces the sets $\YY$ and $\YY_d$ are given by
$$
\YY=\big\{\cB^{sq}_p(\SS): (s, p, q)\in\QQ(A)\big\}
\quad{ and }\quad
\YY_d=\big\{\bb^{sq}_p(\cX): (s, p, q)\in\QQ(A)\big\},
$$
where $A>1$ is fixed and
$\QQ(A)$ is introduced in (\ref{indices-1}).
A similar observation is valid for the Triebel-Lizorkin spaces $\cF^{sq}_p(\SS)$ and $\ff^{sq}_p(\cX)$.
\smallskip

\noindent
{\bf The old frame.}
We stipulate the existence of a pair of dual frames $\{\psi_\xi\}_{\xi\in\cX}$, $\{\tilde\psi_\xi\}_{\xi\in\cX}$ for all $\fB\in\YY$
such that $\psi_\xi, \tilde\psi_\xi \in \cS$, where $\cX$ is a countable index set,
with the following properties:

\smallskip

\noindent
{\bf A1.}
The {\em analysis} and {\em synthesis} operators $S_{\psi}$, $S_{\tilde\psi}$
and $T_{\psi}$, $T_{\tilde\psi}$ from \eqref{def-oper-S-T} have the properties:

(a)
The operators $S_{\psi}, S_{\tilde\psi}: \fB\mapsto \fb$ are bounded.

(b)
For any sequence $h=\{h_\xi\}_{\xi\in\cX}\in \fb$
the series $\sum_{\xi\in\cX}h_\xi\psi_\xi$ and $\sum_{\xi\in\cX}h_\xi\tilde\psi_\xi$ converge unconditionally in $\fB$ and
 $T_\psi, T_{\tilde\psi}: \fb\to \fB$ are bounded.

It is assumed that the norms of the operators $S_{\psi}$, $S_{\tilde\psi}$
and $T_{\psi}$, $T_{\tilde\psi}$
are uniformly bounded relative to $\fB\in\YY$ and $\fb\in\YY_d$ by a constant $C_2=C_2(\YY,\YY_d,\{\psi_\xi\})>1$.
Thus, for any $\fB\in \YY$ and $f\in \fB$ we have
\begin{equation}\label{equiv_frame}
\begin{split}
C_2^{-1}\|f\|_{\fB} \le \|S_{\psi}f\|_\fb=\|\{\langle f,\psi_{\xi}\rangle\}\|_\fb\le C_2 \|f\|_{\fB},\\
C_2^{-1}\|f\|_{\fB} \le \|S_{\tilde\psi}f\|_\fb=\|\{\langle f,\tilde\psi_{\xi}\rangle\}\|_\fb\le C_2 \|f\|_{\fB}.
\end{split}
\end{equation}

\smallskip

\noindent
{\bf A2.} We have $T_{\tilde\psi}S_{\psi}=T_{\psi}S_{\tilde\psi}=I$ in $\cB$, i.e. for any $f\in \fB$
\begin{equation}\label{A1}
f=\sum_{\xi\in \cX} \langle f, \tilde\psi_{\xi}\rangle\psi_{\xi}
=\sum_{\xi\in \cX} \langle f, \psi_{\xi}\rangle\tilde\psi_{\xi},
\end{equation}
where the two series converge unconditionally in $\fB$ and hence in $\cS'$.

Note that the compositions $S_{\psi}T_{\tilde\psi}$, $S_{\tilde\psi}T_{\psi}$ are projectors due to
\begin{equation*}
(S_{\psi}T_{\tilde\psi})^2=S_{\psi}(T_{\tilde\psi}S_{\psi})T_{\tilde\psi}=S_{\psi}IT_{\tilde\psi}=S_{\psi}T_{\tilde\psi}.
\end{equation*}

\smallskip

\noindent
{\bf A3.} In addition, we assume that each $\fb\in\YY_d$ obeys the conditions:
\begin{enumerate}
\item[(a)]
For any sequence $\{h_\xi\}_{\xi\in\cX}\in\fb$ one has $\|\{h_\xi\}\|_\fb = \|\{|h_\xi|\}\|_\fb$.
\item[(b)]
If the sequences $\{h_\xi\}_{\xi\in\cX}, \{g_\xi\}_{\xi\in\cX}\in\fb$
and $|h_\xi|\le |g_\xi|$ for $\xi\in\cX$, then $\|\{h_\xi\}\|_\fb \le \|\{g_\xi\}\|_\fb$.
\item[(c)]
Compactly supported sequences belong to $\fb$ and are dense in $\fb$.
\end{enumerate}

\smallskip

Note that
conditions {\bf A3} (b)-(c) imply condition {\bf A3} (ii) in \cite{DKKP}.

As a consequence of {\bf A1} we obtain that the operator $A:=S_{\psi}T_{\psi}$ with matrix
\begin{equation*}
\bA:=\{a_{\xi,\eta}\}_{\xi, \eta \in\cX}, \quad
a_{\xi, \eta}:=\langle\psi_\eta,\psi_\xi\rangle
\end{equation*}
is uniformly bounded on the sequence spaces $\fb\in \YY_d$, i.e.
\begin{equation}\label{oper-A}
\|A\|_{\fb\mapsto \fb}\le C_3:=C_2^2,\quad \forall \fb\in \YY_d.
\end{equation}

\subsection{Construction of new frames}\label{subsec:new-frame}

We next construct a pair of dual frames
$\{\theta_\xi\}_{\xi\in\cX}$,
$\{\tilde\theta_\xi\}_{\xi\in\cX}$ for all spaces $\fB\in\YY$,
where $\cX$ is the index set from above.

For a system $\{\theta_\xi\}_{\xi\in\cX}\subset\cap\{\fB : \fB\in\YY\}$ of real-valued functions $\theta_\xi\in\fB$, $\xi\in\cX$,
we define the matrices
\begin{equation*}
\begin{aligned}
&\bB:=\{b_{\xi,\eta}\}_{\xi, \eta \in\cX}, \quad
b_{\xi, \eta}:=\langle\theta_\eta,\psi_\xi\rangle,\\
&\bD:=\{d_{\xi,\eta}\}_{\xi,\eta\in\cX}, \quad
d_{\xi,\eta}:=\langle\psi_\eta-\theta_\eta,\psi_\xi\rangle.
\end{aligned}
\end{equation*}
The only condition that we require when constructing $\{\theta_\xi\}_{\xi\in\cX}$ is that the operator
$$D=S_{\psi}T_{\psi}-S_{\psi}T_{\theta}, \;\; D:\fb\to\fb,$$
with matrix $\bD$, defined by $(D h)_\xi=\sum_{\eta\in\cX} d_{\xi,\eta} h_\eta$,
has a sufficiently small norm uniformly for all $\fb\in\YY_d$.
More precisely we assume that
\begin{equation}\label{oper-eps}
\|D\|_{\fb\mapsto \fb}\le \epsilon:=\frac{(1-2^{-\tau})^{1/\tau}}{2C_1C_2^4 2^{1/\tau}},\quad\forall \fb\in\YY_d,
\end{equation}
with $\tau$ given in \eqref{quasi-triangle2}, where $C_1$ is the constant from \eqref{quasi-triangle} and
$C_2$ is the constant in \eqref{equiv_frame}.
For the operator $B$ with matrix $\bB$ we have
$B=A-D=S_{\psi}T_{\theta}$ and hence by \eqref{quasi-triangle}, \eqref{oper-A}, and \eqref{oper-eps}
it follows that $B$ is uniformly bounded on $\YY_d$,
more precisely,
\begin{equation}\label{est-B}
\|B\|_{\fb\mapsto \fb}\le C_4, \quad\forall \fb\in\YY_d,
\end{equation}
with constant $C_4= C_1(C_3+\epsilon)$.

Condition \eqref{oper-eps} will be sufficient to show that $\{\theta_\xi\}_{\xi\in\cX}$ is a frame for all spaces $\fB\in\YY$
and to construct its dual frame $\{\tilde\theta_\xi\}_{\xi\in\cX}$.
To this end we introduce the operator:
\begin{equation}\label{oper-T}
Tf :=\sum_{\xi\in\cX} \langle f, \tilde\psi_\xi\rangle \theta_\xi,
\quad f\in \fB.
\end{equation}

The next three lemmas will be instrumental in the construction of $\{\tilde\theta_\xi\}_{\xi\in\cX}$.
They are direct adaptations of Lemmas 3.1 - 3.3 in \cite{DKKP}; we omit their proofs.

\begin{lem}\label{lem:T-bounded}
The operators $T_\theta$, defined in \eqref{def-oper-S-T} with $\theta_\xi$ in the place of $\psi_\xi$,
and $T$ are well defined and uniformly bounded,
that is,
\begin{equation}\label{oper-T-bound}
\begin{split}
\|T_\theta h\|_\fB \le C_2C_4\|h\|_\fb,
\quad \forall h\in \fb,~\forall\fb\in\YY_d;\\
\|Tf\|_\fB \le C_2^2 C_4\|f\|_\fB,
\quad \forall f\in \fB,~\forall\fB\in\YY;
\end{split}
\end{equation}
where $C_2$ is from \eqref{equiv_frame} and $C_4$ is from \eqref{est-B}.
Furthermore, the series in \eqref{def-oper-S-T} and \eqref{oper-T} converge unconditionally in $\fB$
and hence in~$\cS'$.
\end{lem}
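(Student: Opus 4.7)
The plan is to realize $T_\theta$ and $T$ as compositions of operators whose boundedness is already known from A1, A2, and \eqref{est-B}. The central identity I aim to establish is
\begin{equation*}
T_\theta = T_{\tilde\psi}\circ B,
\end{equation*}
from which the two claimed estimates follow immediately. Combining it with $\|B\|_{\fb\to\fb}\le C_4$ from \eqref{est-B} and $\|T_{\tilde\psi}\|_{\fb\to\fB}\le C_2$ from \eqref{equiv_frame} yields
\begin{equation*}
\|T_\theta h\|_\fB = \|T_{\tilde\psi}(Bh)\|_\fB \le C_2 C_4\|h\|_\fb.
\end{equation*}
Since \eqref{oper-T} reads $T = T_\theta\circ S_{\tilde\psi}$, chaining this bound with $\|S_{\tilde\psi}\|_{\fB\to\fb}\le C_2$ from \eqref{equiv_frame} gives the second estimate $\|Tf\|_\fB\le C_2^2 C_4\|f\|_\fB$.

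To establish the identity, first treat finitely supported $h$. Since each $\theta_\eta$ lies in every $\fB\in\YY$, applying A2 to $\theta_\eta$ gives the frame expansion
\begin{equation*}
\theta_\eta = \sum_{\xi\in\cX}\langle\theta_\eta,\psi_\xi\rangle\tilde\psi_\xi = \sum_{\xi\in\cX} b_{\xi,\eta}\tilde\psi_\xi = T_{\tilde\psi}(S_\psi\theta_\eta),
\end{equation*}
with unconditional convergence in $\fB$; the sequence $S_\psi\theta_\eta = \{b_{\xi,\eta}\}_\xi$ belongs to $\fb$ by A1(a). For finitely supported $h$ the finite sum $T_\theta h = \sum_\eta h_\eta\theta_\eta$ can therefore be rewritten, by linearity of $T_{\tilde\psi}$, as $T_{\tilde\psi}(Bh)$. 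I would then \emph{define} $T_\theta h := T_{\tilde\psi}(Bh)$ for arbitrary $h\in\fb$ and verify that the formal series $\sum_\eta h_\eta\theta_\eta$ appearing in \eqref{def-oper-S-T} converges unconditionally to this value in $\fB$.

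The main technical obstacle is this last convergence claim. The approach uses the density of finitely supported sequences in $\fb$ from A3(c) together with the solidness properties A3(a)--(b). Given $\epsilon > 0$, choose a finitely supported $h'$ with $\|h-h'\|_\fb < \epsilon/(C_2 C_4)$. For any finite $F\subset\cX$ containing $\supp h'$, the sequence $h - h|_F$ vanishes on $F$ and agrees with $h - h'$ on $F^c$, so A3(a)--(b) yield $\|h - h|_F\|_\fb \le \|h - h'\|_\fb$. Consequently,
\begin{equation*}
\Big\|T_\theta h - \sum_{\eta\in F} h_\eta\theta_\eta\Big\|_\fB = \|T_{\tilde\psi}(B(h - h|_F))\|_\fB \le C_2 C_4\|h - h|_F\|_\fb < \epsilon,
\end{equation*}
which is the standard Cauchy criterion for unconditional convergence of $\sum_\eta h_\eta\theta_\eta$ in $\fB$. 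The corresponding unconditional convergence of the series defining $Tf$ follows by applying this to the sequence $S_{\tilde\psi}f\in\fb$.
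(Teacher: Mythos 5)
Your proof is correct and follows the approach the paper intends: although the paper omits the proof (deferring to [DKKP]), it records the identities $T_\theta = T_{\tilde\psi}B$ and $T = T_\theta S_{\tilde\psi}$ immediately after the three lemmas, and your argument is a careful implementation of exactly this factorization, with the extension from finitely supported sequences handled via A3(c) and the solidness conditions A3(a)--(b). The only thing left implicit is the final clause "and hence in $\cS'$," which follows at once from the assumed continuous embedding $\fB\hookrightarrow\cS'$.
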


\smallskip

The fact the operator $T$ is invertible plays a key role in this construction.

\begin{lem}\label{lem:T-invert}
If $(\ref{oper-eps})$ is satisfied, then
\begin{equation}\label{T-identity}
\|I-T\|_{\fB\mapsto\fB}
\le C_2^2\epsilon=\frac{(1-2^{-\tau})^{1/\tau}}{2C_1C_2^2 2^{1/\tau}} \le \frac{1}{2},
\quad \forall \fB\in\YY,
\end{equation}
and hence
$T^{-1}$ exists and
\begin{equation}\label{T-inverse}
\|T^{-1}\|_{\fB\mapsto\fB} \le C_5:=\frac{2^{1/\tau}}{(1-2^{-\tau})^{1/\tau}},
\quad \forall \fB\in\YY,
\end{equation}
where $\tau$ is from \eqref{quasi-triangle2}.
\end{lem}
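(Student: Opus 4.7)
The plan is to reduce $I-T$ to the ``small'' operator $D$ via the reconstruction formula for the old frame, and then build $T^{-1}$ from a Neumann series, taking care because $\fB$ is only quasi-Banach.

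First I would use the identity \eqref{A1}, which reconstructs $f$ from $\{\langle f,\tilde\psi_\xi\rangle\}$ via $\{\psi_\xi\}$, together with the definition \eqref{oper-T} of $T$, to write
\[
(I-T)f=\sum_{\xi\in\cX}\langle f,\tilde\psi_\xi\rangle(\psi_\xi-\theta_\xi),
\]
where the convergence is unconditional in $\fB$ by {\bf A1}--{\bf A2}. Applying $S_\psi$ termwise and recognising the matrix entries $d_{\mu,\xi}=\langle\psi_\xi-\theta_\xi,\psi_\mu\rangle$ of $\bD$, I expect the clean identity $S_\psi(I-T)=D\,S_{\tilde\psi}$ on $\fB$. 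The lower bound in \eqref{equiv_frame} applied to $(I-T)f$, together with the upper bound applied to $f$ and the hypothesis \eqref{oper-eps} on $\|D\|_{\fb\to\fb}$, then yields
\[
\|(I-T)f\|_\fB\le C_2\|D\,S_{\tilde\psi}f\|_\fb\le C_2\epsilon\,\|S_{\tilde\psi}f\|_\fb\le C_2^2\epsilon\,\|f\|_\fB.
\]
Substituting $\epsilon$ from \eqref{oper-eps} and using $C_1,C_2\ge 1$ with $(1-2^{-\tau})^{1/\tau}\le 1\le 2^{1/\tau}$ immediately gives $C_2^2\epsilon\le 1/2$, proving \eqref{T-identity}.

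The main obstacle is passing from $\|I-T\|_{\fB\to\fB}\le 1/2$ to the inverse bound \eqref{T-inverse}, since the usual Neumann series argument is not directly available in a quasi-Banach space. Here I would invoke the Aoki--Rolewicz renorming \eqref{quasi-triangle2}: there is a subadditive functional $\|\cdot\|^*$ satisfying $\|\cdot\|^*\le\|\cdot\|_\fB^\tau\le 2\|\cdot\|^*$. Writing $\alpha:=C_2^2\epsilon$, subadditivity of $\|\cdot\|^*$ controls the tail sums,
\[
\Bigl\|\sum_{n=N}^{N'}(I-T)^nf\Bigr\|^*\le\sum_{n=N}^{N'}\alpha^{n\tau}\|f\|_\fB^\tau,
\]
so $T^{-1}f:=\sum_{n\ge 0}(I-T)^nf$ converges in $\fB$; a telescoping argument then confirms $TT^{-1}=T^{-1}T=I$.

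To finish, the same subadditive estimate delivers $\|T^{-1}f\|_\fB^\tau\le 2\|T^{-1}f\|^*\le 2\|f\|_\fB^\tau/(1-\alpha^\tau)$. A short computation shows $\alpha^\tau=(1-2^{-\tau})/(2(2C_1C_2^2)^\tau)\le 2^{-\tau}$, using $(2C_1C_2^2)^\tau\ge 2^\tau$ from $C_1,C_2\ge 1$; hence $2/(1-\alpha^\tau)\le 2/(1-2^{-\tau})=C_5^\tau$, and taking $\tau$-th roots yields \eqref{T-inverse}. Beyond the quasi-Banach subtlety, I do not anticipate any other difficulty.
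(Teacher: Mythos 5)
Your proposal is correct and follows essentially the route the paper intends (the paper defers the proof to [DKKP], but its displayed identity $I-T=T_{\tilde\psi}DS_{\tilde\psi}$ together with the specific constants $\epsilon$ and $C_5$ encode exactly your argument): the factorization of $I-T$ through $D$ combined with the frame bounds \eqref{equiv_frame} gives $\|I-T\|_{\fB\to\fB}\le C_2^2\epsilon\le 1/2$, and the Neumann series controlled via the Aoki--Rolewicz norm \eqref{quasi-triangle2} yields the inverse with $\|T^{-1}\|^\tau\le 2/(1-\alpha^\tau)\le C_5^\tau$. The only cosmetic difference is that you pass through the lower analysis bound $\|g\|_\fB\le C_2\|S_\psi g\|_\fb$ rather than the synthesis bound $\|T_{\tilde\psi}h\|_\fB\le C_2\|h\|_\fb$; both give the same constant.
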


\begin{lem}\label{lem:T-invert-psi}
\begin{sloppypar}
Assume $(\ref{oper-eps})$ holds. Then the operator
$H$ with matrix $\bH:=\{\langle T^{-1}\psi_\eta, \tilde\psi_\xi\rangle\}_{\xi, \eta\in\cX}$
is uniformly bounded on $\fb\in\YY_d$, i.e.
\end{sloppypar}
\begin{equation}\label{norm_H}
\|H\|_{\fb\mapsto\fb} \le C_6:=C_2^2C_5,\quad \forall\fb\in\YY_d.
\end{equation}
\end{lem}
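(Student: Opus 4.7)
The plan is to recognize the matrix $\bH$ as the matrix of the operator composition
\[
H \;=\; S_{\tilde\psi} \circ T^{-1} \circ T_\psi \;:\; \fb \to \fb,
\]
so that the required bound $C_6=C_2^2C_5$ drops out by multiplying the norm estimates already established: $\|T_\psi\|_{\fb\to\fB}\le C_2$ and $\|S_{\tilde\psi}\|_{\fB\to\fb}\le C_2$ from \eqref{equiv_frame}, together with $\|T^{-1}\|_{\fB\to\fB}\le C_5$ from Lemma~\ref{lem:T-invert}.

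To justify the factorization, I would fix $\fb\in\YY_d$, a sequence $h=\{h_\eta\}_{\eta\in\cX}\in\fb$, and unpack $(\bH h)_\xi = \sum_{\eta\in\cX}\langle T^{-1}\psi_\eta,\tilde\psi_\xi\rangle h_\eta$ into an operator-level identity. By condition A1(b) the series $T_\psi h=\sum_\eta h_\eta \psi_\eta$ converges unconditionally in $\fB$; since $T^{-1}:\fB\to\fB$ is continuous by Lemma~\ref{lem:T-invert}, one may push $T^{-1}$ inside the sum to obtain
\[
T^{-1}T_\psi h \;=\; \sum_{\eta\in\cX} h_\eta\, T^{-1}\psi_\eta \qquad\text{(unconditionally in }\fB\text{)}.
\]
Finally, the continuous embedding $\fB\hookrightarrow\cS'$ combined with $\tilde\psi_\xi\in\cS$ makes the linear functional $\langle\,\cdot\,,\tilde\psi_\xi\rangle$ continuous on $\fB$, which permits the entrywise identification
\[
\langle T^{-1}T_\psi h,\tilde\psi_\xi\rangle \;=\; \sum_{\eta\in\cX}\langle T^{-1}\psi_\eta,\tilde\psi_\xi\rangle h_\eta \;=\; (\bH h)_\xi,
\]
i.e. $Hh = S_{\tilde\psi}(T^{-1}(T_\psi h))$. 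Composing the three uniform bounds then yields $\|Hh\|_\fb \le C_2\cdot C_5\cdot C_2\,\|h\|_\fb = C_6\|h\|_\fb$ uniformly over $\fb\in\YY_d$.

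The lemma is essentially a bookkeeping statement that packages the three boundedness facts accumulated earlier in Section~\ref{s3}, so no serious obstacle is expected. The only care required is the interchange of $T^{-1}$ and of the pairing $\langle\,\cdot\,,\tilde\psi_\xi\rangle$ with the infinite sum defining $T_\psi h$; both interchanges are routine consequences of unconditional convergence in $\fB$ together with the continuity of the relevant maps, and neither requires any new estimate beyond what \eqref{equiv_frame} and Lemma~\ref{lem:T-invert} already supply.
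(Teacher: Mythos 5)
Your proof is correct and follows exactly the route the paper intends: the paper omits the proof (deferring to \cite{DKKP}) but explicitly records the factorization $H=S_{\tilde\psi}T^{-1}T_{\psi}$ in the remark following Lemmas~\ref{lem:T-bounded}--\ref{lem:T-invert-psi}, and the bound $C_6=C_2^2C_5$ is obtained by composing \eqref{equiv_frame} with \eqref{T-inverse} just as you do. Your justification of the two interchanges via unconditional convergence and the continuous embedding $\fB\hookrightarrow\cS'$ is the standard argument the paper itself uses elsewhere (cf.\ \eqref{inner-prod}).
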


The operators from the previous three lemmas can be written as $T_\theta=T_{\tilde\psi} B$,
$T=T_{\theta}S_{\tilde\psi}=T_{\tilde\psi}BS_{\tilde\psi}$,
$I-T=T_{\tilde\psi}DS_{\tilde\psi}$, $H=S_{\tilde\psi}T^{-1}T_{\psi}$.

\smallskip

\noindent
{\bf Construction of the dual frame \boldmath $\{\tilde\theta_\xi\}$.}
For any $\xi\in\cX$ we define the linear functional $\tilde\theta_\xi$ by
\begin{equation}\label{def-f-dual}
\tilde\theta_\xi(f)=\langle f, \tilde\theta_\xi\rangle
:= \sum_{\eta\in\cX}
\langle T^{-1}\psi_\eta, \tilde\psi_\xi\rangle \langle f, \tilde\psi_\eta\rangle
\quad \hbox{for}\;\; f\in \fB,\;\; \fB\in \YY.
\end{equation}
Lemma~\ref{lem:T-invert-psi} and {\bf A1} imply that for any $f\in\fB$, $\fB\in\YY$,
\begin{equation}\label{analysis-bounded}
\|\{\langle f, \tilde\theta_\xi\rangle\}\|_{\fb}
\le \|H\|_{\fb\mapsto \fb}\|\{\langle f, \tilde\psi_\eta\rangle\}\|_{\fb}\le C_6C_2\|f\|_\fB.
\end{equation}
Denote
$\ONE_\xi:=\{\delta_{\xi\eta}\}_{\eta\in\cX}$.
Then $\ONE_\xi\in\fb$ by {\bf A3} (c) and $\|\ONE_\xi\|_\fb>0$ because $\fb$ is a~quasi-normed space.
Now, condition {\bf A3} (b) and inequality \eqref{analysis-bounded} imply
\begin{equation*}
|\tilde\theta_\xi(f)|=|\langle f, \tilde\theta_\xi\rangle|
= \frac{1}{\|\ONE_\xi\|_\fb}\|\langle f, \tilde\theta_\xi\rangle\ONE_\xi\|_{\fb}
\le\frac{1}{\|\ONE_\xi\|_\fb}\|\{\langle f, \tilde\theta_\xi\rangle\}\|_{\fb}
\le \frac{C_6C_2}{\|\ONE_\xi\|_\fb}\|f\|_\fB,
\end{equation*}
i.e. $\tilde\theta_\xi$ ($\xi\in\cX$) is a bounded linear functional on every $\fB\in\YY$.

Also, for any $f\in \fB$ by Lemma~\ref{lem:T-invert} $T^{-1}f\in \fB$ and
using Lemma~\ref{lem:T-bounded}
\begin{equation}\label{identity}
f=T(T^{-1}f) = \sum_{\xi\in\cX} \langle T^{-1}f, \tilde\psi_\xi\rangle\theta_\xi.
\end{equation}
Furthermore,
from the fact that $T^{-1}$ is a bounded operator on $\fB$
and (\ref{A1}) it follows that for any $f\in\fB$
$$
T^{-1}f = \sum_{\eta\in\cX}\langle f, \tilde\psi_\eta\rangle T^{-1}\psi_\eta,
$$
where the series converges unconditionally in $\fB$ and hence in $\cS'$.
This and the fact that $\tilde\psi_\xi\in\cS$ imply
\begin{equation}\label{inner-prod}
\langle T^{-1}f, \tilde\psi_\xi \rangle
= \sum_{\eta\in\cX}\langle T^{-1}\psi_\eta, \tilde\psi_\xi \rangle\langle f, \tilde\psi_\eta\rangle
= \langle f, \tilde\theta_\xi\rangle.
\end{equation}
Here the series converges unconditionally and hence absolutely
because of the unconditional convergence of the former series.
From \eqref{identity}--\eqref{inner-prod} it follows that
\begin{equation}\label{frame-rep}
f = \sum_{\xi\in\cX} \langle f, \tilde\theta_\xi\rangle\theta_\xi,
\quad f\in \fB,
\end{equation}
where $\langle f, \tilde\theta_\xi\rangle$ is defined in (\ref{def-f-dual}) and
the convergence is unconditional in $\fB$.

\smallskip

The following theorem shows that $\{\theta_\xi\}$, $\{\tilde\theta_\xi\}$
is a pair of dual frames for all spaces $\fB\in \YY$ if $\epsilon$ from \eqref{oper-eps} is sufficiently small.

\begin{thm}\label{thm:new-frames}
Let $\{\psi_\xi\}$, $\{\tilde\psi_\xi\}$ be a pair of dual old frames
for all $\fB\in\YY$ satisfying conditions {\bf A1}--{\bf A3} in Subsection~\ref{set-up}.
Assume $\{\theta_\xi\}_{\xi\in\cX}\subset\cap\{\fB : \fB\in\YY\}$ satisfies \eqref{oper-eps}
and $\{\tilde\theta_\xi\}$ is defined as in \eqref{def-f-dual}.
Then the analysis operator $S_{\tilde\theta}:\fB\to\fb$
and the synthesis operator $T_\theta: \fb\to\fB$ are uniformly bounded for $\fB\in\YY$, $\fb\in\YY_d$.
Furthermore, $T_\theta S_{\tilde\theta}=I$ on $\fB$, i.e. for any $f\in \fB$, $\fB\in\YY$, we have
\begin{equation}\label{frame-repr}
f= \sum_{\xi\in\cX} \langle f, \tilde\theta_\xi\rangle\theta_\xi,
\end{equation}
where the convergence is unconditional in $\fB$,
and
\begin{equation}\label{norm-equiv}
\|f\|_\fB \le C_7\|\{\langle f, \tilde\theta_\xi\rangle\}\|_\fb,\quad
\|\{\langle f, \tilde\theta_\xi\rangle\}\|_\fb\le C_8\|f\|_\fB
\end{equation}
with $C_7=2C_1C_2$ and $C_8=C_2C_5$.
\end{thm}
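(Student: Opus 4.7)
The plan is to assemble the theorem from the machinery developed in Lemmas~\ref{lem:T-bounded}--\ref{lem:T-invert-psi} and the manipulations leading to \eqref{frame-rep}. My starting point is the identity $\langle f, \tilde\theta_\xi\rangle = \langle T^{-1}f, \tilde\psi_\xi\rangle$ recorded in \eqref{inner-prod}, which says exactly that the new analysis operator factors as $S_{\tilde\theta} = S_{\tilde\psi}\circ T^{-1}$. This single relation will drive both the boundedness of $S_{\tilde\theta}$ and the lower frame bound, and it is the conceptual heart of the proof.

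First I will verify the upper estimate $\|S_{\tilde\theta}f\|_\fb \le C_8\|f\|_\fB$. Combining $\|T^{-1}\|_{\fB\to\fB}\le C_5$ from Lemma~\ref{lem:T-invert} with $\|S_{\tilde\psi}\|_{\fB\to\fb}\le C_2$ from \eqref{equiv_frame} immediately gives $C_8=C_2 C_5$. Boundedness of $T_\theta$ on $\fb$, together with unconditional convergence of $\sum_\xi h_\xi\theta_\xi$, is already provided by Lemma~\ref{lem:T-bounded}. The reproducing identity $T_\theta S_{\tilde\theta}=I$ is then essentially the content of \eqref{identity}--\eqref{frame-rep}: for $f\in\fB$ I set $g:=T^{-1}f\in\fB$, expand $g$ using the old dual frame \eqref{A1}, and apply the bounded operator $T$ term-by-term to obtain $f = Tg = \sum_\xi \langle g,\tilde\psi_\xi\rangle\theta_\xi = \sum_\xi \langle f,\tilde\theta_\xi\rangle\theta_\xi$, with unconditional convergence in $\fB$ inherited from the unconditional convergence of the old expansion.

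The lower bound $\|f\|_\fB \le C_7\|S_{\tilde\theta}f\|_\fb$ with $C_7 = 2C_1 C_2$ is the one place where I have to compute. I will again let $g := T^{-1}f$ and estimate $\|f\|_\fB = \|Tg\|_\fB$ via the quasi-triangle inequality \eqref{quasi-triangle} and the bound $\|I-T\|_{\fB\to\fB}\le 1/2$ from Lemma~\ref{lem:T-invert}:
\begin{equation*}
\|Tg\|_\fB \le C_1\bigl(\|g\|_\fB + \|(I-T)g\|_\fB\bigr) \le \tfrac{3}{2}C_1\|g\|_\fB \le 2C_1\|g\|_\fB.
\end{equation*}
The lower half of \eqref{equiv_frame} applied to $g$ next gives $\|g\|_\fB\le C_2\|S_{\tilde\psi}g\|_\fb = C_2\|S_{\tilde\theta}f\|_\fb$, and chaining the two inequalities produces the desired constant $C_7$.

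I do not anticipate a serious obstacle, since every substantive estimate has been prepared in the three preceding lemmas; the proof is essentially an assembly. The only subtle point worth flagging is the bookkeeping of constants: using the factorization $S_{\tilde\theta}=S_{\tilde\psi}\circ T^{-1}$ and bounding $\|T\|_{\fB\to\fB}\le \tfrac{3}{2}C_1$ directly from $\|I-T\|\le 1/2$ (rather than routing through $T_\theta\circ S_{\tilde\theta}$ and invoking Lemma~\ref{lem:T-bounded}) is precisely what yields the sharp constants $C_7=2C_1 C_2$ and $C_8=C_2 C_5$ in the statement rather than larger combinations involving $C_4$.
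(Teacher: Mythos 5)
Your proposal is correct and follows essentially the route the paper intends: the reproducing identity is exactly \eqref{identity}--\eqref{inner-prod}, and the factorization $S_{\tilde\theta}=S_{\tilde\psi}\circ T^{-1}$ (recorded in the paper after Lemma~\ref{lem:T-invert-psi} and again in Theorem~\ref{thm:prop-frame}) combined with \eqref{equiv_frame}, Lemma~\ref{lem:T-invert}, and the quasi-triangle inequality yields precisely the stated constants $C_7=2C_1C_2$ and $C_8=C_2C_5$. The argument is complete as an assembly of Lemmas~\ref{lem:T-bounded}--\ref{lem:T-invert-psi}, matching the proof of Theorem~3.5 in \cite{DKKP} to which the paper defers.
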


For a proof of Theorem~\ref{thm:new-frames} see the proof of Theorem~3.5 in \cite{DKKP}.

The main assumption in constructing the frames $\{\theta_\xi\}_{\xi\in\cX}$, $\{\tilde\theta_\xi\}_{\xi\in\cX}$
is the operator norm condition (\ref{oper-eps}).
A standard tool for evaluating the operator norms in sequence spaces is the following monotonicity lemma.

\begin{lem}\label{lem:0.3}
Assume the quasi-norm in $\fb(\cX)$ satisfies conditions {\bf A3} $(a)$--$(b)$ in Subsection~\ref{set-up}.
If the entries of two matrices $\bF=\{f_{\xi,\eta}\}_{\xi,\eta\in\cX}, \bG=\{g_{\xi,\eta}\}_{\xi,\eta\in\cX}$
are related by $|f_{\xi,\eta}|\le g_{\xi,\eta}$, $\xi,\eta\in\cX$, then the respective operators $F$, $G$ are related by
\begin{equation}\label{eq:13}
	\|F\|_{\fb\to\fb}\le \|G\|_{\fb\to\fb}.
\end{equation}
\end{lem}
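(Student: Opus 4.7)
The plan is to prove the inequality by the standard entrywise domination argument, exploiting the two absolute-value / monotonicity properties of the quasi-norm that are packaged in condition {\bf A3}.

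First, I would reduce to the nontrivial case $\|G\|_{\fb\to\fb}<\infty$ and, for an arbitrary $h=\{h_\eta\}_{\eta\in\cX}\in\fb$, form the pointwise modulus sequence $|h|:=\{|h_\eta|\}_{\eta\in\cX}$. By condition {\bf A3}~(a) we have $\||h|\|_\fb=\|h\|_\fb$, so $|h|\in\fb$ with the same norm. Since $G:\fb\to\fb$ is bounded, $G|h|\in\fb$, which in particular means that for each fixed $\xi\in\cX$ the series
\[
(G|h|)_\xi=\sum_{\eta\in\cX} g_{\xi,\eta}|h_\eta|
\]
converges, and $\|G|h|\|_\fb\le \|G\|_{\fb\to\fb}\|h\|_\fb$.

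Second, I would use the entrywise bound $|f_{\xi,\eta}|\le g_{\xi,\eta}$ to dominate $Fh$ by $G|h|$ coordinatewise. Indeed, the majorant series $\sum_\eta g_{\xi,\eta}|h_\eta|$ converges, so the series $\sum_\eta f_{\xi,\eta}h_\eta$ converges absolutely and defines $(Fh)_\xi$, with
\[
|(Fh)_\xi|\le \sum_{\eta\in\cX}|f_{\xi,\eta}||h_\eta|\le \sum_{\eta\in\cX} g_{\xi,\eta}|h_\eta|=(G|h|)_\xi,\qquad \xi\in\cX.
\]

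Third, I would invoke the two parts of {\bf A3} on the sequence $\{(Fh)_\xi\}$: by (a), $\|Fh\|_\fb=\|\{|(Fh)_\xi|\}\|_\fb$, and by (b), the pointwise inequality $|(Fh)_\xi|\le (G|h|)_\xi$ yields $\|\{|(Fh)_\xi|\}\|_\fb\le \|G|h|\|_\fb$. Chaining these estimates with the bound from the first step gives
\[
\|Fh\|_\fb\le \|G|h|\|_\fb\le \|G\|_{\fb\to\fb}\|h\|_\fb,
\]
so $Fh\in\fb$ and, taking the supremum over $h$ with $\|h\|_\fb\le 1$, we obtain $\|F\|_{\fb\to\fb}\le \|G\|_{\fb\to\fb}$.

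There is no real obstacle here; the only point that requires a little care is the definedness of $Fh$, which is not assumed a priori but is recovered automatically from the boundedness of $G$ together with the entrywise domination, using {\bf A3}~(a) to pass between $h$ and $|h|$ without changing the norm. The argument does not invoke the triangle inequality in $\fb$ (and hence does not incur the quasi-Banach constant $C_1$), so the bound is sharp in that sense.
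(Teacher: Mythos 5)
Your proof is correct and follows essentially the same argument as the paper: entrywise domination $|(Fh)_\xi|\le (G|h|)_\xi$, then conditions \textbf{A3}~(b) and (a) give $\|Fh\|_\fb\le\|G|h|\|_\fb\le\|G\|_{\fb\to\fb}\||h|\|_\fb=\|G\|_{\fb\to\fb}\|h\|_\fb$. Your additional remark on the well-definedness of $Fh$ via the convergent majorant series is a minor point the paper leaves implicit, but does not change the argument.
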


\begin{proof}
The relation between $\bF$ and $\bG$ implies
\begin{equation*}
	|(\bF h)_\xi|=|\sum_{\eta\in\cX}h_\eta f_{\xi,\eta}|\le \sum_{\eta\in\cX}|h_\eta||f_{\xi,\eta}|
	\le \sum_{\eta\in\cX}|h_\eta|g_{\xi,\eta} = (\bG |h|)_\xi.
\end{equation*}
Now, (b) and (a) of {\bf A3} imply that for every $h\in \fb(\cX)$
\begin{equation*}
	\|\bF h\|_\fb \le \|\bG |h|\|_\fb
	\le \|\bG\|_{\fb\to\fb} \| |h|\|_\fb = \|\bG\|_{\fb\to\fb} \| h\|_\fb,
\end{equation*}
which implies \eqref{eq:13}.
\end{proof}

\subsection{Almost diagonal operators}\label{s3_4}

In the next two sections we shall apply the small perturbation method described above for construction of new frames
for the Besov spaces from
$$
\YY=\big\{\cB^{sq}_p(\SS): (s, p, q)\in\QQ(A)\big\}
$$
for an arbitrary fixed $A>1$
as well as for the respective collection of Triebel-Lizorkin spaces $\cF^{sq}_p(\SS)$.
All spaces from $\YY$ satisfy \eqref{quasi-triangle} with $C_1=C_1(A)$.
On~account of \thmref{thm:F-Bnorm-equivalence} and \remref{rem:unif-bounds}
the frame $\Psi=\{\psi_\xi\}_{\xi\in\cX}$ is real-valued, self-dual, i.e. $\tilde\psi_\xi=\psi_\xi$, and
conditions {\bf A1}--{\bf A2} in Subsection~\ref{set-up} are satisfied
with a constant $C_2=C_2(d,A,\{\psi_\xi\})$.
Conditions {\bf A3} are trivially satisfied for the sequence spaces
$\bb^{sq}_p, \ff^{sq}_p$ with $0<p,q<\infty$, $s\in\RR$, and for $\ell^p$, $0<p<\infty$, as well.
It remains to establish sufficient conditions for verifying the operator norm bound \eqref{oper-eps}.
To this end, using \lemref{lem:0.3} one can compare the operator matrix elements
with the elements of an appropriate \emph{almost diagonal matrix} (cf. \cite{FJ, KP1}).

The almost diagonal matrices we shall use are
$\Omega_{K,M}:=\{\omega_{\xi,\eta}^{(K,M)}\}_{\xi,\eta\in\cX}$
with entries
\begin{equation}\label{eq:omega_xi_eta}
\omega_{\xi,\eta}^{(K,M)}
:=\left(\frac{\min\{N_\xi,N_\eta\}}{\max\{N_\xi,N_\eta\}}\right)^{K+(d-1)/2}
\frac{1}{\left(1+\min\{N_\xi,N_\eta\}\rho(\xi,\eta)\right)^M},
\end{equation}
where $N_\xi:= 2^{j-1}$ for $\xi\in\cX_j$, $j\ge 0$.
Other (non-symmetric) examples of almost diagonal matrices with index set $\cX$ are given in \cite[Definition 3.9]{KP1}.

We next show that under
appropriate conditions on $K$ and $M$ the operator $\Omega$ with matrix $\Omega_{K,M}$
is bounded on $\bb^{sq}_p$ and $\ff^{sq}_p$.
In the following, we shall use the notation $\cJ:=(d-1)/\min\{1,p\}$ in the case of $\bb$-spaces
and $\cJ:=(d-1)/\min\{1,p, q\}$ in the case of $\ff$-spaces.

\begin{thm}\label{thm:almost-diag}
Let $s\in\RR$, $0<p, q<\infty$.
For a fixed $\delta\in(0,1]$ assume that $K,M\in\NN$ satisfy
\begin{equation}\label{cond-KM}
K\ge \max\{s,\cJ-s-d+1\}+\delta\quad\hbox{and}\quad M\ge \cJ+\delta.
\end{equation}
Then the operator $\Omega$ with matrix $\Omega_{K,M}$ is bounded on $\bb^{sq}_p$ and on $\ff^{sq}_p$.
More precisely, there exists a constant $C_9>0$ such that
\begin{equation*}
\|\Omega h\|_{\bb^{sq}_p} \le C_9\|h\|_{\bb^{sq}_p},\quad \forall h\in\bb^{sq}_p;\qquad
\|\Omega h\|_{\ff^{sq}_p} \le C_9\|h\|_{\ff^{sq}_p},\quad \forall h\in\ff^{sq}_p.
\end{equation*}
Here in the case of $\bb$-spaces the constant $C_9$ can be written in the form
$$
C_9=\Big(\frac{c_{\dag}}{\delta^2}+\Big(\frac{c_{\dag}}{\delta p}\Big)^{2/p-1}\Big)
\Big(\frac{c_{\star}}{\delta}+\Big(\frac{c_{\star}}{\delta q}\Big)^{1/q}\Big),
$$
and in the case of $\ff$-spaces in the form
$$
C_9={c_{\dag}}^{d/\cJ+2|s/(d-1)+1/2|}
\Big(\frac{c_{\star}}{\delta}+\Big(\frac{c_{\star}}{\delta q}\Big)^{1/q}\Big)\frac{\tc_1}{\delta},
$$
where $c_{\dag}$ is a constant depending only on $d$, $c_{\star}$ is an absolute constant,
and $\tc_1$ is the constant from the maximal inequality \eqref{max-const} with $1/t=1/\min\{1,p,q\}+\delta/(d-1)$.
\end{thm}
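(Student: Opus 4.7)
The plan is to follow the classical Frazier--Jawerth almost-diagonal theorem transplanted to the sphere, and to track every constant through the argument so that $C_9$ comes out in the explicit form claimed. First I would decompose $\Omega = \Omega^{(1)} + \Omega^{(2)}$ by splitting each sum according to whether $\eta\in\cX_\ell$ has $\ell\ge j$ or $\ell<j$ (with $\xi\in\cX_j$):
\begin{align*}
(\Omega^{(1)}h)_\xi &= \sum_{\ell\ge j} 2^{-(\ell-j)(K+(d-1)/2)}\sum_{\eta\in\cX_\ell}\frac{h_\eta}{(1+2^{j-1}\rho(\xi,\eta))^M},\\
(\Omega^{(2)}h)_\xi &= \sum_{\ell<j} 2^{-(j-\ell)(K+(d-1)/2)}\sum_{\eta\in\cX_\ell}\frac{h_\eta}{(1+2^{\ell-1}\rho(\xi,\eta))^M}.
\end{align*}
Combining the ratio factor $2^{-|j-\ell|(K+(d-1)/2)}$ with the scale weight $2^{js}$ built into the target quasi-norm, the hypothesis \eqref{cond-KM} produces a residual geometric decay $2^{-\delta|\ell-j|}$ in both directions; the term $s$ in the lower bound on $K$ handles the $\ell>j$ tail for positive $s$, while $\cJ-s-d+1$ handles the $\ell<j$ tail for negative $s$.

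The spatial estimate on a single level $\ell$ is the workhorse. I would choose $t$ with $1/t=1/\min\{1,p,q\}+\delta/(d-1)$ in the $\ff$-case (respectively $1/t=1/\min\{1,p\}+\delta/(d-1)$ in the $\bb$-case), so that $Mt>d-1$. Using \eqref{disjoint}, \eqref{sph_cap2}, and \eqref{sph_cap}, the near-disjointness of the caps $\{B_\eta\}_{\eta\in\cX_\ell}$ combined with a standard dyadic partition of $\SS$ by distance from $\xi$ yields, for every $x\in B_\xi$,
\begin{equation*}
\sum_{\eta\in\cX_\ell}|h_\eta|\bigl(1+2^{(\min\{j,\ell\})-1}\rho(\xi,\eta)\bigr)^{-M} \le c_\dagger\,2^{\ell(d-1)/2}\,\MM_t(F_\ell)(x),
\end{equation*}
where $F_\ell:=\sum_{\eta\in\cX_\ell}|h_\eta|\,|B_\eta|^{-1/2}\ONE_{B_\eta}$. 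The constant $c_\dagger$ arises by controlling $\int(1+Nr)^{-Mt}r^{d-2}\,dr\le c(d)/\delta$, and depends only on $d$.

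In the Triebel--Lizorkin case I would feed this bound into $\|\Omega h\|_{\ff^{sq}_p}$, invoke the Fefferman--Stein vector-valued maximal inequality \eqref{max-ineq}--\eqref{max-const} to move $\MM_t$ outside the $L^p(\ell^q)$-norm (producing the factor $\tc_1$), and finish with a discrete Young-type inequality in the scale variable that contributes the factor $c_\star/\delta+(c_\star/(\delta q))^{1/q}$. In the Besov case the argument is analogous but cleaner: the $\ell^p$-norm in $\xi$ on level $j$ is estimated directly, using the $p$-triangle inequality when $p<1$, which is the origin of the $(c_\dagger/(\delta p))^{2/p-1}$ term; the same discrete Young step then handles the scale convolution.

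The main obstacle is not the boundedness itself (which is classical) but the constant bookkeeping. The two independent sources of $1/\delta$---the spatial tail $\int(1+Nr)^{-Mt}r^{d-2}\,dr$ and the geometric scale tail $\sum_k 2^{-\delta k}$---must be shown to combine in the announced product form, and in the Triebel--Lizorkin case one must also absorb the weight $2^{(\ell-j)s}$ by matching it against powers of $2^{(\ell-j)(d-1)/2}$ before invoking Fefferman--Stein; it is this rearrangement, carried out separately for $\ell>j$ and $\ell<j$, that produces the exponent $d/\cJ+2|s/(d-1)+1/2|$ on $c_\dagger$. Because $s$ is allowed to take either sign with $|s|\le A$, the two halves of the decomposition must be treated symmetrically, and this symmetry is what forces the two-sided bound in \eqref{cond-KM}.
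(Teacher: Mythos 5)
Your architecture matches the paper's (split by levels, a maximal-function bound for the spatial sums, Fefferman--Stein, then a discrete Hardy inequality in the scale variable), but the ``workhorse'' spatial estimate you state is wrong, and the error is not cosmetic. The correct form (the paper's Lemma~\ref{lem:mrax}) is
\begin{equation*}
\sum_{\eta\in\cX_\ell}\frac{|h_\eta|}{\bigl(1+2^{\min\{j,\ell\}-1}\rho(\xi,\eta)\bigr)^{M}}
\le c\,\max\bigl\{1,2^{(\ell-j)(d-1)/t}\bigr\}\,
\MM_t\Bigl(\sum_{\eta\in\cX_\ell}|h_\eta|\ONE_{B_\eta}\Bigr)(x),\qquad x\in B_\xi,
\end{equation*}
whereas your right-hand side $c_\dagger 2^{\ell(d-1)/2}\MM_t(F_\ell)(x)\approx c_\dagger 2^{\ell(d-1)}\MM_t\bigl(\sum_\eta|h_\eta|\ONE_{B_\eta}\bigr)(x)$ carries no dependence on the coarser level $j$ and no exponent $(d-1)/t$. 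For $\ell\gg j$ your inequality is false: take $h$ supported at a single $\eta_0\in\cX_\ell$ adjacent to $\xi$ and $x\in B_\xi$ with $\rho(x,\eta_0)\sim 2^{-j}$; the left side is $\sim 1$ while $\MM_t(\ONE_{B_{\eta_0}})(x)\lesssim 2^{-(\ell-j)(d-1)/t}$, so your right side is $\lesssim 2^{\ell(d-1)}2^{-(\ell-j)(d-1)/t}\to 0$ as $\ell\to\infty$ because $1/t>1$. More importantly, the factor $2^{(\ell-j)(d-1)/t}=2^{(\ell-j)(\cJ+\delta/2)}$ is precisely the loss that the hypothesis $K\ge\cJ-s-d+1+\delta$ is there to absorb, after combining the off-diagonal decay $2^{-(\ell-j)(K+(d-1)/2)}$ with the weight conversion $N_\xi^{s+(d-1)/2}\mapsto N_\eta^{s+(d-1)/2}$; without the correct lemma the residual decay $2^{-\delta|\ell-j|}$ you invoke cannot be produced, and neither can the constant $C_9$.

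Two further points. First, you have the roles of the two constraints on $K$ reversed: $K\ge s+\delta$ is what the coarse-to-fine direction $\ell<j$ requires (binding for large positive $s$; there the maximal lemma costs only an absolute constant), while $K\ge\cJ-s-d+1+\delta$ is what the fine-to-coarse direction $\ell\ge j$ requires (binding for negative $s$, where the $2^{(\ell-j)(d-1)/t}$ loss appears). Second, the paper does not use the maximal operator at all in the Besov case: it estimates the level-$j$ $\ell^p$-sum directly via H\"older (or the $p$-inequality for $p\le1$) together with the counting bound $\sum_{\eta\in\cX_{j+m}}(1+N_\xi\rho(x,\eta))^{-(d-1+\beta)}\le c(d)\beta^{-1}2^{m(d-1)}$, which is why the announced $\bb$-constant contains no $\tc_1$. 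Your maximal-function route for $\bb^{sq}_p$ with $1/t=1/\min\{1,p\}+\delta/(d-1)$ could be made to work, but you would then still have to re-derive the stated constant form from the scalar maximal inequality, which the proposal does not do.
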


Observe that if $(s, p, q)\in\QQ(A)$ for some fixed $A>1$ and $\delta=1$
then \thmref{thm:almost-diag} holds with $C_9$ depending only on $d$ and $A$.

To streamline our presentation we defer the
long tedious proof of \thmref{thm:almost-diag} to Section~\ref{appendix}.

In light of \thmref{thm:almost-diag} we next use \propref{prop:5_3} and the localization property \eqref{local-needlet-0} to show that
the scalar products of the elements of the needlet system $\{\psi_\xi\}$ from Subsection~\ref{subsec:frame-SS}
are majorized by the entries of an almost diagonal matrix.

\begin{prop}\label{prop:almost_diag}
For any $K\in\NN_0$ and $M>d-1$ the needlet system $\{\psi_\xi\}$ from Subsection~\ref{subsec:frame-SS} satisfies
\begin{equation}\label{eq:almost_diag_1}
|\langle\psi_\eta,\psi_\xi\rangle|\le C_{10} \omega_{\xi,\eta}^{(K,M)},\quad\forall \xi,\eta\in\cX,
\end{equation}
where $\omega_{\xi,\eta}^{(K,M)}$ is defined in \eqref{eq:omega_xi_eta} and $C_{10}= 2^K c_3\tc_4^2$ with
$c_3$ from \propref{prop:5_3} and $\tc_4$ from \eqref{local-needlet-0}.
\end{prop}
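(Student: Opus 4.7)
My plan is to combine the bandlimited structure of the needlets with Proposition~\ref{prop:5_3}: spectral disjointness reduces the problem to the regime where the two frequency scales differ by at most a bounded factor, and there Proposition~\ref{prop:5_3} gives exactly the required decay, up to a conversion between the $(d-1)/2$ and $K+(d-1)/2$ powers.

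First I would exploit the spectral support. By the symmetry of $\langle\psi_\xi,\psi_\eta\rangle$ and of $\omega_{\xi,\eta}^{(K,M)}$, it suffices to treat $\xi\in\cX_{j_1}$, $\eta\in\cX_{j_2}$ with $j_1\le j_2$. Since $\LL_j=\sum_k\aa(k/2^{j-1})Z_k$ with $\supp\aa\subset[1/2,2]$, the kernel $\LL_j$ has spherical-harmonic support in $\{k:2^{j-2}\le k\le 2^j\}$ for $j\ge 1$, while $\LL_0=Z_0$ is the projector onto constants. Using the orthogonality of the projectors $Z_k$ together with \eqref{def-psi-xi}, I get
\begin{equation*}
\langle\psi_\xi,\psi_\eta\rangle=\ww_\xi^{1/2}\ww_\eta^{1/2}\sum_{k}\aa\Big(\frac{k}{2^{j_1-1}}\Big)\aa\Big(\frac{k}{2^{j_2-1}}\Big)\,Z_k(\xi\cdot\eta),
\end{equation*}
which vanishes whenever $j_2-j_1\ge 3$ (disjoint supports in $k$), or when $j_1=0$ and $j_2\ge 1$ (orthogonality of $\psi_\eta$ to constants). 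In these cases \eqref{eq:almost_diag_1} holds trivially, and the remaining degenerate case $j_1=j_2=0$ reduces to $\xi=\eta=e_1$ and can be checked by hand.

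Next, in the remaining range $1\le j_1\le j_2\le j_1+2$, I would apply Proposition~\ref{prop:5_3} with $\scal_1=N_\xi$, $\scal_2=N_\eta$ (both $\ge 1$) and $\kappa_1=\tc_4 N_\xi^{-(d-1)/2}$, $\kappa_2=\tc_4 N_\eta^{-(d-1)/2}$, reading these constants off from the localization bound \eqref{local-needlet-0}. This yields
\begin{equation*}
|\langle\psi_\xi,\psi_\eta\rangle|\le c_3\tc_4^2\,\frac{(N_\xi/N_\eta)^{(d-1)/2}}{(1+N_\xi\rho(\xi,\eta))^M}.
\end{equation*}
Since the remaining range forces $N_\eta/N_\xi=2^{j_2-j_1}\in\{1,2,4\}$, the elementary inequality $(N_\xi/N_\eta)^{(d-1)/2}\le (N_\eta/N_\xi)^K(N_\xi/N_\eta)^{K+(d-1)/2}$ converts the previous display into the desired form with a constant of the shape $C_{10}=C(K)\,c_3\tc_4^2$. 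A subcase-by-subcase analysis on $j_2-j_1\in\{0,1,2\}$ then matches the sharper constant $2^K c_3\tc_4^2$ stated in \eqref{eq:almost_diag_1}.

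The main obstacle, once the clean structural observation about spectral support is made, is purely bookkeeping of constants to arrive at exactly $C_{10}=2^K c_3\tc_4^2$ rather than a cruder power of $2$. The only other place requiring care is the edge case $j_1=0$, where $N_\xi=1/2$ violates the hypothesis $\scal_1\ge 1$ of Proposition~\ref{prop:5_3}; but this is precisely where spectral orthogonality kills the inner product outright, so no genuine difficulty arises.
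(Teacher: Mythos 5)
Your proposal is correct and follows essentially the same route as the paper: use the spectral disjointness of the kernels $\LL_j$ to reduce to nearby scales, then apply Proposition~\ref{prop:5_3} with $\kappa_1=\tc_4 N_\xi^{-(d-1)/2}$, $\kappa_2=\tc_4 N_\eta^{-(d-1)/2}$ read off from \eqref{local-needlet-0}, and convert $(N_\xi/N_\eta)^{(d-1)/2}$ into $\omega_{\xi,\eta}^{(K,M)}$ at the cost of $(N_\eta/N_\xi)^K$. One bookkeeping point: since $\aa$ is smooth with $\supp\aa\subset[1/2,2]$ it vanishes at $1/2$ and $2$, so $\langle\psi_\eta,\psi_\xi\rangle=0$ already for $|j_1-j_2|\ge 2$ (not only $\ge 3$); this is what the paper uses, and it is exactly what yields the stated constant $2^K c_3\tc_4^2$ — if the case $j_2-j_1=2$ were genuinely nonzero, your conversion factor would be $4^K$ there, so your subcase analysis must invoke this vanishing rather than Proposition~\ref{prop:5_3}.
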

\begin{proof}
Let $\xi\in\cX_j$ and $\eta\in\cX_k$.
From \eqref{def-Psi-j} and \eqref{def-psi-xi} it readily follows that $\langle\psi_\eta,\psi_\xi\rangle=0$ if $|j-k|\ge 2$.
The symmetry of $\omega_{\xi,\eta}^{(K,M)}$
implies that it suffices to consider only the cases $k=j$ and $k=j+1$.

On account of \eqref{local-needlet-0}
condition \eqref{eq:1c} is satisfied for $g=\psi_\xi$  with $x_1=\xi$, $N_1=N_\xi$, and $\kappa_1=\tc_4 N_\xi^{-(d-1)/2}$ and
condition \eqref{eq:2c} is satisfied for $f=\psi_\eta$ with $x_2=\eta$, $N_2=N_\eta$, and $\kappa_2=\tc_4 N_\eta^{-(d-1)/2}$.
Then \propref{prop:5_3} and $N_\eta=N_\xi$ for $k=j$ or $N_\eta=2N_\xi$ for $k=j+1$
show that \eqref{eq:almost_diag_1} is satisfied with $C_{10}= c_3\tc_4^2$ or $C_{10}= 2^K c_3\tc_4^2$, respectively.
\end{proof}

Note that a combination of \propref{prop:almost_diag}, \lemref{lem:0.3}, and \thmref{thm:almost-diag}
readily yields another proof of \eqref{oper-A}.

\begin{cor}\label{cor:almost_diag}
Let $\fb=\bb_p^{sq}$ or $\fb=\ff_p^{sq}$ with $(s, p, q)\in \QQ(A)$ for a fixed $A>1$.
Then $\{\psi_\xi\}$ satisfies \eqref{oper-A}
with a constant $C_3$ depending only on $d$, $A$ and $\varphi$,
namely $C_3=C_9 C_{10}$, where $C_9$ is from \thmref{thm:almost-diag} with
$K = M = \left\lceil Ad\right\rceil$
and $C_{10}$ is from \propref{prop:almost_diag} with the same $K$ and $M$.
\end{cor}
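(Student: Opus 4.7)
The corollary is essentially a packaging statement: it combines the pointwise off-diagonal bound on $\{\langle\psi_\eta,\psi_\xi\rangle\}$ (Proposition~\ref{prop:almost_diag}), the monotonicity of operator norms on the sequence spaces $\fb$ (Lemma~\ref{lem:0.3}), and the boundedness of the almost-diagonal operator $\Omega$ on $\fb$ (Theorem~\ref{thm:almost-diag}). The plan is therefore to chain these three ingredients and, in the process, verify that a single choice of $K$ and $M$ works uniformly for all triples $(s,p,q)\in\QQ(A)$.

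First I would fix $K=M=\lceil Ad\rceil$ and $\delta=1$. By Proposition~\ref{prop:almost_diag} the entries of the matrix $\bA=\{\langle\psi_\eta,\psi_\xi\rangle\}_{\xi,\eta\in\cX}$ of the operator $A=S_\psi T_\psi$ satisfy
\begin{equation*}
|a_{\xi,\eta}|\le C_{10}\,\omega_{\xi,\eta}^{(K,M)},\qquad \forall\xi,\eta\in\cX,
\end{equation*}
with $C_{10}=2^K c_3\tc_4^2$ depending only on $d$ and $A$ (since $K$ is fixed by $d$ and $A$). Applying Lemma~\ref{lem:0.3} with $\bG=C_{10}\Omega_{K,M}$ gives $\|A\|_{\fb\to\fb}\le C_{10}\|\Omega\|_{\fb\to\fb}$ on any sequence space $\fb$ meeting conditions \textbf{A3}(a)--(b) in Subsection~\ref{set-up}, which is the case for every $\bb^{sq}_p$ and $\ff^{sq}_p$ in the considered range.

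Next I would verify that Theorem~\ref{thm:almost-diag} applies with $\delta=1$ and $K=M=\lceil Ad\rceil$ for all $(s,p,q)\in\QQ(A)$. Since $p,q\ge A^{-1}$ we have $\min\{1,p,q\}\ge A^{-1}$, so $\cJ=(d-1)/\min\{1,p,q\}\le (d-1)A$ (and similarly in the $\bb$ case). Thus
\begin{equation*}
\max\{s,\;\cJ-s-d+1\}+1\le \max\{A,\;(d-1)A+A-d+1\}+1\le dA,
\end{equation*}
since $d\ge 2$ and $A\ge 1$, and also $\cJ+1\le (d-1)A+1\le dA$; both bounds are $\le \lceil Ad\rceil$. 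Therefore condition \eqref{cond-KM} is satisfied, and Theorem~\ref{thm:almost-diag} yields $\|\Omega\|_{\fb\to\fb}\le C_9$ with a constant $C_9$ depending only on $d$ and $A$ (because with $\delta=1$ fixed the explicit formulas for $C_9$ in $\bb$ and $\ff$ cases involve only $d$, $A$, and the maximal-inequality constant $\tc_1$, itself controlled by $d$ and $A$).

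Combining the two estimates yields $\|A\|_{\fb\to\fb}\le C_9 C_{10}$, which is exactly \eqref{oper-A} with $C_3=C_9C_{10}$. There is no serious obstacle here; the only thing requiring care is the uniformity of the constants across the index range $\QQ(A)$, which is handled by the explicit parameter tracking in Theorem~\ref{thm:almost-diag} and by the fact that the bound in Proposition~\ref{prop:almost_diag} depends on $(K,M)$ only through the constant $C_{10}$, not through the triple $(s,p,q)$.
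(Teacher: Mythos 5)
Your proposal is correct and follows the same route as the paper: the paper's proof is exactly the chain Proposition~\ref{prop:almost_diag} $\to$ Lemma~\ref{lem:0.3} $\to$ Theorem~\ref{thm:almost-diag} with $\delta=1$, yielding $C_3=C_9C_{10}$. Your explicit verification that $K=M=\lceil Ad\rceil$ satisfies \eqref{cond-KM} uniformly over $\QQ(A)$ is the only step the paper leaves implicit, and your computation of it is accurate.
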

\begin{proof}
As $K$ and $M$ satisfy assumption \eqref{cond-KM} of \thmref{thm:almost-diag} with $\delta=1$,
Proposition~\ref{prop:almost_diag}, \lemref{lem:0.3}, and \thmref{thm:almost-diag} imply that
\eqref{oper-A} is valid with $C_3=C_9 C_{10}$.
\end{proof}

We shall also apply \thmref{thm:almost-diag} in Section~\ref{s6} to show
that condition \eqref{oper-eps} holds for the constructed new Newtonian kernel frame.

\section{Space localization of needlets and Newtonian kernels}\label{s4}

The basic localization property of the needlets is given in \eqref{local-needlet-0}.
In this section we establish some additional localization properties of the needlets introduced in \S\ref{subsec:frame-SS}.
We also introduce the localized kernels developed in \cite{IP2}.
These kernels are linear combinations of shifts of the Newtonian kernel
and will be the building blocks in the construction of Newtonian kernel frames in Section~\ref{s6}.

\subsection{Properties of the needlets}\label{s4_1}
Let $N:=2^{j-1}$, $j\in\NN$, and assume that the integer parameter $K$ is even, i.e. $K\in 2\NN$.
Let $\varphi$ be the $C^\infty[0,\infty)$ function introduced in Subsection~\ref{subsec:frame-SS}.
We define (cf. \eqref{def-Psi-j})
\begin{equation}\label{eq:constr_3}
	\newPsi_N(u):=\Psi_j(u)=\sum_{\kk=0}^{\infty} \varphi\Big(\frac{\kk}{N}\Big)\PP_\kk(u)
	=\sum_{N/2<\kk<2N} \varphi\Big(\frac{\kk}{N}\Big)\PP_\kk(u)
\end{equation}
and
\begin{equation}\label{eq:constr_4}
\newPhi_N(u):=(-1)^{K/2} \sum_{N/2<\kk<2N} \varphi\Big(\frac{\kk}{N}\Big)[\kk(\kk+d-2)]^{-K/2}\PP_\kk(u),
\end{equation}
where $\PP_\kk$ is from (\ref{def-Pk}).
By (\ref{eq:LB1}) it follows that
\begin{equation*}
-\Delta_0\PP_k(\eta\cdot x) =\kk(\kk+d-2)\PP_k(\eta\cdot x),
\end{equation*}
implying
\begin{equation}\label{eq:LB2}
\Delta_0^{K/2}\newPhi_N(\eta\cdot x)= \newPsi_N(\eta\cdot x),\quad \eta, x\in\SS.
\end{equation}
Here $\Delta_0$ is the Laplace-Beltrami operator on $\SS$ (see Subsection~\ref{s1_2}).

Given $\eta\in\SS$ we extend $\newPhi_N(\eta\cdot x)$ and $\newPsi_N(\eta\cdot x)$ by
\begin{equation}\label{eq:constr_5}
	\til{\newPhi}_N(\eta; x):=\newPhi_N\Big(\frac{\eta\cdot x}{|x|}\Big),\quad
	\til{\newPsi}_N(\eta; x):=\newPsi_N\Big(\frac{\eta\cdot x}{|x|}\Big),\quad x\in\RR^d\backslash\{0\}.
\end{equation}
In light of \eqref{eq:laplace_extension} and \eqref{eq:LB2} this implies
\begin{equation}\label{eq:constr_6}
\Delta^{K/2}\til{\newPhi}_N(\eta; x)=\Delta_0^{K/2}\newPhi_N(\eta\cdot x)= \newPsi_N(\eta\cdot x)=\til{\newPsi}_N(\eta; x),
\quad x\in\SS.
\end{equation}

We shall need the following simple claim.

\begin{lem}\label{lem:2.1}
Let $W(x,y):=\displaystyle{\frac{x\cdot y}{|x||y|}}$ for $x,y\in\RR^d\backslash\{0\}$.

{\em (a)} For any $j=1,2, \dots, d$ we have
$$\frac{\partial}{\partial x_j}W(x,y) = P_j(x,y)|x|^{-3}|y|^{-1},\quad x,y\in\RR^d\backslash\{0\},$$
where $P_j$ is a homogeneous polynomial of degree 2 in $x$
and a homogeneous polynomial of degree 1 in $y$ and
\begin{equation}\label{prop-W}
\Big|\frac{\partial}{\partial x_j}W(x,y)\Big| \le 2\rho(x,y),\quad
\Big|\frac{\partial}{\partial y_j}W(x,y)\Big| \le 2\rho(x,y),\quad
x,y\in\SS.
\end{equation}

{\em (b)} For any multi-index $\beta$ with $|\beta|\ge 1$ and any $G\in C^{|\beta|}[-1,1]$ we have the representation
\begin{equation}\label{rep-par-beta}
\partial^\beta_x G(W(x,y))
= \sum_{1\le \nu \le |\beta|}G^{(\nu)}(W(x,y)) R_{\beta,\nu}(x,y)|x|^{-\nu-2|\beta|}|y|^{-\nu}
\end{equation}
with
\begin{equation}\label{rep-par-beta2}
R_{\beta,\nu}(x,y)=\!\sum_{\substack{\mu\\ |\mu|=2\nu-|\beta|}}
\prod_{k=1}^d\Big(|x|^3|y|\frac{\partial}{\partial x_j} W(x,y)\Big)^{\mu_k}
Q_{\beta,\nu,\mu}(x,y),~ \frac{|\beta|}{2} < \nu\le |\beta|,
\end{equation}
where $x,y\in\RR^d\backslash\{0\}$, $R_{\beta,\nu}$, $1\le \nu \le |\beta|$,
is a homogeneous polynomial of degree $|\beta|+\nu$ in  $x$ and a homogeneous polynomial of degree $\nu$ in  $y$, and
 $Q_{\beta,\nu,\mu}$, $|\beta|/2 < \nu\le |\beta|$,
is a homogeneous polynomial of degree $3|\beta|-3\nu$ in  $x$ and a homogeneous polynomial of degree $|\beta|-\nu$ in  $y$.
The coefficients of $R_{\beta,\nu}$ and $Q_{\beta,\nu,\mu}$ are independent of $G$ and
some of the polynomials $Q_{\beta,\nu,\mu}$ are identically equal to zero.

{\em (c)} For any multi indices $\alpha,\beta$ with $|\alpha|=1$, $|\beta|\ge 0$ and
any $G\in C^{|\beta|+1}[-1,1]$ we have the representation
\begin{multline}\label{rep-par-beta_c}
\partial^\alpha_y\partial^\beta_x G(W(x,y))\\
= \sum_{0\le \nu \le |\beta|}G^{(\nu+1)}(W(x,y)) (\partial^\alpha_y W(x,y)) R_{\beta,\nu}(x,y)|x|^{-\nu-2|\beta|}|y|^{-\nu}\\
+ \sum_{1\le \nu \le |\beta|}G^{(\nu)}(W(x,y))
\big[|y|^2\partial^\alpha_yR_{\beta,\nu}(x,y)-\nu y^\alpha R_{\beta,\nu}(x,y)\big]|x|^{-\nu-2|\beta|}|y|^{-\nu-2},
\end{multline}
where $x,y\in\RR^d\backslash\{0\}$, $R_{0,0}\equiv 1$ and $R_{\beta,\nu}$, $1\le \nu \le |\beta|$, are from part (b).
\end{lem}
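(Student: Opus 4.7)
\medskip
\noindent
\emph{Proof plan.} For part (a), I would compute $\partial_{x_j}W$ directly by the quotient rule: since $\partial_{x_j}(x\cdot y)=y_j$ and $\partial_{x_j}|x|^{-1}=-x_j|x|^{-3}$, we get
\[
\partial_{x_j}W(x,y)=\frac{y_j|x|^2-(x\cdot y)x_j}{|x|^3|y|},
\]
so $P_j(x,y)=y_j|x|^2-(x\cdot y)x_j$, which is homogeneous of degree $2$ in $x$ and degree $1$ in $y$. For the bound on $\SS$, restricting to $|x|=|y|=1$ gives $\partial_{x_j}W=y_j-(x\cdot y)x_j$, i.e.\ the $j$-th component of the tangential projection $y-(x\cdot y)x$ of $y$ at $x$. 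Its length is $\sqrt{1-(x\cdot y)^2}=\sin\rho(x,y)\le\rho(x,y)\le 2\rho(x,y)$, which bounds each component. The bound on $\partial_{y_j}W$ follows from the symmetry $W(x,y)=W(y,x)$.

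For part (b), I would induct on $|\beta|$. The base case $|\beta|=1$ is (a), with $R_{e_j,1}=P_j$. For the inductive step, apply $\partial_{x_j}$ to the right-hand side of \eqref{rep-par-beta} and use the product rule: the derivative of $G^{(\nu)}(W)$ contributes $G^{(\nu+1)}(W)P_j|x|^{-3}|y|^{-1}$; the derivative of $R_{\beta,\nu}$ is a polynomial; the derivative of $|x|^{-\nu-2|\beta|}$ contributes $-(\nu+2|\beta|)x_j|x|^{-\nu-2|\beta|-2}$. Collecting terms by the order of the $G$-derivative yields the recursions
\[
R_{\beta+e_j,\nu+1}=P_j\,R_{\beta,\nu},\qquad
R_{\beta+e_j,\nu}=|x|^2\partial_{x_j}R_{\beta,\nu}-(\nu+2|\beta|)x_j R_{\beta,\nu},
\]
after which a dimensional check confirms the homogeneity degrees of $R_{\beta+e_j,\nu'}$.

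The technically delicate step will be the factorization \eqref{rep-par-beta2}. I would establish it by interpreting the inductive computation above through Faà di Bruno: each term in $\partial^\beta_xG(W)$ with prefactor $G^{(\nu)}(W)$ corresponds to an (ordered) partition of the differentiations into $\nu$ blocks, each block $B$ contributing $\partial^B_x W$. A block of size one contributes a factor of the form $P_k|x|^{-3}|y|^{-1}$, while a block of size $\ge 2$ contributes a polynomial of $x$-degree $|B|+1$ and $y$-degree $1$ over $|x|^{2|B|+1}|y|$. When $\nu>|\beta|/2$, pigeonhole forces at least $2\nu-|\beta|$ of the blocks to be singletons. Choosing (in any fixed combinatorial manner) exactly $2\nu-|\beta|$ singleton blocks from each partition and indexing by the multi-index $\mu$ recording their coordinates yields the decomposition $\prod_k(P_k)^{\mu_k}\cdot Q_{\beta,\nu,\mu}(x,y)$ with $|\mu|=2\nu-|\beta|$; the remaining factors go into $Q_{\beta,\nu,\mu}$. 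Subtracting the extracted contribution ($x$-degree $2|\mu|$, $y$-degree $|\mu|$) from the total degrees of $R_{\beta,\nu}$ yields $Q_{\beta,\nu,\mu}$ with $x$-degree $|\beta|+\nu-2|\mu|=3|\beta|-3\nu$ and $y$-degree $\nu-|\mu|=|\beta|-\nu$, as claimed. The allowance that some $Q_{\beta,\nu,\mu}$ vanish reflects that not every multi-index $\mu$ arises from the chosen extraction scheme.

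For part (c), I would differentiate the formula in (b) with respect to $y^\alpha$, $|\alpha|=1$, term by term. The derivative of $G^{(\nu)}(W)$ produces $G^{(\nu+1)}(W)\,\partial^\alpha_yW$ with the same polynomial factor $R_{\beta,\nu}$, giving the first sum (with the convention $R_{0,0}\equiv 1$ covering the base case $|\beta|=0$). The derivative of $R_{\beta,\nu}$ and of $|y|^{-\nu}=|y|^{-\nu-2}|y|^2$ (using $\partial^\alpha_y|y|^{-\nu}=-\nu y^\alpha|y|^{-\nu-2}$) both preserve the $G$-order; after multiplying and dividing by $|y|^2$ to match the denominator, they combine into the bracketed polynomial $|y|^2\partial^\alpha_yR_{\beta,\nu}-\nu y^\alpha R_{\beta,\nu}$, producing the second sum. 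No new combinatorial structure is needed beyond what is already established in (b).
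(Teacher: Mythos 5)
Your proposal is correct, and for part (a), for the main representation \eqref{rep-par-beta}, and for part (c) it follows essentially the same path as the paper: direct computation of $P_j$, induction on $|\beta|$ via the product rule, and $y$-differentiation of \eqref{rep-par-beta} with the convention $R_{0,0}\equiv 1$. Two points of comparison. First, your bound in \eqref{prop-W} is obtained by recognizing $y_j-(x\cdot y)x_j$ as a component of the tangential projection of $y$ at $x$, of length $\sqrt{1-(x\cdot y)^2}=\sin\rho(x,y)\le\rho(x,y)$; the paper instead rewrites $P_j$ as $\sum_{\nu\ne j}x_\nu[y_j(x_\nu-y_\nu)-y_\nu(x_j-y_j)]$ and applies Cauchy--Schwarz together with $|x-y|\le\rho(x,y)$. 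Your identity is cleaner and even gives constant $1$ rather than $2$. Second, for the factorization \eqref{rep-par-beta2} you use a Fa\`a di Bruno/set-partition argument ($R_{\beta,\nu}=\sum_{|\pi|=\nu}\prod_{B\in\pi}R_{B,1}$, singleton blocks contributing $P_k$, and pigeonhole forcing at least $2\nu-|\beta|$ singletons when $\nu>|\beta|/2$), whereas the paper carries the factored form through the same induction via its recursion for $R_{\beta+\alpha,\nu}$. Your route makes the combinatorial origin of \eqref{rep-par-beta2} and of the degree counts more transparent; the paper's is shorter but leaves the verification of \eqref{rep-par-beta2} implicit.

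One slip to fix: the two displayed ``recursions''
$R_{\beta+e_j,\nu+1}=P_jR_{\beta,\nu}$ and
$R_{\beta+e_j,\nu}=|x|^2\partial_{x_j}R_{\beta,\nu}-(\nu+2|\beta|)x_jR_{\beta,\nu}$
cannot both hold as stated, since for $2\le\nu\le|\beta|$ they assign two different values to $R_{\beta+e_j,\nu}$. The coefficient of $G^{(\nu)}$ in $\partial_x^{\beta+e_j}G(W)$ receives contributions from \emph{both} the differentiation of $G^{(\nu-1)}(W)$ and the differentiation of the factors multiplying $G^{(\nu)}(W)$, so the correct single recursion is
\begin{equation*}
R_{\beta+e_j,\nu}=P_j\,R_{\beta,\nu-1}+|x|^2\partial_{x_j}R_{\beta,\nu}-(\nu+2|\beta|)x_jR_{\beta,\nu},
\qquad 1\le\nu\le|\beta|+1,
\end{equation*}
with $R_{\beta,0}\equiv 0$ for $|\beta|\ge 1$ and $R_{\beta,|\beta|+1}\equiv 0$. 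This is exactly the recursion the paper records, and your dimensional check goes through for the sum just as for each term separately.
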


\begin{proof}
Clearly,
\begin{multline*}
\frac{\partial}{\partial x_j}W(x,y) = |x|^{-3}|y|^{-1} \sum_{\nu\ne j}x_\nu(y_j x_\nu-y_\nu x_j)\\
= |x|^{-3}|y|^{-1} \sum_{\nu\ne j}x_\nu[y_j(x_\nu-y_\nu)-y_\nu(x_j-y_j)]
\end{multline*}
and the first inequality of part (a) follows using the Cauchy-Schwarz inequality and that $|x-y|\le\rho(x,y)$ for $x,y\in\SS$.
The second inequality of part (a) follows from the first one by symmetry.

Part (b) follows by induction on $|\beta|$.
Note that $R_{\beta,\nu}$ is defined recursively by $R_{(0,\dots,0),0}(x,y)\equiv 1$,
$R_{\beta,0}(x,y)\equiv 0$ for $|\beta|\ge 1$, $R_{\beta,|\beta|+1}(x,y)\equiv 0$ for $|\beta|\ge 0$,
and for $|\alpha|=1$, $|\beta|\ge 0$
\begin{multline*}
R_{\beta+\alpha,\nu}(x,y)=(|x|^3|y|\partial^\alpha_x W(x,y))R_{\beta,\nu-1}(x,y)\\
+|x|^2\partial^\alpha_x R_{\beta,\nu}(x,y)-(\nu+2|\beta|)x^\alpha R_{\beta,\nu}(x,y),\quad 1\le\nu\le|\beta|+1.
\end{multline*}

Part (c) follows from part (b) by differentiating \eqref{rep-par-beta} with respect to $y$ for $|\beta|\ge 1$
or trivially for $|\beta|=0$.
\end{proof}

From \lemref{lem:2.1} one easily derives localization estimates for zonal functions.

\begin{lem}\label{lem:2.2}
Let $K,d\in\NN$, $G\in C^K [-1,1]$.
Assume that for some $N\ge 1$ and $M>0$
\begin{equation}\label{eq:2.2.1}
|G^{(\nu)}(u)|\le \frac{\kappa N^{2\nu}}{(1+N\arccos u)^{M+\nu}}, \quad \forall u\in[-1,1],~0\le\nu\le K,
\end{equation}
where $\kappa>0$ is a constant  depending on $K$, $d$, $M$, and $N$.
Then for all $x,y\in\SS$ we have
\begin{equation}\label{eq:2.2.2}
\Big|\partial^\beta_x G\Big(\frac{y\cdot x}{|y||x|}\Big)\Big|
\le c\frac{\kappa N^{|\beta|}}{(1+N\rho(y,x))^M}, \quad 0\le|\beta|\le K,
\end{equation}
\begin{equation}\label{eq:2.2.2c}
\Big|\partial^\alpha_y\partial^\beta_x G\Big(\frac{y\cdot x}{|y||x|}\Big)\Big|
\le c\frac{\kappa N^{|\beta|+1}}{(1+N\rho(y,x))^M}, \quad |\alpha|=1,~0\le|\beta|\le K-1,
\end{equation}
where $c>0$ is a constant depending only on $K$ and $d$.
\end{lem}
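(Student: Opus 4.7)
The plan is to apply Lemma 4.2 directly and exploit the structure of the polynomials $R_{\beta,\nu}$ once everything is restricted to the sphere, where the homogeneity factors $|x|,|y|$ all equal one. For the first estimate \eqref{eq:2.2.2}, the case $|\beta|=0$ is immediate from hypothesis \eqref{eq:2.2.1} with $\nu=0$, since on $\SS$ we have $W(y,x)=y\cdot x=\cos\rho(y,x)$ and hence $\arccos W(x,y)=\rho(x,y)$. For $|\beta|\ge 1$, I would invoke the representation \eqref{rep-par-beta} of Lemma~\ref{lem:2.1}(b); all the $|x|^{-\nu-2|\beta|}|y|^{-\nu}$ prefactors become $1$ on $\SS\times\SS$, and the task reduces to bounding each summand $|G^{(\nu)}(W(x,y))|\cdot|R_{\beta,\nu}(x,y)|$ for $1\le\nu\le|\beta|$.

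The bound on $G^{(\nu)}$ comes directly from \eqref{eq:2.2.1} and yields $|G^{(\nu)}(W)|\le \kappa N^{2\nu}(1+N\rho)^{-(M+\nu)}$. For $R_{\beta,\nu}$ I would split into two ranges. For $1\le\nu\le|\beta|/2$, I simply use that $R_{\beta,\nu}$ is a polynomial of bounded degree in $(x,y)$, hence $|R_{\beta,\nu}|\le c$ on $\SS\times\SS$, and the inequality $N^{2\nu-|\beta|}\le 1$ closes the estimate. For $|\beta|/2<\nu\le|\beta|$, I would plug in the factorization \eqref{rep-par-beta2}: on $\SS$ each of the $|\mu|=2\nu-|\beta|$ factors of the form $|x|^3|y|\partial_{x_j}W$ is controlled by $2\rho(x,y)$ thanks to the sharp bound \eqref{prop-W}, giving $|R_{\beta,\nu}(x,y)|\le c\,\rho(x,y)^{2\nu-|\beta|}$. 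Substituting $\rho\le(1+N\rho)/N$ converts this into $c(1+N\rho)^{2\nu-|\beta|}/N^{2\nu-|\beta|}$, and combining with the $G^{(\nu)}$-bound yields an upper bound $c\kappa N^{|\beta|}(1+N\rho)^{-(M+|\beta|-\nu)}$, which is dominated by $c\kappa N^{|\beta|}(1+N\rho)^{-M}$ since $\nu\le|\beta|$.

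The second estimate \eqref{eq:2.2.2c} proceeds in the same spirit using Lemma~\ref{lem:2.1}(c). The first sum in \eqref{rep-par-beta_c} is handled exactly as above, with the additional factor $|\partial_y^\alpha W|\le 2\rho$ from \eqref{prop-W} converting the $G^{(\nu+1)}$ weight $N^{2\nu+2}/(1+N\rho)^{M+\nu+1}$ into one with an extra $\rho/(1+N\rho)\le 1/N$, so that the product $N^{2\nu+2}\cdot N^{-1}=N^{2\nu+1}$ has the right form and the same $R_{\beta,\nu}$ dichotomy yields the target $c\kappa N^{|\beta|+1}(1+N\rho)^{-M}$. For the second sum the key point is to bound the bracket $[\,|y|^2\partial_y^\alpha R_{\beta,\nu}-\nu y^\alpha R_{\beta,\nu}\,]$ on $\SS$ by $c\,\rho(x,y)^{\max\{0,2\nu-|\beta|-1\}}$; pairing this with the $G^{(\nu)}$-bound and once more trading $\rho$ for $(1+N\rho)/N$ produces the required $c\kappa N^{|\beta|+1}(1+N\rho)^{-M}$.

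The main obstacle is the bookkeeping inside the bracket of the second sum in the regime $\nu>|\beta|/2$: I must verify that $\partial_y^\alpha$ destroys at most one of the $|\mu|=2\nu-|\beta|$ small factors $|x|^3|y|\partial_{x_j}W$ appearing in the representation \eqref{rep-par-beta2}. This follows from the product rule, because $\partial_y^\alpha$ either hits a single $\partial_{x_j}W$ factor (losing at most one power of $\rho$-smallness but producing only a bounded polynomial in $x,y$, since $\partial_y^\alpha\partial_{x_j}W$ is homogeneous of total degree zero and smooth on $\SS\times\SS$) or it hits the coefficient polynomial $Q_{\beta,\nu,\mu}$, preserving all $\rho$-small factors. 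With this observation in place, the remaining steps are purely algebraic manipulations of powers of $N$ and $(1+N\rho)$, exactly as in the first estimate.
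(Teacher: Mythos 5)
Your proposal is correct and follows essentially the same route as the paper: restrict the representation of Lemma~\ref{lem:2.1} to $\SS\times\SS$, bound $|R_{\beta,\nu}(x,y)|\le c\,\rho(x,y)^{(2\nu-|\beta|)_+}$ via \eqref{prop-W} and \eqref{rep-par-beta2} (your two-case split for $\nu\le|\beta|/2$ versus $\nu>|\beta|/2$ is just the unpacked form of the $(\cdot)_+$ exponent), trade $\rho$ for $(1+N\rho)/N$, and count powers; the same scheme with the extra factor $|\partial_y^\alpha W|\le 2\rho$ and the bracket bound $c\,\rho^{(2\nu-|\beta|-1)_+}$ gives \eqref{eq:2.2.2c}. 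Your closing observation that $\partial_y^\alpha$ can destroy at most one small factor in \eqref{rep-par-beta2} correctly justifies the bracket estimate, which the paper states without elaboration.
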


\begin{proof}
For $|\beta|=0$ \eqref{eq:2.2.1} with $\nu=0$ coincides with \eqref{eq:2.2.2} with $c=1$.

Let $1\le|\beta|\le K$.
From \eqref{prop-W} and \eqref{rep-par-beta2} we get $|R_{\beta,\nu}(x,y)|\le c \rho(x,y)^{(2\nu-|\beta|)_+}$, $x,y\in\SS$.
Using this estimate and \eqref{eq:2.2.1} with  $u=y\cdot x=\cos\rho(y, x)$, $x,y\in\SS$,
in \eqref{rep-par-beta} we get
\begin{align*}
\Big|\partial^\beta_x G\Big(\frac{y\cdot x}{|y||x|}\Big)\Big|
&\le c\sum_{1\le\nu\le|\beta|}|G^{(\nu)}(y\cdot x)| \rho(y, x)^{(2\nu-|\beta|)_+}\\
&\le c\sum_{1\le\nu\le|\beta|}
\kappa N^{2\nu}(1+N\rho(y, x))^{-M-\nu}\rho(y, x)^{(2\nu-|\beta|)_+}\\
&\le c\sum_{1\le\nu\le|\beta|}
\kappa N^{2\nu-(2\nu-|\beta|)_+}(1+N\rho(y, x))^{-M-\nu+(2\nu-|\beta|)_+}\\
&\le c \kappa N^{|\beta|}(1+N\rho(y, x))^{-M},
\end{align*}
which confirms \eqref{eq:2.2.2}.

For the proof of \eqref{eq:2.2.2c}
with the help of \eqref{prop-W} and \eqref{rep-par-beta2}
we estimate the quantities in \eqref{rep-par-beta_c} for $x,y\in\SS$ as follows
\begin{equation*}
|(\partial^\alpha_y W(x,y))R_{\beta,\nu}(x,y)|\le c \rho(x,y)^{1+(2\nu-|\beta|)_+},
\end{equation*}
\begin{equation*}
\big||y|^2\partial^\alpha_y R_{\beta,\nu}(x,y)-\nu y^\alpha R_{\beta,\nu}(x,y)\big|\le c \rho(x,y)^{(2\nu-|\beta|-1)_+}.
\end{equation*}
Now \eqref{rep-par-beta_c} implies for $x,y\in\SS$
\begin{align*}
\Big|\partial^\alpha_y\partial^\beta_x G\Big(\frac{y\cdot x}{|y||x|}\Big)\Big|
&\le c\sum_{0\le\nu\le|\beta|}|G^{(\nu+1)}(y\cdot x)| \rho(y, x)^{1+(2\nu-|\beta|)_+}\\
&+c\sum_{1\le\nu\le|\beta|}|G^{(\nu)}(y\cdot x)| \rho(y, x)^{(2\nu-|\beta|-1)_+}
\end{align*}
and one completes the proof of \eqref{eq:2.2.2c} along the lines of proof of \eqref{eq:2.2.2}.
\end{proof}

For $\til{\newPhi}_N$, $\til{\newPsi}_N$ and their partial derivatives we have the following estimates:

\begin{prop}
For any $N\ge 1$, $K\in\NN$, $M>0$ and $x,\eta\in\SS$ we have
\begin{equation}\label{eq:local_Phi}
\big|\partial^\beta \til{\newPhi}_N(\eta; x)\big|
\le c_4 \frac{N^{-K+|\beta|+d-1}}{(1+N\rho(\eta,x))^M},\quad~0\le |\beta|\le K+1,
\end{equation}
\begin{equation}\label{eq:local_Phi_c}
\big|\partial^\alpha_\eta\partial^\beta_x \newPhi_N\Big(\frac{\eta\cdot x}{|\eta||x|}\Big)\big|
\le c_4\frac{N^{-K+|\beta|+d}}{(1+N\rho(\eta,x))^M}, \quad |\alpha|=1,~0\le|\beta|\le K,
\end{equation}
and
\begin{equation}\label{eq:local_Psi}
\big|\partial^\beta \til{\newPsi}_N(\eta; x)\big|
\le c_5 \frac{N^{|\beta|+d-1}}{(1+N\rho(\eta,x))^M},\quad~0\le |\beta|\le K+1,
\end{equation}
\begin{equation}\label{eq:local_Psi_c}
\big|\partial^\alpha_\eta\partial^\beta_x \newPsi_N\Big(\frac{\eta\cdot x}{|\eta||x|}\Big)\big|
\le c_5\frac{N^{|\beta|+d}}{(1+N\rho(\eta,x))^M}, \quad |\alpha|=1,~0\le|\beta|\le K.
\end{equation}
where $c_4$, $c_5$ depend only on $d, K,M$, and $\varphi$.
\end{prop}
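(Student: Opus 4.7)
\medskip

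\noindent\textbf{Proof proposal.} The plan is to reduce everything to the one-variable kernel localization \thmref{thm:localization} and then lift to partial derivative bounds via \lemref{lem:2.2}. We treat $\newPsi_N$ and $\newPhi_N$ separately at the one-variable level and then invoke \lemref{lem:2.2} uniformly.

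First I would handle $\newPsi_N$: by construction \eqref{eq:constr_3} it is exactly a kernel of the form $\Lambda_N$ in \thmref{thm:localization} with $\lambda=\varphi$ and $\supp\varphi\subset[1/2,2]$, so for any $\nu\ge 0$ and any $M'>0$ we obtain
\begin{equation*}
|\newPsi_N^{(\nu)}(\cos\theta)|\le c\,\frac{N^{d-1+2\nu}}{(1+N|\theta|)^{M'}},\qquad |\theta|\le\pi,
\end{equation*}
with $c$ depending only on $d,\nu,M'$, and finitely many $C^{M'}$-norms of $\varphi$. Applying this with $M'=M+\nu$ for $0\le\nu\le K+1$ verifies hypothesis \eqref{eq:2.2.1} of \lemref{lem:2.2} with $\kappa=c\,N^{d-1}$ and the role of $K$ played by $K+1$. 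Part (b) of that lemma then yields \eqref{eq:local_Psi}, and part (c) with $|\alpha|=1$, $0\le|\beta|\le K$, yields \eqref{eq:local_Psi_c}.

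The case of $\newPhi_N$ is the only place where a small manipulation is needed, and this is the main technical point. I would rewrite \eqref{eq:constr_4} as
\begin{equation*}
\newPhi_N(u)=(-1)^{K/2}N^{-K}\sum_{k=0}^\infty \lambda_N(k/N)Z_k(u),\qquad
\lambda_N(t):=\varphi(t)\bigl[t\bigl(t+(d-2)/N\bigr)\bigr]^{-K/2},
\end{equation*}
so that the sum fits the template of \thmref{thm:localization}. Since $\supp\lambda_N\subset\supp\varphi\subset[1/2,2]$ keeps $t$ bounded away from $0$, and $(d-2)/N\le d-2$ for $N\ge 1$, the factor $[t(t+(d-2)/N)]^{-K/2}$ is smooth on $\supp\varphi$ with all derivatives bounded by a constant depending only on $d,K$. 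Hence for every $m\le M+\nu$, $\|\lambda_N^{(m)}\|_\infty$ is bounded by a constant independent of $N$, depending only on $d,K,M,\nu,\varphi$. \thmref{thm:localization} then gives
\begin{equation*}
|\newPhi_N^{(\nu)}(\cos\theta)|\le c\,\frac{N^{-K+d-1+2\nu}}{(1+N|\theta|)^{M+\nu}},\qquad 0\le\nu\le K+1,
\end{equation*}
which is \eqref{eq:2.2.1} with $\kappa=c\,N^{-K+d-1}$. Applying \lemref{lem:2.2}(b)--(c) exactly as above (with $y=\eta$ and $|\eta|=1$ for \eqref{eq:local_Phi}, and with $\eta$ treated as the Cartesian variable $y$ for \eqref{eq:local_Phi_c}) produces \eqref{eq:local_Phi} and \eqref{eq:local_Phi_c}, respectively.

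The only mild obstacle is the renormalization of $\newPhi_N$ and the verification that the derivatives of $\lambda_N$ are uniformly bounded in $N\ge 1$; once this is in hand everything else is a direct application of the two lemmas already on the table. Note also that the index range $|\beta|\le K+1$ in \eqref{eq:local_Phi}, \eqref{eq:local_Psi} is accommodated by choosing the ``$K$'' in \lemref{lem:2.2} to be $K+1$ and applying \thmref{thm:localization} for each $\nu\in\{0,1,\dots,K+1\}$ separately, which is legitimate since the polynomial $\newPhi_N$ is $C^\infty$.
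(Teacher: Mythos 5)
Your proposal is correct and follows essentially the same route as the paper: rewrite $\newPhi_N$ as a $\Lambda_N$-kernel with a modified multiplier $\lambda_N$ whose derivatives are bounded by $cN^{-K}$ uniformly in $N\ge 1$, apply \thmref{thm:localization} with an enlarged decay exponent, and then invoke \lemref{lem:2.2} to pass to partial derivatives; the $\newPsi_N$ estimates follow the same pattern with $\lambda=\varphi$. (Incidentally, your expression $t(t+(d-2)/N)$ is the one consistent with the eigenvalues $\kk(\kk+d-2)$ in \eqref{eq:constr_4}; the paper's proof writes $(d-1)/N$, which appears to be a typo.)
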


\begin{proof}
Let
$\lambda(t):=(-1)^{K/2} N^{-K}\big(t[t+(d-1)/N]\big)^{-K/2}\varphi(t)$, $t\in[0,\infty)$.
Clearly,
$$
\newPhi_N(u) = \sum_{N/2<\kk<2N} \lambda\Big(\frac{\kk}{N}\Big)\PP_\kk(u),
\quad\lambda\in C^\infty[0, \infty),\quad\hbox{and}\quad \supp \lambda \subset [1/2, 2].
$$
It is readily seen that
$\|\lambda^{(m)}\|_\infty \le cN^{-K}$ for each $m\ge 0$ with $c=c(d, m, K, \varphi)$.
Then for any $M>0$ by Theorem~\ref{thm:localization} with $M+K+1$ instead of $M$ we have
\begin{equation}\label{local-Phi}
|\newPhi_N^{(\nu)}(\cos \theta)| \le \frac{cN^{-K+ d-1}N^{2\nu}}{(1+N|\theta|)^{M+K+1}},
\quad |\theta|\le \pi, \; 0\le \nu\le K+1,
\end{equation}
where $c=c(d, M, K, \varphi)$.
Applying \lemref{lem:2.2} with $\kappa=cN^{-K+ d-1}$,
$K$ replaced by $K+1$, and \eqref{local-Phi} with $|\theta|=\rho(\eta,x)$ we get \eqref{eq:local_Phi} and \eqref{eq:local_Phi_c}.

For the localization of $\newPsi_N$ we use \eqref{eq:constr_3} and the fact that
$\|\varphi^{(m)}\|_\infty \le c$ for each $m\ge 0$ with $c=c(d, m, \varphi)$.
Thus for any $M>0$ by \thmref{thm:localization} with $M$ replaced by $M+K+1$ we obtain
\begin{equation*}
|\newPsi_N^{(\nu)}(\cos \theta)| \le \frac{cN^{d-1}N^{2\nu}}{(1+N|\theta|)^{M+K+1}},
\quad |\theta|\le \pi, \; 0\le \nu\le K+1.
\end{equation*}
This estimate along with \lemref{lem:2.2} with $\kappa=cN^{d-1}$ and
$K$ replaced by $K+1$ imply \eqref{eq:local_Psi} and \eqref{eq:local_Psi_c}.
\end{proof}

For $\xi\in\cX_j$, $j\in\NN_0$, we set $N_\xi:=2^{j-1}$.
The elements of the needlet frame $\LL=\{\psi_\xi(x): \xi\in\cX\}$, defined in \eqref{def-psi-xi},
can be represented in terms of the kernels $\newPsi_N$ as follows
\begin{equation}\label{eq:needlet_1}
	\psi_\xi(x):=C^\diamond_\xi \psi^\diamond_\xi(x),\quad
	\psi^\diamond_\xi(x):=\newPsi_{N_\xi}(\xi\cdot x)=\Psi_j(\xi\cdot x),\quad C^\diamond_\xi:=\ww_\xi^{1/2}
\end{equation}
for $x\in\SS$, $\xi\in\cX_j$, $j\in\NN$, and the coefficients $C^\diamond_\xi$ satisfy
\begin{equation}\label{eq:needlet_2}
	C^\diamond_\xi \le c_9 N_\xi^{-(d-1)/2},\quad \xi\in\cX,
\end{equation}
with $c_9=2^{(1-d)/2}c_7^{1/2}$ depending only on $d$ (cf. \eqref{cubature_w}).

In the following sections we assume that the needlet frame $\Psi$ is fixed;
the dependence of some of the constants on $\varphi$ will not be indicated explicitly.

\subsection{Highly localized kernels in terms of shifts of the Newtonian kernel}\label{s4_2}

As already explained in the introduction
our tool for approximation of harmonic functions on the ball will consist of linear combinations of shifts
of the Newtonian kernel:
$$
\frac{1}{|x|^{d-2}}\quad\hbox{in dimension}\quad d>2
\quad\hbox{or}\quad
\ln \frac{1}{|x|} \quad\hbox{if}\quad d=2,
$$
just as in \eqref{lin-comb}.
The poor localization of the Newtonian kernel, however, creates problems.
Its directional derivatives achieve much better localization
and are well approximated by finite differences.
However, as explained in \cite{IP2} they do not have either the right localization in the sense of \eqref{eq:local_1}
or $L^1(\SS)$ normalization.

We next invoke Theorem 3.1 from \cite{IP2} to show (see Corollary~\ref{cor:loc-kern} below) the existence of
highly localized summability kernels that are linear combinations of finitely many directional derivatives
of the Newtonian kernel.
Consequently, they will be arbitrarily well approximated by
linear combinations of a fixed number of shifts of the Newtonian kernel.
\begin{thm}[Theorem 3.1 in \cite{IP2}]\label{thm:2}
Let $d\ge 2$, $M>d-2$, and $0<\eps\le 1$.
Set $a:=1+\eps$, $\delta:=1-a^{-2}$ and
\begin{equation}\label{eq:potential0}
m:=\left\lceil (M-d+2)/2\right\rceil.
\end{equation}
Consider the function
\begin{equation}\label{eq:potential1}
\cF_{\eps}(u) := \eps^{2m-1} (a^2+1-2au)^{-d/2+1-m},\quad u\in[-1,1].
\end{equation}
The function $\cF_{\eps}$ has these properties:
\begin{equation}\label{eq:2.3}
\cF_{\eps}(x\cdot\eta) = \eps^{2m-1}	|x-a\eta|^{-d+2-2m},\quad \forall x, \eta\in \SS,
\end{equation}
\begin{equation}\label{main-1}
0< \cF_{\eps}(x\cdot\eta) \le \frac{c_1^\#\eps^{-d+1}}{(1+\eps^{-1}\rho(x, \eta))^{M}},
\quad\forall x, \eta\in \SS,
\end{equation}
and
\begin{equation}\label{main-2}
\int_{\SS}\cF_{\eps}(x\cdot\eta)d\sigma(x) \ge c_2^\#>0,
\quad\forall \eta\in \SS,
\end{equation}
where $c_1^\#, c_2^\#>0$ are constants depending only on $m$ and $d$.
Furthermore, there exist real numbers $b_0,b_1,\dots,b_m$ depending only on $\eps$, $m$, and $d$
such that for every $\eta\in \SS$ the function $\cF_{\eps}(x\cdot\eta)$
is the restriction on $\SS$ of the harmonic function $\sF_{\eps,m}$
defined on $\RR^d\setminus\{a\eta\}$ by
\begin{equation}\label{eq:2.1}
	\sF_{\eps,m}(a\eta,x):=\sum_{\ell=0}^m
	b_\ell (\eta\cdot\nabla)^\ell |x-a\eta|^{2-d}
\quad\mbox{if}~d\ge3,
\end{equation}
or
\begin{equation}\label{eq:2.2}
	\sF_{\eps,m}(a\eta,x):=b_0+\sum_{\ell=1}^m
	b_\ell (\eta\cdot\nabla)^\ell \ln \frac{1}{|x-a\eta|} \quad\mbox{if}~d=2.
\end{equation}
\end{thm}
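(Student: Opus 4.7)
The plan is to address the four assertions in order, with the representation \eqref{eq:2.1}--\eqref{eq:2.2} as combinations of shifts of the Newtonian kernel being the main technical hurdle. For the explicit form \eqref{eq:2.3}, a direct computation gives $|x-a\eta|^2=|x|^2-2a(x\cdot\eta)+a^2=a^2+1-2au$ for $x,\eta\in\SS$, so raising to the power $-d/2+1-m$ and multiplying by $\eps^{2m-1}$ yields the identity. For the localization \eqref{main-1}, I would use $a^2+1-2au=\eps^2+2a(1-u)$ together with the elementary comparison $1-\cos\theta\sim\theta^2$ on $[0,\pi]$ to conclude $a^2+1-2au\sim\eps^2+\rho(x,\eta)^2$ uniformly in $0<\eps\le 1$; since the exponent $-d/2+1-m$ is negative,
\[
\cF_\eps(x\cdot\eta)\sim \eps^{2m-1}(\eps^2+\rho^2)^{-d/2+1-m}\sim \eps^{1-d}(1+\eps^{-1}\rho)^{-(d-2+2m)},
\]
and the choice $m=\lceil(M-d+2)/2\rceil$ guarantees $d-2+2m\ge M$, producing \eqref{main-1}. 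The lower bound \eqref{main-2} then follows by restricting to the cap $\{x:\rho(x,\eta)\le\eps\}$, which has measure $\sim\eps^{d-1}$ and on which the same comparison gives $\cF_\eps(x\cdot\eta)\gtrsim\eps^{1-d}$.

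The main work is the representation \eqref{eq:2.1}. For $d\ge 3$ I would set $G(x):=|x-a\eta|^{2-d}$, the Newtonian kernel with pole at $a\eta$, which is harmonic on $\RR^d\setminus\{a\eta\}$. The key observation is that $(\eta\cdot\nabla_x)$ acts on $|x-a\eta|^{-\alpha}$ by $-\alpha(x\cdot\eta-a)|x-a\eta|^{-\alpha-2}$ and on $(x\cdot\eta-a)$ by $|\eta|^2=1$, so a straightforward induction gives
\[
(\eta\cdot\nabla_x)^\ell G(x)=\sum_{p=\lceil\ell/2\rceil}^{\ell} c_{\ell,p}\,(x\cdot\eta-a)^{2p-\ell}|x-a\eta|^{2-d-2p}
\]
with explicit constants $c_{\ell,p}$ depending only on $d,\ell,p$. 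Restricting to $\SS$, the algebraic identity
\[
x\cdot\eta-a=-\bigl(|x-a\eta|^2+a^2-1\bigr)/(2a), \qquad x\in\SS,
\]
which follows from $|x-a\eta|^2=1-2a(x\cdot\eta)+a^2$, lets me expand each factor $(x\cdot\eta-a)^{2p-\ell}$ as a polynomial in $r^2=|x-a\eta|^2$ and conclude $(\eta\cdot\nabla_x)^\ell G|_{\SS}=\sum_{j=0}^\ell d_{\ell,j}(a)\,r^{2-d-2j}$. The resulting $(m+1)\times(m+1)$ matrix $D=(d_{\ell,j})$ is lower triangular with diagonal entries $d_{\ell,\ell}(a)=[(a^2-1)/a]^\ell\prod_{k=0}^{\ell-1}(d/2-1+k)$, coming from iterating only the derivative of $|x-a\eta|^{-\alpha}$; these are nonzero for $d\ge 3$ and $a>1$, so $D$ is invertible and one solves $\sum_\ell b_\ell(\eta\cdot\nabla_x)^\ell G|_{\SS}=\eps^{2m-1}r^{2-d-2m}=\cF_\eps(x\cdot\eta)$ for $b_0,\ldots,b_m$ depending only on $\eps,m,d$. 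The function $\sF_{\eps,m}(a\eta,x)$ is automatically harmonic in $x$ on $\RR^d\setminus\{a\eta\}$, each summand being a directional derivative of the harmonic function $G$.

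For $d=2$ the same scheme applies with $G(x)=\ln(1/|x-a\eta|)$: one verifies $(\eta\cdot\nabla_x)\ln(1/|x-a\eta|)=-(x\cdot\eta-a)|x-a\eta|^{-2}$ and iterates exactly as before, obtaining triangular relations with diagonal entries $d_{\ell,\ell}(a)=(\ell-1)!(a^2-1)^\ell/(2a^\ell)$ for $\ell\ge 1$, again nonzero. Since the logarithm itself does not appear among $(\eta\cdot\nabla)^\ell G$ for $\ell\ge 1$, a free additive constant $b_0$ is needed to close the $(m+1)\times(m+1)$ system, yielding \eqref{eq:2.2}. The main obstacle throughout is the combinatorial bookkeeping of $(\eta\cdot\nabla_x)^\ell$ applied to the Newtonian kernel and the verification that the reduction modulo the identity for $x\cdot\eta-a$ on $\SS$ produces an invertible triangular system in the basis $\{r^{2-d-2j}\}_{j=0}^m$; once that structure is established, both the coefficients $b_\ell$ and the harmonic extension $\sF_{\eps,m}$ follow.
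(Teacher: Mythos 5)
Your proposal is correct: the computation of $|x-a\eta|^2=a^2+1-2a(x\cdot\eta)$ on $\SS$, the comparison $a^2+1-2au\sim\eps^2(1+\eps^{-1}\rho)^2$ for the two-sided localization bounds, and the reduction of $\sum_\ell b_\ell(\eta\cdot\nabla)^\ell|x-a\eta|^{2-d}\big|_{\SS}$ to an invertible triangular system in the basis $\{|x-a\eta|^{2-d-2j}\}_{j=0}^m$ via the identity $x\cdot\eta-a=-(|x-a\eta|^2+a^2-1)/(2a)$ all check out, including the diagonal entries and the role of the free constant $b_0$ when $d=2$. The paper itself does not prove this theorem but cites \cite{IP2}; the fragment of that proof quoted here (the bounds on $(a^2+1-2au)^{1/2}/(\eps(1+\eps^{-1}\arccos u))$ used in the proof of Corollary~\ref{cor:loc-kern}) indicates the same approach, so no substantive comparison beyond this is possible.
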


We define the univariate function
\begin{equation}\label{eq:potential3}
F_\eps(u):=\kappa_{\eps,m,d}\cF_{\eps}(u),\quad u\in[-1,1],
\end{equation}
where $\cF_{\eps}$ is defined in \eqref{eq:potential1} and
\begin{equation}\label{eq:potential3b}
\kappa_{\eps,m,d}:=\Big(\int_{\SS}\cF_{\eps}(x\cdot\eta)\,d\sigma(x)\Big)^{-1},\quad \forall \eta\in\SS.
\end{equation}
Note that $\kappa_{\eps,m,d}$ is independent of $\eta$ and \eqref{main-2} implies
\begin{equation*}
	\kappa_{\eps,m,d}\le 1/c_2^\#,\quad \forall~ 0<\eps\le 1.
\end{equation*}

Given $\eta\in\SS$ we extend $F_\eps(\eta\cdot x)$ just as $\newPsi_N(\eta\cdot x)$ in \eqref{eq:constr_5} by
\begin{equation}\label{eq:potential8}
	\til{F}_\eps(\eta; x)=F_\eps\Big(\frac{\eta\cdot x}{|x|}\Big),\quad x\in\RR^d\backslash\{0\}.
\end{equation}
In this case \eqref{eq:laplace_extension} takes the form
\begin{equation}\label{eq:potential9}
	\Delta\til{F}_\eps(\eta; x)=\Delta_0 F_\eps(\eta\cdot x),\quad x\in\SS.
\end{equation}
We use $\til{F}_\eps$ to bound the derivatives of $F_\eps(\eta\cdot x)$ for $x\in\SS$.

\begin{cor}\label{cor:loc-kern}
Let $d\ge 2$, $M>d-2$, $K\in\NN$.
Let $0<\eps\le 1$
and let $F_\eps$ be defined by \eqref{eq:potential3}--\eqref{eq:potential3b}.
Then for all $x,\eta\in\SS$ we have
\begin{equation}\label{eq:potential4}
F_\eps(\eta\cdot x)=\kappa_{\eps,m,d}\sF_{\eps,m}(a\eta,x),
\end{equation}
\begin{equation}\label{eq:potential5}
\int_{\SS} F_{\eps}(\eta\cdot y)\,d\sigma(y)=1,
\end{equation}
\begin{equation}\label{eq:potential6}
\big|\partial^\beta\til{F}_\eps(\eta; x)\big|
\le c_8 \frac{(\eps^{-1})^{|\beta|+d-1}}{(1+\eps^{-1}\rho(\eta,x))^M}, \quad 0\le|\beta|\le 2K+1,
\end{equation}
\begin{equation}\label{eq:potential6_c}
\big|\partial^\alpha_\eta\partial^\beta_x F_\eps\Big(\frac{\eta\cdot x}{|\eta||x|}\Big)\big|
\le c_8 \frac{(\eps^{-1})^{|\beta|+d}}{(1+\eps^{-1}\rho(\eta,x))^M}, \quad |\alpha|=1,~0\le|\beta|\le 2K,
\end{equation}
where $\til{F}_\eps$ is defined by \eqref{eq:potential8} and $c_8$ depends only on $d,K,M$.
\end{cor}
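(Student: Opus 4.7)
My plan is to dispatch the four claims in sequence, with the first two being essentially formal and the last two reducing to an application of Lemma 4.2 after bounding the univariate derivatives of $\cF_\eps$.

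First, claim \eqref{eq:potential4} is immediate: by Theorem \ref{thm:2}, the function $\cF_\eps(x\cdot\eta)$ is the restriction to $\SS$ of the harmonic function $\sF_{\eps,m}(a\eta,x)$ given in \eqref{eq:2.1}--\eqref{eq:2.2}, and multiplying both sides of $\cF_\eps(x\cdot\eta)=\sF_{\eps,m}(a\eta,x)$ by $\kappa_{\eps,m,d}$ yields $F_\eps(\eta\cdot x)=\kappa_{\eps,m,d}\sF_{\eps,m}(a\eta,x)$. For claim \eqref{eq:potential5}, the definition of $\kappa_{\eps,m,d}$ in \eqref{eq:potential3b} gives $\int_\SS F_\eps(\eta\cdot y)\,d\sigma(y)=\kappa_{\eps,m,d}\int_\SS\cF_\eps(\eta\cdot y)\,d\sigma(y)=1$.

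The substantive work is for the localization estimates \eqref{eq:potential6}--\eqref{eq:potential6_c}. The key reduction is: if I can show the one-variable bound
\begin{equation*}
|F_\eps^{(\nu)}(u)| \le c\,\eps^{-(d-1)}\frac{(\eps^{-1})^{2\nu}}{(1+\eps^{-1}\arccos u)^{M+\nu}},\quad u\in[-1,1],\ 0\le \nu\le 2K+1,
\end{equation*}
then Lemma \ref{lem:2.2} (with $N=\eps^{-1}$, with its integer parameter $K$ replaced by $2K+1$, and $\kappa=c\,\eps^{-(d-1)}$) converts this into exactly \eqref{eq:potential6} and \eqref{eq:potential6_c}. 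So everything boils down to the univariate estimate.

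To prove the univariate estimate, I would differentiate \eqref{eq:potential1} directly: writing $\mu:=d/2-1+m$, we get $\cF_\eps^{(\nu)}(u)=\eps^{2m-1}(2a)^\nu\mu(\mu+1)\cdots(\mu+\nu-1)(a^2+1-2au)^{-\mu-\nu}$, so $|\cF_\eps^{(\nu)}(u)|\le c\,\eps^{2m-1}(a^2+1-2au)^{-\mu-\nu}$. The main analytic point is the lower bound
\begin{equation*}
a^2+1-2au=(a-1)^2+2a(1-\cos\theta)\ge c(\eps^2+\theta^2),\qquad \theta=\arccos u\in[0,\pi],
\end{equation*}
together with the elementary inequality $1+x^2\ge \tfrac12(1+x)^2$ for $x\ge 0$ applied to $x=\eps^{-1}\theta$. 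Combining these and using $2m-1-2\mu=1-d$ gives
\begin{equation*}
|\cF_\eps^{(\nu)}(u)|\le c\,\eps^{1-d-2\nu}(1+\eps^{-1}\theta)^{-2(\mu+\nu)}.
\end{equation*}
Finally, the choice $m=\lceil(M-d+2)/2\rceil$ in \eqref{eq:potential0} guarantees $2m+d-2\ge M$, hence $2(\mu+\nu)\ge M+\nu$ for every $\nu\ge 0$, so I may replace the decay exponent $2(\mu+\nu)$ by $M+\nu$. Multiplying by $\kappa_{\eps,m,d}\le 1/c_2^\#$ gives the bound for $F_\eps^{(\nu)}$.

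The main obstacle is just bookkeeping the exponents so as to land exactly on the $M+\nu$ decay required by Lemma \ref{lem:2.2}; once the parameter $m$ is seen to produce $2\mu\ge M+d-2$ this is automatic. Everything else is either the definition of $F_\eps$ or a direct invocation of Theorem \ref{thm:2} and Lemma \ref{lem:2.2}.
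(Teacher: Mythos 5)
Your proposal is correct and follows essentially the same route as the paper: the first two claims are formal consequences of Theorem~\ref{thm:2} and the normalization \eqref{eq:potential3b}, and the localization estimates are obtained by bounding $F_\eps^{(\nu)}$ via the explicit differentiation of \eqref{eq:potential1} and then invoking Lemma~\ref{lem:2.2} with $N=\eps^{-1}$ and $K$ replaced by $2K+1$. The only cosmetic difference is that you derive the lower bound $a^2+1-2au\ge c(\eps^2+\theta^2)$ from scratch, whereas the paper cites the equivalent two-sided estimate $(a^2+1-2au)^{1/2}\sim\eps(1+\eps^{-1}\arccos u)$ from \cite{IP2}.
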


\begin{proof}
Identity \eqref{eq:potential4} follows from \eqref{eq:potential3} and the second part of \thmref{thm:2},
while \eqref{eq:potential5} follows from \eqref{eq:potential3} and \eqref{eq:potential3b}.
From \eqref{eq:potential3} and \eqref{eq:potential1} it follows that for any $\nu\in\NN_0$
\begin{equation*}
F_\eps^{(\nu)}(u)
=\kappa_{\eps,m,d}\,a^{\nu}\eps^{2m-1} (a^2+1-2au)^{-d/2+1-m-\nu}
\prod_{k=0}^{\nu-1} (2m+d-2+2k).
\end{equation*}
On the other hand, using that $a=1+\eps$ it is easy to show that
\begin{equation*}
\frac{1}{5}\le\frac{(a^2+1-2au)^{1/2}}{\eps(1+\eps^{-1}\arccos u)}\le 2, \quad u\in[-1,1],
\end{equation*}
see the proof of inequalities (3.7) in \cite{IP2}.
The above, \eqref{eq:potential0}, and \eqref{main-2} yield
\begin{equation*}
|F_\eps^{(\nu)}(u)|\le \frac{c \eps^{-(2\nu+d-1)}}{(1+\eps^{-1}\arccos u)^{d-2+2m+2\nu}}
\le \frac{c \eps^{-(2\nu+d-1)}}{(1+\eps^{-1}\arccos u)^{M+\nu}}
\end{equation*}
with $c$ depending on $d$, $m$, and $\nu$.
In turn, this estimate and \lemref{lem:2.2} with $G=F_\eps$, $N=\eps^{-1}$, $\kappa=cN^{d-1}$, and
$K$ replaced by $2K+1$ imply \eqref{eq:potential6} and \eqref{eq:potential6_c}.
\end{proof}

Note that the extension $\til{F}_\eps(\eta; x)$ of $F_\eps(\eta\cdot x)$ from \eqref{eq:potential8}
is \emph{different} from its harmonic extension $\kappa_{\eps,m,d}\sF_{\eps,m}(a\eta,x)$
given in \eqref{eq:potential4} of the form  \eqref{eq:2.1}--\eqref{eq:2.2}.

\section{Frames in terms of shifts of the Newtonian kernel}\label{s6}

We now come to the most technical part of our development -- the construction of a frame
whose elements are finite lineal combinations of shifts of the Newtonian kernel.
We shall carry our this construction in several steps.

\subsection{The main technical step in the construction of the new frame on $\SS$}\label{subsec:scheme}

We now focus on the construction of highly localized frame elements
$\{\theta_\xi: \xi\in \cX\}$
of the form
\begin{equation}\label{form-frame-2}
\theta_\xi(x) =\sum_{\nu=1}^{\tilde{n}} \frac{a_\nu}{|x-y_\nu|^{d-2}},\quad\hbox{if}\quad d>2,
\end{equation}
or
\begin{equation}\label{form-frame-3}
\theta_\xi(x) =\sum_{\nu=1}^{\tilde{n}} a_\nu \ln \frac{1}{|x-y_\nu|},\quad\hbox{if}\quad d=2.
\end{equation}
Here $y_\nu\in\RR^d$ with $|y_\nu|>1$, $a_\nu\in\RR$,
and $\{y_\nu\}_{\nu=1}^{\tilde{n}}$ and $\{a_\nu\}_{\nu=1}^{\tilde{n}}$ may vary with $\xi\in\cX$,
but $\tilde{n}$ is fixed.

Assume that $\LL=\{\psi_\xi: \xi\in \cX\}$, $\cX=\cup_{j\ge 0} \cX_j$, is the existing frame,
described in \S\ref{subsec:frame-SS}.
For the construction of the new frame elements $\{\theta_\xi\}$
we utilize the small perturbation method, described in \S\ref{s3}.
In applying this scheme the main step is to construct frame elements $\theta_\xi$, $\xi\in\cX$,
of the form (\ref{form-frame-2})--(\ref{form-frame-3}) so that
\begin{equation}\label{inner-prod-1}
|\langle \psi_\eta-\theta_\eta, \psi_\xi\rangle| \le \gamma_0\omega_{\xi, \eta},
\quad\xi,\eta\in\cX,
\end{equation}
and
\begin{equation}\label{psi-theta-est}
|\psi_\xi(x)-\theta_\xi(x)| \le \frac{\gamma_0 N_\xi^{(d-1)/2}}{(1+N_\xi\rho(x, \xi))^M},
\quad x\in\SS,\;\xi\in\cX.
\end{equation}
Here $N_\xi=2^{j-1}$ for $\xi\in\cX_j$, $\gamma_0>0$ is a small parameter,
$\omega_{\xi, \eta}$ are the entries of an~almost diagonal matrix
like $\omega_{\xi, \eta}^{(K, M)}$ from (\ref{eq:omega_xi_eta}),
and $M>0$ is sufficiently large.
The result of this construction will be a frame $\{\theta_\xi\}_{\xi\in\cX}$
for the Besov and Triebel-Lizorkin spaces of interest.

It will be convenient to us to approximate the essentially $L^1$-normalized frame elements
$\psi_\xi^\diamond(x):=\newPsi_{N_\xi}(\xi\cdot x)$
defined in \eqref{eq:needlet_1}
by essentially $L^1$-normalized new frame elements $\theta_\xi^\diamond$.
Then multiplication by constants $C_\xi^\diamond$ (see \eqref{eq:needlet_1})
will complete the construction of $L^2$-normalized frame elements.

The construction of the new frame elements $\theta_\xi^\diamond$ will be carried out in four steps:

(a) Approximation of $\newPsi_{N_\xi}(\xi\cdot x)$, $\xi\in\cX$, by convolving $\newPsi_{N_\xi}$
with the kernel $F_{\eps}(y\cdot x)$ from \eqref{eq:potential4}.

(b) Discretization of the convolutions by using the cubature formula from \eqref{simpl-cubature}.

(c) Truncation of the resulting sums.

(d) Approximation of the truncated sums by discrete versions of the operators involved.

These approximation steps will be governed by four small parameters (constants):
$\gamma_1$, $\gamma_2$, $\gamma_3$, $\gamma_4 >0$.
The relations between these parameters and all involved constants will be carefully traced.

We next introduce some convenient notation and set up the approximation steps described above.
For the only index $\xi\in\cX_0$ we set $\theta_\xi(x):=\psi_\xi(x)\equiv 1$.
In the remaining part of this subsection we consider
$\xi\in\cX\setminus\cX_0=\cup_{j=1}^\infty \cX_j$.

Given $0<\gamma_1\le 1$ (to be selected), we set
\begin{equation}\label{eq:eps}
\eps:=\gamma_1/N_\xi
\end{equation}
and define
\begin{equation}\label{eq:g1}
g_1(\xi;x):=\int_{\SS}\newPhi_{N_\xi}(\xi\cdot y)F_{\eps}(y\cdot x)\, d\sigma(y),\quad x\in\SS,
\end{equation}
where $\newPhi_{N_\xi}$ is defined in \eqref{eq:constr_4}, $F_{\eps}(y\cdot x)$ is the kernel from \eqref{eq:potential4}
with $\varepsilon$ from \eqref{eq:eps}, and $m$ from \eqref{eq:potential0}.

Given $0<\gamma_2\le \gamma_1$ (to be selected), we let $\cZ_j\subset\SS$ be a fixed maximal $\delta$-net
with $\delta:=\gamma_2 2^{-j+1}$
and let $\{\nA_\zeta\}_{\zeta\in\cZ_j}$ be the associated partition of $\SS$ (see Subsection~\ref{s5_3}).
Applying cubature formula \eqref{simpl-cubature} with nodes $\zeta\in\cZ_j$ and weights $w_\zeta=|\nA_\zeta|$ to \eqref{eq:g1}
we arrive at
\begin{equation}\label{eq:g2}
g_2(\xi;x):=\sum_{\zeta\in\cZ_j} w_\zeta \newPhi_{N_\xi}(\xi\cdot \zeta)F_{\eps}(\zeta\cdot x),\quad x\in\SS.
\end{equation}
Observe that there is no connection between the nodal sets $\cX_j$ and $\cZ_j$ ($j\in\NN$).
In particular, the cubature $\sum_{\zeta\in\cZ_j}w_\zeta f(\zeta)$ from \eqref{simpl-cubature} has to be exact only for constants,
while the cubature $\sum_{\xi\in\cX_j}\ww_\xi f(\xi)$ from \eqref{cubature} is required to be exact
for all spherical harmonics of degree $\le 2^{j+1}$.

Given $0<\gamma_3\le 1$ (to be determined), we truncate the sum in \eqref{eq:g2} by including only the nodes
within distance $\rrr_\xi:=(\gamma_3 N_\xi)^{-1}$ from $\xi$ to obtain
\begin{equation}\label{eq:g3}
g_3(\xi;x):=\sum_{\substack{\zeta\in\cZ_j\\ \rho(\zeta,\xi)\le \rrr_\xi}} w_\zeta \newPhi_{N_\xi}(\xi\cdot \zeta)F_{\eps}(\zeta\cdot x),
\quad x\in\SS.
\end{equation}
The functions $g_1(\xi;x)$, $g_2(\xi;x)$, and $g_3(\xi;x)$ should be viewed as
consecutive approximations of $\newPhi_{N_\xi}(\xi\cdot x)$.

We obtain consecutive approximations to $\newPsi_{N_\xi}(\xi\cdot x)$ by
applying $\Delta_0^{K/2}$ to each of the functions $g_1$, $g_2$, $g_3$ in \eqref{eq:g1}, \eqref{eq:g2}, and \eqref{eq:g3}.
We set
\begin{multline}\label{eq:h1}
h_1(\xi;x):= \Delta_0^{K/2}g_1(\xi;x)
=\Delta_0^{K/2}\int_{\SS}\newPhi_{N_\xi}(x\cdot y)F_{\eps}(y\cdot \xi)\, d\sigma(y) \\
= \int_{\SS}\Delta_0^{K/2}\newPhi_{N_\xi}(x\cdot y)F_{\eps}(y\cdot \xi)\, d\sigma(y)
= \int_{\SS}\newPsi_{N_\xi}(x\cdot y)F_{\eps}(y\cdot \xi)\, d\sigma(y),
\end{multline}
\begin{equation}\label{eq:h2}
h_2(\xi;x)
:=\Delta_0^{K/2}g_2(\xi;x)=\sum_{\zeta\in\cZ_j}
w_\zeta \newPhi_{N_\xi}(\xi\cdot \zeta)\Delta_0^{K/2}F_{\eps}(\zeta\cdot x),
\end{equation}
\begin{equation}\label{eq:h3}
h_3(\xi;x)
:=\Delta_0^{K/2}g_3(\xi;x)=\sum_{\substack{\zeta\in\cZ_j\\ \rho(\zeta,\xi)\le\rrr_\xi}}
w_\zeta \newPhi_{N_\xi}(\xi\cdot \zeta)\Delta_0^{K/2}F_{\eps}(\zeta\cdot x).
\end{equation}
Above in \eqref{eq:h1} we first used the commutativity of the inner product of zonal functions \eqref{eq:comut}
in the definition of $g_1$ followed by \eqref{eq:LB2} in the last equality.

Observe that $h_3(\xi;x)$ is a linear combination of finitely many (independent of $\xi$) terms of the form
\begin{equation*}
\Delta_0^{K/2}F_{\eps}(\zeta\cdot x)
=\kappa_{\eps,m,d} \sum_{\ell=0}^m b_{\ell} \Delta_0^{K/2}(\zeta\cdot\nabla)^\ell |x-a\zeta|^{2-d},
\quad\hbox{if}\; d\ge 3,
\end{equation*}
see \eqref{eq:2.1}, \eqref{eq:potential3b}, and \eqref{eq:potential4}.
We have a similar representation of $h_3(\xi;x)$ in dimension $d=2$.
Replacing the differential operator
$\Delta_0^{K/2}\left(\zeta\cdot\nabla\right)^\ell$ in \eqref{eq:h3} by its discrete counterpart $\fL_t^{K/2}\fD^\ell_t(\zeta)$
with an appropriate small $t=t_j>0$ (to be specified)
we arrive at the following definition of $\theta^\diamond_\xi$, $\xi\in\cX_j$, $j\in\NN$, in dimension $d\ge 3$
\begin{equation}\label{eq:theta_diamond}
\theta^\diamond_\xi(x):=\kappa_{\eps,m,d}\sum_{\substack{\zeta\in\cZ_j\\ \rho(\zeta,\xi)\le\rrr_\xi}}
w_\zeta \newPhi_{N_\xi}(\xi\cdot \zeta)
\sum_{\ell=0}^m b_{\ell}\fL_{t_j}^{K/2}\fD^\ell_{t_j}(\zeta)  |x-a\zeta|^{2-d}.
\end{equation}
If $d=2$ we set
\begin{equation}\label{eq:theta_diamond2}
\theta^\diamond_\xi(x):=\kappa_{\eps,m,2}\sum_{\substack{\zeta\in\cZ_j\\ \rho(\zeta,\xi)\le\rrr_\xi}}
w_\zeta \newPhi_{N_\xi}(\xi\cdot \zeta)
\sum_{\ell=1}^m	b_{\ell} \fL_{t_j}^{K/2}\fD^\ell_{t_j}(\zeta)  \ln \frac{1}{|x-a\zeta|}.
\end{equation}
Several remarks are in order:
\begin{enumerate}
\item
The finite difference operator $\fD^\ell_t(\varsigma):=t^{-\ell}\sum_{k=0}^\ell (-1)^{\ell-k} \binom{\ell}{k} T(\varsigma,kt)$
is defined by the translation operator (in $\RR^d$) in direction $\varsigma\in\SS$ with step $t$ given by
$T(\varsigma,t)f(x)=f(x+t\varsigma)$ for $x\in\RR^d$.

\item Following \cite[p. 23 or (4.2.1) on p. 81]{DX} the rotation $Q_{1,2,t}\in SO(d)$ is given by
\begin{multline*}
Q_{1,2,t}\varsigma = Q_{1,2,t}(\varsigma_1,\varsigma_2,\dots,\varsigma_d) \\
:= (\varsigma_1 \cos t +\varsigma_2 \sin t, -\varsigma_1 \sin t +\varsigma_2 \cos t,\varsigma_3,\dots,\varsigma_d),
\quad \varsigma\in\SS,
\end{multline*}
and $Q_{i,\ell,t}\varsigma$ is defined similarly for any $1\le i<\ell\le d$.
The translation operator corresponding to the rotation $Q_{i,\ell,t}$, $1\le i<\ell\le d$, is given by
\begin{equation*}
T(Q_{i,\ell,t})f(\varsigma) := f(Q_{i,\ell,t}^{-1}\varsigma)=f(Q_{i,\ell,-t}\varsigma).
\end{equation*}
The operator
\begin{equation*}
\fL_tf(\varsigma):=t^{-2}\sum_{1\le i<\ell\le d}(T(Q_{i,\ell,t}) +T(Q_{i,\ell,-t})-2\II)f(\varsigma),
\end{equation*}
where $\II$ stands for the identity, approximates well $\Delta_0 f(\varsigma)$ for small $t$;
the powers of $\fL_t$ are defined as usual by $\fL_t^k:=\fL_t(\fL_t^{k-1})$.

\item The numbers $a$, $\delta$, $m$, and $b_\ell$, $\ell=0,1,\dots,m$, are determined in \thmref{thm:2}
as functions of $\eps$ from \eqref{eq:eps} and $M$.
\end{enumerate}

We now come to the first main assertion in this section.

\begin{thm}\label{thm:new_frame_diamond}
Let $d\ge 2$, $K\in 2\NN$, $M>K+d-1$, and $\gamma_0>0$.
Then there exist constants
$\gamma_1$, $\gamma_2$, $\gamma_3$, $\gamma_4>0$ depending only on $d, K, M, \gamma_0$,
and
for every $j\in\NN$
there exists $t_j>0$ depending only on $d, K, M, \gamma_0$, and $j$ such that for every $\xi\in\cX$
the element $\theta^\diamond_\xi$ from \eqref{eq:theta_diamond} or \eqref{eq:theta_diamond2} obeys
\begin{equation}\label{eq:small-0}
\big|\partial^\beta \big[\til{\psi}^\diamond_\xi(x)-\til{\theta}^\diamond_\xi(x)\big]\big|
\le \frac{\gamma_0 N_\xi^{|\beta|+d-1}}{(1+N_\xi\rho(\xi,x))^M},\quad \forall x\in\SS,~ 0\le|\beta|\le K,
\end{equation}
\begin{equation}\label{eq:small-1}
\Big|\int_{\SS} y^\beta \left[\psi^\diamond_\xi(y)-\theta^\diamond_\xi(y)\right]d\sigma(y)\Big|
\le \gamma_0 N_\xi^{-K},~~0\le|\beta|\le K-1.
\end{equation}
Here
$\til{\theta}^\diamond_\xi(x):=\theta^\diamond_\xi(x/|x|)$, $x\in\RR^d\setminus\{0\}$, and
$\til{\psi}^\diamond_\xi(x):=\til{\newPsi}_{N_\xi}(\xi; x)$, see \eqref{eq:constr_5}, \eqref{eq:needlet_1}.
\end{thm}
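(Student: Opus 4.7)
The plan is to prove both \eqref{eq:small-0} and \eqref{eq:small-1} via the telescoping decomposition
$$
\psi^\diamond_\xi - \theta^\diamond_\xi = (\psi^\diamond_\xi - h_1) + (h_1 - h_2) + (h_2 - h_3) + (h_3 - \theta^\diamond_\xi),
$$
bounding each of the four differences by $\gamma_0/4$ times the required right-hand side. The small parameters $\gamma_1,\gamma_2,\gamma_3,\gamma_4$ are fixed in succession, each absorbing the constants produced by the preceding step, and then for each $j\in\NN$ the discretization parameter $t_j$ is taken small enough to absorb the constants of Step~4.

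For the moment inequality \eqref{eq:small-1} I would exploit the identities $\psi^\diamond_\xi=\Delta_0^{K/2}\newPhi_{N_\xi}(\xi\cdot\cdot)$ from \eqref{eq:LB2} and $h_k=\Delta_0^{K/2} g_k$ for $k=1,2,3$, together with the self-adjointness of $\Delta_0^{K/2}$ on $\SS$:
$$
\int_\SS y^\beta \bigl(\psi^\diamond_\xi(y)-h_k(\xi;y)\bigr)\,d\sigma(y) = \int_\SS \Delta_0^{K/2}(y^\beta)\,\bigl(\newPhi_{N_\xi}(\xi\cdot y)-g_k(\xi;y)\bigr)\,d\sigma(y).
$$
Since $|\Delta_0^{K/2}y^\beta|\le c_6$ by \eqref{eq:small-1_powers}, the moment bound reduces to the $L^1$ estimate $\|\newPhi_{N_\xi}(\xi\cdot\cdot)-g_k(\xi;\cdot)\|_{L^1(\SS)}\le c\gamma_0 N_\xi^{-K}$, which is handled alongside the pointwise bounds below. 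For the $h_3-\theta^\diamond_\xi$ piece, the discrete operator $\fL_{t_j}^{K/2}\fD^\ell_{t_j}(\zeta)$ is a finite combination of translates/rotations, so its adjoint action on $y^\beta$ yields another bounded test function, and the residual error tends to $0$ as $t_j\to 0$ because $|y-a\zeta|^{2-d}$ is $C^\infty$ on $\SS$.

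For the derivative inequality \eqref{eq:small-0}, Step~1 uses $\int F_\eps=1$ to write $\psi^\diamond_\xi(x)-h_1(\xi;x)=\int_\SS[\newPsi_{N_\xi}(\xi\cdot x)-\newPsi_{N_\xi}(y\cdot x)]F_\eps(y\cdot\xi)\,d\sigma(y)$; combining the Lipschitz-in-center smoothness of $\til{\newPsi}_{N_\xi}$ from \eqref{eq:local_Psi}--\eqref{eq:local_Psi_c} with the $\eps=\gamma_1/N_\xi$ localization of $F_\eps$ from \eqref{eq:potential6} extracts a factor $c\gamma_1$ times the target, uniformly in $|\beta|\le K$. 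Step~2 is a cubature error on cells $\nA_\zeta$ of diameter $\delta_j\sim\gamma_2/N_\xi$; the dominant $y$-gradient of the integrand is $|\nabla_y F_\eps|\sim\eps^{-1}=N_\xi/\gamma_1$ (from \eqref{eq:potential6_c}), giving the ratio $c\gamma_2/\gamma_1$. Step~3 truncates at distance $\rrr_\xi=(\gamma_3 N_\xi)^{-1}$: the decay of $\newPhi_{N_\xi}$ in \eqref{eq:local_Phi} extracts a factor $c(1+1/\gamma_3)^{-(M-d+1)}$, while \eqref{comp_local} transfers the localization of $\partial^\beta_x\Delta_0^{K/2}F_\eps(\zeta\cdot\cdot)$ from $\zeta$ to $\xi$ (since $N_\xi\rho(\xi,\zeta)\le 1/\gamma_3$). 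In Step~4 the set of retained $\zeta$'s is finite and $|x-a\zeta|^{2-d}$ is $C^\infty$ in $x$ on $\SS$ (with derivatives of size a fixed power of $\eps^{-1}$), so standard stability of finite differences yields an error tending to $0$ as $t_j\to 0$, which determines $t_j$.

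The main technical obstacle will be Step~2: the simple cubature \eqref{simpl-cubature} is exact only for constants, while the $y$-gradient of $\newPhi_{N_\xi}(\xi\cdot y)F_\eps(y\cdot x)$ carries the large factor $\eps^{-1}=N_\xi/\gamma_1$. Verifying that the ratio $\gamma_2/\gamma_1$ can be made uniformly small while preserving both the $N_\xi^{|\beta|+d-1}$ prefactor and the $(1+N_\xi\rho(\xi,x))^{-M}$ decay after $\partial^\beta_x\Delta_0^{K/2}$ requires delicate bookkeeping of the constants in $M$ and $K$ across all four steps, and this is where almost all the work lies.
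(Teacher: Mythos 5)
Your proposal follows essentially the same route as the paper: the same telescoping through $h_1,h_2,h_3$, the same reduction of the moment bounds \eqref{eq:small-1} to $L^1$ estimates on $\newPhi_{N_\xi}(\xi\cdot\,\cdot)-g_k(\xi;\cdot)$ via the self-adjointness of $\Delta_0^{K/2}$ and \eqref{eq:small-1_powers}, and the same successive choice of $\gamma_1,\gamma_2,\gamma_3,\gamma_4$ and $t_j$ (the paper carries this out in Lemmas~\ref{lem:1_2}--\ref{lem:1_5} and \eqref{param0}). The only imprecision is that the cubature-error ratio in your Step~2 is actually $\gamma_2\gamma_1^{-2K-1}$ rather than $\gamma_2/\gamma_1$ (the $x$- and Laplacian derivatives of $\til{F}_\eps$ contribute $\eps^{-(K+|\beta|)}$, not just $\eps^{-1}$), but since $\gamma_1$ is fixed before $\gamma_2$ this is harmlessly absorbed, exactly as you anticipate.
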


The proof of  \thmref{thm:new_frame_diamond} relies on four lemmas,
which establish estimates similar to estimates \eqref{eq:small-0} and \eqref{eq:small-1} for the differences
$\psi^\diamond_\xi-h_1(\xi;\cdot)$,
$h_1(\xi;\cdot)-h_2(\xi;\cdot)$,
$h_2(\xi;\cdot)-h_3(\xi;\cdot)$, and
$h_3(\xi;\cdot)-\theta^\diamond_\xi$.
The values of the parameters $\gamma_1$, $\gamma_2$, $\gamma_3$, $\gamma_4$, used in these four lemmas
will be selected in the proof of \thmref{thm:new_frame_diamond} (see \eqref{param0}).

In light of the last integral in \eqref{eq:h1} we define
\begin{equation*}
	\til{h}_1(\xi;x):=\int_{\SS}\til{\newPsi}_{N_\xi}(y;x)F_{\eps}(y\cdot \xi)\, d\sigma(y),\quad x\in\RR^d\setminus\{0\},
\end{equation*}
where $\til{\newPsi}_{N_\xi}(y; x)$ is defined in \eqref{eq:constr_5} and $\eps$ is from \eqref{eq:eps}.

\begin{lem}\label{lem:1_2}
Let $\xi\in\cX_j$, $j\in\NN$, $K\in 2\NN$, and $M>K+d-1$.
If $0<\gamma_1\le 1$, $\eps$ is from \eqref{eq:eps} and $F_{\eps}$ is from \eqref{eq:potential3},
then:

$(a)$
For any $\beta$, $0\le|\beta|\le K$,
\begin{equation}\label{eq:approx1_1}
\big|\partial^\beta\big[\til{\newPsi}_{N_\xi}(\xi; x)-\til{h}_1(\xi;x)\big]\big|
\le c_{10}\frac{\gamma_1 N_\xi^{|\beta|+d-1}}{(1+N_\xi\rho(\xi,x))^M},\quad \forall x\in\SS;
\end{equation}

$(b)$
For any $\beta$, $0\le|\beta|\le K-1$,
\begin{equation}\label{eq:approx1_3}
\Big|\int_{\SS} y^\beta \left[\newPsi_{N_\xi}(\xi\cdot y)-h_1(\xi;y)\right]d\sigma(y)\Big|
\le c_{10}\gamma_1 N_\xi^{-K},
\end{equation}
where $c_{10}$ depends only on $d, K, M$.
\end{lem}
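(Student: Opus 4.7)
The starting point for both parts is the identity
\begin{equation*}
\til{\newPsi}_{N_\xi}(\xi; x) - \til{h}_1(\xi; x) = \int_\SS \bigl[\til{\newPsi}_{N_\xi}(\xi; x) - \til{\newPsi}_{N_\xi}(y; x)\bigr] F_\eps(y\cdot\xi)\, d\sigma(y),
\end{equation*}
obtained by combining the representation of $h_1$ in \eqref{eq:h1} with the mass-one normalization $\int_\SS F_\eps(y\cdot\xi)\,d\sigma(y) = 1$ from \eqref{eq:potential5}. The essential observation is that $F_\eps$ is localized about $\xi$ on a spherical cap of radius $\eps = \gamma_1/N_\xi$, which is finer than the natural scale $1/N_\xi$ of $\newPsi_{N_\xi}$ by the factor $\gamma_1$, and it is this scale mismatch that ultimately produces the $\gamma_1$ in the bound.

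For part (a) I differentiate under the integral (justified by the uniform bound \eqref{eq:local_Psi}) and split the $y$-integral into a near region $\rho(\xi,y)\le 1/(2N_\xi)$ and a far region $\rho(\xi,y)>1/(2N_\xi)$. On the near region I parametrize the geodesic on $\SS$ from $\xi$ to $y$ and apply the fundamental theorem of calculus; combining the mixed-derivative bound \eqref{eq:local_Psi_c} (via the chain rule for the Euclidean extension in the $\eta$-variable) with the localization-comparison inequality \eqref{comp_local} yields
\begin{equation*}
\bigl|\partial^\beta_x\bigl[\til{\newPsi}_{N_\xi}(\xi;x) - \til{\newPsi}_{N_\xi}(y;x)\bigr]\bigr| \le c\, \rho(\xi,y)\, \frac{N_\xi^{|\beta|+d}}{(1+N_\xi\rho(\xi,x))^M},
\end{equation*}
after which a standard polar-coordinate calculation based on \eqref{main-1} gives $\int_\SS \rho(\xi,y)F_\eps(y\cdot\xi)\,d\sigma(y) \le c\eps$, closing out the near piece. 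On the far region I exploit the elementary inequality $\eps^{-1}\rho(\xi,y) \ge (1+N_\xi\rho(\xi,y))/(3\gamma_1)$ (valid since $N_\xi\rho(\xi,y)\ge 1/2$ there) to derive $F_\eps(y\cdot\xi) \le c\gamma_1^{M-d+1} N_\xi^{d-1}/(1+N_\xi\rho(\xi,y))^M$; the contribution of $\partial^\beta_x\til{\newPsi}_{N_\xi}(\xi;x)$ then comes out immediately from \eqref{eq:local_Psi}, while the contribution of $\partial^\beta_x\til{\newPsi}_{N_\xi}(y;x)$, viewed as a function of $y$ localized at $x$ on scale $N_\xi$, is handled by the cross-convolution estimate of \propref{prop:5_3}. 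Since $M - d + 1 > K \ge 2$ and $\gamma_1\le 1$, the factor $\gamma_1^{M-d+1}$ absorbs cleanly into $\gamma_1$.

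For part (b) I invoke Fubini and $\int F_\eps = 1$ to rewrite the target as
\begin{equation*}
\int_\SS y^\beta\bigl[\newPsi_{N_\xi}(\xi\cdot y) - h_1(\xi;y)\bigr]\, d\sigma(y) = \int_\SS \bigl[G(\xi) - G(z)\bigr] F_\eps(z\cdot\xi)\, d\sigma(z),
\end{equation*}
where $G(z) := \int_\SS y^\beta \newPsi_{N_\xi}(z\cdot y)\,d\sigma(y)$. Decomposing $y^\beta|_\SS$ into spherical harmonics of degrees $\le|\beta|\le K-1$, and using that $\newPsi_{N_\xi}$ has spherical-harmonic support in the band $(N_\xi/2, 2N_\xi)$, orthogonality forces $G\equiv 0$ once $N_\xi > 2(K-1)$, and the entire left-hand side vanishes identically. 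For the finitely many remaining values $N_\xi\le 2(K-1)$, $G$ is a spherical polynomial of bounded degree with $\|\nabla G\|_\infty \le c(d,K)$, so the mean value theorem together with $\int \rho(\xi,z)F_\eps(z\cdot\xi)\,d\sigma(z)\le c\eps$ yields a bound $c(d,K)\gamma_1/N_\xi \le c(d,K)(2K-2)^{K-1}\gamma_1 N_\xi^{-K}$, absorbed into $c_{10}\gamma_1 N_\xi^{-K}$.

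The main technical obstacle is the far-region analysis of part (a): unlike the near-region argument, where smallness comes from a Taylor-type cancellation, here the factor $\gamma_1$ must be extracted from the polynomial tail of $F_\eps$, which is precisely what forces the hypothesis $M > K+d-1$ and requires \propref{prop:5_3} to control the cross term in which the two localized factors are centered at the distinct points $\xi$ and $x$ on the common scale $N_\xi$.
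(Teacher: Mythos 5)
Your proof of both parts is correct, but the two parts take somewhat different routes from the paper.

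For part (a) the paper simply applies \propref{prop:5_2} as a black box with $g(y)=\partial^\beta_x \til{\newPsi}_{N_\xi}(y;x)$ (localized at $x_1=x$ via \eqref{eq:local_Psi_c}), $f(y)=F_\eps(\xi\cdot y)$ (localized at $x_2=\xi$ via \eqref{eq:potential6}), and the normalization \eqref{eq:potential5}; since $\scal_1/\scal_2=\gamma_1$, the estimate \eqref{eq:4b} immediately gives \eqref{eq:approx1_1}. Your near/far decomposition is in effect an inlined re-derivation of \propref{prop:5_2} in this particular instance — the near-region Taylor cancellation matches the first half of the proposition's proof and your far-region argument (peeling off $\gamma_1^{M-d+1}\le\gamma_1$ from the tail of $F_\eps$ and invoking \propref{prop:5_3} for the cross term) plays the role of the $\nA_2,\nA_3$ pieces in the paper's proof of Propositions~\ref{prop:5_1}--\ref{prop:5_2}. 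Both versions are sound; the paper's is shorter because the machinery is already packaged.

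For part (b) the two routes genuinely differ. The paper again uses \propref{prop:5_2}, but applied to the antiderivative $\newPhi_{N_\xi}$ rather than to $\newPsi_{N_\xi}$: the first-derivative bound \eqref{eq:local_Phi} carries an $N_\xi^{-K}$ factor, and after obtaining \eqref{eq:approx1_4} the symmetry of $\Delta_0$ (together with the bound \eqref{eq:small-1_powers} for $\Delta_0^{K/2}y^\beta$) converts this into the moment estimate \eqref{eq:approx1_3} uniformly in $N_\xi$. Your argument instead exploits band-limitedness directly: since $\newPsi_{N_\xi}$ is supported in spherical-harmonic degrees $(N_\xi/2,2N_\xi)$ while $y^\beta|_\SS$ lives in degrees $\le K-1$, orthogonality forces $G\equiv 0$ once $N_\xi>2(K-1)$, and the finitely many remaining small $j$ are dispatched by a brute-force Lipschitz bound (with the factor $N_\xi^{-K}$ recovered by the crude trade $1/N_\xi\le (2K-2)^{K-1}N_\xi^{-K}$). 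Your argument is valid and actually establishes exact vanishing of the left-hand side for large $N_\xi$, which is stronger than the stated bound. The paper's $\Delta_0$-symmetry argument is the one that generalizes to the non-band-limited perturbations $\theta^\diamond_\xi$ in Lemmas~\ref{lem:1_3}--\ref{lem:1_4} (where your orthogonality shortcut is unavailable), which is presumably why the authors use it consistently; but for this particular lemma your shortcut works cleanly.
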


\begin{proof}
\begin{sloppypar}
Let $0\le |\beta|\le K$.
We apply \propref{prop:5_2} with
$g(y)=\partial^\beta_x \til{\newPsi}_{N_\xi}(y;x)$, $x_1\!=\!x$, $N_1=N_\xi$,
$\kappa_1=c_5 N_\xi^{|\beta|}$ on account of \eqref{eq:local_Psi_c},
and with $f(y)=F_{\eps}(\xi\cdot y)$, $x_2=\xi$, $N_2=1/\eps$, $\eps=\gamma_1/N_\xi$,
$\kappa_2=c_8$ on account of \eqref{eq:potential6} with $\beta=0$.
Observe that $N_2=N_\xi/\gamma_1\ge N_1$.
Hence, because of \eqref{eq:potential5}, inequality \eqref{eq:4b} implies
\end{sloppypar}
\begin{equation}\label{eq:w1}
\big|\partial^\beta_x\til{h}_1(\xi;x)-\partial^\beta_x\til{\newPsi}_{N_\xi}(\xi; x)\big|
\le c_2c_5c_8 \frac{\gamma_1 N_\xi^{|\beta|+d-1}  }{(1+N_\xi\rho(\xi,x))^M},\quad x\in\SS.
\end{equation}

For the proof of \eqref{eq:approx1_3} we apply \propref{prop:5_2} with $x_1=\xi$, $N_1=N_\xi$,
$g(y)=\newPhi_{N_\xi}(\xi; y)$,
$\kappa_1=c_4 N_\xi^{-K}$ in view of \eqref{eq:local_Phi} with $|\beta|=1$,
$f(y)=F_{\eps}(x\cdot y)$ with any fixed $x\in\SS$, $N_2=1/\eps$, $\eps=\gamma_1/N_\xi$,
$\kappa_2=c_8$ in view of \eqref{eq:potential6} with $\beta=0$.
Consequently, because of \eqref{eq:potential5}, inequality \eqref{eq:4b} implies
\begin{equation}\label{eq:approx1_4}
\big|\newPhi_{N_\xi}(\xi\cdot x)-g_1(\xi;x)\big|
\le c_{11} \frac{\gamma_1 N_\xi^{-K+d-1}  }{(1+N_\xi\rho(\xi,x))^M},\quad x\in\SS,
\end{equation}
with $c_{11}:= c_2 c_4 c_8$.
Now, for $0\le|\beta|\le K-1$ we apply consecutively \eqref{eq:LB2}, \eqref{eq:h1},
the fact that the operator $\Delta_0$ is symmetric,
\eqref{eq:approx1_4} with $y$ in place of $x$, \eqref{eq:conv_1}, and \eqref{eq:small-1_powers}
to obtain
\begin{multline}\label{eq:w2}
\Big|\int_{\SS} y^\beta \left[\newPsi_{N_\xi}(\xi\cdot y)-h_1(\xi;y)\right]d\sigma(y)\Big|\\
=\Big|\int_{\SS} y^\beta \Delta_0^{K/2}\left[\newPhi_{N_\xi}(\xi\cdot y)-g_1(\xi;y)\right]d\sigma(y)\Big|\\
=\Big|\int_{\SS} \left[\newPhi_{N_\xi}(\xi\cdot y)-g_1(\xi;y)\right]\Delta_0^{K/2}y^\beta d\sigma(y)\Big|
\le c_{11}c_0c_6\gamma_1 N_\xi^{-K}.
\end{multline}
Finally, \eqref{eq:w1} and \eqref{eq:w2} imply \eqref{eq:approx1_1} and \eqref{eq:approx1_3}
with  $c_{10}:=\max\{c_2c_5c_8,c_{11}c_0c_6\}$.
The proof is complete.
\end{proof}

The first integral in \eqref{eq:h1}, \eqref{eq:comut}, and identity \eqref{eq:potential9}
give another representation of $\til{h}_1(\xi;x)$, namely,
\begin{equation}\label{eq:h1_b}
\til{h}_1(\xi;x)= \int_{\SS}\newPhi_{N_\xi}(\xi\cdot y)\Delta^{K/2}\til{F}_{\eps}(y; x)\, d\sigma(y),\quad x\in\RR^d\setminus\{0\},
\end{equation}
with $\til{F}_{\eps}(y; x)$ defined in \eqref{eq:potential8}.
Using \eqref{eq:h2} and \eqref{eq:potential9} we also set for $\xi\in\cX_j$
\begin{equation}\label{eq:h2_b}
\til{h}_2(\xi;x):=\sum_{\zeta\in\cZ_j}
w_\zeta \newPhi_{N_\xi}(\xi\cdot \zeta)\Delta^{K/2}\til{F}_{\eps}(\zeta; x),\quad x\in\RR^d\setminus\{0\}.
\end{equation}

\begin{lem}\label{lem:1_3}
Assume $\xi\in\cX_j$, $j\in\NN$, $K\in 2\NN$, $M>K+d-1$, and let $\gamma_1$, $\eps$, $F_{\eps}$ be as in \lemref{lem:1_2}.
If $0<\gamma_2\le \gamma_1$ and $\til{F}_{\eps}$ is from \eqref{eq:potential8},
then:

$(a)$
For any $\beta$, $0\le|\beta|\le K$,
\begin{equation}\label{eq:approx2_2}
\big|\partial^\beta\big[\til{h}_1(\xi;x)-\til{h}_2(\xi;x)\big]\big|
\le c_{20}\gamma_2 \gamma_1^{-2K-1} \frac{ N_\xi^{|\beta|+d-1}}{(1+N_\xi\rho(\xi,x))^M},\quad \forall x\in\SS;
\end{equation}

$(b)$
For any $\beta$, $0\le|\beta|\le K-1$,
\begin{equation}\label{eq:approx2_5}
\Big|\int_{\SS} y^\beta \left[h_1(\xi;y)-h_2(\xi;y)\right]d\sigma(y)\Big|
\le c_{20}\gamma_2 \gamma_1^{-2K-1} N_\xi^{-K},
\end{equation}
where $c_{20}$ depends only on $d, K, M$.
\end{lem}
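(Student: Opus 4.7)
The plan is to estimate the cubature error $g_1-g_2$ (and its derivative counterpart $\til h_1-\til h_2$) by a mean-value argument based on the diameter $\delta=\gamma_2 2^{-j+1}\sim\gamma_2/N_\xi$ of the partition sets $\nA_\zeta$, and then aggregate the resulting local pointwise bounds into a single localized bound via \propref{prop:5_3}. For part (a), I would write
\begin{equation*}
\partial^\beta_x\bigl[\til h_1(\xi;x)-\til h_2(\xi;x)\bigr]
=\sum_{\zeta\in\cZ_j}\int_{\nA_\zeta}\bigl[\newPhi_{N_\xi}(\xi\cdot y)\,G(y,x)-\newPhi_{N_\xi}(\xi\cdot\zeta)\,G(\zeta,x)\bigr]\,d\sigma(y),
\end{equation*}
where $G(y,x):=\partial^\beta_x\Delta^{K/2}_x\til{F}_\eps(y;x)$ and $|\nA_\zeta|=w_\zeta$. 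For $y\in\nA_\zeta$ one has $\rho(y,\zeta)\le\delta$, so the mean-value theorem along a geodesic from $\zeta$ to $y$ bounds the bracketed integrand by $\delta$ times a tangential gradient of the product $\newPhi_{N_\xi}(\xi\cdot\bullet)\,G(\bullet,x)$, which I would split via Leibniz. Each resulting factor is controlled by \eqref{eq:local_Phi}--\eqref{eq:local_Phi_c} for $\newPhi_{N_\xi}$ and by \eqref{eq:potential6}--\eqref{eq:potential6_c} for $\til{F}_\eps$, after writing $\Delta^{K/2}_x$ as a linear combination of Euclidean derivatives of order $K$; the highest total $x$-order invoked is $|\beta|+K+1\le 2K+1$, admitted by \eqref{eq:potential6}, and the mixed order relevant for \eqref{eq:potential6_c} is $|\beta|+K\le 2K$, which is also admissible.

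Next I would aggregate: since $\{\nA_\zeta\}$ partitions $\SS$ with $w_\zeta\sim\delta^{d-1}$, the sum is dominated by $c\,\delta\int_\SS|\nabla_y[\newPhi_{N_\xi}(\xi\cdot y)\,G(y,x)]|\,d\sigma(y)$. Each summand from the Leibniz expansion is a product of two localized functions centered at $\xi$ and $x$ with dilation factors $N_1=N_\xi$ and $N_2=\eps^{-1}=N_\xi/\gamma_1\ge N_1$, so \propref{prop:5_3} yields a single localized bound centered at $\xi$ with dilation $N_\xi$. Tracking $\kappa_1,\kappa_2$ for both Leibniz terms and multiplying by $\delta\le\gamma_2/N_\xi$ produces an estimate of the form $c\gamma_2\gamma_1^{-r}N_\xi^{|\beta|+d-1}(1+N_\xi\rho(\xi,x))^{-M}$ with $r\le 2K+1$, which is \eqref{eq:approx2_2}.

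For part (b), I would first use the self-adjointness of $\Delta_0$ on $C^\infty(\SS)$ (both $y^\beta$ and $g_1-g_2$ are smooth on $\SS$) to rewrite
\begin{equation*}
\int_\SS y^\beta\bigl[h_1(\xi;y)-h_2(\xi;y)\bigr]\,d\sigma(y)
=\int_\SS\bigl[\Delta_0^{K/2}y^\beta\bigr]\bigl[g_1(\xi;y)-g_2(\xi;y)\bigr]\,d\sigma(y),
\end{equation*}
and then invoke \eqref{eq:small-1_powers} to bound $|\Delta_0^{K/2}y^\beta|\le c_6$, reducing the task to estimating $\int_\SS|g_1(\xi;y)-g_2(\xi;y)|\,d\sigma(y)$. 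The same cubature-error argument as in (a), now applied to $g_1-g_2$ without the extra $\partial^\beta_x\Delta^{K/2}_x$, yields a pointwise bound $|g_1(\xi;y)-g_2(\xi;y)|\le c\gamma_2\gamma_1^{-r'}N_\xi^{-K+d-1}(1+N_\xi\rho(\xi,y))^{-M}$ with $r'\le 2K+1$; integrating in $y$ via \eqref{eq:conv_1} produces \eqref{eq:approx2_5}.

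The main obstacle will be the careful bookkeeping of powers of $\gamma_1$, $\gamma_2$, and $N_\xi$ through the Leibniz expansion and \propref{prop:5_3}: the exponent of $\gamma_1^{-1}$ grows with the order of the highest derivative of $\til{F}_\eps$ invoked, and one must verify that the target exponent $2K+1$ is always sufficient and that no required derivative exceeds the orders covered by \eqref{eq:potential6}--\eqref{eq:potential6_c}. A minor secondary issue is justifying the comparison $\sum_\zeta w_\zeta(\cdot)\lesssim\int_\SS(\cdot)\,d\sigma$ uniformly in $\xi,x$, which is routine since the partition has diameter $\sim\delta$ and the localization weights vary slowly on that scale by \eqref{comp_local}.
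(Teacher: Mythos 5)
Your proposal follows essentially the same route as the paper's proof: the cubature error is rewritten as an integral of the difference between the integrand at $y$ and at its associated node $\zeta(y)$, the difference is split by a product rule and bounded via the mean value theorem along a geodesic of length $\le\gamma_2 N_\xi^{-1}$ using \eqref{eq:local_Phi}--\eqref{eq:local_Phi_c} and \eqref{eq:potential6}--\eqref{eq:potential6_c}, the two resulting localized products are aggregated by \propref{prop:5_3}, and part (b) is handled by moving $\Delta_0^{K/2}$ onto $y^\beta$ via self-adjointness and integrating the pointwise bound on $g_1-g_2$ with \eqref{eq:conv_1} and \eqref{eq:small-1_powers}. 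The bookkeeping you flag works out exactly as you anticipate (the paper obtains $\gamma_1^{-(K+|\beta|+1)}\le\gamma_1^{-2K-1}$ for the dominant term), so the argument is correct.
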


\begin{proof}
Let $0\le |\beta|\le K$.
From \eqref{eq:h1_b} and \eqref{eq:h2_b} we get
\begin{multline}\label{eq:approx2_1}
\big|\partial^\beta\big[\til{h}_1(\xi;x)-\til{h}_2(\xi;x)\big]\big|\\
=\Big|\int_{\SS}\!\!\newPhi_{N_\xi}(\xi\cdot y)\partial^\beta\Delta^{K/2}\til{F}_{\eps}(y; x) d\sigma(y)
 -\!\sum_{\zeta\in\cZ_j} w_\zeta \newPhi_{N_\xi}(\xi\cdot \zeta)\partial^\beta\Delta^{K/2}\til{F}_{\eps}(\zeta; x)\Big|\\
=\Big|\int_{\SS}\!\!\big[\newPhi_{N_\xi}(\xi\cdot y)\partial^\beta\Delta^{K/2}\til{F}_{\eps}(y; x)
 -\newPhi_{N_\xi}(\xi\cdot \zeta(y))\partial^\beta\Delta^{K/2}\til{F}_{\eps}(\zeta(y); x)\big] d\sigma(y)\Big|\\
\le\Big|\int_{\SS}\newPhi_{N_\xi}(\xi\cdot y)\big[\partial^\beta\Delta^{K/2}\til{F}_{\eps}(y; x)
 -\partial^\beta\Delta^{K/2}\til{F}_{\eps}(\zeta(y); x)\big]d\sigma(y)\Big|~~~~~~~~~~~~~~~~\\
+\Big|\int_{\SS}\big[\newPhi_{N_\xi}(\xi\cdot y)
 -\newPhi_{N_\xi}(\xi\cdot \zeta(y))\big]\partial^\beta\Delta^{K/2}\til{F}_{\eps}(\zeta(y); x) d\sigma(y)\Big|,~~~~~~~~~~~~~~~~~~~~
\end{multline}
where $\zeta(y)$ is defined in \S\ref{s5_3}.
Let
$\eta=\eta(s)$, $s\in[0,\rho]$, $\rho=\rho(y,\zeta(y))$, be the geodesic line on $\SS$ such that $\eta(0)=y$ and $\eta(\rho)=\zeta(y)$.
Then
\begin{equation*}
\partial^\beta\Delta^{K/2}\til{F}_{\eps}(y; x)-\partial^\beta\Delta^{K/2}\til{F}_{\eps}(\zeta(y); x)
=\int_0^\rho \left.\nabla_\eta \partial_x^\beta\Delta_x^{K/2}F_{\eps}\Big(\frac{\eta\cdot x}{|\eta||x|}\Big)\right|_{\eta=\eta(s)}
\cdot \eta'(s)\,ds.
\end{equation*}
Using in the above representation \eqref{eq:potential6_c}, \eqref{comp_local},
$\rho(y, \zeta(y))\le\gamma_2N_\xi^{-1}$, $\gamma_2\le\gamma_1\le 1$ we get
\begin{multline*}
\big|\partial^\beta\Delta^{K/2}\til{F}_{\eps}(y; x)-\partial^\beta\Delta^{K/2}\til{F}_{\eps}(\zeta(y); x)\big|
\le c_{21} \frac{\gamma_2N_\xi^{-1}\eps^{-(K+|\beta|+d)}}{(1+\eps^{-1}\rho(y, x))^M}\\
= c_{21} \frac{\gamma_2\gamma_1^{-(K+|\beta|+1)} N_\xi^{K+|\beta|}\eps^{-d+1}}{(1+\eps^{-1}\rho(y, x))^M},\quad x,y\in\SS.
\end{multline*}
Applying \propref{prop:5_3} with $g(y)=\newPhi_{N_\xi}(\xi\cdot x)$, $N_1=N_\xi$,
$\kappa_1=c_4 N_\xi^{-K}$ (because of \eqref{eq:local_Phi} with $\beta=0$),
$f(y)=\partial^\beta\Delta^{K/2}\til{F}_{\eps}(y; x)-\partial^\beta\Delta^{K/2}\til{F}_{\eps}(\zeta(y); x)$, $N_2=1/\eps\ge N_1$,
and $\kappa_2=c_{21}\gamma_2\gamma_1^{-2K-1}N_\xi^{K+|\beta|}$ (using the above estimate)
we get
\begin{multline*}
\Big|\int_{\SS}\newPhi_{N_\xi}(\xi\cdot y)\big[\partial^\beta\Delta^{K/2}\til{F}_{\eps}(y; x)
-\partial^\beta\Delta^{K/2}\til{F}_{\eps}(\zeta(y); x)\big] d\sigma(y)\Big|\\
\le c_3 c_4 c_{21} \frac{\gamma_2 \gamma_1^{-2K-1}N_\xi^{|\beta|+d-1}}{(1+N_\xi\rho(\xi, x))^M},\quad x\in\SS.
\end{multline*}
Similarly, using \eqref{eq:local_Phi_c} and \eqref{eq:potential6} we obtain
\begin{multline*}
\Big|\int_{\SS}\big[\newPhi_{N_\xi}(\xi\cdot y)-\newPhi_{N_\xi}(\xi\cdot \zeta(y))\big]
\partial^\beta\Delta^{K/2}\til{F}_{\eps}(\zeta(y); x) d\sigma(y)\Big|\\
\le c_{22} \frac{\gamma_2 \gamma_1^{-2K}N_\xi^{|\beta|+d-1}}{(1+N_\xi\rho(\xi, x))^M},\quad x\in\SS.
\end{multline*}
Substituting the above two estimates in \eqref{eq:approx2_1} we get \eqref{eq:approx2_2}
with $c_{20}\ge c_{23}:= c_{22}+c_3 c_4 c_{21}$.

For the proof of \eqref{eq:approx2_5}
we repeat the arguments applied for the proof of \eqref{eq:approx2_2} and get
\begin{equation}\label{eq:approx2_4}
|g_1(\xi;x)-g_2(\xi;x)|
\le c_{24} \frac{\gamma_2 \gamma_1^{-1}N_\xi^{-K+d-1}}{(1+N_\xi\rho(\xi, x))^M}.
\end{equation}
Now, for $0\le|\beta|\le K-1$ we apply consecutively \eqref{eq:h1}, \eqref{eq:h2},
the fact that the operator $\Delta_0$ is symmetric,
\eqref{eq:approx2_4}, \eqref{eq:conv_1}, and \eqref{eq:small-1_powers} to obtain
\begin{align*}
&\Big|\int_{\SS} y^\beta \left[h_1(\xi;y)-h_2(\xi;y)\right]d\sigma(y)\Big|\\
&\qquad\qquad=\Big|\int_{\SS} y^\beta \Delta_0^{K/2}\left[g_1(\xi;y)-g_2(\xi;y)\right]d\sigma(y)\Big|\\
&\qquad\qquad=\Big|\int_{\SS} \left[g_1(\xi;y)-g_2(\xi;y)\right]\Delta_0^{K/2}y^\beta d\sigma(y)\Big|
\le c_0c_6c_{24} \gamma_2 \gamma_1^{-1} N_\xi^{-K},
\end{align*}
which yields \eqref{eq:approx2_5} with $c_{20}=\max\{c_{23}, c_0c_6c_{24}\}$ in view of $\gamma_1\le 1$.
\end{proof}

The estimates on $h_2(\xi;\cdot)-h_3(\xi;\cdot)$
are given in the following lemma,
where $\til{h}_2(\xi;x)$ is as in \lemref{lem:1_3} and for $\xi\in\cX_j$ we set
\begin{equation*}
\til{h}_3(\xi;x)=\sum_{\zeta\in\cZ_j:\rho(\zeta,\xi)\le\rrr_\xi}
w_\zeta \newPhi_{N_\xi}(\xi\cdot \zeta)\Delta^{K/2}\til{F}_{\eps}(\zeta; x).
\end{equation*}

\begin{lem}\label{lem:1_4}
Let $\xi\in\cX_j$, $j\in\NN$, $K\in 2\NN$, $M>K+d-1$, and
let $\gamma_1$, $\gamma_2$, $\eps$, $\til{F}_{\eps}$ be as in \lemref{lem:1_3}.
If $0<\gamma_3\le 1$ and $\rrr_\xi=(\gamma_3 N_\xi)^{-1}$, then:

$(a)$
For any $\beta$, $0\le|\beta|\le K$, we have
\begin{equation}\label{eq:approx3_2}
\big|\partial^\beta\big[\til{h}_2(\xi;x)-\til{h}_3(\xi;x)\big]\big|
\le c_{30}\gamma_3\gamma_1^{-2K} \frac{N_\xi^{|\beta|+d-1}}{(1+N_\xi\rho(\xi,x))^M},\quad \forall x\in\SS,
\end{equation}

$(b)$
For any $\beta$, $0\le|\beta|\le K-1$,  we have
\begin{equation}\label{eq:approx3_5}
\Big|\int_{\SS} y^\beta \left[h_2(\xi;y)-h_3(\xi;y)\right]d\sigma(y)\Big|
\le c_{30}\gamma_3\gamma_1^{-2K} N_\xi^{-K},
\end{equation}
where $c_{30}$ depends only on $d, K, M$.
\end{lem}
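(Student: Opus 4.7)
\textbf{Proof plan for Lemma \ref{lem:1_4}.}

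The plan is to exploit the ``tail'' nature of $\til h_2-\til h_3$: this difference is exactly the portion of the cubature sum coming from nodes $\zeta\in\cZ_j$ with $\rho(\zeta,\xi)>\rrr_\xi=(\gamma_3 N_\xi)^{-1}$, where the localization of $\newPhi_{N_\xi}$ forces a small contribution. I would first write
\[
\partial^\beta\bigl[\til h_2(\xi;x)-\til h_3(\xi;x)\bigr]
=\sum_{\substack{\zeta\in\cZ_j\\ \rho(\zeta,\xi)>\rrr_\xi}} w_\zeta\,\newPhi_{N_\xi}(\xi\cdot\zeta)\,
\partial^\beta\Delta^{K/2}\til F_\eps(\zeta;x),
\]
and estimate each term. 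For the first factor, apply \eqref{eq:local_Phi} with $|\beta|=0$ but with decay exponent replaced by $M+1$, obtaining $|\newPhi_{N_\xi}(\xi\cdot\zeta)|\le c\,N_\xi^{-K+d-1}(1+N_\xi\rho(\xi,\zeta))^{-M-1}$. Because $\rho(\xi,\zeta)>(\gamma_3 N_\xi)^{-1}$, one factor $(1+N_\xi\rho(\xi,\zeta))^{-1}\le \gamma_3$ may be extracted, leaving $c\,\gamma_3 N_\xi^{-K+d-1}(1+N_\xi\rho(\xi,\zeta))^{-M}$. For the second factor, use \eqref{eq:potential6} with the order $|\beta|+K\le 2K$ and $\eps=\gamma_1/N_\xi$, giving
\[
|\partial^\beta\Delta^{K/2}\til F_\eps(\zeta;x)|\le c_8\gamma_1^{-(|\beta|+K+d-1)} N_\xi^{|\beta|+K+d-1}(1+N_\xi\rho(\zeta,x))^{-M},
\]
where we used $\gamma_1\le 1$ to replace $\eps^{-1}=N_\xi/\gamma_1$ by $N_\xi$ in the denominator.

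Next I would bound the remaining sum
$\sum_{\zeta\in\cZ_j} w_\zeta (1+N_\xi\rho(\xi,\zeta))^{-M}(1+N_\xi\rho(\zeta,x))^{-M}$
by the integral $\int_\SS(1+N_\xi\rho(\xi,y))^{-M}(1+N_\xi\rho(y,x))^{-M}\,d\sigma(y)$. This is legitimate because $\{\nA_\zeta\}$ partitions $\SS$ with $\nA_\zeta\subset B(\zeta,\gamma_2 2^{-j+1})$ and $w_\zeta=|\nA_\zeta|$, while by \eqref{comp_local} the two decay factors vary by at most a constant across $\nA_\zeta$ since $N_\xi\cdot\gamma_2 2^{-j+1}\le 2\gamma_2\le 2$. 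The resulting integral is dominated by $c(1+N_\xi\rho(\xi,x))^{-M}/N_\xi^{d-1}$, which is the standard convolution-of-localized-kernels estimate (it is essentially \propref{prop:5_3} applied to the two Poisson-like weights, with the $\kappa_1\kappa_2$ factor absorbed). Collecting the factors $\gamma_3\cdot\gamma_1^{-(|\beta|+K+d-1)}$, canceling $N_\xi^{d-1}$ between numerator and denominator, and using $|\beta|+K+d-1\le 2K+d-1$ together with $\gamma_1\le 1$ yields \eqref{eq:approx3_2}.

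For \eqref{eq:approx3_5} I would mirror the tactic of Lemmas \ref{lem:1_2}--\ref{lem:1_3}: move the Laplace--Beltrami operator onto $y^\beta$ using self-adjointness,
\[
\int_\SS y^\beta[h_2(\xi;y)-h_3(\xi;y)]\,d\sigma(y)=\int_\SS\bigl(\Delta_0^{K/2}y^\beta\bigr)\,[g_2(\xi;y)-g_3(\xi;y)]\,d\sigma(y),
\]
invoking \eqref{eq:small-1_powers} to bound $|\Delta_0^{K/2}y^\beta|\le c_6$ for $|\beta|\le K-1$. The inner difference $g_2-g_3$ is estimated by the same ``tail'' argument as above but now with $\til F_\eps$ itself in place of $\partial^\beta\Delta^{K/2}\til F_\eps$, producing $|g_2(\xi;y)-g_3(\xi;y)|\le c\,\gamma_3\gamma_1^{-(d-1)} N_\xi^{-K+d-1}(1+N_\xi\rho(\xi,y))^{-M}$. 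Integrating against $c_6$ using \eqref{eq:conv_1} yields the bound $c\gamma_3\gamma_1^{-(d-1)}N_\xi^{-K}$, which is absorbed into $c_{30}\gamma_3\gamma_1^{-2K}N_\xi^{-K}$ since $\gamma_1\le 1$.

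The main technical point is the third step: justifying the Riemann-sum approximation and the convolution inequality for the localized weights, while carefully tracking that the single factor $(1+N_\xi\rho(\xi,\zeta))^{-1}\le\gamma_3$ is the only place where the smallness of $\gamma_3$ is used. Everything else is bookkeeping of constants. The ability to inflate $M$ in \eqref{eq:local_Phi} (valid for arbitrary $M>0$) is essential, because one unit of decay is spent to produce the $\gamma_3$ factor, and we need at least $M>d-1$ remaining so that the convolution estimate returns $(1+N_\xi\rho(\xi,x))^{-M}$ with the target exponent.
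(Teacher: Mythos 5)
Your overall strategy — isolate the tail $\rho(\zeta,\xi)>\rrr_\xi$, spend one unit of decay of $\newPhi_{N_\xi}$ (using \eqref{eq:local_Phi} at decay $M+1$) to extract the factor $\gamma_3$, then close with a convolution estimate and transfer the result to $g_2-g_3$ via the self-adjointness of $\Delta_0$ for part (b) — is exactly the paper's approach. However, there is a bookkeeping slip in the convolution step that prevents you from reaching the stated exponent $\gamma_1^{-2K}$.

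The trouble is the replacement $(1+\eps^{-1}\rho(\zeta,x))^{-M}\le(1+N_\xi\rho(\zeta,x))^{-M}$ before integrating. This is a legitimate inequality, but it discards the information that $f$ is concentrated at the finer scale $\eps^{-1}=N_\xi/\gamma_1$, and that information is precisely what keeps the $\gamma_1$-power under control. With your weakened bound, the product of constants in front is
\[
\gamma_3\,N_\xi^{-K+d-1}\cdot\gamma_1^{-(|\beta|+K+d-1)}N_\xi^{|\beta|+K+d-1},
\]
and after the same-scale convolution $\int_\SS(1+N_\xi\rho(\xi,y))^{-M}(1+N_\xi\rho(y,x))^{-M}d\sigma(y)\le cN_\xi^{-(d-1)}(1+N_\xi\rho(\xi,x))^{-M}$ you land on $\gamma_1^{-(|\beta|+K+d-1)}$, which for $|\beta|=K$ is $\gamma_1^{-(2K+d-1)}$, \emph{not} $\gamma_1^{-2K}$. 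Since $\gamma_1\le1$, the exponent $2K+d-1>2K$ makes this a strictly larger bound, so \eqref{eq:approx3_2} as stated is not established. The same slip gives you an extraneous $\gamma_1^{-(d-1)}$ in part (b), which your claim ``absorbed into $\gamma_1^{-2K}$'' only covers when $d-1\le 2K$; the lemma is stated for all $d\ge2$ and $K\in2\NN$, so this absorption is not automatic.

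The fix is to leave the decay of $\partial^\beta\Delta^{K/2}\til F_\eps(\zeta;\cdot)$ at its natural scale $\eps^{-1}$ and apply \propref{prop:5_3} directly with the asymmetric scales $N_1=N_\xi$ (for the truncated $\newPhi_{N_\xi}^*$, with $\kappa_1\sim\gamma_3N_\xi^{-K}$) and $N_2=\eps^{-1}\ge N_1$ (for $\til F_\eps$, with $\kappa_2\sim(\eps^{-1})^{K+|\beta|}$). Then \eqref{eq:4c} returns $\kappa_1\kappa_2N_1^{d-1}(1+N_1\rho(\xi,x))^{-M}$: the outer factor is $N_1^{d-1}=N_\xi^{d-1}$ rather than $\eps^{-(d-1)}$, because the finer-scale kernel is $L^1$-normalized and acts essentially as a unit-mass bump. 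Carrying this through gives $c\gamma_3\gamma_1^{-(K+|\beta|)}N_\xi^{|\beta|+d-1}(1+N_\xi\rho(\xi,x))^{-M}$, with $K+|\beta|\le 2K$, which is the claimed estimate; and in part (b) the corresponding bound for $g_2-g_3$ carries no $\gamma_1$-power at all, as in the paper.
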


\begin{proof}
Let $\xi\in\cX_j$. Set $\newPhi_{N_\xi}^*(u):=\newPhi_{N_\xi}(u)\ONE_{(r_\xi,\pi]}(u)$.
Then for $0\le|\beta|\le K$ we get from \eqref{eq:h2} and \eqref{eq:h3}
\begin{align}\label{eq:approx3_1}
\partial^\beta\big[\til{h}_2(\xi;x)-\til{h}_3(\xi;x)\big]
&=\sum_{\zeta\in\cZ_j}
w_\zeta \newPhi_{N_\xi}^*(\xi\cdot \zeta)\partial^\beta\Delta^{K/2}\til{F}_{\eps}(\zeta;x)\notag\\
&= \int_{\SS}\newPhi_{N_\xi}^*(\xi\cdot \zeta(y))\partial^\beta\Delta^{K/2}\til{F}_{\eps}(\zeta(y);x)d\sigma(y),
\end{align}
where $\zeta(y)$ is defined in \S\ref{s5_3}.
For $\newPhi_{N_\xi}(\xi\cdot x)$ estimate \eqref{eq:local_Phi} with $\beta=0$
and $M$ replaced by $M+1$ yields
\begin{equation*}
\big|\newPhi_{N_\xi}(\xi\cdot x)\big|
\le c_4^* \frac{N_\xi^{-K+d-1}}{ (1+N_\xi\rho(\xi,x))^{M+1}},\quad x\in\SS.
\end{equation*}
This estimate and the inequality $(1+N_\xi\rho(\xi,x))^{-M-1}\le \gamma_3(1+N_\xi\rho(\xi,x))^{-M}$,
if $\rho(\xi,x)>\rrr_\xi$, yield
\begin{equation*}
\big|\newPhi_{N_\xi}^*(\xi\cdot x)\big|
\le c_4^* \frac{\gamma_3 N_\xi^{-K+d-1}}{ (1+N_\xi\rho(\xi,x))^M},\quad x\in\SS,
\end{equation*}
and hence
\begin{equation}\label{eq:local_Phi3}
\big|\newPhi_{N_\xi}^*(\xi\cdot \zeta(y))\big|
\le c_{31} \frac{\gamma_3 N_\xi^{-K+d-1}}{ (1+N_\xi\rho(\xi,y))^M},\quad y\in\SS,
\end{equation}
with $c_{31}=2^M c_4^*$ on account of $\rho(y,\zeta(y))\le \gamma_2 N_\xi^{-1}\le N_\xi^{-1}$ and \eqref{comp_local}.
Using \eqref{comp_local} again we get from \eqref{eq:potential6}
\begin{equation}\label{eq:potential7}
\big|\partial^\beta\Delta^{K/2}\til{F}_{\eps}(\zeta(y);x)\big|
\le c_{32} \frac{\eps^{-(K+|\beta|+d-1)}}{ (1+\eps^{-1}\rho(y,x))^M}
\end{equation}
with $c_{32}=2^M d^{K/2}c_8$.
We now apply \propref{prop:5_3} to the integral in \eqref{eq:approx3_1} with $x_1=\xi$,
$g(y)=\newPhi_{N_\xi}^*(\xi\cdot \zeta(y))$, $N_1=N_\xi$, $\kappa_1=c_{31}\gamma_3 N_\xi^{-K}$ from \eqref{eq:local_Phi3},
$x_2=x$, $f(y)=\partial^\beta\Delta^{K/2}\til{F}_{\eps}(\zeta(y);x)$,
$\eps=\gamma_1/N_\xi$, $N_2=1/\eps\ge N_1$, $\kappa_2=c_{32}(\eps^{-1})^{K+|\beta|}$ from \eqref{eq:potential7},
and get \eqref{eq:approx3_2} with $c_{30}\ge c_3 c_{31} c_{32}$.

For the proof of \eqref{eq:approx3_5} we repeat the argument from the proof of \eqref{eq:approx3_2} and get
\begin{multline}\label{eq:approx3_4}
|g_2(\xi;x)-g_3(\xi;x)|\\
= \Big|\int_{\SS}\newPhi_{N_\xi}^*(\xi\cdot \zeta(y))F_{\eps}(\zeta(y)\cdot x)d\sigma(y)\Big|
\le c_{33} \frac{\gamma_3 N_\xi^{-K+d-1}}{(1+N_\xi\rho(\xi, x))^M}.
\end{multline}
Now, for $0\le|\beta|\le K-1$ we apply consecutively \eqref{eq:h2}, \eqref{eq:h3},
the fact that the operator $\Delta_0$ is symmetric,
\eqref{eq:approx3_4}, \eqref{eq:conv_1}, and \eqref{eq:small-1_powers} to obtain
\begin{align*}
&\Big|\int_{\SS} y^\beta \left[h_2(\xi;y)-h_3(\xi;y)\right]d\sigma(y)\Big|\\
&\qquad\qquad=\Big|\int_{\SS} y^\beta \Delta_0^{K/2}\left[g_2(\xi;y)-g_3(\xi;y)\right]d\sigma(y)\Big|\\
&\qquad\qquad=\Big|\int_{\SS} \left[g_2(\xi;y)-g_3(\xi;y)\right]\Delta_0^{K/2}y^\beta d\sigma(y)\Big|
\le c_0 c_6c_{33} \gamma_3 N_\xi^{-K},
\end{align*}
which yields \eqref{eq:approx3_5} with $c_{30}\ge c_0 c_6c_{33}$ because of $\gamma_1\le 1$.
Finally, we set $c_{30}:=\max\{c_3 c_{31} c_{32}, c_0 c_6c_{33}\}$.
\end{proof}

\begin{lem}\label{lem:1_5}
Assume $\xi\in\cX_j$, $j\in\NN$, $K\in 2\NN$, $M>K+d-1$, and
let $\gamma_1$, $\gamma_2$, $\gamma_3$, $\eps$, $\rrr_\xi$, $\til{h}_3$ be as in \lemref{lem:1_4}.
Let $\til{\theta}^\diamond_\xi(x)$ be defined by \eqref{eq:theta_diamond} if $d\ge 3$
or by \eqref{eq:theta_diamond2} if $d=2$
with  $x/|x|$ in the place of $x$.
Then for any $\gamma_4>0$ there exists $t_j>0$ such that
\begin{equation}\label{eq:approx4_1}
\big|\partial^\beta\big[\til{h}_3(\xi;x)-\til{\theta}^\diamond_\xi(x)\big]\big|
\le \frac{\gamma_4 N_\xi^{|\beta|+d-1}}{(1+N_\xi\pi)^M},\quad \forall x\in\SS,~0\le|\beta|\le K,
\end{equation}
\begin{equation}\label{eq:approx4_2}
\Big|\int_{\SS} y^\beta \left[h_3(\xi;y)-\theta^\diamond_\xi(y)\right]d\sigma(y)\Big|
\le \gamma_4 N_\xi^{-M},~0\le|\beta|\le K-1.
\end{equation}
\end{lem}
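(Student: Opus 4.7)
The plan is to exploit that, with $\gamma_1,\gamma_2,\gamma_3$ and $j$ already fixed, the difference $\til h_3(\xi;x)-\til\theta^\diamond_\xi(x)$ is a \emph{finite} linear combination of terms in which a continuous differential operator has been replaced by its discrete counterpart, so the whole error can be made arbitrarily small by taking the step size $t_j$ sufficiently small.

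Combining \eqref{eq:h3} and \eqref{eq:potential9} with \eqref{eq:potential4} and \eqref{eq:2.1}, I would write, in the case $d\ge 3$,
\[
\til h_3(\xi;x)-\til\theta^\diamond_\xi(x)
=\kappa_{\eps,m,d}\!\!\sum_{\substack{\zeta\in\cZ_j\\ \rho(\zeta,\xi)\le \rrr_\xi}}\!\!
w_\zeta\,\newPhi_{N_\xi}(\xi\cdot\zeta)\sum_{\ell=0}^m b_\ell\bigl[\Delta_0^{K/2}(\zeta\cdot\nabla)^\ell-\fL_{t_j}^{K/2}\fD^\ell_{t_j}(\zeta)\bigr]|x-a\zeta|^{2-d},
\]
with an analogous identity in $d=2$. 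Since $\fL_t$ and $\fD^\ell_t(\zeta)$ do not touch the variable $x$, they commute with $\partial^\beta_x$, so the same representation holds after differentiation in $x$, with $|x-a\zeta|^{2-d}$ replaced by $\partial^\beta_x|x-a\zeta|^{2-d}$.

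Three features make the estimate cheap. First, the index set is finite: $\cZ_j$ is a maximal $\gamma_2 2^{-j+1}$-net and only $\zeta$ with $\rho(\zeta,\xi)\le\rrr_\xi=(\gamma_3 N_\xi)^{-1}$ contribute, so the number of terms is bounded by some $C(d,\gamma_2,\gamma_3)$ independent of $t_j$. Second, the coefficients $w_\zeta$, $|\newPhi_{N_\xi}(\xi\cdot\zeta)|$, $\kappa_{\eps,m,d}$, and $|b_\ell|$ are all controlled by constants depending on $d,K,M,j,\gamma_1,\gamma_2,\gamma_3$ but not on $t_j$. Third, because $|a\zeta|=a=1+\eps>1$, the function $y\mapsto|x-y|^{2-d}$ is $C^\infty$ on a full neighbourhood of $y=a\zeta$ for every $x\in\SS$, with all derivatives uniformly bounded jointly in $(x,\zeta)\in\SS\times\SS$. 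A standard Taylor expansion of each factor in the product $\fL_t^{K/2}\fD^\ell_t(\zeta)$ then yields
\[
\Bigl|\bigl[\Delta_0^{K/2}(\zeta\cdot\nabla)^\ell-\fL_t^{K/2}\fD^\ell_t(\zeta)\bigr]\partial^\beta_x|x-a\zeta|^{2-d}\Bigr|\le C'\,t,\quad x,\zeta\in\SS,
\]
for all sufficiently small $t$. Summing over the finitely many $\zeta$ gives a bound of the form $C''\,t_j$ for $|\partial^\beta[\til h_3-\til\theta^\diamond_\xi](x)|$ uniformly in $x\in\SS$ and $0\le|\beta|\le K$; choosing $t_j$ small enough enforces \eqref{eq:approx4_1} for every such $\beta$. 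Since $\cX_j$ is finite, the dependence of $t_j$ on $\xi\in\cX_j$ can be absorbed by taking a minimum. The moment bound \eqref{eq:approx4_2} is then immediate from the same uniform pointwise estimate with $\beta=0$ by multiplying by $y^\beta$ (bounded on $\SS$ thanks to \eqref{eq:small-1_powers}) and integrating over $\SS$, possibly shrinking $t_j$ further.

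The only technical point, modest in comparison with Lemmas \ref{lem:1_2}--\ref{lem:1_4}, is the uniform Taylor remainder for $\fL_t^{K/2}\fD^\ell_t(\zeta)$. I would expand the two factors separately: $\fD^\ell_t(\zeta)$ is a forward difference in the fixed direction $\zeta$ with remainder $O(t)$ controlled by $\|\nabla^{\ell+1}\|_\infty$ on the relevant compact set; $\fL_t$ is symmetric in $t$ and, via \eqref{eq:laplace_beltrami_decomposition} together with the Taylor expansion of $T(Q_{i,\ell,\pm t})$, satisfies $\fL_t g-\Delta_0 g=O(t^2)$; iterating $\fL_t$ and then composing with $\fD^\ell_t(\zeta)$ delivers the displayed $O(t)$ estimate, with the constant controlled by finitely many uniform derivative bounds on $\partial^\beta_x|x-a\zeta|^{2-d}$. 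No new harmonic analytic ingredient beyond what is already available in the excerpt is needed.
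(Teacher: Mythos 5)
Your proposal is correct and is essentially the paper's own argument: the paper disposes of this lemma in one sentence, citing exactly the two ingredients you develop — that $\fL_t^{K/2}\fD_t^\ell(\zeta)$ approximates $\Delta_0^{K/2}(\zeta\cdot\nabla)^\ell$ as $t\to 0$, and that $|x-a\zeta|^{2-d}$ (resp.\ $\ln(1/|x-a\zeta|)$) is $C^\infty$ with uniformly bounded derivatives on $\SS$ since the poles $a\zeta$ lie strictly outside $\overline{B^d}$ — combined with the finiteness of the sum over $\zeta$ and the freedom to shrink $t_j$ for each fixed $j$. The only slight imprecision is the remark that $\fL_t$ and $\fD_t^\ell(\zeta)$ ``do not touch $x$'' (they are translations and rotations in $x$, so $\fL_t$ does not literally commute with $\partial_x^\beta$), but this does not affect the argument since the $O(t)$ Taylor remainder holds equally for the differentiated kernels.
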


\begin{proof}
Inequalities \eqref{eq:approx4_1} and \eqref{eq:approx4_2} follow from \eqref{eq:h3}--\eqref{eq:theta_diamond2}
by approximating
the operator $\Delta_0^{K/2}\left(\zeta\cdot\nabla\right)^m$ by $\fL_t^{K/2}\fD^m_t(\zeta)$ as $t\to 0$ and
the infinite smoothness of $|(1+\eps)\zeta-x|^{-d+2}$ and $\log 1/|(1+\eps)\zeta-x|$ on the compact $\SS$.
\end{proof}

\begin{proof}[Proof of \thmref{thm:new_frame_diamond}]
This proof follows at once by Lemmas~\ref{lem:1_2}, \ref{lem:1_3}, \ref{lem:1_4}, and \ref{lem:1_5}
with the following selection of parameters:
\begin{equation}\label{param0}
\begin{array}{ll}
\gamma_1:=\min\{\gamma_0/(4c_{10}),1\},&
\gamma_2:=\min\{\gamma_0\gamma_1^{2K+1}/(4c_{20}),\gamma_1\},\\
\gamma_3:=\min\{\gamma_0\gamma_1^{2K}/(4c_{30}),1\},&
\gamma_4:=\gamma_0/4.
\end{array}
\end{equation}
\end{proof}

\subsection{Completion of the construction of new frames on $\SS$}\label{s6_5}

We use the scheme from Section~\ref{s3} to complete the construction of a pair of dual frames
$\{\theta_\xi\}_{\xi\in\cX}$, $\{\tilde\theta_\xi\}_{\xi\in\cX}$ on $\SS$,
where each frame element $\theta_\xi$ is a linear combination of a fixed number of shifts of the Newtonian kernel.

Following the definition $\psi_\xi(x):=C^\diamond_\xi \psi^\diamond_\xi(x)$
of the elements of old frame $\LL$ given in \eqref{eq:needlet_1},
we similarly construct the elements
$$
\theta_\xi(x):=C^\diamond_\xi \theta^\diamond_\xi(x), \quad \xi\in\cX_j,~~j\ge 1,
$$
of the new frame $\Theta=\{\theta_\xi\}_{\xi\in\cX}$.
In light of \eqref{eq:theta_diamond} and \eqref{eq:theta_diamond2} we have for $j\ge 1$
\begin{equation}\label{eq:theta}
\theta_\xi(x):=C^\diamond_\xi\kappa_{\eps,m,d}\sum_{\substack{\zeta\in\cZ_j\\ \rho(\zeta,\xi)\le\rrr_\xi}}
w_\zeta \newPhi_{N_\xi}(\xi\cdot \zeta)
\sum_{\ell=0}^m b_{\ell}\fL_{t_j}^{K/2}\fD^\ell_{t_j}(\zeta)  |x-a\zeta|^{2-d},~~~ d\ge 3,
\end{equation}
\begin{equation}\label{eq:theta2}
\theta_\xi(x):=C^\diamond_\xi\kappa_{\eps,m,2}\sum_{\substack{\zeta\in\cZ_j\\ \rho(\zeta,\xi)\le\rrr_\xi}}
w_\zeta \newPhi_{N_\xi}(\xi\cdot \zeta)
\sum_{\ell=1}^m b_{\ell}\fL_{t_j}^{K/2}\fD^\ell_{t_j}(\zeta) \ln\frac{1}{|x-a\zeta|},~~ d=2.
\end{equation}
The only frame element excluded from this definition is the constant function corresponding to $\xi\in\cX_0$.
For $\xi\in\cX_0$ we set $\theta_\xi(x):=\psi_\xi(x)\equiv 1$, $x\in\SS$.

In \thmref{thm:new_frame} below we collect some important properties of the new frame $\Theta$.
Its proof is based on \thmref{thm:new_frame_diamond} and the following lemma.

\begin{lem}\label{lem:1_1}
Let $\eta\in\cX$, $K\in 2\NN$, and $M>K+d-1$.

$(a)$
For any $\beta$, $0\le|\beta|\le K$, we have
\begin{equation}\label{eq:approx1_0}
\big|\partial^\beta \til{\newPsi}_{N_\eta}(\eta; x)\big|
\le c_{40} \frac{N_\eta^{|\beta|+d-1}}{(1+N_\eta\rho(\eta,x))^M},\quad \forall x\in\SS,
\end{equation}
with $\til{\newPsi}_{N_\eta}(\eta; x)$ given by \eqref{eq:constr_5}.

$(b)$
For any $\beta$, $0\le|\beta|\le K-1$,  we have
\begin{equation}\label{eq:approx1_2}
\Big|\int_{\SS} y^\beta \newPsi_{N_\eta}(\eta\cdot y)d\sigma(y)\Big|
\le c_{40}N_\eta^{-K}.
\end{equation}
\end{lem}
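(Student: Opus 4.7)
Part (a) is not new work: the estimate \eqref{eq:local_Psi} already proved in \S\ref{s4_1} gives exactly the claimed bound on $|\partial^\beta \til{\newPsi}_{N_\eta}(\eta;x)|$ for all $0\le|\beta|\le K+1$, so in particular for $0\le|\beta|\le K$. Therefore one only needs to take $c_{40}\ge c_5$ and \eqref{eq:approx1_0} follows immediately.

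For part (b), the plan is to exchange the monomial $y^\beta$ with the Newtonian-type kernel by exploiting the self-adjointness of the Laplace--Beltrami operator, mirroring the argument already used in \eqref{eq:w2} inside the proof of \lemref{lem:1_2}. Concretely, by identity \eqref{eq:LB2} we have $\newPsi_{N_\eta}(\eta\cdot y)=\Delta_0^{K/2}\newPhi_{N_\eta}(\eta\cdot y)$ (recall $K$ is even). Since $\Delta_0$ is symmetric on $L^2(\SS)$ and both $y\mapsto y^\beta$ and $y\mapsto\newPhi_{N_\eta}(\eta\cdot y)$ are smooth on $\SS$, we can move $\Delta_0^{K/2}$ onto the monomial:
\begin{equation*}
\int_{\SS} y^\beta\newPsi_{N_\eta}(\eta\cdot y)\,d\sigma(y)
=\int_{\SS} \bigl(\Delta_0^{K/2}y^\beta\bigr)\,\newPhi_{N_\eta}(\eta\cdot y)\,d\sigma(y).
\end{equation*}

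Now the two factors on the right are controlled by already established tools. The integer-order derivative bound \eqref{eq:small-1_powers} gives $|\Delta_0^{K/2}y^\beta|\le c_6$ uniformly on $\SS$ for $0\le|\beta|\le K-1$ (and in fact up to $|\beta|\le K$). The localization estimate \eqref{eq:local_Phi} with $\beta=0$ yields
\begin{equation*}
|\newPhi_{N_\eta}(\eta\cdot y)|\le c_4\,\frac{N_\eta^{-K+d-1}}{(1+N_\eta\rho(\eta,y))^M},\qquad y\in\SS,
\end{equation*}
and since $M>d-1$, the normalization integral \eqref{eq:conv_1} bounds $\int_{\SS} N_\eta^{d-1}(1+N_\eta\rho(\eta,y))^{-M}\,d\sigma(y)$ by $c_0$. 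Combining these three facts gives
\begin{equation*}
\Big|\int_{\SS} y^\beta \newPsi_{N_\eta}(\eta\cdot y)\,d\sigma(y)\Big|
\le c_6\cdot c_4\cdot c_0\cdot N_\eta^{-K},
\end{equation*}
and \eqref{eq:approx1_2} follows by choosing $c_{40}:=\max\{c_5,\,c_0c_4c_6\}$, which depends only on $d,K,M$.

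There is essentially no obstacle here: the lemma is a clean repackaging of tools already assembled in \S\ref{s4_1} (the localization of $\newPhi_N$ and $\newPsi_N$, the identity $\Delta_0^{K/2}\newPhi_N=\newPsi_N$, the uniform bound on $\Delta_0^{K/2}y^\beta$, and the decay integral \eqref{eq:conv_1}). The only minor point to check is that the self-adjoint pairing is legitimate, which is immediate because $\newPhi_{N_\eta}(\eta\cdot\cdot\,)$ is a spherical polynomial (a finite sum in \eqref{eq:constr_4}), so no boundary or convergence issues arise.
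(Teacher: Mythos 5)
Your proof is correct and follows essentially the same route as the paper: part (a) is \eqref{eq:local_Psi} verbatim, and part (b) transfers $\Delta_0^{K/2}$ from $\newPhi_{N_\eta}$ onto $y^\beta$ by symmetry and then combines \eqref{eq:small-1_powers}, \eqref{eq:local_Phi} with $\beta=0$, and \eqref{eq:conv_1}, arriving at the same constant $c_{40}=\max\{c_5,c_0c_4c_6\}$. The only point you skip is the index $\eta\in\cX_0$, for which $\til{\newPsi}_{N_\eta}(\eta;\cdot)\equiv 1/\omega_d$ and the representation via $\newPhi_{N_\eta}$ and \eqref{eq:LB2} is not available; the paper disposes of this case separately as trivial, and your argument should note it as well.
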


\begin{proof}
For $\eta\in\cX_0$ we have $\til{\newPsi}_{N_\eta}(\eta; x)\equiv 1/\omega_d$, $N_\eta=2^{-1}$,
and inequalities \eqref{eq:approx1_0}, \eqref{eq:approx1_2} are trivial.

Let $\eta\in\cX\backslash\cX_0$.
Inequality \eqref{eq:local_Psi} with $N=N_\eta$ yields \eqref{eq:approx1_0} with $c_{40}\ge c_5$.
For any multi-index $\beta$, $0\le|\beta|\le K-1$, we get from \eqref{eq:LB2},
the fact that the operator $\Delta_0$ is symmetric, \eqref{eq:local_Phi} with $\beta=0$ and $N=N_\eta$,
\eqref{eq:conv_1}, and \eqref{eq:small-1_powers} that
\begin{multline*}
\Big|\int_{\SS} y^\beta \newPsi_{N_\eta}(\eta\cdot y)d\sigma(y)\Big|
=\Big|\int_{\SS} y^\beta \Delta_0^{K/2}\newPhi_{N_\eta}(\eta\cdot y)d\sigma(y)\Big|\\
=\Big|\int_{\SS} \newPhi_{N_\eta}(\eta\cdot y)\Delta_0^{K/2}y^\beta d\sigma(y)\Big|\le c_0c_6c_4N_\eta^{-K},
\end{multline*}
which proves the lemma with $c_{40}:=\max\{c_5,c_0c_6c_4\}$.
\end{proof}

\begin{thm}\label{thm:new_frame}
Let $d\ge 2$, $K\in 2\NN$, $M>K+d-1$, and $0<\gamma_0\le 1$. Then there exist constants
$\gamma_1$, $\gamma_2$, $\gamma_3$, $\gamma_4>0$ depending only on $d, K, M, \gamma_0$,
and for every $\xi\in\cX_j$, $j\in\NN$, there exists $t_j>0$ depending only on $d, K, M, \gamma_0$, and $j$
such that:

$(a)$ The new frame $\Theta=\{\theta_\xi\}_\xi\in\cX$ is real-valued and satisfies
\begin{equation}\label{eq:new_frame_local}
\big|\partial^\beta \til{\theta}_\xi(x)\big|
\le c_{41} \frac{N_\xi^{|\beta|+(d-1)/2}}{(1+N_\xi\rho(\xi,x))^M},
\quad \forall x\in\SS,~\forall\xi\in\cX,~ \forall\beta,~ 0\le|\beta|\le K,
\end{equation}
\begin{equation}\label{eq:new_frame_small}
|\left\langle \psi_\eta,\psi_\xi-\theta_\xi\right\rangle| \le
c_{42} \gamma_0 \omega_{\xi,\eta}^{(K,M)},
\quad \forall\xi,\eta\in\cX,
\end{equation}
with $c_{41}=\max\{2^{(d-1)/2}(1+\pi/2)^M, c_9(c_{40}+\gamma_0)\}$, $c_{42}=c_1 c_{40}c_9^2$.

$(b)$ Every frame element $\theta_\xi$, $\xi\in\cX\backslash\cX_0$,
is a linear combination of at most $\tilde{n}$ shifts of Newtonian kernels, where
$\tilde{n}\le c_{43}\gamma_0^{(4K+3)(1-d)}$ with $c_{43}$ depending only on $d, K, M$.

$(c)$ Moreover, if
\begin{equation}\label{cond-g0}
\gamma_0 \le \frac{\tc_5^2}{4 c_{42}},
\end{equation}
where $c_{42}$ is from \eqref{eq:new_frame_small} and $\tc_5$ is from \eqref{psi-xi-norm2}, then
\begin{equation}\label{local-norm}
\|\theta_\xi\|_{L^p(\SS)} \sim N_\xi^{(d-1)(1/2-1/p)},\quad \frac{d-1}{M}<p\le\infty, \quad \forall \xi\in\cX,
\end{equation}
with uniformly bounded constants of equivalence for $p\ge \frac{d-1+\delta}{M}$, $\delta>0$.
\end{thm}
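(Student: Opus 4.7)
The plan is to deduce all three parts directly from \thmref{thm:new_frame_diamond}, \lemref{lem:1_1}, and \propref{prop:5_1}, using the normalization $\theta_\xi = C^\diamond_\xi \theta^\diamond_\xi$, $\psi_\xi = C^\diamond_\xi \psi^\diamond_\xi$ with $C^\diamond_\xi \le c_9 N_\xi^{-(d-1)/2}$ from \eqref{eq:needlet_2}.

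For the localization in (a), write $\partial^\beta \til{\theta}_\xi = C^\diamond_\xi(\partial^\beta \til{\psi}^\diamond_\xi - \partial^\beta(\til{\psi}^\diamond_\xi - \til{\theta}^\diamond_\xi))$ and apply the triangle inequality: \lemref{lem:1_1}(a) controls the first summand with constant $c_{40}$ while \thmref{thm:new_frame_diamond} controls the second with constant $\gamma_0$. Multiplying by $C^\diamond_\xi \le c_9 N_\xi^{-(d-1)/2}$ yields \eqref{eq:new_frame_local} with $c_{41}\ge c_9(c_{40}+\gamma_0)$ for $\xi\in\cX\setminus\cX_0$; the only $\xi\in\cX_0$ (where $\theta_\xi\equiv 1$, $N_\xi=1/2$) is handled directly using $N_\xi\rho(\xi,x)\le\pi/2$, forcing $c_{41}\ge 2^{(d-1)/2}(1+\pi/2)^M$.

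For the inner-product estimate \eqref{eq:new_frame_small}, I would apply \propref{prop:5_1} with a case split on which of $N_\eta, N_\xi$ is smaller. If $N_\eta\le N_\xi$, take $g:=\psi_\eta$, $x_1:=\eta$, $N_1:=N_\eta$ (smoothness and localization supplied by \lemref{lem:1_1}(a), giving $\kappa_1=c_9c_{40}N_\eta^{-(d-1)/2}$), and $f:=\psi_\xi-\theta_\xi$, $x_2:=\xi$, $N_2:=N_\xi$ (localization \eqref{eq:2a} from the $\beta=0$ case of \thmref{thm:new_frame_diamond} and small moments \eqref{eq:3a} from its second estimate, both yielding $\kappa_2=c_9\gamma_0 N_\xi^{-(d-1)/2}$). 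Then \eqref{eq:4a} reads
\begin{equation*}
|\langle\psi_\eta,\psi_\xi-\theta_\xi\rangle|\le c_1 c_9^2 c_{40}\gamma_0\,(N_\eta/N_\xi)^{K+(d-1)/2}(1+N_\eta\rho(\eta,\xi))^{-M}=c_{42}\gamma_0\,\omega^{(K,M)}_{\xi,\eta},
\end{equation*}
matching the stated constant $c_{42}=c_1 c_{40}c_9^2$. The case $N_\eta>N_\xi$ is symmetric by the real-valuedness of the inner product: now $\psi_\xi-\theta_\xi$ is $g$ (smoothness from \thmref{thm:new_frame_diamond}(a)) and $\psi_\eta$ is $f$ (small moments of $\psi_\eta$ furnished by \lemref{lem:1_1}(b)). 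Pairings involving $\cX_0$ collapse trivially (either the difference vanishes, or the $\beta=0$ moment bound suffices).

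Part (b) is a counting exercise on \eqref{eq:theta}/\eqref{eq:theta2}. The number of admissible $\zeta\in\cZ_j$ with $\rho(\zeta,\xi)\le r_\xi=(\gamma_3 N_\xi)^{-1}$ is $\lesssim (r_\xi/\delta_j)^{d-1}=(\gamma_2\gamma_3)^{-(d-1)}$ by a standard packing argument (using \eqref{eq:maximal_net}); for each such $\zeta$, the operator $\fL^{K/2}_{t_j}\fD^\ell_{t_j}(\zeta)$ expands into a number of translated/rotated copies of the Newtonian kernel that depends only on $d, K, m$. Substituting the selections \eqref{param0} $\gamma_2\sim\gamma_0^{2K+2}$, $\gamma_3\sim\gamma_0^{2K+1}$ gives $\tilde n\le c_{43}\gamma_0^{-(4K+3)(d-1)}=c_{43}\gamma_0^{(4K+3)(1-d)}$. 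For part (c), the upper bound $\|\theta_\xi\|_{L^p}\lesssim N_\xi^{(d-1)(1/2-1/p)}$ follows by raising \eqref{eq:new_frame_local} (with $\beta=0$) to the $p$-th power and integrating via \eqref{eq:conv_1} (with $M$ replaced by $Mp$, valid when $pM>d-1$). For the lower bound, set $\eta=\xi$ in \eqref{eq:new_frame_small}; since $\omega^{(K,M)}_{\xi,\xi}=1$,
\begin{equation*}
\big|\|\psi_\xi\|_{L^2}^2-\langle\psi_\xi,\theta_\xi\rangle\big|\le c_{42}\gamma_0,
\end{equation*}
and the hypothesis \eqref{cond-g0} together with \eqref{psi-xi-norm2} yields $|\langle\psi_\xi,\theta_\xi\rangle|\ge 3\tc_5^2/4$. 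Hölder's inequality and the $L^{p'}$-bound $\|\psi_\xi\|_{L^{p'}}\lesssim N_\xi^{(d-1)(1/p-1/2)}$ (obtained again from \eqref{local-needlet-0} and \eqref{eq:conv_1}) then supply the matching lower bound. The main obstacle is the bookkeeping in the second claim of (a): arranging the scale ratio $(N_1/N_2)^K$, the decay factor $(1+N_1\rho)^{-M}$, and the $\kappa_i$-factors in \propref{prop:5_1} so that the product is \emph{exactly} $c_{42}\gamma_0\omega^{(K,M)}_{\xi,\eta}$ in both cases, which is why assigning the smaller-scale function to the role of $g$ (where $K$ derivatives and localization are both required) is essential.
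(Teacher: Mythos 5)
Your treatment of parts (a) and (b) coincides with the paper's: the localization estimate is obtained exactly as you describe (Lemma~\ref{lem:1_1}(a) plus Theorem~\ref{thm:new_frame_diamond}, scaled by $C^\diamond_\xi\le c_9N_\xi^{-(d-1)/2}$, with the trivial $\cX_0$ case separated), the inner--product bound uses Proposition~\ref{prop:5_1} with precisely your case split on $\min\{N_\xi,N_\eta\}$ and the same assignment of roles to $g$ and $f$, and the count in (b) is the same packing argument combined with \eqref{param0}. The upper bound in (c) also matches.

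There is, however, a gap in your lower bound for part (c). You derive $\langle\psi_\xi,\theta_\xi\rangle\ge 3\tc_5^2/4$ and then invoke H\"older's inequality with the conjugate exponent $p'$, bounding $\langle\psi_\xi,\theta_\xi\rangle\le\|\psi_\xi\|_{L^{p'}}\|\theta_\xi\|_{L^p}$. This requires $p\ge1$ (so that $p'\ge1$), but the asserted range is $\frac{d-1}{M}<p\le\infty$, and since $M>K+d-1$ this range contains exponents $p<1$, for which the dual-exponent H\"older inequality is unavailable. The paper avoids this by first establishing the case $p=2$ from $\langle\theta_\xi,\theta_\xi\rangle\ge\langle\psi_\xi,\psi_\xi\rangle-2|\langle\psi_\xi,\psi_\xi-\theta_\xi\rangle|\ge\tc_5^2/2$, and then for all $0<p<2$ interpolating via
\begin{equation*}
\tc_5^2/2\le\|\theta_\xi\|_{L^2}^2\le\|\theta_\xi\|_{L^\infty}^{2-p}\|\theta_\xi\|_{L^p}^p
\le\big(c_{41}N_\xi^{(d-1)/2}\big)^{2-p}\|\theta_\xi\|_{L^p}^p,
\end{equation*}
which uses only the already-proved $L^\infty$ upper bound and works for every $p>0$; the dual-exponent H\"older argument is reserved for $2<p\le\infty$, where it is legitimate. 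Your argument can be repaired in the same spirit (e.g.\ $|\langle\psi_\xi,\theta_\xi\rangle|\le\|\psi_\xi\|_{L^\infty}\|\theta_\xi\|_{L^\infty}^{1-p}\|\theta_\xi\|_{L^p}^p$ for $p<1$), but as written it does not cover the full stated range.
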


\begin{proof}
For $\xi\in\cX_0$ we have $\theta_\xi=1$, $N_\xi=2^{-1}$, and \eqref{eq:new_frame_local}
is satisfied with $c_{41}\ge 2^{(d-1)/2}(1+\pi/2)^M$. Also $\psi_\xi-\theta_\xi=0$
and inequality \eqref{eq:new_frame_small} is obvious for all $\eta\in\cX$.

Let $\xi\in\cX\backslash\cX_0$. Then \eqref{eq:small-0}, \eqref{eq:approx1_0} with $\xi$ in the place of $\eta$,
and \eqref{eq:needlet_2} imply \eqref{eq:new_frame_local} with $c_{41}\ge c_9(c_{40}+\gamma_0)$.

For the proof of \eqref{eq:new_frame_small} first assume that $N_\xi\ge N_\eta$.
We apply \propref{prop:5_1} with $g=\psi_\eta=C^\diamond_\eta \psi^\diamond_\eta$, $x_1=\eta$,
$f=\psi_\xi-\theta_\xi=C^\diamond_\xi (\psi^\diamond_\xi-\theta^\diamond_\xi)$, $x_2=\xi$.
\lemref{lem:1_1} implies that \eqref{eq:1a} is satisfied with $N_1=N_\eta$, $\kappa_1=c_{40}C^\diamond_\eta$, and
\thmref{thm:new_frame_diamond} implies that \eqref{eq:2a}, \eqref{eq:3a}
are satisfied with $N_2=N_\xi$, $\kappa_2=\gamma_0 C^\diamond_\xi$.
Now, \eqref{eq:4a} and \eqref{eq:needlet_2}
give
\begin{equation*}
|\left\langle \psi_\eta,\psi_\xi-\theta_\xi\right\rangle|
\le c_1 \frac{c_{40}
C_9 N_\eta^{-\frac{d-1}{2}}\gamma_0 C_9 N_\xi^{-\frac{d-1}{2}}(N_\eta/N_\xi)^{K}N_\eta^{d-1}}{(1+N_\eta\rho(\xi,\eta))^M}
\le c_{42} \gamma_0 \omega_{\xi,\eta}^{(K,M)},
\end{equation*}
with $c_{42}:=c_1 c_{40}c_9^2$, which establishes \eqref{eq:new_frame_small} in this case.

Second, assume that $N_\xi\le N_\eta$.
Here, we apply \propref{prop:5_1} with
$g=\psi_\xi-\theta_\xi$,
$f=\psi_\eta$, and then
\eqref{eq:new_frame_small} follows similarly as above.
This completes the proof of $(a)$.

The number of $\zeta\in\cZ_j$ in \eqref{eq:theta} (or \eqref{eq:theta2}) can be estimated as follows. From
\begin{equation*}
	\bigcup_{\zeta\in\cZ_j:\rho(\zeta,\xi)\le\rrr_\xi}\nA_\zeta\subset B(\xi,\rrr_\xi+\gamma_2/N_\xi)
\end{equation*}
we find that the total volume covered by $\nA_\zeta$ does not exceed $c((\gamma_3^{-1}+\gamma_2)/N_\xi)^{d-1}$.
From this estimate and \eqref{eq:maximal_net} with $\gamma=\gamma_2$ we get that
the number of $\zeta\in\cZ_j$ in \eqref{eq:theta} is at most $c(\gamma_3\gamma_2)^{1-d}=c_{44}\gamma_0^{(4K+3)(1-d)}$
in light of \eqref{param0}.

Clearly, the number of translation terms in $\fL_{t_j}^{K/2}$ is at most $(d(d-1)/2)^{K/2}+1$.
The number of translation terms in $\fD^\ell_{t_j}(\zeta)$ is $\ell+1$, $\ell=0,1,\dots,m$,
and every such term is also a term for $\fD^m_{t_j}(\zeta)$.
This leads to the estimate $\tilde{n}\le c_{43}\gamma_0^{(4K+3)(1-d)}$ with $c_{43}:=c_{44}[(d(d-1)/2)^{K/2}+1](m+1)$
for the number of shifts of Newtonian kernels used in \eqref{eq:theta}
or in \eqref{eq:theta2}. Thus, the proof of $(b)$ is complete.

We now establish $(c)$. Inequality \eqref{eq:new_frame_local} with $|\beta|=0$ along with \eqref{eq:conv_1} imply
\begin{equation}\label{local-norm_1}
\|\theta_\xi\|_{L^p} \le c_0^{1/p} c_{41} N_\xi^{(d-1)(1/2-1/p)}
\end{equation}
with $c_0=c(d)/\delta$.

To prove the estimate in the other direction:
\begin{equation}\label{local-norm_2}
\|\theta_\xi\|_{L^p} \ge c_{45} N_\xi^{(d-1)(1/2-1/p)}
\end{equation}
we first consider the case $p=2$. From
\begin{equation*}
\langle \theta_\xi,\theta_\xi\rangle
=\langle \psi_\xi,\psi_\xi\rangle-2\langle \psi_\xi,\psi_\xi-\theta_\xi\rangle+\langle \psi_\xi-\theta_\xi,\psi_\xi-\theta_\xi\rangle,
\end{equation*}
\eqref{psi-xi-norm2}, \eqref{eq:new_frame_small}, and \eqref{cond-g0},  we get
\begin{equation*}
\langle \theta_\xi,\theta_\xi\rangle
\ge\langle \psi_\xi,\psi_\xi\rangle-2|\langle \psi_\xi,\psi_\xi-\theta_\xi\rangle|
\ge \tc_5^2-2\gamma_0 c_{42}\omega_{\xi,\xi}^{(K,M)}\ge \tc_5^2/2,
\end{equation*}
due to $\omega_{\xi,\xi}^{(K,M)}=1$ (see \eqref{eq:omega_xi_eta}).
This gives \eqref{local-norm_2} for $p=2$ with $c_{45}=\tc_5/\sqrt{2}$.

Now, consider the case $p<2$.
Using \eqref{local-norm_2} with $p=2$ and \eqref{local-norm_1} with $p=\infty$ we obtain
\begin{equation*}
\tc_5^2/2\le \|\theta_\xi\|_{L^2}^2\le \|\theta_\xi\|_{L^\infty}^{2-p}\|\theta_\xi\|_{L^p}^p
\le \left(c_{41} N_\xi^{(d-1)/2}\right)^{2-p}\|\theta_\xi\|_{L^p}^p,
\end{equation*}
which proves \eqref{local-norm_2} for $p<2$ with $c_{45}=(c_{41}^{p-2}\tc_5^2/2)^{1/p}$.

Finally, consider the case $2<p\le\infty$.
Using H\"{o}lder's inequality, \eqref{local-norm_2} with $p=2$ and \eqref{local-norm_1} with $1\le p'<2$ we obtain
\begin{equation*}
\tc_5^2/2\le \|\theta_\xi\|_{L^2}^2\le \|\theta_\xi\|_{L^{p'}}\|\theta_\xi\|_{L^p}
\le c_0^{1/p'}c_{41} N_\xi^{(d-1)(1/2-1/p')}\|\theta_\xi\|_{L^p},
\end{equation*}
which proves \eqref{local-norm_2} for $p>2$ with $c_{45}=c_0^{-1+1/p}c_{41}^{-1}\tc_5^2/2$.
This proves \eqref{local-norm_2} for all $p$ and completes the proof of the theorem.
\end{proof}

\begin{rem}
All poles of the Newtonian kernels in \eqref{eq:theta} and in \eqref{eq:theta2} are placed on $m+1$ concentric spheres of radii
	$1+\gamma_1 N_\xi^{-1}+kt_j$,~$k=0,1,\dots,m$.
	On every such sphere the poles are located in
the spherical cap of radius $(\gamma_3N_\xi)^{-1}+t_jK/2$ centred at $(1+\gamma_1 N_\xi^{-1}+kt_j)\xi$ .
\end{rem}

Our next step is to show that the above defined system $\Theta$
coupled with the dual system $\tilde\Theta=\{\tilde\theta_\xi\}_{\xi\in\cX}$ constructed by the scheme
from \S\ref{subsec:new-frame} form a pair of frames for all Besov and Triebel-Lizorkin space $\cB^{sq}_p$, $\cF^{sq}_p$
with parameters $(s, p, q)\in \QQ(A)$ for a fixed $A>1$ with $\QQ(A)$ defined in \eqref{indices-1}.

\begin{thm}\label{thm:prop-frame}
Assume $d\ge 2$, $A>1$, and
let $\Theta=\{\theta_\xi\}_{\xi\in\cX}$ be the real-valued system constructed
in \eqref{eq:theta} or \eqref{eq:theta2}, where
\begin{equation}\label{cond-K}
K \ge \left\lceil Ad\right\rceil, K\in 2\NN, \quad M = K+d.
\end{equation}
If the constant $\gamma_0$ in the construction of $\{\theta_\xi\}_{\xi\in\cX}$
is sufficiently small, namely,
\begin{equation}\label{cond-g}
\gamma_0 \le \frac{\epsilon}{c_{42} C_9},
\end{equation}
where $\epsilon$ is from \eqref{oper-eps}, $c_{42}$ is from \eqref{eq:new_frame_small}, and
$C_9$ is from \thmref{thm:almost-diag},
then:

$(a)$ The synthesis operator $T_\theta$ defined by $T_\theta h:= \sum_{\xi\in \cX}h_\xi\theta_\xi$
on sequences of complex numbers $h=\{h_\xi\}_{\xi\in\cX}$ is bounded as a map
$T_\theta: \bb_p^{sq} \mapsto \cB_p^{sq}$, uniformly with respect to to $(s, p, q)\in\QQ(A)$.

$(b)$ The operator
\begin{equation}\label{def:operator-T}
Tf:=\sum_{\xi\in\cX} \langle f, \psi_\xi\rangle \theta_\xi=T_{\theta}S_{\psi}f,
\end{equation}
is invertible on $\cB_p^{sq}$ and $T$, $T^{-1}$ are bounded on $\cB_p^{sq}$,
uniformly with respect to $(s, p, q)\in\QQ(A)$.

$(c)$ For $(s, p, q)\in \QQ(A)$ the dual system $\tilde\Theta=\{\tilde\theta_\xi\}_{\xi\in\cX}$ consists of bounded linear functionals
on $\cB_p^{sq}$ defined by
\begin{equation}\label{def-f-dual-f}
\tilde\theta_\xi(f)=
\langle f, \tilde\theta_\xi\rangle
:= \sum_{\eta\in\cX}
\langle T^{-1}\psi_\eta, \psi_\xi\rangle \langle f, \psi_\eta\rangle
\quad \hbox{for}\;\; f\in \cB_p^{sq},
\end{equation}
with the series converging absolutely.
Also, the analysis operator
$$
S_{\tilde\theta}: \cB_p^{sq} \mapsto \bb_p^{sq},\;
S_{\tilde\theta} = S_{\psi}T^{-1}T_{\psi}S_{\psi}=S_{\psi}T^{-1},
$$
is uniformly bounded with respect to $(s, p, q)\in\QQ(A)$.
Moreover, $\{\theta_\xi\}_{\xi\in\cX}$, $\{\tilde\theta_\xi\}_{\xi\in\cX}$
form a pair of dual frames for $\cB_p^{sq}$ in the following sense:
For any $f\in \cB_p^{sq}$
\begin{equation}\label{frame-B}
f=\sum_{\xi\in\cX} \langle f, \tilde\theta_\xi\rangle \theta_\xi
\quad\hbox{and}\quad
\|f\|_{\cB_p^{sq}} \sim \|\{\langle f, \tilde\theta_\xi\rangle\}\|_{\bb_p^{sq}},
\end{equation}
where the convergence is unconditional in $\cB_p^{sq}$.

Furthermore, $(a)$, $(b)$, and $(c)$ hold true when
$\cB_p^{sq}$, $\bb_p^{sq}$ are replaced by $\cF_p^{sq}$, $\ff_p^{sq}$, respectively.
\end{thm}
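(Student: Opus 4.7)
The plan is to apply the abstract small perturbation framework of Section~\ref{s3} via \thmref{thm:new-frames}, taking
\begin{equation*}
\YY = \{\cB_p^{sq}(\SS) : (s,p,q) \in \QQ(A)\}, \quad \YY_d = \{\bb_p^{sq}(\cX) : (s,p,q) \in \QQ(A)\}
\end{equation*}
and, separately, the analogous Triebel--Lizorkin pair. The old frame is the self-dual needlet system $\Psi = \{\psi_\xi\}$ from Subsection~\ref{subsec:frame-SS} (so $\tilde\psi_\xi := \psi_\xi$), and the candidate new system is $\Theta = \{\theta_\xi\}$ constructed in \eqref{eq:theta}--\eqref{eq:theta2}.

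First I would verify assumptions {\bf A1}--{\bf A3} uniformly for $(s,p,q) \in \QQ(A)$. Assumptions {\bf A1}--{\bf A2} (uniform boundedness of the analysis/synthesis operators and the reproducing identity $T_\psi S_\psi = I$) are immediate from \thmref{thm:F-Bnorm-equivalence} together with \remref{rem:unif-bounds}. Assumption {\bf A3} (solidity and density of compactly supported sequences) is trivial for the sequence spaces $\bb_p^{sq}$ and $\ff_p^{sq}$ with $p,q<\infty$. Moreover, the quasi-triangle constants for spaces in $\YY$ and $\YY_d$ are bounded by some $C_1 = C_1(A,d)$, which in turn fixes $\tau \in (0,1]$ in \eqref{quasi-triangle2} and the threshold $\epsilon$ in \eqref{oper-eps} purely in terms of $A$ and $d$.

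The crux of the proof is to verify the smallness condition \eqref{oper-eps} for the perturbation operator $D$ with matrix $d_{\xi,\eta} = \langle \psi_\eta - \theta_\eta, \psi_\xi\rangle$. I would combine three ingredients. First, \thmref{thm:new_frame}(a) supplies the pointwise almost-diagonal majorant
\begin{equation*}
|d_{\xi,\eta}| \le c_{42}\gamma_0\, \omega_{\xi,\eta}^{(K,M)}, \qquad \xi,\eta \in \cX.
\end{equation*}
Second, the choice $K \ge \lceil Ad\rceil$, $K \in 2\NN$, $M = K+d$ in \eqref{cond-K} guarantees the hypotheses of \thmref{thm:almost-diag} with $\delta = 1$ uniformly over $\QQ(A)$: indeed, in both the Besov case $\cJ = (d-1)/\min\{1,p\}$ and the Triebel--Lizorkin case $\cJ = (d-1)/\min\{1,p,q\}$ one has $\cJ \le (d-1)A$ and $|s| \le A$, so $\max\{s, \cJ-s-d+1\} + 1 \le \lceil Ad\rceil$ and $\cJ + 1 \le K+d = M$. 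Hence the operator $\Omega$ with matrix $\{\omega_{\xi,\eta}^{(K,M)}\}$ is bounded on every $\fb \in \YY_d$ by a constant $C_9 = C_9(d,A)$. Third, the monotonicity principle \lemref{lem:0.3} applied to $|d_{\xi,\eta}| \le c_{42}\gamma_0\,\omega_{\xi,\eta}^{(K,M)}$ yields
\begin{equation*}
\|D\|_{\fb \to \fb} \le c_{42}\gamma_0 \, \|\Omega\|_{\fb \to \fb} \le c_{42}\gamma_0 C_9 \le \epsilon,
\end{equation*}
where the final inequality is precisely the smallness hypothesis \eqref{cond-g}.

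With \eqref{oper-eps} in hand, all of (a), (b), (c) drop out of \thmref{thm:new-frames}: uniform boundedness of $T_\theta$ follows from \lemref{lem:T-bounded} (giving (a)); invertibility of $T = T_\theta S_\psi$ with $\|T^{-1}\| \le C_5$ uniformly follows from \lemref{lem:T-invert} (giving (b)); and the definition \eqref{def-f-dual-f} of $\tilde\theta_\xi$, together with the absolute convergence noted in \eqref{inner-prod}, the uniform boundedness of the analysis operator $S_{\tilde\theta} = S_\psi T^{-1}$ via \lemref{lem:T-invert-psi}, and the unconditional expansion \eqref{frame-rep}, gives the dual frame statement (c). The Triebel--Lizorkin assertions are obtained by the identical argument, since \thmref{thm:almost-diag} treats $\bb$- and $\ff$-spaces on equal footing under the same choice of $K, M$. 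The main technical obstacle is purely bookkeeping: tracking that the constant $c_{42}$ from \thmref{thm:new_frame} and the constant $C_9$ from \thmref{thm:almost-diag} depend only on $d, K, M$, so that a single uniform choice of $\gamma_0$ controls the entire family $\QQ(A)$ at once. This is guaranteed by the construction in \thmref{thm:new_frame_diamond}, where the auxiliary parameters $\gamma_1,\gamma_2,\gamma_3,\gamma_4$ and the step sizes $t_j$ have already been selected as functions of $d, K, M, \gamma_0$ alone.
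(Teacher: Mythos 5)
Your proposal is correct and follows essentially the same approach as the paper's proof: verify assumptions {\bf A1}--{\bf A3} for the needlet frame via \thmref{thm:F-Bnorm-equivalence} and \remref{rem:unif-bounds}, check that \eqref{cond-K} makes \eqref{cond-KM} hold with $\delta=1$ uniformly over $\QQ(A)$, combine \eqref{eq:new_frame_small}, \lemref{lem:0.3}, and \thmref{thm:almost-diag} to obtain the smallness condition \eqref{oper-eps}, and then invoke \lemref{lem:T-bounded}, \lemref{lem:T-invert}, and \thmref{thm:new-frames}. The extra bookkeeping you include regarding why $\cJ \le (d-1)A$ and why $c_{42}, C_9$ depend only on $d,K,M$ simply makes explicit what the paper leaves terse.
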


\begin{proof}
The parameters $K$ and $M$ from \eqref{cond-K} satisfy \eqref{cond-KM} with $\delta=1$
for all $(s, p, q)\in \QQ(A)$ and we can apply \thmref{thm:almost-diag}.
From estimate \eqref{eq:new_frame_small} in \thmref{thm:new_frame}, \lemref{lem:0.3}, and \thmref{thm:almost-diag}
we obtain that $\{\theta_\xi\}_{\xi\in\cX}$
satisfies \eqref{oper-eps} due to $\gamma_0 c_{42} C_9\le \epsilon$.
Also, all conditions on the old frame laid in Subsection~\ref{set-up} are satisfied as shown in Subsection~\ref{s3_4}.
Now, we apply \lemref{lem:T-bounded} and \lemref{lem:T-invert} to get $(a)$ and $(b)$.
Finally, \thmref{thm:new-frames} implies  $(c)$.
\end{proof}

\subsection{Frames on $B^d$ in terms of shifts of the Newtonian kernel}\label{subsec:frames-Bd}

Note that by the fact that each frame element $\theta_\xi$, $\xi\in\cX$, is represented as
a finite linear combination of shifts of the Newtonian kernel it readily follows
that $\theta_\xi(x)$ defined in \eqref{eq:theta} or \eqref{eq:theta2} as a function of $x\in B^d$ is harmonic on $B^d$.
This leads immediately to the conclusion that $\{\theta_\xi\}_{\xi\in\cX}$, $\{\tilde\theta_\xi\}_{\xi\in\cX}$
is a pair of dual frames for harmonic  Besov and Triebel-Lizorkin spaces in the sense
of the following

\begin{thm}\label{thm:harmonic-frame}
Under the hypothesis of Theorem~\ref{thm:prop-frame}
let $\{\theta_\xi\}_{\xi\in\cX}$, $\{\tilde\theta_\xi\}_{\xi\in\cX}$ be the frames from Theorem~\ref{thm:prop-frame}.
Then for any $(s, p, q)\in\QQ(A)$ and $U\in B_p^{sq}(\HHH)$
\begin{equation}\label{harmonic-frame}
U(x)=\sum_{\xi\in\cX} \langle f_U, \tilde\theta_\xi\rangle \theta_\xi(x),~x\in B^d,
\quad\hbox{and}\quad
\|U\|_{B_p^{sq}} \sim \|\{\langle f_U, \tilde\theta_\xi\rangle\}\|_{\bb_p^{sq}}.
\end{equation}
Here $f_U$ is the boundary value of $U$ $($see Proposition~\ref{prop:identify}$)$
and the series converges uniformly on every compact subset of $B^d$.
Furthermore, the above holds true when
$B_p^{sq}(\HHH)$, $\bb_p^{sq}$ are replaced by $F_p^{sq}(\HHH)$, $\ff_p^{sq}$, respectively.
\end{thm}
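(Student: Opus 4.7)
The plan is to transfer the frame decomposition already established on the sphere (Theorem~\ref{thm:prop-frame}) to the ball via the boundary-value correspondence of Proposition~\ref{prop:identify} and the space identifications of Theorems~\ref{thm:equiv-norms-B}, \ref{thm:equiv-norms-F}. The essential observation is that, by construction \eqref{eq:theta}--\eqref{eq:theta2}, each $\theta_\xi$ is a finite linear combination of shifts of the Newtonian kernel with poles $y_\nu$ of modulus $|y_\nu|>1$. Hence the formula defining $\theta_\xi$ extends to a harmonic function on $B^d$ (continuous on $\overline{B^d}$), which by uniqueness of the Dirichlet problem on $B^d$ coincides with $U_{\theta_\xi}$, the harmonic extension of $\theta_\xi|_{\SS}$ in the sense of Proposition~\ref{prop:identify}(b).

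First, I would fix $(s,p,q)\in\QQ(A)$ and let $U\in B_p^{sq}(\HHH)$. By Theorem~\ref{thm:equiv-norms-B}, the boundary distribution $f_U\in\cB_p^{sq}(\SS)$ with $\|f_U\|_{\cB_p^{sq}}\sim\|U\|_{B_p^{sq}(\HHH)}$. Applying Theorem~\ref{thm:prop-frame}(c) to $f_U$ yields
\begin{equation*}
f_U=\sum_{\xi\in\cX}\langle f_U,\tilde\theta_\xi\rangle\,\theta_\xi
\quad\text{in}\quad\cB_p^{sq}(\SS),
\qquad
\|f_U\|_{\cB_p^{sq}}\sim\|\{\langle f_U,\tilde\theta_\xi\rangle\}\|_{\bb_p^{sq}}.
\end{equation*}
Combining the two displayed equivalences gives the norm equivalence claimed in \eqref{harmonic-frame}.

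Next, I would upgrade this identity on $\SS$ to the representation formula on $B^d$. Convergence in $\cB_p^{sq}(\SS)$ entails convergence in $\cS'$ (by the continuous embedding postulated for all members of $\YY$ in Subsection~\ref{set-up}). For any fixed $x\in B^d$ the Poisson kernel $P(\cdot,x)$ belongs to $\cS$, since its spherical-harmonic coefficients decay geometrically in the radius $|x|<1$; hence the functional $f\mapsto\langle f,P(\cdot,x)\rangle=U_f(x)$ is continuous on $\cS'$. Applying this functional termwise, and using that $U_{\theta_\xi}(x)=\theta_\xi(x)$ by the harmonicity observation above, produces
\begin{equation*}
U(x)=U_{f_U}(x)=\sum_{\xi\in\cX}\langle f_U,\tilde\theta_\xi\rangle\,\theta_\xi(x),\qquad x\in B^d.
\end{equation*}
To promote this to uniform convergence on compacta $K\Subset B^d$, I would use the uniform bound on $\cS$-seminorms of $P(\cdot,x)$ as $x$ ranges over $K$ (geometric decay governed by $\max_{x\in K}|x|<1$) together with the distribution estimate \eqref{def-distr} applied to partial sums, which are uniformly bounded in $\cS'$ because of the $\cB_p^{sq}$ convergence.

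The Triebel--Lizorkin version follows by the same three-step pattern, replacing Theorem~\ref{thm:equiv-norms-B} by Theorem~\ref{thm:equiv-norms-F} and $\bb_p^{sq}$ by $\ff_p^{sq}$. The main obstacle is the last step --- justifying uniform convergence on compact subsets of $B^d$ from the $\cB_p^{sq}$-convergence on $\SS$; everything else is a direct concatenation of Theorem~\ref{thm:prop-frame} with the identification theorems from Section~\ref{s2}. This obstacle is overcome by exploiting that harmonic extension factors through the $\cS'$-pairing with the Poisson kernel, whose $\cS$-seminorms are controlled uniformly on compact subsets of $B^d$.
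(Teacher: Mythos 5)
Your proof is correct and follows exactly the route the paper intends: transfer Theorem~\ref{thm:prop-frame} from $\SS$ to $B^d$ via the identifications in Theorems~\ref{thm:equiv-norms-B} and \ref{thm:equiv-norms-F}, using the Poisson-kernel pairing (as in Lemma~\ref{lem:coeff}) to upgrade $\cS'$-convergence to uniform convergence on compacta. The paper states this as immediate from those three theorems; you have simply supplied the details, including the correct observation that $P(\cdot,x)\in\cS$ with $\cS$-seminorms uniformly controlled on compact subsets of $B^d$ and that each $\theta_\xi$ restricted to $\SS$ has $U_{\theta_\xi}=\theta_\xi$ by uniqueness of the Dirichlet problem.
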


This theorem follows immediately by Theorems~\ref{thm:prop-frame}, \ref{thm:equiv-norms-F}, and \ref{thm:equiv-norms-B}.

\section{Nonlinear approximation from shifts of the Newtonian kernel}\label{s7}

The primary goal of this article is to establish a Jackson type estimate for nonlinear
$n$-term approximation of harmonic functions on $B^d$ from shifts of the Newtonian kernel in the harmonic Hardy space $\HHH^p(B^d)$.
For any $n\ge 1$ write
\begin{equation}\label{def:N-n-d}
\NNN_n:=\Big\{G: G(x) =a_0+\sum_{\nu=1}^{n} \frac{a_\nu}{|x-y_\nu|^{d-2}}, \; |y_\nu|>1, a_\nu\in \CC\Big\},
\;\;\hbox{if $d>2$},
\end{equation}
and
\begin{equation}\label{def:N-n-2}
\NNN_n:=\Big\{G: G(x) =a_0+\sum_{\nu=1}^{n} a_\nu\ln \frac{1}{|x-y_\nu|}, \; |y_\nu|>1, a_\nu\in \CC\Big\},
\;\;\hbox{if $d=2$}.
\end{equation}
Observe that the points $\{y_\nu\}$ above may vary with $G$ and hence $\NNN_n$ is nonlinear.

Let $\fB$ be one of the spaces $\HHH^p(B^d)$, $B_p^{0q}(\HHH)$, or $F_p^{0q}(\HHH)$, $0<p,q<\infty$.
Given $U\in \fB$ we define
\begin{equation}\label{def:best-app}
E_n(U)_{\fB}:=\inf_{G\in\NNN_n}\|U-G\|_{\fB}.
\end{equation}
We call $E_n(U)_{\fB}$ the best nonlinear $n$-term approximation of $U$ from shifts of the Newtonian kernel
in the harmonic space $\fB$.

We now come to the main result in this article.

\begin{thm}\label{thm:Jackson}
Let $s>0$, $0<p<\infty$, and $1/\tau=s/(d-1)+1/p$.
If $U\in B_\tau^{s\tau}(\HHH)$, then $U\in \HHH^p(B^d)$ and
\begin{equation}\label{jackson-H}
E_n(U)_{\HHH^p(B^d)} \le cn^{-s/(d-1)}\|U\|_{B_\tau^{s\tau}(\HHH)},\quad n\ge 1,
\end{equation}
where the constant $c>0$ depends only on $p, s, d$.
\end{thm}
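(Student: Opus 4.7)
The plan is to follow the three-step strategy outlined in the introduction: transfer the problem from the ball to the sphere via the identification $\HHH^p(B^d)\sim \cF_p^{02}(\SS)$, perform nonlinear $n$-term approximation on the sphere using the Newtonian kernel frame $\{\theta_\xi\}$, $\{\tilde\theta_\xi\}$, and then reassemble each frame element as a bounded number of Newtonian shifts. First I fix $A>1$ large enough (depending on $s$ and $p$) so that both triples $(s,\tau,\tau)$ and $(0,p,2)$ lie in $\QQ(A)$, and apply Theorem~\ref{thm:prop-frame} to obtain a pair of frames $\{\theta_\xi\}_{\xi\in\cX}$, $\{\tilde\theta_\xi\}_{\xi\in\cX}$ that characterize all the relevant $\cB$-- and $\cF$--spaces on $\SS$. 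The inclusion $U\in\HHH^p(B^d)$ follows by passing to the boundary distribution $f_U$ (Theorem~\ref{thm:equiv-norms-B}) and chaining the embeddings $\cB_\tau^{s\tau}=\cF_\tau^{s\tau}\hookrightarrow \cF_p^{02}$ from Proposition~\ref{embed}, identities \eqref{eq:8b}--\eqref{eq:8c} and the Sobolev-type embedding \eqref{eq:12} (note $s-(d-1)/\tau=-(d-1)/p$ is exactly the gap condition), followed by Theorem~\ref{thm:identify-Hp}.

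Next I expand $f_U=\sum_\xi a_\xi\theta_\xi$ with $a_\xi:=\langle f_U,\tilde\theta_\xi\rangle$ and, by Theorem~\ref{thm:prop-frame}(c),
\[
\|\{a_\xi\}\|_{\bb_\tau^{s\tau}}\lesssim \|f_U\|_{\cB_\tau^{s\tau}}\sim \|U\|_{B_\tau^{s\tau}(\HHH)}.
\]
Because $q=p=\tau$, the $\bb_\tau^{s\tau}$-norm collapses to a weighted $\ell^\tau$-norm: using $s+(d-1)(1/2-1/\tau)=(d-1)(1/2-1/p)$, one checks
\[
\|\{a_\xi\}\|_{\bb_\tau^{s\tau}}^\tau=\sum_{\xi\in\cX}\bigl(N_\xi^{(d-1)(1/2-1/p)}|a_\xi|\bigr)^\tau=\|\{b_\xi\}\|_{\ell^\tau}^\tau,
\]
with $b_\xi:=N_\xi^{(d-1)(1/2-1/p)}|a_\xi|$. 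Let $\Lambda_n\subset\cX$ be an index set of cardinality $\le n$ corresponding to the $n$ largest $b_\xi$ and put $G_n:=\sum_{\xi\in\Lambda_n}a_\xi\theta_\xi$. By Theorem~\ref{thm:prop-frame}(a) the synthesis operator $T_\theta$ is bounded on $\cF_p^{02}$, so
\[
\|f_U-G_n\|_{\cF_p^{02}}\lesssim \|\{a_\xi\ONE_{\xi\notin\Lambda_n}\}\|_{\ff_p^{02}}.
\]

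The key technical step, and the one I expect to be the main obstacle, is the discrete Jackson inequality
\[
\|\{a_\xi\ONE_{\xi\notin\Lambda_n}\}\|_{\ff_p^{02}}\le Cn^{-s/(d-1)}\|\{a_\xi\}\|_{\bb_\tau^{s\tau}}.
\]
To establish it I would combine the Stechkin inequality in $\ell^p$, which yields
\[
\|\{b_\xi\ONE_{\xi\notin\Lambda_n}\}\|_{\ell^p}\le n^{1/p-1/\tau}\|\{b_\xi\}\|_{\ell^\tau}=n^{-s/(d-1)}\|\{b_\xi\}\|_{\ell^\tau},
\]
with a comparison between the $\ff_p^{02}$ norm of $\{a_\xi\}$ and the $\ell^p$ norm of $\{b_\xi\}$. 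This comparison uses the $\cB\hookrightarrow\cF$ embedding on sequence spaces, following from Proposition~\ref{embed}: since $\bb_p^{0p}=\ff_p^{0p}$, the rescaled sequence $\{b_\xi\}$ has $\ell^p$-norm equivalent to $\|\{a_\xi\}\|_{\bb_p^{0p}}$, and $\bb_p^{0p}\hookrightarrow\ff_p^{0q}$ for $q\ge p$ (resp.\ $\ff_p^{02}\hookrightarrow\bb_p^{02}\hookrightarrow\bb_p^{0p}$ when $p\ge 2$), which up to rescaling yields the desired inequality; when $p<2$ one instead uses the Fefferman--Stein maximal inequality in the form \eqref{max-ineq} applied to the characteristic functions $\ONE_{B_\xi}$ to pass from a tail $\ell^p$-estimate of $\{b_\xi\}$ back to a $\ff_p^{02}$-estimate of $\{a_\xi\ONE_{\xi\notin\Lambda_n}\}$. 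The bookkeeping in the latter maximal-function step, and the verification that the exponents line up cleanly under the condition $1/\tau=s/(d-1)+1/p$, is what makes this paragraph technical.

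Finally, by Theorem~\ref{thm:identify-Hp} and harmonic extension of $G_n$ to $B^d$ (each $\theta_\xi$ is already a harmonic function on $B^d$ since it is a linear combination of shifts of the Newtonian kernel with poles in $\RR^d\setminus\overline{B^d}$), the above estimate translates to
\[
\|U-G_n\|_{\HHH^p(B^d)}\lesssim n^{-s/(d-1)}\|U\|_{B_\tau^{s\tau}(\HHH)}.
\]
By Theorem~\ref{thm:new_frame}(b), each $\theta_\xi$ is a fixed linear combination of at most $\tilde n$ shifts of the Newtonian kernel, with $\tilde n$ depending only on $A$ and $d$; hence $G_n\in\NNN_{\tilde n\, n}$ up to the constant term. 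Therefore $E_{\tilde n\, n}(U)_{\HHH^p}\le\|U-G_n\|_{\HHH^p}\lesssim n^{-s/(d-1)}\|U\|_{B_\tau^{s\tau}(\HHH)}$, and a final relabeling $n\mapsto\lfloor n/\tilde n\rfloor$ (absorbing $\tilde n^{s/(d-1)}$ into the constant) yields \eqref{jackson-H}.
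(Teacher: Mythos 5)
Your overall architecture coincides with the paper's: pass to the boundary distribution $f_U$, expand in the Newtonian-kernel frame, keep the $n$ terms with largest $\|a_\xi\theta_\xi\|_{L^p}\sim b_\xi$, invoke boundedness of $T_\theta$ on $\cF_p^{02}$, and reassemble each $\theta_\xi$ from at most $\tilde n$ shifts. The embedding chain giving $U\in\HHH^p(B^d)$, the identity $\|\{a_\xi\}\|_{\bb_\tau^{s\tau}}=\|\{b_\xi\}\|_{\ell^\tau}$, and the Stechkin step are all correct and match the paper (Lemma~\ref{lem:est-B}).

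The gap is in the key discrete Jackson inequality $\|\{a_\xi\ONE_{\xi\notin\Lambda_n}\}\|_{\ff_p^{02}}\lesssim n^{-s/(d-1)}\|\{a_\xi\}\|_{\bb_\tau^{s\tau}}$ when $p>2$. Your argument for $q=2\ge p$ is fine: $\bb_p^{0p}=\ff_p^{0p}\hookrightarrow\ff_p^{02}$ reduces the tail to $\|\{b_\xi\ONE_{\xi\notin\Lambda_n}\}\|_{\ell^p}$ and Stechkin finishes. But for $p>2$ the embedding in your parenthetical, $\ff_p^{02}\hookrightarrow\bb_p^{0p}$, runs in the wrong direction (you need to bound the $\ff_p^{02}$ norm of the tail from \emph{above} by an $\ell^p$ quantity, and $\ff_p^{0p}\not\subset\ff_p^{02}$ when $p>2$), and your fallback via the maximal inequality is attached to the case $p<2$, which is precisely the case that did not need it. Moreover, the Fefferman--Stein inequality applied to $\ONE_{B_\xi}$ cannot by itself convert an $\ell^p$ tail bound on $\{b_\xi\}$ into an $\ff_p^{02}$ tail bound for $p>2$: the square function genuinely registers the multiplicity of overlapping caps $B_\xi$ across scales, and the crude bound $\|\text{tail}\|_{\ff_p^{02}}\le\|\{b_\xi\ONE_{\xi\notin\Lambda_n}\}\|_{\ell^2}$ obtained from the triangle inequality in $L^{p/2}$ yields only the rate $n^{1/2-1/\tau}$, which is worse than $n^{1/p-1/\tau}=n^{-s/(d-1)}$. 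The paper closes this case differently (proof of Theorem~\ref{thm:Jackson-S-F}): it stratifies the indices into layers $\cY_\mu$ where $\|a_\xi\theta_\xi\|_{L^p}\sim2^{-\mu}N(f)$, uses the cardinality bound $\#\cY_\mu\le2^{\mu\tau}$, and controls each layer's contribution $\|F_\mu\|_{L^{p/q}}$ via the overlap-counting estimate of Lemma~\ref{lem:est-F}, $\|\sum_{\xi\in Y}|B_\xi|^{-1/p}\ONE_{B_\xi}\|_{L^p}\le c(\#Y)^{1/p}$, before summing the geometric series in $\mu$. Some such counting input, exploiting both the bounded overlap of the caps and the fact that all terms in a layer have comparable size, is what your proposal is missing for $p>2$.
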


\thmref{thm:Jackson} is an immediate consequence of
\thmref{thm:Jackson-B-F} bellow with $q=2$ and \thmref{thm:identify-Hp}.

Our approach to approximating a harmonic function $U$ on $B^d$ amounts
to first establishing a Jackson estimate for nonlinear $n$-term approximation of its boundary value function $f_U$
on $\SS$ from the frame elements $\{\theta_\xi\}_{\xi\in\cX}$ constructed in Section~\ref{s6}
and then considering the harmonic extension to $B^d$ of the approximant.
The gist of our approximation method is that each frame element $\theta_\xi$
is a linear combination of a fixed number of shifts of the Newtonian kernel.

\subsection{Nonlinear $n$-term frame approximation on $\SS$}

Let $\{\theta_\xi\}_{\xi\in\cX}$ be the frame constructed in Section~\ref{s6}
with parameters to be specified.
Denote by $\Sigma_n$ the set of all functions $g$ on $\SS$ of the form
\begin{equation*}
g=\sum_{\xi\in Y_n} a_\xi\theta_\xi, \quad a_\xi\in \CC,
\end{equation*}
where $Y_n\subset \cX$ is an index set such that $\# Y_n\le n$.
Define
\begin{equation}\label{best-app}
\sigma_n(f)_\fB:=\inf_{g\in\Sigma_n}\|f-g\|_{\fB},
\end{equation}
where $\fB$ is one of the spaces $L^p(\SS)$, $\cB_p^{0q}(\SS)$, or $\cF_p^{0q}(\SS)$, $0<p,q<\infty$.

As one can expect the smoothness spaces on $\SS$ governing this kind of approximation
should be the Besov spaces $\cB_\tau^{s\tau}(\SS)$ with $s$ and $\tau$ as in Theorem~\ref{thm:Jackson}.
For~this to be true, however, $\{\theta_\xi\}_{\xi\in\cX}$, $\{\tilde\theta_\xi\}_{\xi\in\cX}$
have to provide frame decomposition of all spaces involved just as in Theorem~\ref{thm:prop-frame}.

\smallskip

\noindent
{\bf Assumptions:}
The construction of the frames $\{\theta_\xi\}_{\xi\in\cX}$, $\{\tilde\theta_\xi\}_{\xi\in\cX}$
in Section~\ref{s6} depends on the parameters $A$, $K$, $M$, and $\gamma_0$.
The main parameter is $A$. In~light of Theorems~\ref{thm:new_frame} and \ref{thm:prop-frame}
we require that
\begin{equation*}
0<s\le A, \;\; A^{-1}\le p\le A, \;\; A^{-1}\le q <\infty, \;\; A^{-1}\le s/(d-1)+1/p\le  A,\;\; A>1,
\end{equation*}
which reduces to the following principle conditions:
\begin{equation}\label{main-cond}
0<s\le A, \;\; 0<p \le A, \;\; A^{-1}\le q <\infty, \;\; s/(d-1)+1/p\le  A,\;\; A>1.
\end{equation}
The parameters $K$, $M$ are secondary and are defined as
\begin{equation}\label{param-KM}
K:= 2\left\lceil \frac{Ad}{2}\right\rceil,\quad M:=K+d.
\end{equation}
Further, $\gamma_0>0$ is a ``small'' parameter
and using the notation from Theorems~\ref{thm:new_frame} and \ref{thm:prop-frame}
we fix it as
\begin{equation}\label{param-gamma0}
\gamma_0:=\min\left\{ \frac{\epsilon}{c_{42} C_9},\frac{\tc_5^2}{4 c_{42}}\right\}.
\end{equation}
It is easy to see that $\gamma_0$ depends only on $A$, $d$, and the ``old" frame $\{\psi_\xi\}_{\xi\in\cX}$,
described in \S\ref{subsec:frame-SS}.

Conditions \eqref{main-cond} can be viewed in two ways:

(a) Given $A>1$ consider \eqref{main-cond} as conditions on $s, p, q$;

(b) Given $s, p, q$ consider \eqref{main-cond} as conditions on $A$.
For \thmref{thm:Jackson} we take $A$ to be the smallest number satisfying \eqref{main-cond}.

Either way conditions \eqref{main-cond} coupled with \eqref{param-KM}--\eqref{param-gamma0}
imply that the hypotheses of Theorems~\ref{thm:new_frame} and \ref{thm:prop-frame} are obeyed
and hence the conclusions of these theorems are valid for
the frames $\{\theta_\xi\}_{\xi\in\cX}$, $\{\tilde\theta_\xi\}_{\xi\in\cX}$
and the spaces $\cB_p^{0q}(\SS)$, $\cF_p^{0q}(\SS)$, and $\cB_\tau^{s\tau}(\SS)$.

Although we are mainly interested in approximation of functions
in the harmonic Hardy space $\HHH^p(B^d)$ or their boundary values in $\cF_p^{02}(\SS)$,
to put it in perspective we shall examine the approximation process at hand in the slightly
more general  spaces $\cF_p^{0q}(\SS)$ and $\cB_p^{0q}(\SS)$.

\subsubsection{Nonlinear $n$-term frame approximation in Triebel-Lizorkin spaces on $\SS$}\label{ss:TL}

\begin{thm}\label{thm:Jackson-S-F}
Assume $A>1$ and let the parameters $K$, $M$, and $\gamma_0$ be defined as in \eqref{param-KM}--\eqref{param-gamma0}.
Let $s>0$, $0<p,q<\infty$, and $1/\tau=s/(d-1)+1/p$,
and assume that $s, p, q$ satisfy conditions \eqref{main-cond}.
If $f\in \cB_\tau^{s\tau}(\SS)=\cF_\tau^{s\tau}(\SS)$, then $f\in \cF_p^{0q}(\SS)$
and with the notation from \eqref{best-app}
\begin{equation}\label{jackson-S-F}
\sigma_n(f)_{\cF_p^{0q}(\SS)} \le cn^{-s/(d-1)}\|f\|_{\cB_\tau^{s\tau}(\SS)},\quad n\ge 1,
\end{equation}
where the constant $c>0$ depends only on $A, d$.
\end{thm}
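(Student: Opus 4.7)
The plan is to transfer the approximation problem from $\SS$ to sequences via the dual frames from \thmref{thm:prop-frame}, and then prove a Jackson-type estimate at the sequence level. Write $h_\xi := \langle f,\tilde\theta_\xi\rangle$. The hypotheses \eqref{main-cond}--\eqref{param-gamma0} guarantee that \thmref{thm:prop-frame} applies uniformly to both $\cB_\tau^{s\tau}$ and $\cF_p^{0q}$, so $\|f\|_{\cB_\tau^{s\tau}}\sim\|\{h_\xi\}\|_{\bb_\tau^{s\tau}}$ and $\|T_\theta a\|_{\cF_p^{0q}}\le c\|a\|_{\ff_p^{0q}}$ for every sequence $a$. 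Thus, if for each $n\ge 1$ I can produce an $n$-sparse sequence $a=\{a_\xi\}$ with
\[
\|h-a\|_{\ff_p^{0q}}\le c\,n^{-s/(d-1)}\|h\|_{\bb_\tau^{s\tau}},
\]
then $g := T_\theta a = \sum_\xi a_\xi\theta_\xi$ lies in $\Sigma_n$ and $\|f-g\|_{\cF_p^{0q}}\le c\|h-a\|_{\ff_p^{0q}}$, which is \eqref{jackson-S-F}. The fact that $f\in\cF_p^{0q}$ in the first place will emerge as a byproduct of the same sequence-level argument applied with $a=0$, i.e.\ as the embedding $\bb_\tau^{s\tau}\hookrightarrow\ff_p^{0q}$.

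Using $1/\tau-1/p=s/(d-1)$, a direct computation from \eqref{def-b-space} shows
\[
\|h\|_{\bb_\tau^{s\tau}}\sim\Bigl(\sum_{\xi\in\cX}\bigl(N_\xi^{(d-1)(1/2-1/p)}|h_\xi|\bigr)^\tau\Bigr)^{1/\tau},
\]
so $\bb_\tau^{s\tau}$ is equivalent to the weighted $\ell^\tau$-space on $\cX$ for the weights $N_\xi^{(d-1)(1/2-1/p)}$. Writing $b_\xi := N_\xi^{(d-1)(1/2-1/p)}|h_\xi|$, I would choose the approximant by greedy thresholding: let $E_n\subset\cX$ with $\#E_n=n$ index the $n$ largest values of $b_\xi$, and put $a_\xi := h_\xi\,\ONE_{E_n}(\xi)$. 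By the elementary rearrangement bound for $\ell^\tau$-sequences, on $\cX\setminus E_n$ one has the pointwise estimate $b_\xi\le\lambda_n := n^{-1/\tau}\|h\|_{\bb_\tau^{s\tau}}$, and the dyadic level sets $F_k := \{\xi\in\cX\setminus E_n : 2^{-k-1}\lambda_n < b_\xi\le 2^{-k}\lambda_n\}$ satisfy $\#F_k\le c\,2^{(k+1)\tau}n$ for each $k\ge 0$.

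The remaining and most technical step is to bound $\|h-a\|_{\ff_p^{0q}}$ via the decomposition $h-a=\sum_{k\ge 0}h\,\ONE_{F_k}$. The key ingredient I intend to prove is a Nikolski-type sequence embedding: if $e=\{e_\xi\}$ is supported on a finite set of cardinality $N$ and $b_\xi|e_\xi|\le\mu$ for every $\xi$, then $\|e\|_{\ff_p^{0q}}\le c\,\mu\,N^{1/p}$. This is the discrete analogue of the spherical-polynomial Nikolski inequality \eqref{Nikolski} and follows from Definition~\ref{def:TL}, \remref{rem:TL}, and the Fefferman--Stein vector-valued maximal inequality \eqref{max-ineq} applied to the $L^p$-normalized indicators $|B_\xi|^{-1/p}\ONE_{B_\xi}$, with uniformity in $(p,q)$ secured by \eqref{max-const} and the assumption $q<\infty$. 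Applying this embedding to each $h\,\ONE_{F_k}$ with $\mu=2^{-k}\lambda_n$ and $N=\#F_k\le c\,2^{k\tau}n$ and combining the pieces by the quasi-triangle inequality in $\ff_p^{0q}$, the resulting geometric series in $k$ converges because $\tau<p$ (this is the point where $s>0$ enters) and sums to $c\,\lambda_n n^{1/p} = c\,n^{-s/(d-1)}\|h\|_{\bb_\tau^{s\tau}}$, as required. The main obstacle is establishing the Nikolski-type sequence embedding with constants depending only on $d$ and $A$; once this is in hand, everything else is direct bookkeeping.
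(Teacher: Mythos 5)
Your proposal is correct and follows essentially the same route as the paper: transfer to sequences via Theorem~\ref{thm:prop-frame}, threshold on the quantity $N_\xi^{(d-1)(1/2-1/p)}|h_\xi|\sim\|h_\xi\theta_\xi\|_{L^p}$, decompose the tail into dyadic level sets of controlled cardinality, and bound each block in $\ff_p^{0q}$ by (cardinality)$^{1/p}$ times the threshold. Your ``Nikolski-type sequence embedding'' is exactly the paper's Lemma~\ref{lem:est-F} (applied with $p/q$ in place of $p$), which the paper proves by a direct count of the overlaps of the balls $B_\xi$ across scales rather than via the maximal inequality — note that the caps $B_\xi$ for $\xi$ in an arbitrary finite set are not disjoint across levels, so some such cross-scale counting is needed in any proof of that lemma.
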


The proof of this theorem depends on the following simple

\begin{lem}\label{lem:est-F}
For any $0<p<\infty$ and any \emph{finite} subset $Y$ of $\cX$ we have
\begin{equation}\label{est-F_q}
\Big\|\sum_{\xi\in Y}|\nB_\xi|^{-1/p}\ONE_{\nB_\xi}\Big\|_{L^{p}}\le c(\# Y)^{1/p}
\end{equation}
with $\nB_\xi=B(\xi,\gamma N_\xi^{-1})$ from Definition~\ref{def:TL}.
\end{lem}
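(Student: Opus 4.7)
The plan is to bound $F(x) := \sum_{\xi\in Y}|\nB_\xi|^{-1/p}\ONE_{\nB_\xi}(x)$ pointwise in terms of the largest scale active at $x$, and then integrate out using the covering estimate for the union of balls at each level. First, I would decompose $Y = \bigsqcup_{j\ge 0} Y_j$ with $Y_j := Y\cap \cX_j$, and recall from \eqref{sph_cap} together with $\nB_\xi = B(\xi, \gamma 2^{-j+1})$ for $\xi\in\cX_j$ that $|\nB_\xi| \sim 2^{-j(d-1)}$ uniformly.

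The key observation is that within a single level $j$ the number of $\xi \in \cX_j$ with $x \in \nB_\xi$ is bounded by a constant $C_d$ depending only on $d$: since $\cX_j$ is a maximal $\gamma 2^{-j+1}$-net, the balls $B(\xi, \gamma 2^{-j})$ ($\xi\in\cX_j$) are pairwise disjoint, so only $O(1)$ of the twice-larger balls $\nB_\xi$ can meet at a single point. Define
\begin{equation*}
j^*(x) := \max\{j\ge 0 : \exists\, \xi \in Y_j \text{ with } x \in \nB_\xi\},
\end{equation*}
with the convention $j^*(x) = -\infty$ when no such $j$ exists (in which case $F(x)=0$). Summing $|\nB_\xi|^{-1/p} \sim 2^{j(d-1)/p}$ as a geometric series over levels $j\le j^*(x)$ and using the pointwise overlap bound $C_d$, I obtain
\begin{equation*}
F(x) \;\le\; C_d \sum_{0\le j\le j^*(x)} c\, 2^{j(d-1)/p} \;\le\; C_p\, 2^{j^*(x)(d-1)/p},
\end{equation*}
with $C_p$ depending only on $p$ and $d$ (the geometric sum collapses to its largest term since $2^{(d-1)/p}>1$).

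Raising to the $p$-th power and integrating,
\begin{equation*}
\|F\|_{L^p}^p \;\le\; C_p^p \sum_j 2^{j(d-1)}\,|\{x \in \SS : j^*(x) = j\}|.
\end{equation*}
By definition, $\{x : j^*(x) = j\}\subset \bigcup_{\xi\in Y_j}\nB_\xi$, hence
\begin{equation*}
|\{x : j^*(x) = j\}| \;\le\; \sum_{\xi\in Y_j} |\nB_\xi| \;\le\; c\,(\# Y_j)\,2^{-j(d-1)}.
\end{equation*}
Plugging this in, the factors $2^{j(d-1)}$ and $2^{-j(d-1)}$ cancel and we obtain $\|F\|_{L^p}^p \le c\, C_p^p \sum_j \# Y_j = c\, C_p^p\, \# Y$, which yields \eqref{est-F_q}.

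I do not expect a genuine obstacle: the only place that requires care is the pointwise bound on within-level overlap, which is immediate from the maximal-net property, and the verification that the geometric summation across levels is dominated by its top term. Note that this proof is uniform in $p\in(0,\infty)$ (with the constant blowing up only as $p\to\infty$ through the geometric-series factor $1/(2^{(d-1)/p}-1)$), so there is no need to split into the ``easy'' range $p\le 1$ (where $p$-subadditivity of $\|\cdot\|_p^p$ would give the result directly) and the range $p>1$.
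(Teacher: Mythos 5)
Your proof is correct and uses essentially the same two ingredients as the paper's: bounded overlap of the caps $\nB_\xi$ within a single level $j$, and a geometric series across levels that is dominated by its finest-scale term. The paper phrases the cross-level step a bit more slickly by passing directly from $F(x)\le c_\star h(x)^{-1/p}$ (with $h(x)$ the smallest active $|\nB_\xi|$) to $F(x)^p\le c_\star^p\sum_{\xi\in Y}|\nB_\xi|^{-1}\ONE_{\nB_\xi}(x)$ and integrating term by term, rather than partitioning $\SS$ by the level-set of $j^*(x)$, but the substance is the same.
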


\begin{proof}
First, we observe that for any $j\in\NN$ and any $x\in\SS$
the number of points $\eta\in\cX_j$ such that $x\in B_\eta=B(\eta,\delta_j)$ does not exceed a constant $c(d)$.
Indeed, the spherical caps $\{B(\eta,\delta_j/2)\}_{\eta\in\cX_j}$ are mutually disjoint
and $x\in B_\eta$ implies $B(\eta,\delta_j/2)\subset B(x,3\delta_j/2)$, which together with \eqref{sph_cap2} justifies the observation.

Given $Y\subset \cX$ we set
$\Omega:= \cup_{\xi\in Y}B_\xi$
and
$h(x):= \min \big\{|B_\xi|: x\in B_\xi, \xi\in Y\big\}$ if $x\in \Omega$.
Clearly, if $x\in B_\xi$ for some $\xi\in Y$, then the above observation and \eqref{sph_cap2} imply

\begin{equation*}
\sum_{\substack{\eta\in Y: x\in B_\eta\\ |B_\eta|\ge |B_\xi|}}
(|B_\xi|/|B_\eta|)^{1/p} \le c\sum_{j=0}^{\infty} 2^{-j(d-1)/p}=c_\star<\infty,
\end{equation*}
yielding
\begin{equation*}
\sum_{\xi\in Y} |B_\xi|^{-1/p}\ONE_{B_\xi}(x)
\le c_\star h(x)^{-1/p}
\le c_\star\Big(\sum_{\xi\in Y} |B_\xi|^{-1}\ONE_{B_\xi}(x)\Big)^{1/p},
\quad x\in \Omega.
\end{equation*}
In turn, this readily implies \eqref{est-F_q}.
\end{proof}

\begin{proof}[Proof of Theorem~\ref{thm:Jackson-S-F}]
Under the hypotheses of Theorem~\ref{thm:Jackson-S-F} assume $f\!\in \!\cB_\tau^{s\tau}(\SS)$.
Embedding \eqref{eq:12} implies $f\in \cF_p^{0q}(\SS)$.

As we already alluded to above the frames $\{\theta_\xi\}_{\xi\in\cX}$, $\{\tilde\theta_\xi\}_{\xi\in\cX}$ are well defined
and the conclusions of Theorems~\ref{thm:new_frame} and \ref{thm:prop-frame} are valid
for the spaces $\cF_p^{0q}(\SS)$ and $\cB_\tau^{s\tau}(\SS)$.
In particular, condition \eqref{param-gamma0} implies \eqref{cond-g0}
and conditions \eqref{main-cond}--\eqref{param-KM} implies $(d-1+\delta)/M\le p$ with $\delta=1$.
Hence the assumptions of \thmref{thm:new_frame}~(c) are fulfilled and from \eqref{local-norm} and \eqref{sph_cap} we get
\begin{equation}\label{local-norm2}
\|\theta_\xi\|_{L^p}:=\|\theta_\xi\|_{L^p(\SS)} \sim N_\xi^{(d-1)(1/2-1/p)}\sim |\nB_\xi|^{1/p-1/2},\quad \xi\in\cX,
\end{equation}
with constants of equivalence depending only on $d$ and $A$.
Set $a_\xi:=\langle f, \tilde\theta_\xi\rangle$, $\xi\in \cX$.
From \eqref{local-norm2} and \eqref{def-b-space}--\eqref{def-f-space} we obtain
\begin{equation}\label{eq:Nf_0}
\|a\|_{\bb_\tau^{s\tau}(\SS)}\sim \|a\|_{\ff_\tau^{s\tau}(\SS)}\sim
\Big(\sum_{\xi\in \cX} (\|a_\xi\theta_\xi\|_{L^p})^\tau\Big)^{1/\tau}=:N(f).
\end{equation}
We may assume $N(f)>0$.
Denote
\begin{equation}\label{def-Y-nu}
\cY_\nu:=\Big\{\xi\in\cX: 2^{-\nu}N(f)<\|a_\xi\theta_\xi\|_{L^p}\le 2^{-\nu+1}N(f)\Big\},\quad \nu\in\NN.
\end{equation}
Then
\begin{equation*}
\bigcup_{\nu\le\mu}\cY_\nu=\Big\{\xi: 2^{-\mu}N(f)<\|a_\xi\theta_\xi\|_{L^p}\Big\},\quad \mu\in\NN,
\end{equation*}
and hence
\begin{equation}\label{eq:Nf_1}
\#\cY_\mu\le\sum_{\nu\le\mu}\#\cY_\nu=\#\Big(\bigcup_{\nu\le\mu}\cY_\nu \Big)\le 2^{\mu\tau}.
\end{equation}

Set $F_\mu:=\sum_{\xi\in \cY_\mu}|a_\xi|^q|\nB_\xi|^{-q/2}\ONE_{\nB_\xi}$.
We next show that for $m\ge 0$
\begin{equation}\label{eq:Nf_3}
\Big\|\Big(\sum_{\mu\ge m+1}F_\mu\Big)^{1/q}\Big\|_{L^p(\SS)}
\le c2^{-m\tau s/(d-1)}N(f).
\end{equation}
To this end we first estimate $\|F_\mu\|_{L^{p/q}}$.
Using \eqref{def-Y-nu}, \eqref{local-norm2},
\lemref{lem:est-F} with $p$ replaced by $p/q$, and \eqref{eq:Nf_1} we obtain
\begin{align}
\|F_\mu\|_{L^{p/q}}\label{est-F-norm}
&= \Big\|\sum_{\xi\in\cY_\mu}\big(|a_\xi||B_\xi|^{1/p-1/2}\big)^q|\nB_\xi|^{-q/p}\ONE_{\nB_\xi}\Big\|_{L^{p/q}} \notag
\\
&\le c 2^{-q(\mu-1)} N(f)^q
 \Big\|\sum_{\xi\in \cY_\mu}|\nB_\xi|^{-q/p}\ONE_{\nB_\xi}\Big\|_{L^{p/q}}
\le c 2^{-q\mu} N(f)^q(\# \cY_\mu)^{q/p}
\\
&\le c2^{-q\mu(1-\tau/p)} N(f)^q
= c 2^{-q\mu\tau s/(d-1)} N(f)^q. \notag
\end{align}
To prove \eqref{eq:Nf_3} we consider two cases.
If $q\le p$, then using \eqref{est-F-norm}
\begin{align*}
\Big\|\Big(\sum_{\mu\ge m+1}F_\mu\Big)^{1/q}\Big\|_{L^p}^q
= \Big\|\sum_{\mu\ge m+1}F_\mu\Big\|_{L^{p/q}}
&\le \sum_{\mu\ge m+1}\|F_\mu\|_{L^{p/q}}
\\
\le c\sum_{\mu\ge m+1} 2^{-q\mu\tau s/(d-1)} N(f)^q
&\le c 2^{-qm\tau s/(d-1)} N(f)^q,
\end{align*}
which implies \eqref{eq:Nf_3}.
In the case $q>p$ using that $p/q<1$ and \eqref{est-F-norm} we have
\begin{align*}
\Big\|\Big(\sum_{\mu\ge m+1}F_\mu\Big)^{1/q}\Big\|_{L^p}^p
= \Big\|\sum_{\mu\ge m+1}F_\mu\Big\|_{L^{p/q}}^{p/q}
&\le \sum_{\mu\ge m+1}\|F_\mu\|_{L^{p/q}}^{p/q}
\\
\le c\sum_{\mu\ge m+1} 2^{-p\mu\tau s/(d-1)} N(f)^p
&\le c 2^{-pm\tau s/(d-1)} N(f)^p,
\end{align*}
yielding again \eqref{eq:Nf_3}.

Choose $m\ge 0$ so that
$2^{m\tau}\le n<2^{(m+1)\tau}$
and denote $\cZ_m:=\bigcup_{\nu\le m}\cY_\nu$.
Also, set $a^\star_\xi:= a_\xi$ if $\xi\in \cX\setminus\cZ_m$ and
$a^\star_\xi:= 0$ if $\xi\in \cZ_m$.
By \eqref{eq:Nf_1} it follows that $\# \cZ_m \le 2^{m\tau}\le n$.
This,
the frame representation \eqref{frame-B} for $f\in\cF_p^{0q}(\SS)$,
and the boundedness of the synthesis operator $T_\theta$ from \thmref{thm:prop-frame}~(b) yield
\begin{align*}
\sigma_n(f)_{\cF_p^{0q}}
&\le \Big\|f-\sum_{\xi\in\cZ_m}a_\xi\theta_\xi\Big\|_{\cF_p^{0q}}
= \Big\|\sum_{\xi\in\cX\setminus\cZ_m}a_\xi\theta_\xi\Big\|_{\cF_p^{0q}}
\le c\|a^\star\|_{\ff_p^{0q}}
\\
&= c\Big\|\Big(\sum_{\xi\in\cX\setminus\cZ_m}|a_\xi|^q|\nB_\xi|^{-q/2}\ONE_{\nB_\xi}\Big)^{1/q}\Big\|_{L^p}
= c\Big\|\Big(\sum_{\mu\ge m+1}F_\mu\Big)^{1/q}\Big\|_{L^p}.
\end{align*}
Finally, we use \eqref{eq:Nf_3}, \eqref{eq:Nf_0}, and Theorem~\ref{thm:prop-frame} (c)
to obtain
\begin{align*}
\sigma_n(f)_{\cF_p^{0q}}
\le c2^{-m\tau s/(d-1)}N(f)
\le cn^{-s/(d-1)}\|a\|_{\bb_\tau^{s\tau}}
\le cn^{-s/(d-1)}\|f\|_{\cB_\tau^{s\tau}(\SS)},
\end{align*}
which confirms \eqref{jackson-S-F}.
\end{proof}

From \thmref{thm:Jackson-S-F} and the equivalence $\cF_p^{02}(\SS)\sim L^p(\SS)$ for $1<p<\infty$ we immediately get

\begin{cor}\label{cor:Jackson-S}
Assume $A>1$ and let the parameters $K$, $M$, and $\gamma_0$ be defined as in \eqref{param-KM}--\eqref{param-gamma0}.
Let $s>0$, $1<p<\infty$, and $1/\tau=s/(d-1)+1/p$,
and assume that $s, p$ satisfy conditions \eqref{main-cond} with $q=2$.
If $f\in \cB_\tau^{s\tau}(\SS)$, then $f\in L^p(\SS)$ and
\begin{equation}\label{jackson-S}
\sigma_n(f)_{L^p(\SS)} \le cn^{-s/(d-1)}\|f\|_{\cB_\tau^{s\tau}(\SS)},~~n\ge 1,
\end{equation}
where the constant $c>0$ depends only on $A, d$.
\end{cor}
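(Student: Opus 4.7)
The plan is to deduce Corollary~\ref{cor:Jackson-S} directly from Theorem~\ref{thm:Jackson-S-F} applied with $q=2$, combined with the norm equivalence $L^p(\SS)\sim \cF_p^{02}(\SS)$ that is valid for $1<p<\infty$. Since everything has effectively been set up in the preceding results, no new analytic estimates are required.

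First, I would check that the hypotheses \eqref{main-cond} with $q=2$ are indeed in force: we are given $0<s\le A$, $A^{-1}\le p\le A$ (since $1<p<\infty$ forces an appropriate choice of $A>1$), and $s/(d-1)+1/p\le A$, so Theorem~\ref{thm:Jackson-S-F} applies to the triple $(s,p,2)$. It yields that every $f\in\cB_\tau^{s\tau}(\SS)$ belongs to $\cF_p^{02}(\SS)$ with
\begin{equation*}
\sigma_n(f)_{\cF_p^{02}(\SS)} \le c\,n^{-s/(d-1)}\|f\|_{\cB_\tau^{s\tau}(\SS)}, \quad n\ge 1.
\end{equation*}

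Next, I would invoke the classical identification $\cF_p^{02}(\SS)=L^p(\SS)$ for $1<p<\infty$ with equivalent quasi-norms. In the present paper this is encoded in Theorem~\ref{thm:identify-Hp}, which gives both $\|U\|_{\HHH^p}\sim\|f_U\|_{\cF_p^{02}(\SS)}$ and $\|U\|_{\HHH^p}\sim\|f_U\|_{L^p(\SS)}$ in the range $1<p<\infty$; composing these equivalences yields $\|f\|_{L^p(\SS)}\sim \|f\|_{\cF_p^{02}(\SS)}$ for every $f$ in either space.

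Finally, since the approximation class $\Sigma_n$ is the same whether we measure the error in $L^p(\SS)$ or in $\cF_p^{02}(\SS)$, the norm equivalence transfers directly to the best-approximation functionals: for every $g\in\Sigma_n$ one has $\|f-g\|_{L^p(\SS)}\sim \|f-g\|_{\cF_p^{02}(\SS)}$, hence
\begin{equation*}
\sigma_n(f)_{L^p(\SS)} \sim \sigma_n(f)_{\cF_p^{02}(\SS)} \le c\,n^{-s/(d-1)}\|f\|_{\cB_\tau^{s\tau}(\SS)},
\end{equation*}
which is \eqref{jackson-S}. The only point requiring any care is making sure the constants hidden in the norm equivalence $L^p\sim \cF_p^{02}$ depend only on $p$ (equivalently, only on $A$ and $d$ under our conventions); this is standard. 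There is no genuine obstacle here---the substantive work is entirely contained in Theorem~\ref{thm:Jackson-S-F} and Theorem~\ref{thm:identify-Hp}.
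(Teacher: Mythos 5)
Your proposal is correct and follows exactly the route the paper takes: the corollary is stated in the paper as an immediate consequence of Theorem~\ref{thm:Jackson-S-F} with $q=2$ together with the equivalence $\cF_p^{02}(\SS)\sim L^p(\SS)$ for $1<p<\infty$. Your additional remarks on transferring the norm equivalence to the best-approximation functional and on tracking the constants are accurate but do not change the argument.
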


\subsubsection{Nonlinear $n$-term frame approximation in Besov spaces on $\SS$}\label{ss:Besov}

\begin{thm}\label{thm:Jackson-S-B}
Assume $A>1$ and let the parameters $K$, $M$, and $\gamma_0$ be as in \eqref{param-KM}--\eqref{param-gamma0}.
Let $s>0$, $0<p, q<\infty$, $1/\tau=s/(d-1)+1/p$, and $q\ge \tau$,
and assume that $s, p, q$ satisfy conditions \eqref{main-cond}.
If $f\in \cB_\tau^{s\tau}(\SS)$, then $f\in \cB_p^{0q}(\SS)$ and for every $n\ge 1$ we have
\begin{equation}\label{jackson-S-B}
\sigma_n(f)_{\cB_p^{0q}(\SS)} \le cn^{-s/(d-1)}\|f\|_{\cB_\tau^{s\tau}(\SS)},\quad p\le q,
\end{equation}
\begin{equation}\label{jackson-S-B2}
\sigma_n(f)_{\cB_p^{0q}(\SS)} = o(n^{1/q-1/\tau})\|f\|_{\cB_\tau^{s\tau}(\SS)},\quad \tau\le q<p,
\end{equation}
where the constant $c>0$ depends only on $A, d$.
\end{thm}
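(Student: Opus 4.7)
The plan is to mirror the strategy of Theorem~\ref{thm:Jackson-S-F}, but replace the Triebel-Lizorkin $\ff_p^{0q}$ bookkeeping by the $\bb_p^{0q}$ bookkeeping, and then split into the two exponent regimes. First I would verify that $f\in\cB_p^{0q}(\SS)$: embedding \eqref{eq:2} gives $\cB_\tau^{s\tau}\subset\cB_\tau^{sq}$ since $q\ge\tau$, and embedding \eqref{eq:11} gives $\cB_\tau^{sq}\subset\cB_p^{0q}$ because $1/\tau=s/(d-1)+1/p$. Next, the parameter choice \eqref{param-KM}--\eqref{param-gamma0} makes Theorem~\ref{thm:prop-frame} applicable to $\cB_\tau^{s\tau}(\SS)$ and $\cB_p^{0q}(\SS)$, so $f=\sum_{\xi\in\cX}a_\xi\theta_\xi$ with $a_\xi=\langle f,\tilde\theta_\xi\rangle$ and $\|a\|_{\bb_\tau^{s\tau}}\sim\|f\|_{\cB_\tau^{s\tau}}$. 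Setting $c_\xi:=\|a_\xi\theta_\xi\|_{L^p}\sim|a_\xi|N_\xi^{(d-1)(1/2-1/p)}$, the same computation as in \eqref{eq:Nf_0} yields $N(f):=(\sum_\xi c_\xi^\tau)^{1/\tau}\sim\|f\|_{\cB_\tau^{s\tau}}$ and, for the target space,
\begin{equation*}
\|h\|_{\bb_p^{0q}}^q \;\sim\; \sum_{j\ge 0}\Big(\sum_{\xi\in\cX_j}\|h_\xi\theta_\xi\|_{L^p}^{\,p}\Big)^{q/p}.
\end{equation*}

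As in the F-space argument I would sort by size, defining the layers $\cY_\nu$ by \eqref{def-Y-nu}, so that $\#(\cup_{\nu\le\mu}\cY_\nu)\le 2^{\mu\tau}$. Choose $m$ with $2^{m\tau}\le n<2^{(m+1)\tau}$, set $\cZ_m:=\cup_{\nu\le m}\cY_\nu$ (so $\#\cZ_m\le n$) and take $g_n:=\sum_{\xi\in\cZ_m}a_\xi\theta_\xi\in\Sigma_n$. By the boundedness of the synthesis operator in Theorem~\ref{thm:prop-frame}(a), $\sigma_n(f)_{\cB_p^{0q}}\le\|f-g_n\|_{\cB_p^{0q}}\le c\|a^\star\|_{\bb_p^{0q}}$, where $a^\star_\xi=a_\xi\ONE_{\xi\notin\cZ_m}$. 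It then remains to estimate $\|a^\star\|_{\bb_p^{0q}}$.

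For the case $p\le q$ (inequality \eqref{jackson-S-B}), I would estimate each layer directly. With $N_{j,\mu}:=\#(\cX_j\cap\cY_\mu)$ and $c_\xi\le 2^{-\mu+1}N(f)$ on $\cY_\mu$,
\begin{equation*}
\|a_{\cY_\mu}\|_{\bb_p^{0q}}^q
\;\le\;(2^{-\mu+1}N(f))^q\sum_j N_{j,\mu}^{q/p}
\;\le\;(2^{-\mu+1}N(f))^q\Big(\sum_j N_{j,\mu}\Big)^{q/p}
\;\le\; c\,N(f)^q\,2^{\mu q(\tau/p-1)},
\end{equation*}
using $q/p\ge 1$ and $\#\cY_\mu\le 2^{\mu\tau}$. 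Since $\tau/p-1=-\tau s/(d-1)$, the quasi-triangle inequality with exponent $\rho=\min(p,q,1)$ applied to $a^\star=\sum_{\mu>m}a_{\cY_\mu}$ produces a convergent geometric series and yields $\|a^\star\|_{\bb_p^{0q}}\le cN(f)2^{-m\tau s/(d-1)}\le cN(f)n^{-s/(d-1)}$.

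For the case $\tau\le q<p$ (inequality \eqref{jackson-S-B2}) the layer argument is too loose, and this is the main obstacle. Here I would exploit that $q/p<1$ so that $(\sum_i x_i)^{q/p}\le\sum_i x_i^{q/p}$ for nonnegative $x_i$. Applied inside each level $j$ this gives
\begin{equation*}
\|a^\star\|_{\bb_p^{0q}}^q
\;\le\; c\sum_j\Big(\sum_{\xi\in\cX_j\setminus\cZ_m}c_\xi^{\,p}\Big)^{q/p}
\;\le\; c\sum_{\xi\notin\cZ_m} c_\xi^{\,q},
\end{equation*}
reducing the problem to the $\ell^q$-tail of the sorted sequence $b_1\ge b_2\ge\cdots$ obtained from $\{c_\xi\}_{\xi\in\cX}$. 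The sorting bound $b_k\le k^{-1/\tau}N(f)$ immediately yields $\sum_{k>n}b_k^q\le cN(f)^q\,n^{1-q/\tau}$, hence the $O$-version of \eqref{jackson-S-B2}. To sharpen $O$ to $o$ I would use that $\{c_\xi\}\in\ell^\tau$ forces $k\,b_k^\tau\to 0$ (since $\tfrac{k}{2}b_k^\tau\le\sum_{j=\lceil k/2\rceil}^{k}b_j^\tau\to 0$), i.e.\ $b_k=o(k^{-1/\tau})$; then for any $\varepsilon>0$ there exists $K$ with $b_k\le\varepsilon k^{-1/\tau}$ for $k\ge K$, and for $n\ge K$,
\begin{equation*}
n^{q/\tau-1}\sum_{k>n}b_k^{\,q}\;\le\;\varepsilon^q\,n^{q/\tau-1}\sum_{k>n}k^{-q/\tau}\;\le\;c\varepsilon^q,
\end{equation*}
because $q/\tau>1$. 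Letting $\varepsilon\to 0$ proves $\sum_{k>n}b_k^q=o(n^{1-q/\tau})N(f)^q$, which combined with the frame bound and $N(f)\sim\|f\|_{\cB_\tau^{s\tau}}$ gives \eqref{jackson-S-B2}.
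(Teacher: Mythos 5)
Your strategy is sound, and in both halves it genuinely departs from the paper's proof. For \eqref{jackson-S-B} the paper rearranges the quantities $\|a_\xi\theta_\xi\|_{L^p}$ in decreasing order, keeps the $n$ largest, passes to $\cB_p^{0p}\subset\cB_p^{0q}$ via \eqref{eq:2}, and invokes the discrete Hardy inequality of Lemma~\ref{lem:est-B}; your layer decomposition $\cY_\mu$ combined with $\sum_j N_{j,\mu}^{q/p}\le(\#\cY_\mu)^{q/p}$ (valid since $q/p\ge1$) and a geometric series in $\mu$ reaches the same bound and is correct. For \eqref{jackson-S-B2} the paper interpolates by H\"older between the endpoint estimates in $\cB_p^{0\tau}$ and $\cB_p^{0p}$, whereas you reduce to an $\ell^q$-tail of the rearranged sequence via $(\sum_i x_i)^{q/p}\le\sum_i x_i^{q/p}$; your route is more elementary and self-contained, but it has a gap as written.

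The gap is in the last step of the second case. Your approximant there is still $g_n=\sum_{\xi\in\cZ_m}a_\xi\theta_\xi$ with $n_0:=\#\cZ_m\le 2^{m\tau}\le n$, so after the $q/p$-subadditivity you are left with $\sum_{\xi\notin\cZ_m}c_\xi^q=\sum_{k>n_0}b_k^q$, not $\sum_{k>n}b_k^q$. Since $n_0$ counts only the coefficients exceeding the threshold $2^{-m}N(f)$, it can grow far more slowly than $n$, and the two tails differ by the block $\sum_{k=n_0+1}^{n}b_k^q$; the only available pointwise bound there, $b_k\le 2^{-m}N(f)$, gives $O(n^{1-q/\tau})N(f)^q$ for this block, which is not the required $o$. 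Two easy repairs: either take as approximant the $n$ terms with largest $c_\xi$ (exactly the paper's choice), so that the residual index set is precisely $\{k>n\}$ and your $b_k=o(k^{-1/\tau})$ argument applies verbatim; or keep $\cZ_m$ and write $\sum_{k>n_0}b_k^q\le(2^{-m}N(f))^{q-\tau}\sum_{\xi:\,c_\xi\le 2^{-m}N(f)}c_\xi^\tau$, noting that the last factor tends to $0$ as $m\to\infty$ because it is the contribution of ever smaller coefficients to a convergent $\ell^\tau$ sum. Finally, your closing display uses $q/\tau>1$, so the endpoint $q=\tau$ of \eqref{jackson-S-B2} is not covered as written; there the claim is just $o(1)$ and follows directly from the vanishing of the $\ell^\tau$-tail.
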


For the proof of this theorem we shall utilize inequality (6.7) from \cite{NPW2}
given in the following

\begin{lem}\label{lem:est-B}
Let $0<\tau< p<\infty$ and $x_1\ge x_2\ge \dots \ge 0$. Then for every $n\in\NN$ we have
\begin{equation*}
\Big(\sum_{k=n+1}^\infty x_k^p\Big)^{1/p}\le n^{1/p-1/\tau}\Big(\sum_{k=1}^\infty x_k^\tau\Big)^{1/\tau}.
\end{equation*}
\end{lem}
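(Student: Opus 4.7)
The plan is to exploit the monotonicity of $\{x_k\}$ in two complementary ways: once to factor the tail sum in $\ell^p$ through an $\ell^\tau$ sum with a pointwise remainder, and once to bound that pointwise remainder by the full $\ell^\tau$ norm. No constants will be lost, so the stated exponent $n^{1/p-1/\tau}$ will come out exactly.

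First I would split the power $p = \tau + (p-\tau)$ inside each tail term. Since $x_k$ is non-increasing, for every $k \ge n+1$ we have $x_k \le x_{n+1}$, and because $p-\tau>0$ this yields the pointwise bound $x_k^p = x_k^\tau \cdot x_k^{p-\tau} \le x_k^\tau \cdot x_{n+1}^{p-\tau}$. Summing over $k\ge n+1$ and enlarging the remaining $\ell^\tau$ sum to all of $\NN$ gives
\begin{equation*}
\sum_{k=n+1}^\infty x_k^p \;\le\; x_{n+1}^{p-\tau}\sum_{k=1}^\infty x_k^\tau .
\end{equation*}

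Second, I would estimate the factor $x_{n+1}^{p-\tau}$. Monotonicity again gives $n\,x_{n+1}^\tau \le \sum_{k=1}^n x_k^\tau \le \sum_{k=1}^\infty x_k^\tau$, so $x_{n+1} \le n^{-1/\tau}\bigl(\sum_k x_k^\tau\bigr)^{1/\tau}$, and raising to the positive power $p-\tau$ yields
\begin{equation*}
x_{n+1}^{p-\tau} \;\le\; n^{1-p/\tau}\Bigl(\sum_{k=1}^\infty x_k^\tau\Bigr)^{(p-\tau)/\tau}.
\end{equation*}
Plugging this into the previous display consolidates the two $\ell^\tau$ factors into $\bigl(\sum_k x_k^\tau\bigr)^{p/\tau}$ with prefactor $n^{1-p/\tau}$, and taking $p$-th roots produces exactly $n^{1/p-1/\tau}\bigl(\sum_k x_k^\tau\bigr)^{1/\tau}$.

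There is no real obstacle here; the only point that requires care is checking signs of exponents (using $\tau<p$ to guarantee $p-\tau>0$ and $1-p/\tau<0$) so that the monotonicity inequalities go in the right direction and nothing has to be compensated by a constant. The argument is elementary and self-contained, invoking only that $\{x_k\}$ is non-negative and non-increasing.
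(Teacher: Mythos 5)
Your proof is correct, and it is the standard argument for this classical inequality (the paper itself gives no proof, citing only inequality (6.7) of \cite{NPW2}, where essentially the same two-step use of monotonicity — first $x_k\le x_{n+1}$ for $k\ge n+1$ to split off $x_{n+1}^{p-\tau}$, then $nx_{n+1}^\tau\le\sum_{k\le n}x_k^\tau$ to bound it — is used). The exponent bookkeeping checks out exactly, so nothing needs to be added.
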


\begin{proof}[Proof of Theorem~\ref{thm:Jackson-S-B}]
Assume that the hypotheses of Theorem~\ref{thm:Jackson-S-B} are obeyed and let $f\in \cB_\tau^{s\tau}(\SS)$.
Embeddings \eqref{eq:11} and \eqref{eq:2} imply $f\in \cB_p^{0q}(\SS)$.

As above the frames $\{\theta_\xi\}_{\xi\in\cX}$, $\{\tilde\theta_\xi\}_{\xi\in\cX}$ are well defined
and the conclusions of Theorems~\ref{thm:new_frame} and \ref{thm:prop-frame} are valid
for the spaces $\cB_p^{0q}(\SS)$ and $\cB_\tau^{s\tau}(\SS)$.
Denote $a_\xi:=\langle f, \tilde\theta_\xi\rangle$, $\xi\in \cX$.
Recall the equivalence \eqref{local-norm2} which holds in this case.

\begin{sloppypar}
Let $\big\{\|a_{\eta_k}\theta_{\eta_k}\|_{L^p}\big\}_{k=1}^\infty$ be the non-increasing rearrangement
of the sequence $\big\{\|a_{\xi}\theta_\xi\|_{L^p}\big\}_{\xi\in \cX}$,
i.e.
\begin{equation*}
\|a_{\eta_1}\theta_{\eta_1}\|_{L^p}\ge \|a_{\eta_2}\theta_{\eta_2}\|_{L^p}\ge\cdots.
\end{equation*}
\end{sloppypar}

Consider first the case $p\le q$.
Fix $n\ge 1$ and set $a^\star_{\eta_k}:= a_{\eta_k}$ if $k>n$ and $a^\star_{\eta_k}:=0$ if $k\le n$.
Note that from \eqref{def-b-space} and \eqref{local-norm2} it follows that
\begin{equation*}
\|a^\star\|_{\bb_p^{0p}}\sim \Big(\sum_{\xi\in\cX} \|a^\star_{\xi}\theta_{\xi}\|_{L^p}^p\Big)^{1/p}
=\Big(\sum_{k=n+1}^\infty \|a_{\eta_k}\theta_{\eta_k}\|_{L^p}^p\Big)^{1/p}.
\end{equation*}
Using this, embedding \eqref{eq:2}, and the boundedness of the synthesis operator $T_\theta$ from \thmref{thm:prop-frame}~(b)
we get
\begin{align*}
\sigma_n(f)_{\cB_p^{0q}(\SS)}
&\le \Big\|\sum_{\eta\in\cX}a_\eta\theta_\eta-\sum_{k=1}^n a_{\eta_k}\theta_{\eta_k}\Big\|_{\cB_p^{0q}}
\le c\Big\|\sum_{k=n+1}^\infty a_{\eta_k}\theta_{\eta_k}\Big\|_{\cB_p^{0p}}
\\
& \le c\|a^\star\|_{\bb_p^{0p}}
\le c\Big(\sum_{k=n+1}^\infty \|a_{\eta_k}\theta_{\eta_k}\|_{L^p}^p\Big)^{1/p}.
\end{align*}
Further, we apply the inequality of \lemref{lem:est-B} with $x_k=\|a_{\eta_k}\theta_{\eta_k}\|_{L^p}$,
\eqref{local-norm2} and Theorem~\ref{thm:prop-frame} (c) to obtain
\begin{align*}
\sigma_n(f)_{\cB_p^{0q}(\SS)}
&\le c n^{1/p-1/\tau}\Big(\sum_{k=1}^\infty \|a_{\eta_k}\theta_{\eta_k}\|_{L^p}^\tau\Big)^{1/\tau}
\\
&\le cn^{-s/(d-1)}\|a\|_{\bb_\tau^{s\tau}}\le cn^{-s/(d-1)}\|f\|_{\cB_\tau^{s\tau}(\SS)},
\end{align*}
which confirms \eqref{jackson-S-B}.

In the case $q=\tau$, we use the embedding $\cB_\tau^{s\tau}(\SS)\subset\cB_p^{0\tau}(\SS)$ (see \eqref{eq:11}) to obtain
$\|f-g\|_{\cB_p^{0\tau}} = o(1)\|f\|_{\cB_\tau^{s\tau}}$ with $g=\sum_{k=1}^n a_{\eta_k}\theta_{\eta_k}$.
Combining this estimate with estimate \eqref{jackson-S-B} with $q=p$ and applying H\"older's inequality we obtain
in the case $\tau\le q<p$
\begin{multline*}
\big\|f-g\big\|_{\cB_p^{0q}}
\le\big\|f-g\big\|_{\cB_p^{0\tau}}^{(p-q)\tau/((p-\tau)q)}
\big\|f-g\big\|_{\cB_p^{0p}}^{(q-\tau)p/((p-\tau)q)}\\
=o(1)\big\|f\big\|_{\cB_\tau^{s\tau}}^{(p-q)\tau/((p-\tau)q)}
n^{1/q-1/\tau}\big\|f\big\|_{\cB_\tau^{s\tau}}^{(q-\tau)p/((p-\tau)q)}
= o(n^{1/q-1/\tau})\|f\|_{\cB_\tau^{s\tau}}.
\end{multline*}
This proves \eqref{jackson-S-B2} and completes the proof of the theorem.
\end{proof}

\begin{rem}
In comparing \thmref{thm:Jackson-S-F} and \thmref{thm:Jackson-S-B}
we see that the optimal rate $n^{s/(d-1)}$ holds for approximation in $\cF_p^{0q}(\SS)$ for every $q>0$
but only for $q\ge p$
for approximation in $\cB_p^{0q}(\SS)$.
\thmref{thm:Jackson-S-B} cannot be extended for $q<\tau$
because $\cB_\tau^{s\tau}(\SS)\setminus\cB_p^{0q}(\SS)\ne\emptyset$ if $q<\tau$.
\end{rem}

\begin{sloppypar}
\begin{rem}
Note that in both \thmref{thm:Jackson-S-F} and \thmref{thm:Jackson-S-B}
we form the near best approximant as the sum of the $n$ terms $\langle f, \tilde\theta_\xi\rangle \theta_\xi$
with the biggest norms $\|\langle f, \tilde\theta_\xi\rangle \theta_\xi\|_{L^p(\SS)}$.
\end{rem}
\end{sloppypar}

\subsection{Nonlinear $n$-term approximation of harmonic functions on $B^d$}\label{ss:TL-B}

We next use Theorems~\ref{thm:Jackson-S-F} and \ref{thm:Jackson-S-B} to establish
respective Jackson estimates for nonlinear $n$-term approximation of harmonic functions
on $B^d$ from shifts of Newtonian kernel.

\begin{thm}\label{thm:Jackson-B-F}
Let $s>0$, $0<p,q<\infty$, and $1/\tau=s/(d-1)+1/p$.
If the harmonic function $U\in B_\tau^{s\tau}(\HHH)=F_\tau^{s\tau}(\HHH)$, then $U\in F_p^{0q}(\HHH)$ and
\begin{equation}\label{jackson-B-F}
E_n(U)_{F_p^{0q}(\HHH)} \le cn^{-s/(d-1)}\|U\|_{B_\tau^{s\tau}(\HHH)},\quad n\ge 1,
\end{equation}
where the constant $c>0$ depends only on $s, p, q, d$.
\end{thm}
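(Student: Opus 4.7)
The plan is to transfer the problem to the sphere via boundary values, apply the frame Jackson estimate \thmref{thm:Jackson-S-F}, and transfer back to $B^d$ using the fact that every frame element $\theta_\xi$ is already given by an explicit Newtonian-kernel formula that is harmonic on $B^d$. First, I choose the frame parameter $A>1$ large enough, depending only on $s,p,q,d$, that both $(0,p,q)$ and $(s,\tau,\tau)$ lie in $\QQ(A)$; taking $K,M,\gamma_0$ as in \eqref{param-KM}--\eqref{param-gamma0}, the hypotheses of Theorems~\ref{thm:new_frame}, \ref{thm:prop-frame}, \ref{thm:harmonic-frame}, and \ref{thm:Jackson-S-F} are all simultaneously satisfied. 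In particular, each $\theta_\xi$ with $\xi\in\cX\setminus\cX_0$ is a linear combination of at most a fixed number $\tilde n=\tilde n(A,d)$ of shifts of the Newtonian kernel with poles outside $\overline{B^d}$, while $\theta_\xi\equiv 1$ for the unique $\xi\in\cX_0$.

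Next, given $U\in B_\tau^{s\tau}(\HHH)$, \thmref{thm:equiv-norms-B} supplies the boundary distribution $f_U\in\cB_\tau^{s\tau}(\SS)$ with $\|f_U\|_{\cB_\tau^{s\tau}(\SS)}\sim\|U\|_{B_\tau^{s\tau}(\HHH)}$. Applying \thmref{thm:Jackson-S-F} produces, for every $m\ge 1$, an index set $Y_m\subset\cX$ with $\#Y_m\le m$ and coefficients $\{a_\xi\}_{\xi\in Y_m}$ such that
\begin{equation*}
\Big\|f_U-\sum_{\xi\in Y_m}a_\xi\theta_\xi\Big\|_{\cF_p^{0q}(\SS)}\le c\,m^{-s/(d-1)}\|f_U\|_{\cB_\tau^{s\tau}(\SS)}.
\end{equation*}
The defining formulas \eqref{eq:theta}--\eqref{eq:theta2} exhibit each $\theta_\xi(x)$ as a linear combination of $|x-a\zeta|^{2-d}$ (or $\ln(1/|x-a\zeta|)$) with $|a\zeta|>1$; these formulas define harmonic functions of $x\in B^d$ which, by uniqueness in \propref{prop:identify}(c), coincide with the harmonic extensions of the $\theta_\xi$. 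Hence $G_m(x):=\sum_{\xi\in Y_m}a_\xi\theta_\xi(x)$ belongs to $\NNN_{\tilde n m}$ (the possible constant from an $\cX_0$-index being absorbed into $a_0$) and its boundary distribution is $g_m:=\sum_{\xi\in Y_m}a_\xi\theta_\xi$. Then \propref{prop:identify}(c) and \thmref{thm:equiv-norms-F} yield
\begin{equation*}
\|U-G_m\|_{F_p^{0q}(\HHH)}\sim\|f_U-g_m\|_{\cF_p^{0q}(\SS)}\le c\,m^{-s/(d-1)}\|U\|_{B_\tau^{s\tau}(\HHH)}.
\end{equation*}

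The final step is to convert the $m$-term frame bound into an $n$-term Newtonian-kernel bound. Given $n\ge 2\tilde n$, take $m:=\lfloor n/\tilde n\rfloor$: then $G_m\in\NNN_{\tilde n m}\subset\NNN_n$ and $m\ge n/(2\tilde n)$, so $m^{-s/(d-1)}\le(2\tilde n)^{s/(d-1)}n^{-s/(d-1)}$, and the prefactor $(2\tilde n)^{s/(d-1)}$ is absorbed into $c$. The finitely many small cases $n<2\tilde n$ are handled by the trivial bound obtained from taking $G=0$: combining the embedding $\cB_\tau^{s\tau}(\SS)\hookrightarrow\cF_p^{0q}(\SS)$ from \eqref{eq:12} with Theorems~\ref{thm:equiv-norms-B} and \ref{thm:equiv-norms-F} gives $E_n(U)_{F_p^{0q}(\HHH)}\le\|U\|_{F_p^{0q}(\HHH)}\le c\|U\|_{B_\tau^{s\tau}(\HHH)}$, which trivially dominates $n^{-s/(d-1)}\|U\|_{B_\tau^{s\tau}(\HHH)}$ up to a constant depending only on $s,p,q,d$. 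All substantive work has already been absorbed into \thmref{thm:Jackson-S-F} and the Newtonian-kernel frame construction of Section~\ref{s6}, so the present reduction is essentially bookkeeping; the only point requiring care is that $\tilde n$ depends on $A$ and hence on $s,p,q,d$ but is independent of $n$ and $U$, which is precisely what allows $(2\tilde n)^{s/(d-1)}$ to be folded into the final constant.
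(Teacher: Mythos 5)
Your proposal is correct and follows essentially the same route as the paper's proof: reduce to the sphere via boundary values and Theorem~\ref{thm:equiv-norms-B}, invoke \thmref{thm:Jackson-S-F} for the frame $\{\theta_\xi\}$, transfer back to $B^d$ via \thmref{thm:equiv-norms-F}, and use \thmref{thm:new_frame}(b) to convert each frame element into at most $\tilde n$ Newtonian shifts. The only difference is that you spell out the bookkeeping step from an $m$-term frame bound to an $n$-term $\NNN_n$-bound (including the small-$n$ cases and the $\cX_0$ constant term) which the paper compresses into ``estimate \eqref{jackson-B-F} follows readily.''
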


\begin{proof}
\begin{sloppypar}
By \thmref{thm:equiv-norms-B} it follows that
the boundary value function $f_U$ of $U$ given in Proposition~\ref{prop:identify}
belongs to $\cB_\tau^{s\tau}(\SS)=\cF_\tau^{s\tau}(\SS)$
and
$\|f_U\|_{\cB_\tau^{s\tau}(\SS)} \sim \|U\|_{B_\tau^{s\tau}(\HHH)}$.
Then embedding \eqref{eq:12} of \propref{embed} implies that $f_U\in\cF_p^{0q}(\SS)$
and in turn Theorem~\ref{thm:equiv-norms-F} yields $U\in F_p^{0q}(\HHH)$.
\end{sloppypar}

With $s, p, q, \tau$ already fixed, we choose
$A:=\max\big\{s, p, q^{-1}, \tau^{-1}, 2\big\}$.
Then conditions \eqref{main-cond} are satisfied.
Pick the parameters $K$, $M$, and $\gamma_0$ as in \eqref{param-KM}--\eqref{param-gamma0}.
Then the frames $\{\theta_\xi\}_{\xi\in\cX}$, $\{\tilde\theta_\xi\}_{\xi\in\cX}$ are well defined.
Appealing to \thmref{thm:Jackson-S-F} we conclude that for any $n\ge 1$ there exist
$\xi_1, \dots, \xi_n\in\cX$ and coefficients $a_1, \dots, a_n\in\CC$ such that
$$
\Big\|f_U - \sum_{j=1}^n a_j \theta_{\xi_j}\Big\|_{\cF_p^{02}(\SS)}
\le cn^{-s/(d-1)}\|f_U\|_{\cB_\tau^{s\tau}(\SS)}
\le cn^{-s/(d-1)}\|U\|_{B_\tau^{s\tau}(\HHH)}.
$$
Write
$G_n(x):= \sum_{j=1}^n a_j \theta_{\xi_j}(x)$, $x\in B^d$.
From above by harmonic extension using \thmref{thm:equiv-norms-F} we obtain
\begin{equation}\label{est-U-G}
\|U - G_n\|_{F_p^{0q}(\HHH)}
\le cn^{-s/(d-1)}\|U\|_{B_\tau^{s\tau}(\HHH)}.
\end{equation}
However, by Theorem~\ref{thm:new_frame} (b) we know that
for every $\xi\in\cX\setminus \cX_0$ the frame element $\theta_\xi$ is
a linear combination of $\le \tilde{n}$ shifts of the Newtonian kernel,
where $\tilde n$ is a~constant.
Therefore, $G_n\in\NNN_{\tilde{n}n}$ and then estimate \eqref{jackson-B-F} follows readily by \eqref{est-U-G}.
\end{proof}

\begin{thm}\label{thm:Jackson-B-B}
Let $s>0$, $0<p, q<\infty$, $1/\tau=s/(d-1)+1/p$, and $q\ge \tau$.
If the harmonic function $U\in B_\tau^{s\tau}(\HHH)=F_\tau^{s\tau}(\HHH)$, then $U\in B_p^{0q}(\HHH)$
and for every $n\ge 1$ we have
\begin{equation}\label{jackson-B-B}
E_n(U)_{B_p^{0q}(\HHH)} \le cn^{-s/(d-1)}\|U\|_{B_\tau^{s\tau}(\HHH)},\quad p\le q,
\end{equation}
\begin{equation}\label{jackson-B-B2}
E_n(U)_{B_p^{0q}(\HHH)} = o(n^{1/q-1/\tau})\|U\|_{B_\tau^{s\tau}(\HHH)},\quad \tau\le q<p,
\end{equation}
where the constant $c>0$ depends only on $A, d$.
\end{thm}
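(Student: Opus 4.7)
The plan is to mirror the proof of \thmref{thm:Jackson-B-F}, substituting \thmref{thm:Jackson-S-B} for \thmref{thm:Jackson-S-F} and passing through the boundary value correspondence between $B^d$ and $\SS$. First I would invoke Proposition~\ref{prop:identify} and \thmref{thm:equiv-norms-B} to associate to $U$ its boundary distribution $f_U\in\cS'$ with
$$\|f_U\|_{\cB_\tau^{s\tau}(\SS)}\sim \|U\|_{B_\tau^{s\tau}(\HHH)}.$$
The embeddings \eqref{eq:11} and \eqref{eq:2} of \propref{embed} then give $f_U\in\cB_p^{0q}(\SS)$, and \thmref{thm:equiv-norms-B} transports this back to $U\in B_p^{0q}(\HHH)$ with $\|U\|_{B_p^{0q}(\HHH)}\sim\|f_U\|_{\cB_p^{0q}(\SS)}$.

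Next, with $s,p,q,\tau$ fixed, I would choose $A:=\max\{s,p,q^{-1},\tau^{-1},2\}$ so that conditions \eqref{main-cond} are met, and fix the secondary parameters $K,M,\gamma_0$ by \eqref{param-KM}--\eqref{param-gamma0}. This makes the frames $\{\theta_\xi\}_{\xi\in\cX}$, $\{\tilde\theta_\xi\}_{\xi\in\cX}$ of \thmref{thm:prop-frame} available and the hypotheses of \thmref{thm:Jackson-S-B} satisfied. Applying \thmref{thm:Jackson-S-B} to $f_U$ yields, for each $n\ge 1$, indices $\xi_1,\dots,\xi_n\in\cX$ and coefficients $a_1,\dots,a_n\in\CC$ such that
$$\Bigl\|f_U-\sum_{j=1}^n a_j\theta_{\xi_j}\Bigr\|_{\cB_p^{0q}(\SS)}\le cn^{-s/(d-1)}\|f_U\|_{\cB_\tau^{s\tau}(\SS)}\quad (p\le q),$$
with the corresponding $o(n^{1/q-1/\tau})$ bound in the case $\tau\le q<p$.

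The final step is to set $G_n(x):=\sum_{j=1}^n a_j\theta_{\xi_j}(x)$ for $x\in B^d$. Since each $\theta_\xi$ is defined by \eqref{eq:theta} or \eqref{eq:theta2}, it is manifestly harmonic on $B^d$, and taking its harmonic extension is trivial. By \thmref{thm:equiv-norms-B} we transfer the sphere estimate to
$$\|U-G_n\|_{B_p^{0q}(\HHH)}\le c\bigl\|f_U-{\textstyle\sum_j} a_j\theta_{\xi_j}\bigr\|_{\cB_p^{0q}(\SS)},$$
and by \thmref{thm:new_frame}(b) each $\theta_{\xi_j}$ is a linear combination of at most $\tilde n$ shifts of the Newtonian kernel (with $\tilde n$ an absolute constant depending only on $d,K,M,\gamma_0$, hence only on $A,d$). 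Therefore $G_n\in\NNN_{\tilde n n}$, and $E_{\tilde n n}(U)_{B_p^{0q}(\HHH)}$ inherits both bounds; rescaling $n\mapsto\lceil n/\tilde n\rceil$ produces \eqref{jackson-B-B} and \eqref{jackson-B-B2} as stated.

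There is no substantive obstacle: the heavy lifting has already been done in \thmref{thm:Jackson-S-B} (the sphere-side Jackson estimate) and in \thmref{thm:new_frame}(b) (the fact that each frame element costs only a bounded number of Newtonian shifts). The only thing to be mildly careful about is the soft $o(\cdot)$ bound in \eqref{jackson-B-B2}: it must be transported through the norm equivalence $\|\cdot\|_{B_p^{0q}(\HHH)}\sim\|\cdot\|_{\cB_p^{0q}(\SS)}$ without degrading, which is immediate since the constants in \thmref{thm:equiv-norms-B} do not depend on $n$.
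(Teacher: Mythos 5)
Your proposal is correct and follows exactly the route the paper intends: the authors omit the proof of this theorem, stating only that it "goes along the lines of the proof of Theorem~\ref{thm:Jackson-B-F} with Theorem~\ref{thm:Jackson-S-F} replaced by Theorem~\ref{thm:Jackson-S-B}," which is precisely what you carry out. The details you supply (boundary transfer via Theorem~\ref{thm:equiv-norms-B}, the choice of $A$, the count $G_n\in\NNN_{\tilde n n}$, and the harmless rescaling) are all sound.
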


The proof of \thmref{thm:Jackson-B-B} goes along the lines of the proof of \thmref{thm:Jackson-B-F}
with \thmref{thm:Jackson-S-F} replaced by \thmref{thm:Jackson-S-B}.
We omit it.

\section{Approximation of harmonic functions on $\RR^d\setminus \overline{B^d}$ and $\RR_+^d$}\label{s8}

The results in Section~\ref{s7} have their analogues for approximation of harmonic functions
on $\RR^d\setminus \overline{B^d}$ or $\RR_+^d$.
In the following we established the analogue of the main result (Theorem~\ref{thm:Jackson}) on $\RR^d\setminus \overline{B^d}$
and explain briefly its analogue on $\RR_+^d$.

In analogy to the set $\NNN_n$ from \eqref{def:N-n-d}--\eqref{def:N-n-2}
we denote by $\overline{\NNN}_n$ the set of all linear combinations of shifts of the Newtonian kernel
as in \eqref{def:N-n-d}--\eqref{def:N-n-2}
with the requirement that the poles $y_\nu\in B^d$.

The approximation will take place in the harmonic Hardy space
$\HHH^p(\RR^d\setminus \overline{B^d})$.
Let $\HHH(\RR^d\setminus\overline{B^d})$ denote the set of all harmonic functions $U$ on $\RR^d\setminus\overline{B^d}$
such that
$\lim_{|x|\to\infty} U(x) =0$ if $d>2$
or
$\lim_{|x|\to\infty} U(x) = {\rm const.}$ if $d=2$.
The harmonic Hardy space
$\HHH^p(\RR^d\setminus \overline{B^d})$, $0<p<\infty$,
is defined as the set of all harmonic functions $U\in \HHH(\RR^d\setminus\overline{B^d})$ such that
\begin{equation}\label{def-Hp-c}
\|U\|_{\HHH^p(\RR^d\setminus \overline{B^d})} := \|\sup_{r>1}r^{d-2}|U(r\cdot)|\|_{L^p(\SS)} <\infty.
\end{equation}
Given $U\in \HHH^p(\RR^d\setminus \overline{B^d})$ we define
\begin{equation}\label{def:best-app-2}
E_n(U)_{\HHH^p(\RR^d\setminus \overline{B^d})}:=\inf_{G\in \overline{\NNN}_n}\|U-G\|_{\HHH^p(\RR^d\setminus \overline{B^d})}.
\end{equation}

Denote by $\bar B_p^{sq}(\HHH)$ the harmonic Besov spaces
on $\RR^d\setminus \overline{B^d}$ (see \cite[Section 8]{IP}).

As one can expect the following Jackson type theorem is valid:

\begin{thm}
\label{thm:Jackson-H2}
Let $s>0$, $0<p<\infty$, and $1/\tau=s/(d-1)+1/p$.
If $U\in \bar B_\tau^{s\tau}(\HHH)$, then $U\in \HHH^p(\RR^d\setminus \overline{B^d})$ and
\begin{equation}\label{jackson-H-2}
E_n(U)_{\HHH^p(\RR^d\setminus \overline{B^d})} \le cn^{-s/(d-1)}\|U\|_{\bar B_\tau^{s\tau}(\HHH)},\quad n\ge 1,
\end{equation}
where the constant $c>0$ depends only on $p, s, d$.
\end{thm}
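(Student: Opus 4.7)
The strategy is to reduce the exterior problem to the interior problem handled by Theorem~\ref{thm:Jackson} via the Kelvin transform. Define the Kelvin transform by
\begin{equation*}
K[V](x) := |x|^{2-d} V(x/|x|^2),\quad x\in\RR^d\setminus\{0\},\;d\ge 3,
\end{equation*}
with the usual $d=2$ modification. It is an involution that interchanges $\HHH(B^d)$ with $\HHH(\RR^d\setminus\overline{B^d})$, sending functions that stay bounded at $\infty$ (with appropriate normalization in $d=2$) to functions on $B^d$ that extend harmonically across the origin. The normalization $r^{d-2}$ in the definition of $\|\cdot\|_{\HHH^p(\RR^d\setminus\overline{B^d})}$ in \eqref{def-Hp-c} is precisely chosen so that $K$ is an isometric isomorphism between $\HHH^p(B^d)$ and $\HHH^p(\RR^d\setminus\overline{B^d})$: indeed $\sup_{r>1} r^{d-2}|U(r\xi)| = \sup_{0<\rho<1}|V(\rho\xi)|$ under $\rho = 1/r$ and $V = K[U]$.

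The crucial algebraic observation is that $K$ maps shifts of the Newtonian kernel to shifts of the Newtonian kernel with inverted poles. A direct computation using $|x-y|x|^2|^2 = |y|^2|x|^2|x-y^*|^2$ with $y^* := y/|y|^2$ gives, for $d>2$ and $|y|>1$,
\begin{equation*}
K\Big[\frac{1}{|\,\cdot\,-y|^{d-2}}\Big](x) = \frac{|y|^{2-d}}{|x-y^*|^{d-2}},\qquad |y^*|<1,
\end{equation*}
and $K[\text{constant}](x) = \text{constant}\cdot|x|^{2-d}$, which is a Newtonian kernel with pole at $0\in B^d$; an analogous identity holds for the $\log$ kernel when $d=2$. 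Consequently, $K$ maps $\NNN_n$ into $\overline{\NNN}_{n+1}$ (the extra term accounting for the image of the constant, which costs only a harmless shift $n\mapsto n+1$ in the rate).

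Next, the harmonic Besov spaces on the exterior domain are defined in \cite[Section 8]{IP} precisely so that $K:B^{sq}_p(\HHH)\to\bar B^{sq}_p(\HHH)$ is a bounded isomorphism with comparable quasi-norms for all admissible parameters. Granting this, the proof proceeds as follows. Given $U\in\bar B_\tau^{s\tau}(\HHH)$, set $V := K[U]$, so $V\in B_\tau^{s\tau}(\HHH)$ with $\|V\|_{B_\tau^{s\tau}(\HHH)}\sim \|U\|_{\bar B_\tau^{s\tau}(\HHH)}$. Applying Theorem~\ref{thm:Jackson} to $V$ produces, for each $n\ge 1$, a function $G_n\in\NNN_n$ such that
\begin{equation*}
\|V-G_n\|_{\HHH^p(B^d)} \le c\,n^{-s/(d-1)}\|V\|_{B_\tau^{s\tau}(\HHH)}.
\end{equation*}
Taking $\widetilde G_n := K[G_n]\in\overline{\NNN}_{n+1}$ and using that $K$ is an isometric isomorphism of the harmonic Hardy spaces yields
\begin{equation*}
\|U-\widetilde G_n\|_{\HHH^p(\RR^d\setminus\overline{B^d})}
= \|V-G_n\|_{\HHH^p(B^d)}
\le c\,n^{-s/(d-1)}\|U\|_{\bar B_\tau^{s\tau}(\HHH)},
\end{equation*}
which gives \eqref{jackson-H-2} after absorbing the $n\mapsto n+1$ shift into the constant.

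The main obstacle is not the approximation step itself (which is inherited from Theorem~\ref{thm:Jackson}) but rather the verification that the Kelvin transform implements a bounded isomorphism on both the Hardy and Besov scales involved; for the Hardy scale this is an elementary change of variables built into \eqref{def-Hp-c}, while for the Besov scale it requires invoking the exterior theory of \cite{IP}. Once these isomorphisms are in hand, the reduction to Theorem~\ref{thm:Jackson} is automatic, and the analogue on the half-space $\RR^d_+$ proceeds identically, replacing the Kelvin transform by the reflection $x\mapsto x^*$ across the boundary hyperplane $\RR^{d-1}$, which again sends Newtonian kernels with poles in $\RR^d_-$ to Newtonian kernels with poles in $\RR^d_+$.
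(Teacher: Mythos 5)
Your proposal is correct and follows essentially the same route as the paper: reduce to Theorem~\ref{thm:Jackson} via the Kelvin transform, using that it is an isometric isomorphism on the Hardy scale, an isomorphism on the exterior Besov scale by \cite[Section 8]{IP}, and that it carries shifts of the Newtonian kernel with poles outside $\overline{B^d}$ to (scaled) shifts with poles inside $B^d$. Your explicit bookkeeping of the extra pole at the origin coming from the image of the constant term (the $n\mapsto n+1$ shift) is a detail the paper leaves implicit, but otherwise the arguments coincide.
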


\begin{proof}
As is well known the Kelvin transform
$
KU(x):= |x|^{2-d}U(x/|x|^2)
$
maps one-to-one $\HHH(B^d)$ onto $\HHH(\RR^d\setminus \overline{B^d})$
and $K^{-1}=K$.
It is easy to see that the Kelvin transform is an isometric isomorphism of
$\HHH^p(\RR^d\setminus \overline{B^d})$ onto $\HHH^p(B^d)$.
Also, as shown in \cite[Section 8]{IP} the Kelvin transform is an isometric isomorphism
between the harmonic Besov spaces $\bar B^{sq}_p(\HHH)$ on $\RR^d\setminus \overline{B^d}$ and
the harmonic Besov spaces $B^{sq}_p(\HHH)$ on $B^d$.
Furthermore, it is readily seen by the symmetry lemma that for a fixed $y\in \RR^d$, $y\ne 0$,
$$
K\Big(\frac{1}{|x-y|^{d-2}}\Big)(x) = \frac{|y|^{2-d}}{|x-y/|y|^2|^{d-2}}, \quad d>2,
$$
and
$$
K\Big(\ln \frac{1}{|x-y|}\Big)(x) = \ln \frac{1}{|y|}+ \ln |x| + \ln\frac{1}{|x-y/|y|^2|}, \quad d=2.
$$
Assuming that $U\in \bar B_\tau^{s\tau}(\HHH)$ we apply estimate \eqref{jackson-H} to $KU$
and use all of the above to conclude that estimate \eqref{jackson-H-2} holds true.
\end{proof}

\smallskip

\noindent
{\bf Approximation of harmonic functions on $\RR_+^d$.}
Closely related to the approximation problem considered above
is the problem for nonlinear $n$-term approximation of functions in the harmonic Hardy spaces
$\HHH^p(\RR^d_{+})$, $0<p<\infty$, from linear combinations of shifts of the Newtonian kernel
with poles in $\RR^d_{-}$.
This problem should be regarded as a limiting case of the same problem on $B(0, R)\subset \RR^d$
as $R\to \infty$.
For lack of space we do not elaborate on this sort of approximation.
We would like to observe only that all definitions and statements in this article have analogues
in the more common setting on $\RR_+^d$ from Harmonic analysis point of view,
in particular, our main Jackson estimate \eqref{jackson-H} is valid.

\section{Proofs}\label{appendix}

\subsection{Proofs of Propositions~\ref{prop:5_1}, \ref{prop:5_2}, and \ref{prop:5_3}}

For the proofs of \propref{prop:5_1}
we need the following simple

\begin{lem}\label{lem:5_1}
Let $K\in\NN$, $x_0\in\SS$, $g\in W_\infty^K(\SS)$ and
$\til{g}(x):=g(x/|x|)$ for $x\in\RR^d\backslash\{0\}$.
Then for every $x\in\SS$ with $\rho(x,x_0)\le 1$ we have
\begin{equation*}
\Big|\til{g}(x)-\sum_{|\beta|\le K-1}\frac{\partial^\beta\til{g}(x_0)}{\beta!}(x-x_0)^\beta\Big|
\le c \rho(x,x_0)^K\sup_{\substack{z\in\SS\\ \rho(z,x_0)\le\rho(x,x_0)}}\max_{|\beta|=K}
\left|\partial^\beta\til{g}(z)\right|
\end{equation*}
with $c$ depending only on $d$ and $K$.
\end{lem}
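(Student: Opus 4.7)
The plan is to apply the classical Taylor expansion of $\til{g}$, viewed as a function on $\RR^d\setminus\{0\}$, along the Euclidean chord joining $x_0$ to $x$, and then transfer the resulting derivative bounds back to the sphere using the homogeneity of $\til{g}$.

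First I would parametrize the chord by $y(t):=x_0+t(x-x_0)$, $t\in[0,1]$. The key geometric fact I need is that this chord stays uniformly away from the origin: a direct computation gives $|y(t)|^2=1-t(1-t)|x-x_0|^2$, and since $|x-x_0|\le\rho(x,x_0)\le1$, one has $|y(t)|\ge \cos(\rho(x,x_0)/2)\ge\cos(1/2)>0$ for all $t\in[0,1]$. In particular $\til{g}$ and all its partial derivatives are smooth on a neighbourhood of the chord, so Taylor's theorem with integral remainder gives
\begin{equation*}
\til{g}(x)=\sum_{|\beta|\le K-1}\frac{\partial^\beta\til{g}(x_0)}{\beta!}(x-x_0)^\beta
+K\sum_{|\beta|=K}\frac{(x-x_0)^\beta}{\beta!}\int_0^1(1-t)^{K-1}\partial^\beta\til{g}(y(t))\,dt.
\end{equation*}

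Next I would exploit that $\til{g}(\lambda z)=\til{g}(z)$ for every $\lambda>0$, i.e.\ $\til{g}$ is positively homogeneous of degree $0$. Differentiating this identity yields that $\partial^\beta\til{g}$ is positively homogeneous of degree $-|\beta|$, so for any $y\ne0$ with $z:=y/|y|\in\SS$ one has $\partial^\beta\til{g}(y)=|y|^{-|\beta|}\partial^\beta\til{g}(z)$. Applying this with $y=y(t)$ and using $|y(t)|\ge\cos(1/2)$ for $|\beta|=K$ gives
\begin{equation*}
|\partial^\beta\til{g}(y(t))|\le(\cos(1/2))^{-K}|\partial^\beta\til{g}(z(t))|,
\qquad z(t):=y(t)/|y(t)|\in\SS.
\end{equation*}

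The second geometric observation is that the normalized chord $\{z(t):t\in[0,1]\}$ is precisely the minor great-circle arc from $x_0$ to $x$, so $\rho(z(t),x_0)\le\rho(x,x_0)$ for every $t\in[0,1]$. Consequently
\begin{equation*}
|\partial^\beta\til{g}(y(t))|\le(\cos(1/2))^{-K}\sup_{\substack{z\in\SS\\ \rho(z,x_0)\le\rho(x,x_0)}}\max_{|\alpha|=K}|\partial^\alpha\til{g}(z)|=:S.
\end{equation*}

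Finally, inserting this into the integral remainder, using $|(x-x_0)^\beta|\le|x-x_0|^K\le\rho(x,x_0)^K$ and $K\int_0^1(1-t)^{K-1}dt=1$, together with the multinomial identity $\sum_{|\beta|=K}1/\beta!=d^K/K!$, yields
\begin{equation*}
\Big|\til{g}(x)-\sum_{|\beta|\le K-1}\frac{\partial^\beta\til{g}(x_0)}{\beta!}(x-x_0)^\beta\Big|
\le \frac{d^K}{K!}(\cos(1/2))^{-K}\rho(x,x_0)^K S,
\end{equation*}
which is the claimed estimate with $c=c(d,K)$. The only step that needs care is the geometric claim that the radial projection of the chord traces out the great arc and that this arc lies inside the cap $\{\rho(\cdot,x_0)\le\rho(x,x_0)\}$; this is the main (albeit mild) obstacle, and it follows because $z(t)$ lies in the $2$-plane spanned by $x_0,x$ and depends monotonically on $t$.
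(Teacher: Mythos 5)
Your proof is correct and takes essentially the same approach as the paper: Taylor expansion along the Euclidean chord from $x_0$ to $x$, the degree-$0$ homogeneity of $\til{g}$ (giving degree $-|\beta|$ homogeneity of $\partial^\beta\til{g}$) to transfer derivative bounds back to the sphere, and the fact that the chord stays a distance $\ge\cos(1/2)$ from the origin. The only cosmetic differences are that the paper uses the Lagrange (mean-value) form of the remainder with a directional derivative $(\eta\cdot\nabla)^K$ rather than your integral multi-index form, and that you spell out the geometric observation that the radial projection of the chord traces the minor great-circle arc inside the cap, a step the paper leaves implicit.
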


\begin{proof}
Assuming $x\ne x_0$, we set $\eta:=(x-x_0)/|x-x_0|\in\SS$.
Then from Taylor's theorem
there exists  $\lambda\in(0,1)$ such that
\begin{multline*}
\Big|\til{g}(x)-\sum_{|\beta|\le K-1}\frac{\partial^\beta\til{g}(x_0)}{\beta!}(x-x_0)^\beta\Big|\\
=\frac{|x-x_0|^K}{K!}\left|(\eta\cdot\nabla)^K\til{g}(x_\lambda)\right|
=\frac{|x-x_0|^K}{|x_\lambda|^K K!}\Big|(\eta\cdot\nabla)^K\til{g}\Big(\frac{x_\lambda}{|x_\lambda|}\Big)\Big|,
\end{multline*}
where $x_\lambda:=x_0+\lambda(x-x_0)$ and the definition of $\til{g}$ is used for the last equality.
Now, we use that $|x-x_0|\le\rho(x,x_0)$, $|x_\lambda|\ge\cos 1/2$ for $\lambda\in(0,1)$, and
\begin{equation*}
\left|(\eta\cdot\nabla)^K\til{g}(y)\right|
\le c \max_{|\beta|=K}\left|\partial^\beta\til{g}(y)\right|,\quad y\in\SS,
\end{equation*}
to complete the proof.
\end{proof}

\begin{proof}[Proof of \propref{prop:5_1}]
We represent $\left\langle g,f\right\rangle$ in the form
\begin{equation}\label{eq:5a}
\begin{split}
\left\langle g,f\right\rangle&=S_1+S_2,\\
S_1&:=\int_{\SS}\Big(\til{g}(y)
-\sum_{|\beta|\le K-1}\frac{\partial^\beta\til{g}(x_2)}{\beta!}(y-x_2)^\beta\Big)\overline{f(y)}\, d\sigma(y),\\
S_2&:=\sum_{|\beta|\le K-1}\frac{\partial^\beta\til{g}(x_2)}{\beta!}\int_{\SS}(y-x_2)^\beta \overline{f(y)}\, d\sigma(y).
\end{split}
\end{equation}
From \eqref{eq:3a} we get
\begin{equation*}
\Big|\int_{\SS}(y-x_2)^\beta \overline{f(y)}\, d\sigma(y)\Big|\le c\kappa_2 \scal_2^{-K},\quad ~0\le |\beta|\le K-1
\end{equation*}
and using \eqref{eq:1a}
\begin{equation*}
|S_2|
\le \sum_{|\beta|\le K-1} \frac{\kappa_1 \scal_1^{|\beta|+d-1}}{\beta!(1+\scal_1\rho(x_1,x_2))^M} c\kappa_2 \scal_2^{-K}
\le c\frac{\kappa_1 \kappa_2 (\scal_1/\scal_2)^{K}\scal_1^{d-1}}{(1+\scal_1\rho(x_1,x_2))^M}.
\end{equation*}
We bound $S_1$ by
\begin{equation*}
|S_1|\le\int_{\SS}\Big|\til{g}(y)
-\sum_{|\beta|\le K-1}\!\!\frac{\partial^\beta\til{g}(x_2)}{\beta!}(y-x_2)^\beta\Big||f(y)|\, d\sigma(y)
=:\int_{\nA_1}+\int_{\nA_2}+\int_{\nA_3},
\end{equation*}
where
\begin{align*}
\nA_1&=\{y\in\SS : \rho(x_2, y)\le N_1^{-1}\},\\
\nA_2&=\{y\in\SS : \rho(x_2, y)>N_1^{-1}, \rho(x_1,y)\le\rho(x_1,x_2)/2\},\\
\nA_3&=\{y\in\SS : \rho(x_2, y)>N_1^{-1}, \rho(x_1,y)>\rho(x_1,x_2)/2\}.
\end{align*}

For $y\in\nA_1$, \lemref{lem:5_1} and \eqref{eq:1a} imply
\begin{multline*}
\Big|\til{g}(y)-\sum_{|\beta|\le K-1}\frac{\partial^\beta\til{g}(x_2)}{\beta!}(y-x_2)^\beta\Big|
\le c \rho(y,x_2)^K\sup_{z\in\nA_1}\max_{|\beta|= K}\left|\partial^\beta\til{g}(z)\right|\\
\le c \sup_{z\in\nA_1}\frac{\kappa_1 \scal_1^{K+d-1}}{(1+\scal_1\rho(x_1,z))^M}\rho(y,x_2)^K
\le c \frac{\kappa_1 \scal_1^{K+d-1}}{(1+\scal_1\rho(x_1,x_2))^M}\rho(y,x_2)^K
\end{multline*}
due to \eqref{comp_local}.
Using the above estimate, \eqref{eq:2a}, and \eqref{eq:conv_1} we see that
\begin{align*}
\int_{\nA_1}
&\le c \frac{\kappa_1 \scal_1^{K+d-1}}{(1+\scal_1\rho(x_1,x_2))^M}
\int_{\nA_1}\rho(y,x_2)^K \frac{\kappa_2 \scal_2^{d-1}}{(1+\scal_2\rho(y,x_2))^M}\, d\sigma(y)\\
&\le c \frac{\kappa_1 \kappa_2 \scal_1^{K+d-1}}{(1+\scal_1\rho(x_1,x_2))^M} \scal_2^{-K}
\int_\SS \frac{\scal_2^{d-1}}{(1+\scal_2\rho(y,x_2))^{M-K}}\, d\sigma(y)\\
&\le c\frac{\kappa_1 \kappa_2 (\scal_1/\scal_2)^{K}\scal_1^{d-1}}{(1+\scal_1\rho(x_1,x_2))^M}.
\end{align*}

\begin{sloppypar}
For $y\in\nA_2$ we have $\rho(x_1,x_2)/2\le\rho(x_2, y)\le 3\rho(x_1,x_2)/2$,
and hence
$3\scal_2\rho(y,x_2)\ge (\scal_2/\scal_1)(1+\scal_1\rho(x_1,x_2))$, using that $ \rho(y,x_2)> \scal_1^{-1}$.
Therefore,
\begin{equation*}
(1+\scal_2\rho(y,x_2))^{-M}\le (\scal_2\rho(y,x_2))^{-M}\le 3^M(\scal_1/\scal_2)^M(1+\scal_1\rho(x_1,x_2))^{-M}.
\end{equation*}
This combined with \eqref{eq:1a} and \eqref{eq:2a} implies
\begin{align*}
\int_{\nA_2}
&\le \int_{\nA_2}\frac{\kappa_1 \scal_1^{d-1}}{(1+\scal_1\rho(x_1,y))^M}
\frac{\kappa_2 \scal_2^{d-1}}{(1+\scal_2\rho(y,x_2))^M}\, d\sigma(y)\\
&\qquad\qquad\quad+ \int_{\nA_2}\sum_{|\beta|\le K-1}\frac{\kappa_1 \scal_1^{|\beta|+d-1}\rho(y,x_2)^{|\beta|}}
 {\beta!(1+\scal_1\rho(x_1,x_2))^M}\frac{\kappa_2 \scal_2^{d-1}}{(1+\scal_2\rho(y,x_2))^M}\, d\sigma(y)\\
&\le \int_{\nA_2}\frac{\kappa_1 \scal_1^{d-1}}{(1+\scal_1\rho(x_1,y))^M}\, d\sigma(y)
\frac{3^M\kappa_2 (\scal_1/\scal_2)^M\scal_2^{d-1}}{(1+\scal_1\rho(x_1,x_2))^M}\\
&+\sum_{|\beta|\le K-1}\int_{\nA_2}
\frac{\kappa_1 \scal_1^{|\beta|+d-1}(3/2)^{|\beta|}\rho(x_1,x_2)^{|\beta|}}
 {\beta!(1+\scal_1\rho(x_1,x_2))^M}\, d\sigma(y)
\frac{3^M\kappa_2 (\scal_1/\scal_2)^M\scal_2^{d-1}}{(1+\scal_1\rho(x_1,x_2))^M}\\
&\le c\frac{\kappa_1 \kappa_2 (\scal_1/\scal_2)^{K}\scal_1^{d-1}}{(1+\scal_1\rho(x_1,x_2))^M}.
\end{align*}
Here we used that $M>K+d-1$ and \eqref{eq:conv_1}
as well as the fact that $\scal_1/\scal_2\le 1$ and $\sigma(\nA_2)\le c\rho(x_1,x_2)^{d-1}$.
\end{sloppypar}

For $y\in\nA_3$, we have $(1+\scal_1\rho(x_1,x_2))/2\le 1+\scal_1\rho(x_1,y)$ and $\rho(x_2, y)>\scal_1^{-1}$.
Therefore,
\begin{align*}
\int_{\nA_3}
&\le \int_{\nA_3}\frac{\kappa_1 \scal_1^{d-1}}{(1+\scal_1\rho(x_1,y))^M}
\frac{\kappa_2 \scal_2^{d-1}}{(1+\scal_2\rho(y,x_2))^M}\, d\sigma(y)\\
&\qquad\qquad\quad + \int_{\nA_3}\sum_{|\beta|\le K-1}\frac{\kappa_1 \scal_1^{|\beta|+d-1}\rho(y,x_2)^{|\beta|}}
 {\beta!(1+\scal_1\rho(x_1,x_2))^M}\frac{\kappa_2 \scal_2^{d-1}}{(1+\scal_2\rho(y,x_2))^M}\, d\sigma(y)\\
&\le c \frac{\kappa_1 \kappa_2 \scal_1^{d-1}}{(1+\scal_1\rho(x_1,x_2))^M}
\int_{\nA_3}\frac{(\scal_1\rho(y,x_2))^{K}\scal_2^{d-1}}{(1+\scal_2\rho(y,x_2))^M}\, d\sigma(y)\\
&\le c\frac{\kappa_1 \kappa_2 (\scal_1/\scal_2)^{K}\scal_1^{d-1}}{(1+\scal_1\rho(x_1,x_2))^M},
\end{align*}
using that $M>K+d-1$ and
\begin{equation*}
\int_{\SS}\frac{(\scal_2\rho(y,x_2))^{K}\scal_2^{d-1}}{(1+\scal_2\rho(y,x_2))^M}\, d\sigma(y)
\le\int_{\SS}\frac{\scal_2^{d-1}}{(1+\scal_2\rho(y,x_2))^{M-K}}\, d\sigma(y)\le c
\end{equation*}
because of \eqref{eq:conv_1}. This completes the proof.
\end{proof}

\begin{proof}[Proof of \propref{prop:5_2}]
This proof is the same as the proof of \propref{prop:5_1} for $K=1$.
Instead of estimating $S_2$ in \eqref{eq:5a} we move it to the left-hand side.
Only the localization of the first derivatives of $\til{g}$, but not of $\til{g}$ itself, is needed here.
\end{proof}

\begin{proof}[Proof of \propref{prop:5_3}]
This proof follows along the lines of the proof of \propref{prop:5_1} with $K=0$.
Of course, in this case the Taylor series is missing from the definitions of both $S_1$ and $S_2$ in \eqref{eq:5a},
i.e. $S_2\equiv 0$. \lemref{lem:5_1} is also not used in the proof.
\end{proof}

\subsection{Proof of Theorem~\ref{thm:almost-diag}}

This proof depends on the next three lemmas.

\begin{lem}\label{lem:Besov_1}
Let $j,m\ge 0$, $0<\beta\le 1$, $x\in\SS$ and $\xi\in\cX_j$. Then
\begin{equation}\label{eq:besov_0}
\sum_{\eta\in\cX_{j+m}} \frac{1}{(1+N_\xi\rho(x,\eta))^{d-1+\beta}}\le c^\star_1 2^{m(d-1)}
\end{equation}
with $c^\star_1=c(d)\beta^{-1}$.
\end{lem}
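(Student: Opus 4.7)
The plan is to compare the sum to an integral by exploiting the maximal $\delta_{j+m}$-net structure of $\cX_{j+m}$ and then invoke the standard integral bound \eqref{eq:conv_1}.

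First, I would invoke the disjoint partition $\{\nA_\eta\}_{\eta\in\cX_{j+m}}$ of $\SS$ associated with the maximal $\delta_{j+m}$-net $\cX_{j+m}$ (see \eqref{disjoint}), where $\delta_{j+m}=\gamma 2^{-j-m+1}$, and satisfies $B(\eta,\delta_{j+m}/2)\subset \nA_\eta\subset B(\eta,\delta_{j+m})$. Since $N_\xi\le 2^{j-1}$ and $m\ge 0$, one has $N_\xi\delta_{j+m}\le \gamma 2^{-m}\le 1$, so for any $y\in \nA_\eta$ the estimate \eqref{comp_local} applies (with $\eta_1=\eta$, $\eta_2=y$ and the roles matching after a trivial relabelling), giving
\begin{equation*}
(1+N_\xi\rho(x,\eta))^{-1}\le 2(1+N_\xi\rho(x,y))^{-1},\qquad \forall y\in\nA_\eta.
\end{equation*}
(The boundary case $j=0$, where $\cX_0=\{e_1\}$ and $N_\xi=1/2$, is either trivial when $m=0$ or follows by the same scaling check for $m\ge 1$.)

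Second, using the lower bound $|\nA_\eta|\ge |B(\eta,\delta_{j+m}/2)|\ge c(d)2^{-(j+m)(d-1)}$ from \eqref{sph_cap} together with the disjointness of the partition, the preceding pointwise bound (raised to the power $d-1+\beta$) yields
\begin{equation*}
\sum_{\eta\in\cX_{j+m}}\frac{1}{(1+N_\xi\rho(x,\eta))^{d-1+\beta}}
\le c\,2^{(j+m)(d-1)}\int_\SS \frac{d\sigma(y)}{(1+N_\xi\rho(x,y))^{d-1+\beta}}.
\end{equation*}

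Third, since $d-1+\beta>d-1$, I would apply \eqref{eq:conv_1} with $M=d-1+\beta$ to obtain
\begin{equation*}
\int_\SS \frac{N_\xi^{d-1}}{(1+N_\xi\rho(x,y))^{d-1+\beta}}\,d\sigma(y)\le \frac{c(d)}{\beta},
\end{equation*}
and then combine with the previous display using $N_\xi^{d-1}\sim 2^{j(d-1)}$ to conclude
\begin{equation*}
\sum_{\eta\in\cX_{j+m}}\frac{1}{(1+N_\xi\rho(x,\eta))^{d-1+\beta}}\le \frac{c(d)}{\beta}\,2^{m(d-1)},
\end{equation*}
which is the claimed inequality. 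There is no real obstacle here; the only point requiring a moment of care is checking the scaling $N_\xi\delta_{j+m}\le 1$ (needed to apply \eqref{comp_local} uniformly in $m\ge 0$) and the trivial $j=0$ endpoint.
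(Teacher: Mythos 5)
Your proof is correct and follows essentially the same route as the paper's: use the disjoint partition $\{\nA_\eta\}$ of the maximal net, pass from the discrete sum to the integral $\int_\SS(1+N_\xi\rho(x,y))^{-d+1-\beta}\,d\sigma(y)$ via the comparison of weights, and then bound that integral by $c(d)\beta^{-1}N_\xi^{-d+1}$ using \eqref{eq:conv_1}. The only cosmetic difference is that the paper states the pointwise comparison as $(1+\gamma)(1+N_\xi\rho(x,\eta))\ge 1+N_\xi\rho(x,y)$ directly for $y\in\nA_\eta$ rather than invoking \eqref{comp_local}, but the content is the same.
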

\begin{proof}
Using that $\cX_{j+m}$ is a maximal $\gamma 2^{-j-m+1}$ net with a fixed $\gamma=c(d)\in(0,1)$ (as stated in \S\ref{s5_3}),
\eqref{eq:maximal_net}, the inequality $(1+\gamma)(1+N_\xi\rho(x,\eta))\ge 1+N_\xi\rho(x,y)$ for any $y\in A_\eta$, and (2.6) we obtain
\begin{multline*}
\sum_{\eta\in\cX_{j+m}} \frac{1}{(1+N_\xi\rho(x,\eta))^{d-1+\beta}}
\le c(d)N_\eta^{d-1}\sum_{\eta\in\cX_{j+m}} \frac{|A_\eta|}{(1+N_\xi\rho(x,\eta))^{d-1+\beta}}\\
\le c(d)N_\eta^{d-1}\int_{\SS} \frac{d\sigma(y)}{(1+N_\xi\rho(x,y))^{d-1+\beta}}
\le c(d)N_\eta^{d-1}c(d)\beta^{-1} N_\xi^{-d+1},
\end{multline*}
which proves \eqref{eq:besov_0}.
\end{proof}

\begin{lem}\label{lem:mrax}
Let  $0<t\le 1$ and $M\ge(d-1)/t+\delta$, $0<\delta\le 1$. Then for any sequence of complex numbers
$\{h_{\eta}\}_{\eta\in \cX_{m} }$,   $m\ge 0$,
and for any $x\in B_\xi=B(\xi,\gamma 2^{-j})$, $\xi\in\cX_j$, $j\ge 0$, we have
\begin{multline}\label{est-M}
\sum_{\eta\in\cX_m} \frac{|h_{\eta}|}{\big(1+\min\{N_\xi, N_\eta\}\rho(\xi,\eta)\big)^{M}}\\
\le c^\star_2 \max\big\{1,2^{(m-j)(d-1)/t}\big\} \MM_t\Big(\sum_{\eta\in \cX_m}
|h_{\eta}|\ONE_{\nB_\eta}\Big)(x),
\end{multline}
where $c^\star_2:=(2/\ln 2) 4^{(d-1)/t}(2/\gamma)^M \delta^{-1}$
with $\gamma\in(0,1)$ being the constant from the construction of the old frame $\Psi$ in \S\ref{subsec:frame-SS}.
\end{lem}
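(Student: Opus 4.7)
The plan is to dyadically decompose the sum by distance from $\xi$, then on each annulus use the bounded-overlap property of the balls $\{\nB_\eta\}_{\eta\in\cX_m}$ together with the subadditivity $(a+b)^t\le a^t+b^t$ valid for $0<t\le 1$, to reduce the discrete sum to an integral dominated by the $L^t$-maximal function at $x$.

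First I would note that $\tilde N:=\min\{N_\xi,N_\eta\}$ is the same for every $\eta\in\cX_m$ (only $\min\{j,m\}$ matters), so the sum becomes
\begin{equation*}
S:=\sum_{\eta\in\cX_m}\frac{|h_\eta|}{(1+\tilde N\rho(\xi,\eta))^M}.
\end{equation*}
Split $\cX_m=\bigsqcup_{\ell\ge 0}A_\ell$ with $A_0=\{\eta:\rho(\xi,\eta)\le\tilde N^{-1}\}$ and $A_\ell=\{\eta:2^{\ell-1}\tilde N^{-1}<\rho(\xi,\eta)\le 2^\ell\tilde N^{-1}\}$ for $\ell\ge 1$, so that the weight is bounded by $2^{-(\ell-1)M}$ on $A_\ell$. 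Because $x\in B_\xi$ of radius $\gamma 2^{-j}\le 2\gamma\tilde N^{-1}$ and each $\nB_\eta$ has radius $\gamma 2^{-m+1}\le 2\gamma\tilde N^{-1}$, every $\nB_\eta$ with $\eta\in A_\ell$ lies inside the ball $B_\ell':=B(x,r_\ell)$ with $r_\ell\le (2^\ell+4\gamma)\tilde N^{-1}$. Applying $(\sum a_\nu)^t\le\sum a_\nu^t$ to $F_{A_\ell}(y):=\sum_{\eta\in A_\ell}|h_\eta|\ONE_{\nB_\eta}(y)$ and using that at most a constant number $L=L(d)$ of the $\nB_\eta$ overlap at any point, one obtains the pointwise inequality $\sum_{\eta\in A_\ell}|h_\eta|^t\ONE_{\nB_\eta}(y)\le L\,F(y)^t$; integrating over $B_\ell'$ and dividing by $|\nB_\eta|\sim(\gamma 2^{-m+1})^{d-1}$ (via \eqref{sph_cap}) gives
\begin{equation*}
\sum_{\eta\in A_\ell}|h_\eta|\le\Big(\sum_{\eta\in A_\ell}|h_\eta|^t\Big)^{1/t}\le L^{1/t}\Big(\frac{|B_\ell'|}{|\nB_\eta|}\Big)^{1/t}\MM_tF(x),
\end{equation*}
after bounding $\int_{B_\ell'}F^t\le|B_\ell'|\MM_tF(x)^t$.

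Finally I would assemble the annular estimates: since $|B_\ell'|/|\nB_\eta|\lesssim (r_\ell 2^m)^{d-1}\lesssim 2^{\ell(d-1)}\cdot(2^m/\tilde N)^{d-1}$ and $2^m/\tilde N=\max\{1,2^{m-j}\}\cdot 2$, one has
\begin{equation*}
S\le L^{1/t}\max\{1,2^{(m-j)(d-1)/t}\}\MM_tF(x)\sum_{\ell\ge 0}2^{-(\ell-1)_+M}\cdot C\cdot 2^{\ell(d-1)/t},
\end{equation*}
and the geometric series converges because $M-(d-1)/t\ge\delta$, yielding a factor $O(\delta^{-1})$. Tracking the explicit constants from \eqref{sph_cap} (powers of $\tilde c_2$, $\gamma^{-1}$) and the tail bound $\sum_{\ell\ge 1}2^{-\ell\delta}\le (\ln 2\cdot\delta)^{-1}$ should reproduce the stated value $c^\star_2=(2/\ln 2)4^{(d-1)/t}(2/\gamma)^M\delta^{-1}$. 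The only mildly delicate point is the bookkeeping: the conceptual argument is the standard ``discretization dominated by a maximal function'' device, but recovering the precise constants in the form written requires careful choice of annulus widths and accounting of the geometric-series factor against the weight decay.
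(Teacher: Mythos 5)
Your proof is correct and follows essentially the same route as the paper's: a dyadic annular decomposition around $\xi$, the $\ell^t\hookrightarrow\ell^1$ inequality to convert each annular sum into an integral of $F^t$ over a ball containing $x$, and a geometric series whose convergence is governed by $M-(d-1)/t\ge\delta$. The only cosmetic differences are that you handle the cases $m\ge j$ and $m<j$ in one stroke via $\tilde N=\min\{N_\xi,N_\eta\}$ (the paper treats them separately) and that you invoke bounded overlap of the balls $\nB_\eta$ where the paper exploits their exact disjointness.
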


\begin{proof}
Two cases present themselves here.

{\bf Case 1:} $m\ge j$.
Set
$\cQ_{0}:=\{\eta\in \cX_m: 2^{j-1}\rho(\xi,\eta)< \gamma \}$
and
$$
\cQ_{\nu}:=\{\eta\in \cX_m: \gamma 2^{\nu-1}\le 2^{j-1}\rho(\xi,\eta)< \gamma 2^{\nu}\},\quad \nu\ge 1.
$$
Since $0<t\le 1$ we have for  $\nu\ge 1$
\begin{align*}
\sum_{\eta\in\cQ_\nu} \frac{|h_{\eta}|}{\bigl(1+2^{j-1}\rho(\xi,\eta)\bigr)^{M}}
\le \Big(\frac{2}{\gamma}\Big)^M 2^{-\nu M} \sum_{\eta\in\cQ_\nu} |h_{\eta}|
\le \Big(\frac{2}{\gamma}\Big)^M 2^{-\nu M} \Big(\sum_{\eta\in\cQ_\nu} |h_{\eta}|^t\Big)^{1/t}.
\end{align*}
The same estimate holds trivially for $\nu=0$.
Put
$$
\cR_\nu :=B(\xi,2\gamma (2^{-m}+2^{-j+\nu})),
\quad \nu\ge 0.
$$
Clearly $\cup_{\eta\in \cQ_\nu} B_\eta \subset \cR_\nu$.
Using this, the fact that the sets $\{B_\eta : \eta\in\cX_m\}$ are disjoint, and \eqref{sph_cap2}
 we obtain for every $x\in B_\xi \subset \cR_\nu$
\begin{align*}
\sum_{\eta\in \cQ_\nu}|h_{\eta}|^t
&=\int_\SS \biggl(\sum_{\eta\in \cQ_\nu}|h_\eta| |\nB_\eta|^{-1/t} \ONE_{\nB_\eta}(y)\biggr)^t \, d\sigma(y)\\
& =\frac{|\cR_\nu|}{|B(\xi, \gamma 2^{-m})|}\frac{1}{|\cR_\nu|}
\int_{\cR_\nu} \Big(\sum_{\eta\in \cQ_\nu}|h_\eta|\ONE_{\nB_\eta}(y)\Big)^t d\sigma(y)
\\
&\le 4^{d-1} 2^{(m-j+\nu)(d-1)} \Big[\MM_t\bigl(\sum_{\eta\in \cX_m}|h_\eta| \ONE_{\nB_\eta}\bigr)(x)\Big]^t.
\end{align*}
Therefore, since $M\ge (d-1)/t+\delta$ we get for any $x\in B_\xi$
\begin{align*}
\sum_{\eta\in\cX_m} &\frac{|h_{\eta}|}{\big(1+2^{j-1}\rho(\xi,\eta)\big)^{M}}\\
&\le \Big(\frac{2}{\gamma}\Big)^M 4^{(d-1)/t}\MM_t\bigl(\sum_{\eta\in \cX_m}|h_\eta| \ONE_{\nB_\eta}\bigr)(x)
\sum_{\nu\ge 0} 2^{-\nu M}2^{(\nu-j+m)(d-1)/t}
\\
&\le \Big(\frac{2}{\gamma}\Big)^M 4^{(d-1)/t}2^{(m-j)(d-1)/t}
\MM_t\bigl(\sum_{\eta\in \cX_m}|h_\eta| \ONE_{\nB_\eta}\bigr)(x)
\sum_{\nu\ge 0} 2^{-\nu\delta}
\\
&\le \frac{2}{\delta\ln 2}\Big(\frac{2}{\gamma}\Big)^M 4^{(d-1)/t}2^{(m-j)(d-1)/t}
\MM_t\bigl(\sum_{\eta\in \cX_m}|h_\eta| \ONE_{\nB_\eta}\bigr)(x),
\end{align*}
which confirms (\ref{est-M}).

\smallskip

{\bf Case 2:} $m<j$.
Set
$\tQ_{0}:=\{\eta\in \cX_m: 2^{m-1} \rho(\xi,\eta)< \gamma\}$
and
$$
\tQ_{\nu}:=\{\eta\in \cX_m: \gamma 2^{\nu-1}\le 2^{m-1}\rho(\xi,\eta)< \gamma 2^{\nu}\},\quad \nu\ge 1.
$$
Write
$$
\tilde \cR_\nu :=B(\xi,\gamma 2^{-m+1}(1+2^{\nu})),
\quad \nu\ge 0.
$$
We use that $0<t\le 1$ to obtain
\begin{align*}
\sum_{\eta\in\tQ_\nu} \frac{|h_{\eta}|}{\bigl(1+2^{m-1}\rho(\xi,\eta)\bigr)^{M}}
\le \Big(\frac{2}{\gamma}\Big)^M 2^{-\nu M} \sum_{\eta\in\tQ_\nu} |h_{\eta}|
\le \Big(\frac{2}{\gamma}\Big)^M 2^{-\nu M} \Big(\sum_{\eta\in\tQ_\nu} |h_{\eta}|^t\Big)^{1/t}.
\end{align*}
Just as in Case 1 we obtain for $x\in B_\xi \subset \tilde \cR_\nu$
\begin{align*}
\sum_{\eta\in \tQ_\nu}|h_{\eta}|^t
&=\int_\SS \biggl(\sum_{\eta\in \tQ_\nu}|h_\eta| |\nB_\eta|^{-1/t} \ONE_{\nB_\eta}(y)\biggr)^t \, d\sigma(y)\\
& =\frac{|\tilde \cR_\nu|}{|B(\xi, \gamma 2^{-m})|}\frac{1}{|\tilde \cR_\nu|}
\int_{\tilde \cR_\nu} \Big(\sum_{\eta\in \tQ_\nu}|h_\eta|\ONE_{\nB_\eta}(y)\Big)^t d\sigma(y)
\\
&\le 4^{d-1} 2^{\nu(d-1)} \Big[\MM_t\bigl(\sum_{\eta\in \cX_m}|h_\eta| \ONE_{\nB_\eta}\bigr)(x)\Big]^t.
\end{align*}
As before, since $M\ge (d-1)/t+\delta$ we get for any $x\in B_\xi$
\begin{align*}
\sum_{\eta\in\cX_m} &\frac{|h_{\eta}|}{\big(1+2^{m-1}\rho(\xi,\eta)\big)^{M}}\\
&\le \Big(\frac{2}{\gamma}\Big)^M 4^{(d-1)/t}\MM_t\bigl(\sum_{\eta\in \cX_m}|h_\eta| \ONE_{\nB_\eta}\bigr)(x)
\sum_{\nu\ge 0} 2^{-\nu(M-(d-1)/t)}
\\
&\le \frac{2}{\delta\ln 2}\Big(\frac{2}{\gamma}\Big)^M 4^{(d-1)/t}
\MM_t\bigl(\sum_{\eta\in \cX_m}|h_\eta| \ONE_{\nB_\eta}\bigr)(x),
\end{align*}
which verifies (\ref{est-M}).
The proof of the lemma is complete.
\end{proof}

In the next lemma we specify the constants in certain well known discrete Hardy inequalities that will be needed.

\begin{lem}\label{lem:hardy}
Let $\gam >0$, $0<q<\infty$, and $a_m \ge 0$ for $m\ge 0$. Then
\begin{equation}\label{hardy-1}
\Big(\sum_{j\ge 0}\Big(\sum_{m\ge j} 2^{-(m-j)\gam}a_m\Big)^q\Big)^{1/q}
\le c^\star_3\Big(\sum_{m\ge 0} a_m^q\Big)^{1/q}
\end{equation}
and
\begin{equation}\label{hardy-2}
\Big(\sum_{j\ge 0}\Big(\sum_{m=0}^j 2^{-(j-m)\gam}a_m\Big)^q\Big)^{1/q}
\le c^\star_3\Big(\sum_{m\ge 0} a_m^q\Big)^{1/q}
\end{equation}
with
\begin{equation*}
c^\star_3=2^\gam\max\left\{\frac{1}{\gam \ln2},\frac{1}{(\gam q \ln 2)^{1/q}}\right\}.
\end{equation*}
\end{lem}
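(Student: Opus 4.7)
The plan is to split into the two cases $q\ge 1$ and $0<q<1$, and in each case handle the two inequalities symmetrically by the substitution $k=m-j$ (resp. $k=j-m$). The heart of the argument is routine; the only thing that genuinely matters is tracking the constants so that they match the claimed form of $c^\star_3$.

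For $q\ge 1$, I would set $k=m-j$ in \eqref{hardy-1} and rewrite the left-hand side as $\big(\sum_{j\ge 0}(\sum_{k\ge 0} 2^{-k\gamma}a_{j+k})^q\big)^{1/q}$. By Minkowski's inequality in $\ell^q$ applied to the sequences $(2^{-k\gamma}a_{j+k})_{j\ge 0}$ indexed by $k$, this is bounded by
\begin{equation*}
\sum_{k\ge 0}2^{-k\gamma}\Big(\sum_{j\ge 0}a_{j+k}^q\Big)^{1/q}
\le \Big(\sum_{k\ge 0}2^{-k\gamma}\Big)\Big(\sum_{m\ge 0}a_m^q\Big)^{1/q}
= \frac{1}{1-2^{-\gamma}}\Big(\sum_{m\ge 0}a_m^q\Big)^{1/q}.
\end{equation*}
To match the claimed constant I would use $e^x-1\ge x$ with $x=\gamma\ln 2$, giving $2^\gamma-1\ge\gamma\ln 2$, and hence $(1-2^{-\gamma})^{-1}=2^\gamma/(2^\gamma-1)\le 2^\gamma/(\gamma\ln 2)$. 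Inequality \eqref{hardy-2} is handled identically after the change $k=j-m$.

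For $0<q<1$, the quasi-triangle inequality in $\ell^q$ fails, so Minkowski is replaced by the elementary subadditivity $(\sum_i x_i)^q\le\sum_i x_i^q$ for $x_i\ge 0$. Applied to the inner sum in \eqref{hardy-1} and followed by an interchange of summation, this yields
\begin{equation*}
\sum_{j\ge 0}\Big(\sum_{m\ge j}2^{-(m-j)\gamma}a_m\Big)^q
\le \sum_{j\ge 0}\sum_{m\ge j}2^{-(m-j)\gamma q}a_m^q
= \sum_{m\ge 0}a_m^q\sum_{j=0}^m 2^{-(m-j)\gamma q}
\le \frac{1}{1-2^{-\gamma q}}\sum_{m\ge 0}a_m^q.
\end{equation*}
Raising to the power $1/q$ and applying $e^x-1\ge x$ with $x=\gamma q\ln 2$ gives $(1-2^{-\gamma q})^{-1/q}=2^\gamma/(2^{\gamma q}-1)^{1/q}\le 2^\gamma/(\gamma q\ln 2)^{1/q}$, which is the second term in the $\max$ defining $c^\star_3$. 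Inequality \eqref{hardy-2} is treated the same way with $k=j-m$.

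There is no real obstacle here: both parts are standard discrete Hardy-type estimates, and the only fiddly point is the elementary calibration of the geometric-series constant via $e^x-1\ge x$ to produce precisely the form of $c^\star_3$ stated in the lemma.
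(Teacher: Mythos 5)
Your proof is correct and follows essentially the same route as the paper: the same split into $q\ge 1$ and $0<q<1$, the same subadditivity-plus-Fubini argument in the second case, and the same calibration $2^{x}-1\ge x\ln 2$ to produce the stated form of $c^\star_3$. The only (inessential) difference is that for $q\ge 1$ you invoke Minkowski's inequality for the $\ell^q$-norm of the convolution, whereas the paper splits the weight $2^{-(m-j)\gam}=2^{-(m-j)\gam/q'}2^{-(m-j)\gam/q}$ and applies H\"older followed by an interchange of summation; both yield the same bound $2^\gam/(\gam\ln 2)$.
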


\begin{proof}
In the case $0<q\le 1$ inequalities (\ref{hardy-1})--(\ref{hardy-2}) follow readily by
applying the $q$-inequality and switching the order of summation.
More precisely, we have
\begin{align*}
&\Big(\sum_{j\ge 0}\Big(\sum_{m\ge j} 2^{-(m-j)\gam}a_m\Big)^q\Big)^{1/q}
\le \Big(\sum_{j\ge 0}\sum_{m\ge j} 2^{-(m-j)\gam q}a_m^q\Big)^{1/q}\\
&= \Big(\sum_{m\ge 0}\sum_{j= 0}^m 2^{-(m-j)\gam q}a_m^q\Big)^{1/q}
\le \Big(\sum_{\nu\ge 0} 2^{-\nu\gam q}\Big)^{1/q}\Big(\sum_{m\ge 0}a_m^q\Big)^{1/q}.
\end{align*}
Clearly
\begin{equation*}
\Big(\sum_{\nu\ge 0} 2^{-\nu\gam q}\Big)^{1/q} =\Big(\frac{1}{1-2^{-\gam q}}\Big)^{1/q}
\le \frac{2^\gam}{(\gam q \ln 2)^{1/q}}
\le c^\star_3,
\end{equation*}
which implies (\ref{hardy-1}).
The proof of (\ref{hardy-2}) in the case $0<q\le 1$ is similar and gives the same constant $c^\star_3$.

In the case $q>1$ using the H\"{o}lder's inequality ($1/q'+1/q=1$)
and switching the order of summation just as above we obtain
\begin{align*}
&\Big(\sum_{j\ge 0}\Big(\sum_{m\ge j} 2^{-(m-j)\gam}a_m\Big)^q\Big)^{1/q}
= \Big(\sum_{j\ge 0}\Big(\sum_{m\ge j} 2^{-(m-j)\gam/q'}2^{-(m-j)\gam/q}a_m\Big)^q\Big)^{1/q}\\
&\le \Big(\sum_{j\ge 0}
\Big(\sum_{m\ge j} 2^{-(m-j)\gam }\Big)^{q/q'}
\Big(\sum_{m\ge j} 2^{-(m-j)\gam }a_m^q\Big)
\Big)^{1/q}\\
&\le
\Big(\sum_{\nu\ge 0} 2^{-\nu\gam }\Big)^{1/q'}
\Big(\sum_{\nu\ge 0} 2^{-\nu\gam }\Big)^{1/q}
\Big(\sum_{m\ge 0} a_m^q \Big)^{1/q}
=\sum_{\nu\ge 0} 2^{-\nu\gam }\Big(\sum_{m\ge 0} a_m^q \Big)^{1/q},
\end{align*}
which gives \eqref{hardy-1} with $c^\star_3\ge 2^\beta/(\beta \ln2)$.
The proof of \eqref{hardy-2} in the case $q>1$ is similar and with the same constant.
\end{proof}

\begin{proof}[Proof of Theorem~\ref{thm:almost-diag}]

We shall use the abbreviated notation  $\omega_{\xi,\eta}:= \omega_{\xi,\eta}^{(K,M)}$
for $\xi,\eta\in\cX$ (see \eqref{eq:omega_xi_eta}).

We first establish the result for the {\em sequence Besov spaces} $\bb^{sq}_p$, that is,
\begin{equation}\label{B-AlmDiag}
\|\Omega h\|_{\bb^{sq}_p}\le C_9\|h\|_{\bb^{sq}_p}.
\end{equation}
Set $p_*:=\max\{p,1\}$. We start with the proof of the estimate
\begin{equation}\label{eq:besov_1}
\begin{split}
\Big(\sum_{\xi\in\cX_j}\Big|\sum_{\eta\in\cX}\omega_{\xi,\eta}&h_\eta\Big|^p\Big)^{1/p}\\
&\le C_{11}\sum_{m=0}^\infty 2^{-m(K+(d-1)(1/2-1/p'_*)-\delta/2)}\Big(\sum_{\eta\in\cX_{j+m}} |h_\eta|^p\Big)^{1/p}\\
&+C_{11}\sum_{m=1}^j 2^{-m(K+(d-1)(1/2-1/p)-\delta/2)}\Big(\sum_{\eta\in\cX_{j-m}} |h_\eta|^p\Big)^{1/p}
\end{split}
\end{equation}
for any $j\ge 0$. For $1<p<\infty$ using \eqref{eq:omega_xi_eta} and the convexity of $u^p$ we obtain
\begin{multline}\label{eq:besov_2}
\sum_{\xi\in\cX_j}\Big|\sum_{\eta\in\cX}\omega_{\xi,\eta}h_\eta\Big|^p
\le 2^{p-1}\sum_{\xi\in\cX_j}\Big[
\Big(\sum_{m=0}^\infty \sum_{\eta\in\cX_{j+m}}\frac{2^{-m(K+(d-1)/2)}|h_\eta|}{(1+N_\xi\rho(\xi,\eta))^M}\Big)^p\\
+\Big(\sum_{m=1}^{j} \sum_{\eta\in\cX_{j-m}}\frac{2^{-m(K+(d-1)/2)}|h_\eta|}{(1+N_\eta\rho(\xi,\eta))^M}\Big)^p\Big].
\end{multline}
Applying in the first double sum in the right-hand side of \eqref{eq:besov_2}
twice H\"older's inequality, first in the summation on $m$ and then on $\eta$, and \lemref{lem:Besov_1} with $\beta=\delta$ we get
with $M_1=(d-1+\delta)/p'$ and $M_2=M-M_1$
\begin{multline}\label{eq:besov_3}
\Big(\sum_{m=0}^\infty \sum_{\eta\in\cX_{j+m}}\frac{2^{-m(K+(d-1)/2)}|h_\eta|}{(1+N_\xi\rho(\xi,\eta))^M}\Big)^p\\
\le \Big(\sum_{m=0}^\infty 2^{-mp'\delta/2}\Big)^{p/p'}
\sum_{m=0}^\infty \Big(\sum_{\eta\in\cX_{j+m}}\frac{2^{-m(K+(d-1)/2-\delta/2)}|h_\eta|}{(1+N_\xi\rho(\xi,\eta))^M}\Big)^p\\
\le C_{12}^p\sum_{m=0}^\infty \Big(\sum_{\eta\in\cX_{j+m}} \frac{1}{(1+N_\xi\rho(\xi,\eta))^{M_1p'}}\Big)^{p/p'}
\sum_{\eta\in\cX_{j+m}}\frac{2^{-m(K+(d-1)/2-\delta/2)p}|h_\eta|^p}{(1+N_\xi\rho(\xi,\eta))^{M_2p}}\\
\le {c^\star_1}^{p-1} C_{12}^p\sum_{m=0}^\infty
\sum_{\eta\in\cX_{j+m}}\frac{2^{-m(K+(d-1)(1/2-1/p')-\delta/2)p}|h_\eta|^p}{(1+N_\xi\rho(\xi,\eta))^{M_2p}},
\end{multline}
where $C_{12}:=\big(1- 2^{-p'\delta/2}\big)^{-1/p'}\le 5\delta^{-1/p'}$.
Applying the same arguments to the second double sum in the right-hand side of \eqref{eq:besov_2} we obtain
\begin{multline}\label{eq:besov_4}
\Big(\sum_{m=1}^j \sum_{\eta\in\cX_{j-m}}\frac{2^{-m(K+(d-1)/2)}|h_\eta|}{(1+N_\eta\rho(\xi,\eta))^M}\Big)^p\\
\le {c^\star_1}^{p-1} C_{12}^p\sum_{m=1}^j
\sum_{\eta\in\cX_{j-m}}\frac{2^{-m(K+(d-1)/2-\delta/2)p}|h_\eta|^p}{(1+N_\eta\rho(\xi,\eta))^{M_2p}},
\end{multline}
Note that \eqref{cond-KM} implies $M_2p\ge d-1+\delta$.
Substituting \eqref{eq:besov_3} and \eqref{eq:besov_4} in \eqref{eq:besov_2}
and using \lemref{lem:Besov_1} with $\beta=\delta$ we get
\begin{multline}\label{eq:besov_5}
\sum_{\xi\in\cX_j}\Big|\sum_{\eta\in\cX}\omega_{\xi,\eta}h_\eta\Big|^p\\
\le 2^{p-1} {c^\star_1}^{p-1} C_{12}^p\Big(
\sum_{m=0}^\infty \sum_{\eta\in\cX_{j+m}}\sum_{\xi\in\cX_j}
\frac{2^{-m(K+(d-1)(1/2-1/p')-\delta/2)p}|h_\eta|^p}{(1+N_\xi\rho(\xi,\eta))^{d-1+\delta}}\\
+\sum_{m=1}^j \sum_{\eta\in\cX_{j-m}}\sum_{\xi\in\cX_j}
\frac{2^{-m(K+(d-1)/2-\delta/2)p}|h_\eta|^p}{(1+N_\eta\rho(\xi,\eta))^{d-1+\delta}}\Big)\\
\le 2^{p-1} {c^\star_1}^{p-1} C_{12}^p\Big(
\sum_{m=0}^\infty {c^\star_1} 2^{-m(K+(d-1)(1/2-1/p'_*)-\delta/2)p}\sum_{\eta\in\cX_{j+m}}|h_\eta|^p\\
+\sum_{m=1}^j {c^\star_1} 2^{-m(K+(d-1)(1/2-1/p)-\delta/2)p}\sum_{\eta\in\cX_{j-m}}|h_\eta|^p\Big).
\end{multline}
Now, we raise both sides of \eqref{eq:besov_5} to the power $1/p<1$
and apply the $1/p$-inequality to its right-hand side
to obtain \eqref{eq:besov_1} for $1<p<\infty$ with $C_{11}\ge C_{13}:=2^{1-1/p} c^\star_1 C_{12}$.

Let $0<p\le 1$. Using the $p$-inequality,
observing that \eqref{cond-KM} implies in this case $Mp\ge d-1+\delta p$,
and using Lemma~\ref{lem:Besov_1} with $\beta=\delta p\le 1$ we obtain
\begin{multline}\label{eq:besov_7}
\sum_{\xi\in\cX_j}\Big|\sum_{\eta\in\cX}\omega_{\xi,\eta}h_\eta\Big|^p
\le \sum_{m=0}^\infty {c^\star_1} 2^{-mp\delta/2}2^{-m(K+(d-1)/2-\delta/2)p}\sum_{\eta\in\cX_{j+m}}|h_\eta|^p\\
+\sum_{m=1}^j {c^\star_1} 2^{-mp\delta/2}2^{-m(K+(d-1)(1/2-1/p)-\delta/2)p}\sum_{\eta\in\cX_{j-m}}|h_\eta|^p.
\end{multline}
We now raise both sides of \eqref{eq:besov_7} to the power $1/p\ge 1$,
use the convexity of $u^{1/p}$ to break the right-hand side to two terms
and apply H\"older's inequality with exponents $r=1/(1-p)$ and $r'=1/p$ in the summations on $m$
in order to get the $1/p$ power inside the sum and to prove \eqref{eq:besov_1}
for $0<p\le 1$ with $C_{11}:=\max\{C_{13}, C_{15}\}$,
where $C_{15}=2^{1/p-1}{c^\star_1}^{1/p} C_{14}$ with $c^\star_1$ is for $\beta=\delta p$
and $C_{14}:=\big(1- 2^{-\delta p/(2(1-p))}\big)^{-(1-p)/p}$.

Finally, using \eqref{def-b-space}, \eqref{eq:besov_1}, \eqref{cond-KM}, and \lemref{lem:hardy} with $\beta=\delta/2$ and
$a_m=2^{m[s+(d-1)(1/2-1/p)]}\Big(\sum_{\eta\in\cX_{m}} |h_\eta|^p\Big)^{1/p}$ we obtain
\begin{multline}\label{eq:besov_6}
\|\Omega h\|_{\bb_p^{sq}} = \Big(\sum_{j=0}^\infty \Big[2^{j[s+(d-1)(1/2-1/p)]}
\Big(\sum_{\xi \in \cX_j} \Big|\sum_{\eta\in\cX}\omega_{\xi,\eta}h_\eta\Big|^p\Big)^{1/p}\Big]^q\Big)^{1/q}\\
\le C_{11}\Big(\sum_{j=0}^\infty \Big[
\sum_{m=0}^\infty 2^{j[s+(d-1)(1/2-1/p)]-m(K+(d-1)(1/2-1/p'_*)-\delta/2)}\Big(\sum_{\eta\in\cX_{j+m}} |h_\eta|^p\Big)^{1/p}\\
+\sum_{m=1}^j 2^{j[s+(d-1)(1/2-1/p)]-m(K+(d-1)(1/2-1/p)-\delta/2)}\Big(\sum_{\eta\in\cX_{j-m}} |h_\eta|^p\Big)^{1/p}
\Big]^q\Big)^{1/q}\\
= \!C_{11}\Big(\sum_{j=0}^\infty \!\Big[\!
\sum_{m=0}^\infty \!2^{-m(K+s+(d-1)(1/p_*-1/p)-\delta/2)}a_{j+m}
+\sum_{m=1}^j 2^{-m(K-s-\delta/2)}a_{j-m}
\Big]^q\Big)^{1/q}\\
\le C_{11}\Big(\sum_{j=0}^\infty \Big[
\sum_{m\ge j} 2^{-(m-j)\delta/2}a_{m}
+\sum_{m=0}^{j-1} 2^{-(j-m)\delta/2}a_{m}
\Big]^q\Big)^{1/q}\\
\le 2^{1/q+1}C_{11}c^\star_3\Big(\sum_{m=0}^\infty a_m^q\Big)^{1/q}=C_9\|h\|_{\bb_p^{sq}}.
\end{multline}
Thus, \eqref{B-AlmDiag} is established with a constant $C_9=2^{1/q+1}C_{11}c^\star_3$ of the claimed form.

We next prove the result for the {\em sequence Triebel-Lizorkin spaces} $\ff^{sq}_p$, that is,
\begin{equation}\label{F-AlmDiag}
\|\Omega h\|_{\ff^{sq}_p}\le C_9\|h\|_{\ff^{sq}_p}.
\end{equation}
Taking into account Remark~\ref{rem:TL} we chose the quasi-norm of $\ff^{sq}_p$ in Definition~\ref{def:TL} to be defined with
$\nB_\xi=B(\xi,\gamma 2^{-j})$ for $\xi\in\cX_j$, $j\ge 1$. Thus $\nB_\xi\cap \nB_\eta=\emptyset$ for $\xi\ne\eta\in\cX_j$.

Let $h\in \ff^{sq}_p$. Then
$
(\Omega h)_{\xi}=\sum_{\eta\in\cX} \omega_{\xi,\eta}h_{\eta},
$
where the series converges absolutely (see proof below).
Then by \eqref{sph_cap}
\begin{multline}\label{F_AD1}
\|\Omega h\|_{\ff^{sq}_p}
:= \Big\|\Big(\sum_{\xi\in\cX}
\big[|\nB_\xi|^{-s/(d-1)-1/2}|(\Omega h)_\xi|\ONE_{\nB_\xi}\big]^q\Big)^{1/q}\Big\|_{L^p}
\\
\le C_{21}\Big\|\Big(\sum_{\xi\in\cX}\big[N_\xi^{s+(d-1)/2}
\sum_{\eta\in\cX} \omega_{\xi,\eta}|h_{\eta}|\ONE_{\nB_\xi}\big]^q\Big)^{1/q}\Big\|_{L^p}
\le C_{21} 2^{1/p+1/q} (\Sigma_1+\Sigma_2),
\end{multline}
where
$C_{21}:={\tc_2}^{~|s/(d-1)+1/2|}(2/\gamma)^{s+(d-1)/2}$,
\begin{align*}
&\Sigma_1:=\Big\|\Big(\sum_{\xi\in\cX}\Big[N_\xi^{s+(d-1)/2}
\sum_{\eta\in\cX: N_\eta \ge N_\xi}
\omega_{\xi,\eta}|h_{\eta}| \ONE_{\nB_\xi}\Big]^q\Big)^{1/q}\Big\|_{L^p},
\quad
\mbox{and}\\
\quad
&\Sigma_2:=\Big\|\Big(\sum_{\xi\in\cX}\Big[N_\xi^{s+(d-1)/2}
\sum_{\eta\in\cX: N_\eta < N_\xi}
\omega_{\xi,\eta}|h_{\eta}| \ONE_{\nB_\xi}\Big]^q\Big)^{1/q}\Big\|_{L^p}.
\end{align*}
Write $\lambda_\xi:= N_\xi^{s+(d-1)/2}\ONE_{\nB_\xi}$, $\xi\in\cX$,
and choose $t$ so that $(d-1)/t=\cJ +\delta/2$.
Then $0<t<\min\{1, p, q\}$.
If $N_\eta \ge N_\xi$, then
$$
\omega_{\xi,\eta} = \Big(\frac{N_\xi}{N_\eta}\Big)^{K+(d-1)/2}
\big( 1+ N_\xi\rho(\xi, \eta)\big)^{-M}.
$$
Then we have
\begin{align*}
&\Sigma_1 \le \Big\|\Big(\sum_{\xi\in\cX}
\Big[\sum_{\eta\in\cX: N_\eta \ge N_\xi}
\Big(\frac{N_\xi}{N_\eta}\Big)^{\cJ-s-(d-1)/2+\delta} (1+N_\xi\rho(\xi, \eta))^{-\cJ-\delta}
|h_{\eta}| \lambda_{\xi}\Big]^q\Big)^{\frac{1}{q}}\Big\|_{L^p}\\
&=\Big\|\Big(\sum_{j\ge 0}\sum_{\xi\in\cX_j}
\Big[\sum_{m\ge j}  2^{-(m-j)(\cJ-s-\frac{d-1}{2}+ \delta)}\sum_{\eta\in\cX_m}
\big(1+N_\xi \rho(\xi,\eta)\big)^{-\cJ-\delta}|h_{\eta}| \lambda_{\xi}\Big]^{q}\Big)^{\frac 1q}\Big\|_{L^p}.
\end{align*}
We now apply Lemma~\ref{lem:mrax} (with $(d-1)/t$ and $\delta/2$ in the place of $M$ and $\delta$) and
the fact that the sets $\{B_\xi : \xi\in\cX_j\}$ are mutually disjoint to obtain
\begin{align*}
\Sigma_1 &\le c^\star_2 \Big\|\Big(\sum_{j\ge 0}\sum_{\xi\in\cX_j}
\Big[\sum_{m\ge j}  2^{-(m-j)(\cJ-s-\frac{d-1}{2}+ \delta - \frac{d-1}{t})}
\MM_t\Big(\sum_{\eta\in\cX_m}|h_{\eta}|\ONE_{\nB_\eta}\Big) \lambda_{\xi}\Big]^{q}\Big)^{\frac 1q}\Big\|_{L^p}
 \\
&\le c^\star_2 \Big\|\Big(\sum_{j\ge 0}\Big[\sum_{m\ge j}  2^{-(m-j)\delta/2}
\MM_t\Big(\sum_{\eta\in\cX_m}|h_{\eta}|\lambda_{\eta}\Big)\Big]^{q}\Big)^{\frac 1q}\Big\|_{L^p}.
\end{align*}
The application of inequality (\ref{hardy-1}), the maximal inequality (\ref{max-ineq}),
the fact that the sets $\{B_\eta : \eta\in\cX_j\}$ are mutually disjoint,
 and \eqref{sph_cap} leads to
\begin{align}
\Sigma_1
&\le c^\star_2 c^\star_3\Big\|\Big(\sum_{j\ge 0}
\Big[\MM_t\Big(\sum_{\eta\in\cX_j}|h_{\eta}|\lambda_{\eta}\Big)\Big]^{q}\Big)^{\frac 1q}\Big\|_{L^p} \label{F_AD2}
\\
&\le c^\star_2 c^\star_3 \tc_1\Big\|\Big(\sum_{j\ge 0}
\Big[\sum_{\eta\in\cX_j}N_\eta^{s+(d-1)/2}|h_{\eta}|\ONE_{\nB_\eta}\Big]^{q}\Big)^{\frac 1q}\Big\|_{L^p} \nonumber
\\
&\le c^\star_2 c^\star_3 \tc_1 C_{22}\Big\|\Big(\sum_{j\ge 0}
\sum_{\eta\in\cX_j}\Big[|\nB_\eta|^{-s/(d-1)-1/2}|h_{\eta}|\ONE_{\nB_\eta}\Big]^{q}\Big)^{\frac 1q}\Big\|_{L^p}
= c\|f\|_{\ff^{sq}_p} \nonumber
\end{align}
with  $C_{22}:=\tc_2^{~|s/(d-1)+1/2|}(\gamma/2)^{s+(d-1)/2}$.

If $N_\eta < N_\xi$, then
$$
\omega_{\xi,\eta} = \Big(\frac{N_\eta}{N_\xi}\Big)^{K+(d-1)/2}
\big(1+ N_\eta\rho(\xi, \eta)\big)^{-M}
$$
and hence
\begin{align*}
&\Sigma_2 \le \Big\|\Big(\sum_{\xi\in\cX}
\Big[\sum_{\eta\in\cX: N_\eta < N_\xi}
\Big(\frac{N_\eta}{N_\xi}\Big)^{s+(d-1)/2+\delta} (1+N_\eta\rho(\xi, \eta))^{-\cJ-\delta}
|h_{\eta}| \lambda_{\xi}\Big]^q\Big)^{\frac{1}{q}}\Big\|_{L^p}\\
&= \Big\|\Big(\sum_{j\ge 0}\sum_{\xi\in\cX_j}
\Big[\sum_{m< j}  2^{-(j-m)(s+\frac{d-1}{2}+ \delta)}\sum_{\eta\in\cX_m}
\big(1+N_\eta \rho(\xi,\eta)\big)^{-\cJ-\delta}|h_{\eta}| \lambda_{\xi}\Big]^{q}\Big)^{\frac 1q}\Big\|_{L^p}.
\end{align*}
As above employing Lemma~\ref{lem:mrax}, using the fact that the sets $\{B_\xi : \xi\in\cX_j\}$ are mutually disjoint,
applying \eqref{hardy-2}, the maximal inequality \eqref{max-ineq}, and \eqref{sph_cap}
we obtain
\begin{align}
\Sigma_2
&\le c^\star_2\Big\|\Big(\sum_{j\ge 0}\Big[\sum_{m< j}  2^{-(j-m)\delta}
\MM_t\Big(\sum_{\eta\in\cX_m}|h_{\eta}|\lambda_{\eta}\Big)\Big]^{q}\Big)^{\frac 1q}\Big\|_{L^p} \label{F_AD3}
\\
&\le c^\star_2 c^\star_3 \Big\|\Big(\sum_{j\ge 0}
\Big[\MM_t\Big(\sum_{\eta\in\cX_j}|h_{\eta}|\lambda_{\eta}\Big)\Big]^{q}\Big)^{\frac 1q}\Big\|_{L^p} \nonumber
\\
&\le c^\star_2 c^\star_3 \tc_1\Big\|\Big(\sum_{j\ge 0}
\Big[\sum_{\eta\in\cX_j}N_\eta^{s+(d-1)/2}|h_{\eta}|\ONE_{\nB_\eta}\Big]^{q}\Big)^{\frac 1q}\Big\|_{L^p} \nonumber
\\
&\le c^\star_2 c^\star_3 \tc_1 C_{22}\Big\|\Big(\sum_{j\ge 0}
\sum_{\eta\in\cX_j}\Big[|\nB_\eta|^{-s/(d-1)-1/2}|h_{\eta}|\ONE_{\nB_\eta}\Big]^{q}\Big)^{\frac 1q}\Big\|_{L^p}
=c\|f\|_{\ff^{sq}_p}, \nonumber
\end{align}
where the constants $c^\star_2, c^\star_3, \tc_1, C_{22}$ are as above.

Finally, using estimates \eqref{F_AD2} and \eqref{F_AD3} in \eqref{F_AD1}
we obtain \eqref{F-AlmDiag}
with $C_9=C_{21} 2^{1/p+1/q}c^\star_2 c^\star_3 \tc_1 C_{22}$,
which is of the claimed form.
This completes the proof of Theorem~\ref{thm:almost-diag}.

\end{proof}

\end{document}